\documentclass[a4paper]{cas-sc}
\makeatletter
\@fleqnfalse
\makeatother

\usepackage[authoryear,longnamesfirst]{natbib}

\usepackage{amsthm}
\usepackage{mathtools}
\usepackage{enumitem}
\usepackage{dsfont}
\usepackage{hyperref}

\hypersetup{
  citecolor=blue,
  urlcolor=blue
}

\numberwithin{equation}{section}
\theoremstyle{plain}

\newtheorem{theorem}{Theorem}[section]
\newtheorem{lemma}[theorem]{Lemma}

\newtheorem{proposition}[theorem]{Proposition}

\theoremstyle{definition}

\newtheorem{example}{Example}

\newcommand{\I}{\uppercase\expandafter{\romannumeral1}}
\newcommand{\II}{\uppercase\expandafter{\romannumeral2}}
\newcommand{\E}{\mathbb E}
\newcommand{\Pp}{\mathbb P}
\newcommand{\R}{\mathbb R}

\newcommand{\Var}{\operatorname{Var}}
\newcommand{\tr}{\operatorname{tr}}
\newcommand{\rank}{\operatorname{rank}}

\newcommand{\dk}{d_K}
\newcommand{\1}{\mathbf 1}
\newcommand{\dd}{\,d}
\newcommand{\Normal}{\mathcal N(0,1)}

\newcommand{\Cov}{\operatorname{Cov}}
\newcommand{\Span}{\operatorname{span}}
\newcommand{\cF}{\mathcal F}
\newcommand{\cV}{\mathcal V}

\begin{document}
\let\WriteBookmarks\relax
\def\floatpagepagefraction{1}
\def\textpagefraction{.001}

\shorttitle{The Berry--Esseen bound for random determinants}    

\shortauthors{Liu et al.}

\title [mode = title]{The Central Limit Theorem and Berry--Esseen bound for logarithmic law of random determinants}

\author[1]{Song-Hao Liu}
\ead{liusonghao@dlut.edu.cn}
\affiliation[1]{
            organization={School of Mathematical Sciences, Dalian University of Technology},
            city={Dalian},
            state={Liaoning},
            country={China}}

\author[2]{Qi-Man Shao}
\cormark[1]
\ead{shaoqm@sustech.edu.cn}
\affiliation[2]{
            organization={Department of Statistics and Data Science, Shenzhen International Center for Mathematics, Southern University of Science and Technology},
            city={Shenzhen},
            state={Guangdong},
            country={China}}

\author[3]{Jing-Yu Xu}
\ead{12131253@mail.sustech.edu.cn}
\affiliation[3]{
            organization={Department of Statistics and Data Science, Southern University of Science and Technology},
            city={Shenzhen},
            state={Guangdong},
            country={China}}

\cortext[1]{Corresponding author}

\begin{abstract}
\relax
\everydisplay{\displaywidth=.65\textwidth\displayindent=.35\textwidth}

Let $A=(A_n)_{n\ge2}$ be a triangular array of random matrices, where $A_n=(a_{ij})_{1\le i,j\le n}$ is an $n\times n$ random
matrix with independent real entries satisfying
$\mathbb E a_{ij}=0$ and $\mathbb Ea_{ij}^2=1$, and put
$\mathcal L_n=\log|\det A_n|$ and
\[
 W_n^{\mathrm d}(A_n):=\frac{\mathcal L_n - \frac12\log(n-1)!}{\sqrt{\frac12\log n}},\quad
 W_n^{\mathrm e}(A_n):= \frac{\mathcal L_n-\mathbb E \mathcal L_n}{\sqrt{\frac12\log n}}.
\]
We prove that $W_n^{\mathrm d}(A_n) \Rightarrow \mathcal N(0,1)$, whenever the family 
$\left\{\frac{|a_{ij}|^{4}}{\sqrt{\log(e+|a_{ij}|)}} \right\}_{\raisebox{6pt}{$\substack{n\geq 2\\1\leq i,j\leq n}$}}$ is uniformly integrable.
If, in addition, the entries have uniformly bounded densities, then $W_n^{\mathrm e}(A_n) \Rightarrow \Normal$ whenever the family
$\left\{\frac{|a_{ij}|^{4}}{\log(e+|a_{ij}|)}\right\}_{\raisebox{4pt}{$\substack{n\geq 2\\1\leq i,j\leq n}$}}
$ is uniformly integrable. These two conditions are optimal at the level of universal moment assumptions.

We further establish the corresponding Berry--Esseen bounds, and show that for $0<\delta\le\tfrac12$, if $\sup\limits_{n}\max\limits_{1\le i,j\le n}\mathbb E \frac{|a_{ij}|^4} {\{\log(e+|a_{ij}|)\}^{1/2-\delta}}<\infty$, then
\begin{align*}
 d_{\mathrm K}(W_n^{\mathrm d}(A_n),\Normal)\le C(\log n)^{-\delta}.
\end{align*} 
For $0<\gamma\le1$, if $\sup\limits_{n}\max\limits_{1\le i,j\le n}\mathbb E \frac{|a_{ij}|^4} {\{\log(e+|a_{ij}|)\}^{1-\gamma}}<\infty$ and the entries have uniformly bounded densities, then
\begin{align*}
 d_{\mathrm K}(W_n^{\mathrm e}(A_n),\Normal)\le C(\log n)^{-\gamma}.
\end{align*}
When $\delta = 1/2$ and $\gamma = 1$, the bounds $(\log n)^{-1/2}$ and $(\log n)^{-1}$ are optimal, respectively. Our results improve the earlier Central Limit Theorem by \cite{BaoPanZhou2015} and the Berry--Esseen bound by \cite{NguyenVu2014}.
\end{abstract}


\begin{keywords}
 Random determinants\sep  Berry-Esseen bound\sep  Central limit theorem\sep
\end{keywords}

\maketitle

\section{Introduction}
Let $A_n=(a_{ij})_{1\leq i,j\leq n}$ be an $n\times n$ random matrix with
independent real entries satisfying
\begin{equation}
  \begin{aligned}
    \mathbb E a_{ij}=0,\qquad \mathbb E a_{ij}^{2}=1.  \nonumber
  \end{aligned}
\end{equation}
Consider the random determinant $\det A_n$.
The study of random determinants has a long history.  Early work, going back to \cite{SzekeresTuran1937}, was followed by a series of papers
in the 1950s and 1960s concerned with fixed moments of random determinants; see
\cite{ForsytheTukey1952,NyquistRiceRiordan1954,Prekopa1967,Dembo1989}
and the monograph \cite{GirkoBook1990}.  Moment estimates give useful
information on the upper tail, but they do not by themselves describe the typical
size of $|\det A_n|$ or its fluctuations. For the magnitude of the determinant, \cite{TaoVu2006} proved
that, for Bernoulli matrices,
\begin{equation}
  \begin{aligned}
    \sqrt{n!}\exp\{-c\sqrt{n\log n}\}
    \leq |\det A_n|
    \leq \sqrt{n!}\,\omega(n)    \nonumber
  \end{aligned}
\end{equation}
with probability tending to one, for every function $\omega(n)\to\infty$.  This
identifies the logarithmic order
\begin{equation}
  \begin{aligned}
    \log |\det A_n|=\left(\frac12+o(1)\right)n\log n    \nonumber
  \end{aligned}
\end{equation}
with high probability, but does not yet yield a central limit theorem.

To obtain a more precise description of the fluctuations of $\log |\det A_{n}|$, let's consider the Gaussian case. Suppose that the entries of $A_n$ are independent standard Gaussian variables.  If $\mathcal V_i$ is the span of the first $i$ rows and $\gamma_{i+1}$ is the distance from the $(i+1)$st row to $\mathcal V_i$,
then the base-times-height formula gives
\begin{equation}
  \begin{aligned}
    (\det A_n)^2=\prod_{i=0}^{n-1}\gamma_{i+1}^{2}.    \nonumber
  \end{aligned}
\end{equation}
By rotational invariance, the variables $\gamma_{i+1}^{2}$ are independent
chi-square random variables with respective degrees of freedom $n-i$.  Hence
\begin{equation}
  \begin{aligned}
    \log (\det A_n)^2
    \stackrel{d}{=}
    \sum_{k=1}^{n}\log \chi_k^2 ,    \label{eq:Gaussial-decompose}
  \end{aligned}
\end{equation}
where $\chi_k^2$ denotes a chi-square variable with $k$ degrees of freedom.  The
central limit theorem for this triangular array implies
\begin{equation}
  \begin{aligned}\label{eq:CLT-for-Gaussian-case}
    \frac{\log |\det A_n|-\frac12\log (n-1)!}{\sqrt{\frac12\log n}}
    \xrightarrow{d} \Normal,  
  \end{aligned}
\end{equation}
see \cite{Goodman1963} and also
\cite{Rouault2007,DumitriuEdelman2002} for related Gaussian and classical
ensemble formulations.
It can be shown that 
\begin{equation}
  \begin{aligned}
    \sup_{x\in\mathbb{R}}\biggl|\mathbb{P}\biggl(
    \frac{\log(|\det A_n|)-\frac12\log(n-1)!}{\sqrt{\frac12\log n}}
    \le x\biggr)-\mathbb{P}(\Normal \le x)\biggr|
    \asymp C\log^{-1/2} n,   
  \end{aligned}
\end{equation}
and, under exact-mean centering,
\begin{equation}
  \begin{aligned}\label{eq:Gaussian-exact-BE-bound}
    \sup_{x\in\mathbb{R}}\biggl|\mathbb{P}\biggl(
    \frac{ \log(|\det A_n|)-\E \log(|\det A_n|) }{\sqrt{\frac12\log n}}
    \le x\biggr)-\mathbb{P}(\Normal \le x)\biggr|
    \asymp C\log^{-1} n.
  \end{aligned}
\end{equation}

The universality problem asks whether the same logarithmic law holds beyond the Gaussian case. Let $\mathcal L_{n} = \log|\det A_n|$, and assume that the entries of $A_n$ are independent, centered, real-valued random variables with unit variance. \cite{Girko1979,Girko1997} claimed that if $\sup_{i,j}\E |a_{ij}|^{4+\delta} < \infty$, for some \(\delta>0\), then
\begin{equation}
  \begin{aligned}\label{eq:CLT-for-Gaussian-case-universal}
    \frac{\mathcal L_{n} - \frac12\log (n-1)!}{\sqrt{\frac12\log n}}
    \xrightarrow{d} \Normal.  
  \end{aligned}
\end{equation}
As mentioned in \cite{tao2012central}, "there are several points which are not clear in" \cite{Girko1979,Girko1997}. \cite{NguyenVu2014} later gave a complete proof under a subexponential tail assumption and also obtained
the quantitative estimate
\begin{equation}
  \begin{aligned}
    \sup_{x\in\mathbb{R}}\biggl|\mathbb{P}\biggl(
    \frac{ \mathcal L_{n} -\frac12\log(n-1)!}{\sqrt{\frac12\log n}}
    \le x\biggr)-\mathbb{P}(\Normal \le x)\biggr|
    \le \log^{-1/3+o(1)} n .    \nonumber
  \end{aligned}
\end{equation}
\cite{BaoPanZhou2015} subsequently proved the logarithmic law \eqref{eq:CLT-for-Gaussian-case-universal} under a weaker
assumption
\begin{equation}
  \begin{aligned}
    \sup_n\max_{1\leq i,j\leq n}\mathbb E a_{ij}^4<\infty, \label{eq:assumption-uniform-4-moment}
  \end{aligned}
\end{equation}
It was believed that the fourth moment condition \eqref{eq:assumption-uniform-4-moment} is sharp, e.g. \cite{li2026logarithmic} says "... the sharp finite fourth moment condition."

The main purpose of this paper is to identify optimal moment conditions for universal central limit theorems for $\log|\det A_n|$. It is shown that condition \eqref{eq:assumption-uniform-4-moment} can be weakened and CLT remains valid provided $\bigg\{\frac{|a_{ij}|^{4}}{\sqrt{\log(e+|a_{ij}|)}}: n\ge2,\ 1\le i,j\le n \bigg\}$ is uniformly integrable. If entrys have uniformly bounded density functions and considering on a exact-mean centering case, condition \eqref{eq:assumption-uniform-4-moment} can be further weakened to $\left\{\frac{|a_{ij}|^{4}}{\log(e+|a_{ij}|)}: n\ge2,\ 1\le i,j\le n \right\}$ is uniformly integrable. Berry--Esseen bounds are also established.

The rest of this paper is organized as follows. Section~\ref{sec:main-result} states the main results, and presents examples showing the sharpness of the moment condition. Section~\ref{sec:proof-main-results} proves the
four main theorems. The auxiliary propositions used in these proofs are postponed in Section~\ref{sec:proof-propositions}, while Section~\ref{sec:proof-lemmas} develops the necessary analytic, geometric,
and probabilistic tools. Finally, Section~\ref{sec:proof-examples} proves the
sharpness examples.

\section{Main results}\label{sec:main-result}

\subsection{Assumptions and basic notation}
A \emph{triangular array of random matrices} is a sequence $A=(A_n)_{n\ge1}$ with $A_n=(a_{ij})_{1\le i,j\le n}$. 
For every $n$, $A_n$ is an $n\times n$ random matrix, the entries $\{a_{ij}:1\le i,j\le n\}$  are independent real random variables. It is noted that $a_{ij}$ may also depend on $n$.

Assume that
\begin{equation}\label{eq:moment-control-class}
 \E a_{ij}=0,
 \qquad
 \E\bigl(a_{ij}^2\bigr)=1.
\end{equation}
Let $\mathcal L_{n} = \log|\det A_n|$. On the event $\{\det A_n=0\}$, we set $\mathcal L_{n}=-\infty$.  The two normalized statistics are
\begin{align}
 W_n^{\mathrm d}(A_n)
 &:=\frac{ \mathcal L_{n} - \frac12\log(n-1)!}
 {\sqrt{\frac12\log n}},
 \label{eq:def-Wd}\\
 W_n^{\mathrm e}(A_n)
 &:=\frac{ \mathcal L_{n} - \E\mathcal L_{n}}
 {\sqrt{\frac12\log n}}.
 \label{eq:def-We}
\end{align}
The superscripts ``$\mathrm d$'' and ``$\mathrm e$'' stand for deterministic centering and exact-mean centering. The statistic $W_n^{\mathrm e}(A_n)$ is used only when
$\E|\mathcal L_{n}|<\infty$; the bounded-density assumptions below in Theorem~\ref{thm:exact-clt} and Theorem~\ref{thm:exact-be} guarantee this integrability.
For real valued random variables $X$ and $Y$, the Kolmogorov distance is defined as
\begin{equation}
  \begin{aligned}\label{def:Kol-distance}
    d_{\mathrm K}(X,Y)
    :=\sup_{x\in\mathbb R}|\Pp(X\le x)-\Pp(Y\le x)|. 
  \end{aligned}
\end{equation}

\vspace{0.5cm}

\subsection{Main theorem}

\vspace{0.5cm}

\begin{theorem}\label{thm:det-clt}
Let $A=(A_n)_{n\ge1}$ be a triangular array of random matrices such that, for each $n$, the entries of $A_n$ are independent, satisfying \eqref{eq:moment-control-class}.
If the family
\begin{equation}
  \begin{aligned}\label{eq:det-clt-assumption}
    \left\{
    \frac{|a_{ij}|^4}
    {\sqrt{\log(e+|a_{ij}|)}}:
    n\ge2,\ 1\le i,j\le n
    \right\} \quad \text{is uniformly integrable,}
  \end{aligned}
\end{equation}
then
\begin{equation}
  \begin{aligned}\label{eq:det-clt-conclusion}
    W_n^{\mathrm d}(A_n)\Rightarrow\Normal.    
  \end{aligned}
\end{equation}
\end{theorem}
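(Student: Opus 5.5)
We follow the base-times-height strategy of \cite{NguyenVu2014} and \cite{BaoPanZhou2015}, and use the weakened moment condition only where the fourth moment actually enters. Write $\mathcal L_n=\sum_{i=0}^{n-1}\log\gamma_{i+1}$, where $\gamma_{i+1}=\operatorname{dist}(X_{i+1},\mathcal V_i)$, $X_{i+1}$ is the $(i+1)$st row and $\mathcal V_i$ is the span of the first $i$ rows. Let $P_i$ denote the orthogonal projection onto $\mathcal V_i^\perp$; its rank is $n-i$ almost surely on the relevant event. Then
\[
\E\bigl(\gamma_{i+1}^2\mid\cF_i\bigr)=\tr P_i=n-i ,
\]
and $\gamma_{i+1}^2/(n-i)-1$ is a quadratic form $\sum_{k,l}p_{kl}(a_{i+1,k}a_{i+1,l}-\delta_{kl})/(n-i)$. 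We split the index range into three parts. The last $m=\lfloor(\log n)^{c}\rfloor$ rows (with $n-i\le m$) are handled by a lower bound on the smallest singular value, as in Rudelson--Vershynin or Tao--Vu. This shows that $\sum_{i>n-m}\log\gamma_{i+1}-\frac12\log (m!)=O_P(\log\log n)$, which is $o(\sqrt{\log n})$. The uniform integrability of $a_{ij}^2$ suffices for this step and also gives $\det A_n\neq0$ with probability tending to one. The first $n-n/\log^{K} n$ rows contribute deterministically up to $o_P(\sqrt{\log n})$, because there $\Var(\log\gamma_{i+1})\lesssim 1/(n-i)$ and the sum of these variances is $O(\log\log n)$.

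The main part is a martingale CLT for the rows with $n/\log^K n\ge n-i\ge m$. Expand
\[
\log\gamma_{i+1}^2-\log(n-i)=Y_i-\tfrac12Y_i^2+O(|Y_i|^3), \qquad Y_i=\gamma_{i+1}^2/(n-i)-1 .
\]
The $Y_i$ are martingale differences with respect to the row filtration. Their conditional variance is
\[
\E(Y_i^2\mid\cF_i)=\frac{2\sum_{k\ne l}p_{kl}^2+\sum_k p_{kk}^2(\E a_{i+1,k}^4-1)}{(n-i)^2}=\frac{2}{n-i}+\frac{\sum_k p_{kk}^2(\E a^4_{i+1,k}-3)}{(n-i)^2}.
\]
Summing $\frac12\cdot\frac{2}{n-i}\cdot\frac14\cdot 4$ over $i$ gives the leading variance $\frac12\log n$ for $\sum\frac12 Y_i$. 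The drift $-\frac14\sum\E(Y_i^2\mid\cF_i)$ combines with $\sum\frac12\log(n-i)$ to produce $\frac12\log(n-1)!$ up to $O(1)$ plus the fourth-moment correction
\[
R_n=\sum_i\frac{\sum_kp_{kk}^2(\E a_{i+1,k}^4-3)}{(n-i)^2}.
\]
Since $\sum_kp_{kk}^2\le n-i$, we get $|R_n|\le C\sum_i 1/(n-i)=O(\log n)$ under a bounded fourth moment. This would be harmful unless it is $o(\sqrt{\log n})$. The key point is delocalization: $p_{kk}\approx (n-i)/n$ for the typical projection, so $\sum_kp_{kk}^2\approx(n-i)^2/n$ and $R_n=O(1)$.

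The crux, and where the condition \eqref{eq:det-clt-assumption} is used, is to run this with truncation. Truncate the entries at level $\sqrt n\,\epsilon_n$, or alternatively at row-dependent levels $\tau_i\approx\sqrt{n-i}$, recentered and renormalized. Then control two quantities: the probability that some entry exceeds its level, and the error that the truncated fourth moments $\E a^4\1\{|a|\le\tau\}$ induce in the conditional variances and in Lyapunov's condition. Write $\E a^4\1\{|a|\le\tau\}$ as $o(\sqrt{\log\tau})$, using uniform integrability of $a^4/\sqrt{\log(e+|a|)}$. Then the Lyapunov/Lindeberg term $\sum_i\E(Y_i^2\1\{|Y_i|>\epsilon\})$ and the off-diagonal diagonal-weight term contribute at most $\sum_i\frac{o(\sqrt{\log (n-i)})}{n-i}\cdot\max_k p_{kk}$. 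After the delocalization bound this is $o(\sqrt{\log n})$ relative to the variance $\frac12\log n$, which is exactly the threshold. Compare this with a mean shift of size $\sum_i \E a^4\1\{|a|\le\sqrt{n-i}\}/(n-i)\approx\sum_j\sqrt{\log j}\,o(1)/j=o((\log n)^{3/2})$, which is too big without delocalization. This shows that delocalization of $P_i$ is indispensable. I expect the main obstacle to be establishing $\max_kp_{kk}\lesssim (n-i)/n+n^{-c}$ with high probability under only these weak moments. I would first try the Schur-complement/resolvent approach of \cite{BaoPanZhou2015}, applied to the truncated entries. Failing that, I would isolate the few columns carrying large entries and handle them by a rank-one perturbation argument.

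Finally, apply the martingale CLT (Hall--Heyde) to $\sum\frac12Y_i$. The conditional variance converges after normalization by $\frac12\log n$, and the conditional Lindeberg condition follows from the truncated fourth-moment bounds. The remainder terms $\sum(Y_i^2-\E(Y_i^2\mid\cF_i))$ and $\sum|Y_i|^3$ have mean and variance $o(\sqrt{\log n})$ on the good event. Combining this with the two boundary ranges and Slutsky's lemma yields \eqref{eq:det-clt-conclusion}.
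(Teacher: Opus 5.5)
\textbf{Verdict: genuine gaps.} Your mechanism is the right one, but the truncation level you propose fails under the hypothesis, and the smallest-singular-value treatment of the terminal block does not deliver the claimed precision.

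\textbf{What is right.} After truncation, the fourth-moment drift $\tfrac14\kappa\sum_i d_i$ is $O(\kappa)$ by delocalization. It is therefore $o(\sqrt{\log n})$ exactly when the truncated fourth moments are $o(\sqrt{\log n})$. The paper uses the same mechanism, via $\sum_i h_i\le CM$ and $\sum_i\E d_i\le C$ (Lemmas~\ref{lem:common-one-step} and~\ref{lem:diagonal-projection-bounds}). It needs only expectation bounds on $p_{rr}(i)$, not the uniform high-probability bound you set as your main obstacle: averages over $r$ when $n-i\ge n\ell^{-400}$, and $\E\max_r p_{rr}(i)\le C\ell^{-160}$ below that.

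\textbf{Gap 1: the truncation level.} This is the one place the hypothesis enters. Uniform integrability only gives $\Pp(|a_{ij}|>T)\le\varepsilon(T)\sqrt{\log(e+T)}\,T^{-4}$, with $\varepsilon(T)\to0$ at an unspecified rate, and this is attained. Add atoms at $\pm\sqrt n$ of total mass $\sqrt{\log n}/(n^2\log\log n)$ to a bounded symmetric law and rescale to unit variance. This array satisfies \eqref{eq:det-clt-assumption}, yet $A_n$ contains about $\sqrt{\log n}/\log\log n\to\infty$ entries of size $\sqrt n$. So truncating at $\sqrt n\,\epsilon_n$, let alone at the row-dependent levels $\sqrt{n-i}$, changes $A_n$ with probability tending to one. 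You need $n^2\sqrt{\log T_n}/T_n^4=O(1)$, i.e.\ $T_n$ at least of order $\sqrt n(\log n)^{1/8}$.

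The paper takes $T_n=\sqrt n\,\ell^{1/8}$. Then $\Pp(A_n\ne\widetilde A_n)\le C\varepsilon(T_n)\to0$ and $\max_{i,j}\E|\widetilde a_{ij}|^4\le C\varepsilon(T_n)\sqrt\ell$. It then proves the normal approximation (Proposition~\ref{prop:det-growing-fourth}) from $M=o(\sqrt\ell)$ and bounded third moments alone, with entries allowed up to $\sqrt n\,\ell^{1/8}$. Separately, recentring and renormalizing changes every entry, so you would still owe a proof that $\log|\det|$ moves by $o_{\Pp}(\sqrt{\log n})$. The coupling of Proposition~\ref{prop:truncation} alters entries only on a rare event, so that issue never arises.

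\textbf{Gap 2: the terminal block.} A smallest-singular-value bound only gives $\gamma_{i+1}\ge s_n(A_n)$. Hence $\sum_{n-i\le m}\log\gamma_{i+1}\ge m\log s_n(A_n)$, which is of order $-m\log n$ and far larger than $\sqrt{\log n}$ in size when $m=\lfloor(\log n)^c\rfloor$. Your claimed $O_{\Pp}(\log\log n)$ would instead need, for each $k\le m$, a small-ball bound for the distance from a row to a random codimension-$k$ subspace, roughly $\Pp(\operatorname{dist}\le t\sqrt k)\le(Ct)^{ck}+o(1/m)$. That is a substantial input: Rudelson and Vershynin prove it for subgaussian entries. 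It is not available for inhomogeneous entries with fourth moments $o(\sqrt{\log n})$ and support up to $\sqrt n\,\ell^{1/8}$. The same lower-tail problem appears at the bottom of your middle range. There, expanding $\log(1+Y_i)$ requires $\max_i|Y_i|\le\frac12$ and control of $\Pp_i(Z_{i+1}<e^{-u})$, which you leave implicit.

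\textbf{How the paper avoids this.} It needs no small-ball estimate for the last rows.
\begin{itemize}
\item It swaps the last $s_1=\lfloor\ell^{60}\rfloor$ rows for Gaussian rows one at a time. Each swap costs at most $C\,\E\max_j|w_j|\le C\ell^{-80}$, by a conditional Berry--Esseen bound for the cofactor form $\sum_jw_jc_{nj}$ (Lemma~\ref{lem:det-one-row}, with $w_j^2=p_{jj}(n-1)$ controlled by Lemma~\ref{lem:terminal-diagonal}). The terminal block then becomes an exact sum of independent $\log(\chi_k^2/k)$.
\item In the bulk, where $n-i>\ell^{60}$, it replaces $Z_{i+1}$ by $Z_{i+1}\vee\ell^{-10}$ and bounds the lower tail by moment estimates (Lemma~\ref{lem:common-lower-tail}).
\item Nonsingularity comes from the vanishing-noise smoothing of Lemma~\ref{lem:det-smoothing}, not from an invertibility theorem.
\end{itemize}
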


\vspace{0.5cm}

The next example shows that the uniform-integrability assumption in \eqref{eq:det-clt-assumption} cannot be weakened to boundedness in $L^1$ of the same family.

\vspace{0.5cm}

\begin{example}\label{ex:det-threshold}
Let $Y$ be a fixed symmetric random variable with variance one and a
$C^\infty$ density supported on a bounded interval.  For $m\ge1$, set
\[
 B_m=e^m,
 \qquad
 L_m=\log(e+B_m),
 \qquad
 N_m=
 \left\lfloor
 \frac{B_m^4}{\sqrt{L_m}}
 \right\rfloor.
\]
The sequence $(N_m)_{m\ge1}$ is strictly increasing.
Let
\[
 p_m=N_m^{-1},
 \qquad
 \tau_m=p_mB_m^2,
\]
and let $\eta_m$ be Bernoulli with parameter $p_m$, let $\varepsilon_m$
take the values $1$ and $-1$ with equal probability, and assume that
$Y,\eta_m,\varepsilon_m$ are independent.  Define
\begin{equation}\label{eq:det-example-spike-variable}
 \xi_m
 =
 \frac{Y+\eta_m\varepsilon_mB_m}
 {\sqrt{1+\tau_m}}.
\end{equation}
Thus, before normalization, a term equal to $B_m$ or $-B_m$ is added
with probability $N_m^{-1}$.

We define $A=(A_n)_{n\ge2}$ as follows.  When $n=N_m$, all entries of
$A_n$ are independent, the entries in the first $\lfloor n/2\rfloor$
rows have the same distribution as $\xi_m$, and the entries in the
remaining rows are standard normal.  When
$n\notin\{N_m:m\ge1\}$, all entries of $A_n$ are independent standard
normal random variables.
The choice of $N_m$ gives
\begin{equation}\label{eq:det-example-spike-scale}
 \frac{p_mB_m^4}{\sqrt{\log N_m}}
 \longrightarrow
 \frac12.
\end{equation}
Thus, the contribution of the added term to the fourth moment has
order $\sqrt{\log N_m}$.  Under deterministic centering, this produces
a nonvanishing shift.
Every entry of the array is centered and has variance one.  Moreover,
\begin{equation}\label{eq:det-example-critical-bound}
 \sup_{n\ge2}\max_{1\le i,j\le n}
 \E\frac{|a_{ij}|^4}
 {\sqrt{\log(e+|a_{ij}|)}}
 <\infty,
\end{equation}
but the family
\begin{equation}\label{eq:det-example-not-ui}
 \left\{
 \frac{|a_{ij}|^4}
 {\sqrt{\log(e+|a_{ij}|)}}:
 n\ge2,\ 1\le i,j\le n
 \right\}
\end{equation}
is not uniformly integrable.  Furthermore,
\begin{equation}\label{eq:det-example-failure}
 \liminf_{m\to\infty}
 \dk\bigl(
 W_{N_m}^{\mathrm d}(A_{N_m}),
 \Normal
 \bigr)
 >0.
\end{equation}
Consequently, the uniform-integrability assumption in
Theorem~\ref{thm:det-clt} cannot, in general, be replaced by the
$L^1$-boundedness condition \eqref{eq:det-example-critical-bound}.
The proof of Example~\ref{ex:det-threshold} will be given in Section~\ref{sec:proof-examples}.
\end{example}

\vspace{0.5cm}

Motivated by \eqref{eq:Gaussian-exact-BE-bound}, we now consider the exact-mean centering $W_n^{\mathrm e}(A_n)$.

\vspace{0.5cm}

\begin{theorem}\label{thm:exact-clt}
Let $A=(A_n)_{n\ge1}$ be a triangular array of random matrices such that, for each $n$, the entries of $A_n$ are independent, centered, real-valued random variables with unit variance and uniformly bounded density $\sup\limits_{n\ge2}\max\limits_{1\le i,j\le n}\|f_{ij}^{(n)}\|_\infty\le M_0<\infty$.
If the family
\begin{equation}
  \begin{aligned}\label{eq:exact-clt-assumption}
    \left\{\frac{|a_{ij}|^{4}}{\log(e+|a_{ij}|)}: n\geq 2, 1\leq i,j\leq n \right\}  \quad \text{is uniformly integrable,}
  \end{aligned}
\end{equation}
then
\begin{equation}
  \begin{aligned}\label{eq:exact-clt-conclusion}
    W_n^{\mathrm e}(A_n)\Rightarrow\Normal.
  \end{aligned}
\end{equation}
\end{theorem}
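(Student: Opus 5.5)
We follow the base-times-height decomposition that underlies the Gaussian case \eqref{eq:Gaussial-decompose}. Write $\mathcal L_n=\sum_{i=0}^{n-1}\log\gamma_{i+1}$, where $\gamma_{i+1}=\operatorname{dist}(X_{i+1},\mathcal V_i)$, $X_{i+1}$ is the $(i+1)$st row and $\mathcal V_i$ is the span of the first $i$ rows. Then $\gamma_{i+1}^2=X_{i+1}^\top P_i X_{i+1}$, where $P_i$ is the orthogonal projection onto $\mathcal V_i^\perp$, which has rank $n-i$ almost surely because the densities are bounded. Let $\cF_i=\sigma(X_1,\dots,X_i)$. The martingale differences are
\[
D_{i+1}=\log\gamma_{i+1}-\E(\log\gamma_{i+1}\mid\cF_i),
\]
and we split
\[
\mathcal L_n-\E\mathcal L_n=\sum_i D_{i+1}+\Bigl(\sum_i \E(\log\gamma_{i+1}\mid\cF_i)-\E\mathcal L_n\Bigr).
\]
Exact-mean centering removes any deterministic shift. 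This is why Example~\ref{ex:det-threshold} does not obstruct the statement, and why a full logarithm can be saved compared with Theorem~\ref{thm:det-clt}.

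\textbf{Step 1: split the rows.} Fix a cutoff $k_n=\lfloor n/\log^{c} n\rfloor$ or similar, and divide the rows into a bulk range $n-i\ge k_n$ and a tail range $n-i<k_n$.

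\emph{Tail range.} We only need $\Var\bigl(\sum_{n-i<k_n}\log\gamma_{i+1}\bigr)=o(\log n)$. The bounded density gives small-ball estimates $\Pp(\gamma_{i+1}\le t)\le (CM_0 t)^{n-i}$ up to constants, via the density of a projection of a vector with independent bounded-density entries (Rudelson--Vershynin type). Hence $\E\log^2\gamma_{i+1}\le C\log^2(n)$ roughly, and in fact the variance of $\log\gamma_{i+1}$ given $\cF_i$ is $O(1/(n-i))+O(1)$ for small $n-i$. The variance contributed by the tail range is then $\sum_{d<k_n}O(1/d)$. If $k_n$ is small enough this is $o(\log n)$, so it suffices to take $k_n=n^{o(1)}$, for instance $k_n=\exp(\sqrt{\log n})$. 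So I will actually reverse the roles: the tail range is $n-i\le \exp(\sqrt{\log n})$, and the bulk range is everything else.

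\emph{Bulk range.} Expand $\log\gamma^2=\log(n-i)+\log(1+Z)$ with
\[
Z=\frac{X^\top P X-(n-i)}{n-i}.
\]
Write $\log(1+Z)=Z-Z^2/2+R$. The conditional variance of $Z$ is
\[
\frac{2\tr P^2+\sum_j(\E a_{j}^4-3)P_{jj}^2}{(n-i)^2}.
\]

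\textbf{Step 2: martingale CLT.} Apply a martingale CLT (Lindeberg plus conditional variance) to $\sum D_{i+1}/\sqrt{\frac12\log n}$. The main part of the conditional variance is $\sum_i 2/(4(n-i))\approx\frac12\log n$. The fourth-moment correction is $\sum_i\sum_j(\E a^4-3)P_{jj}^2/(4(n-i)^2)$. To control it I will use delocalization of the projections, $\sum_j P_{jj}^2\approx (n-i)^2/n$ on average. This makes the correction $O(1)$ when fourth moments are finite; under the weaker hypothesis we truncate.

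\textbf{Step 3: truncation and the fluctuation of the compensator.} Truncate each entry at level $\sqrt{n-i}\,\epsilon$ or at a fixed large level. Uniform integrability of $|a|^4/\log(e+|a|)$ implies
\[
\sum_{d}\frac{1}{d}\,\E\Bigl[|a|^4\1\{|a|>\sqrt d\}\Bigr]/d\;\lesssim\;\sum_d\frac{1}{d}\,o\!\left(\log d\right)/d ,
\]
which is $o(\log n)$ after summation. This is the calculation where the $\log$ weight matches exactly the harmonic sum, and it gives the Lindeberg condition as well.

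The second term in the decomposition is $\sum_i\E(\log\gamma_{i+1}\mid\cF_i)-\E\mathcal L_n$. This random compensator depends on $\sum_j(\E a_j^4-3)P_{jj}^2$ and on the row-truncation effects. Under exact-mean centering only its fluctuation matters, and I will bound that fluctuation by an Efron--Stein or martingale variance argument, aiming for $o(\log n)$.

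\textbf{Main obstacle.} The hard step is controlling the random drift terms $\E(\log(1+Z)\mid\cF_i)$ when fourth moments are infinite. This requires concentration of the diagonal entries $P_{jj}$ of random projections, uniformly over the bulk range, together with the small-ball control near the tail range where $\log$ blows up. I would first try to prove
\[
\Var\Bigl(\sum_j P_{jj}^2\Bigr)\ll\frac{(n-i)^4}{n^2}\,\mathrm{polylog}
\]
using resolvent or leave-one-out identities, and then combine it with the truncated moment sums above.
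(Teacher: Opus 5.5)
You have the same skeleton as the paper: martingale differences plus a compensator, as in the splitting $\zeta_{i+1}=\eta_{i+1}+g_i$ in Lemma~\ref{lem:exact-hybrid}. Your main-term variance $\sum_i\frac{2}{4(n-i)}\approx\frac12\log n$ is also right. However, the steps that carry the weight are missing or would fail as written.

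\textbf{Truncation and the exact mean.} Under \eqref{eq:exact-clt-assumption} the fourth moments may be infinite; tails $\Pp(|a|>t)\asymp t^{-4}(\log t)^{-1}$ are allowed. So your expansion of $\log(1+Z)$ and the conditional-variance formula only make sense after truncation, and Step~3 does not give a usable truncation. For such tails, a row-dependent level $\sqrt{n-i}\,\epsilon$ alters roughly $n/(n-i)^2$ entries of row $i+1$, i.e.\ many entries in every row with $n-i\ll\sqrt n$. A fixed level alters a positive fraction of all entries. Either choice also breaks $\E a=0$ and $\E a^2=1$, so it yields no coupling with $A_n$, and you never say how the conclusion returns to $A_n$. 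In your display, $\E[|a|^4\1\{|a|>\sqrt d\}]$ may be $+\infty$, and $\sum_d o(\log d)/d^2=O(1)$, so that is not where the logarithm is matched. What works is a single level $T_n=\sqrt n(\log n)^{3/8}$. Then $n^2\Pp(|a|>T_n)\to0$, the retained fourth moment is $o(\log T_n)=o(\log n)$, and the fourth-moment corrections are $O(M)=o(\log n)$ against a variance of order $\log n$. Even so, $\Pp(A_n\ne\widetilde A_n)\to0$ says nothing about $\E\mathcal L_n-\E\widetilde{\mathcal L}_n$, because $\log|\det|$ is unbounded in both directions. The paper therefore makes the truncation preserve mean, variance \emph{and} a density bound (Proposition~\ref{prop:truncation}). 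It then proves Lemma~\ref{prop:exact-transfer}: swap one row at a time and write each determinant as $\Lambda$ times a unit linear form in the swapped row. The density bound and Lemma~\ref{lem:log-comparison} then give $|\E\mathcal L_n-\E\widetilde{\mathcal L}_n|\le Cn\rho_n\log(e/\rho_n)=o(\sqrt{\log n})$. Nothing in your plan plays this role.

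\textbf{Tail range and compensator.} Summing ``$O(1/(n-i))+O(1)$'' over $n-i<k_n=\exp(\sqrt{\log n})$ gives $O(k_n)$, not $O(\log k_n)$. Getting $O(1/(n-i))$ uniformly requires $\max_jp_{jj}(i)$ to be small for $n-i$ down to polylogarithmic size. Otherwise the term $\sum_j(\E a_{i+1,j}^4-3)p_{jj}(i)^2/(n-i)^2$ is only $O(M/(n-i))$, which sums to $M\sqrt{\log n}$; with $M=o(\log n)$ this need not be $o(\log n)$. That delocalization is a real theorem (Lemma~\ref{lem:terminal-diagonal}, proved via the regularized resolvents $G_r$).

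The tail variance also contains the fluctuation of the conditional means $\E(\log\gamma_{i+1}\mid\cF_i)$. For $n-i$ of order $M$ or less no Taylor expansion is available, and these are non-universal functionals of $P_i$: for $n-i=1$, $\E(\log|w^\top X|\mid w)$ depends on $w$. The paper avoids this by replacing the last $s_1=\lfloor(\log n)^{60}\rfloor$ rows by Gaussian rows one at a time (Lemma~\ref{lem:exact-one-row}). After moving the row to the bottom, only the one-dimensional form $\sum_jw_ja_{nj}$ matters, where $w$ is the unit normal to the other rows. Berry--Esseen with error $C\max_j|w_j|$, which is small in expectation by Lemma~\ref{lem:terminal-diagonal}, and Lemma~\ref{lem:log-comparison} then control both the law and the exact mean. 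The Gaussian block becomes an explicit independent sum of $\log(\chi_k^2/k)$.

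Your Efron--Stein plan for the compensator also cannot cover all rows. Resampling one row changes every later $P_i$ by a rank-two perturbation. The available stability bound for the conditional mean as a function of $P$ (Lemma~\ref{lem:exact-rank-stability}) needs the support bound supplied by the truncation, and it is small enough only when $n-i$ is nearly of order $n$. The paper uses Efron--Stein only for $n-i\ge n(\log n)^{-400}$. For $(\log n)^{60}\le n-i<n(\log n)^{-400}$ it uses an $L^1$ bound resting on terminal delocalization (Lemma~\ref{lem:exact-drift}). A variance bound on $\sum_jP_{jj}^2$ alone does not replace this, since the conditional mean depends on $P_i$ nonlinearly, not only through its diagonal.
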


\vspace{0.5cm}

Again, the following example shows that condition \eqref{eq:exact-clt-assumption} is sharp.

\vspace{0.5cm}

\begin{example}\label{ex:exact-threshold}
The construction has the same form as in
Example~\ref{ex:det-threshold}.  The essential difference is that we
now use
\[
 N_m=
 \left\lfloor
 \frac{B_m^4}{L_m}
 \right\rfloor
 \qquad\text{instead of}\qquad
 \left\lfloor
 \frac{B_m^4}{\sqrt{L_m}}
 \right\rfloor.
\]

More precisely, let $Y$ be a fixed symmetric random variable with
variance one and a $C^\infty$ density supported on a bounded interval,
and set
\[
 B_m=e^m,
 \qquad
 L_m=\log(e+B_m),
 \qquad
 N_m=
 \left\lfloor
 \frac{B_m^4}{L_m}
 \right\rfloor.
\]
The sequence $(N_m)_{m\ge1}$ is strictly increasing.
Let
\[
 p_m=N_m^{-1},
 \qquad
 \tau_m=p_mB_m^2,
\]
and let $\eta_m$ be Bernoulli with parameter $p_m$, let
$\varepsilon_m$ take the values $1$ and $-1$ with equal probability,
and assume that $Y,\eta_m,\varepsilon_m$ are independent.  Define
\begin{equation}\label{eq:exact-example-spike-variable}
 \xi_m
 =
 \frac{Y+\eta_m\varepsilon_mB_m}
 {\sqrt{1+\tau_m}}.
\end{equation}

When $n=N_m$, let all entries of $A_n$ be independent, let the entries
in the first $\lfloor n/2\rfloor$ rows have the same distribution as
$\xi_m$, and let the entries in the remaining rows be standard normal.
At every other dimension, let all entries be independent standard
normal random variables.
For this choice,
\begin{equation}\label{eq:exact-example-spike-scale}
 \frac{p_mB_m^4}{\log N_m}
 \longrightarrow
 \frac14.
\end{equation}
Thus, the contribution of the added term to the fourth moment now has
order $\log N_m$, rather than order $\sqrt{\log N_m}$ as in
Example~\ref{ex:det-threshold}.  Exact centering removes the shift
present in that example, but the larger fourth-moment contribution
produces an additional Gaussian fluctuation.
Every entry of the array is centered and has variance one.  The
entries have $C^\infty$ densities $f_{ij}^{(n)}$, and there is a
constant $M_0<\infty$ such that
\begin{equation}\label{eq:exact-example-density-bound}
 \sup_{n\ge2}\max_{1\le i,j\le n}
 \|f_{ij}^{(n)}\|_\infty
 \le M_0.
\end{equation}
Moreover,
\begin{equation}\label{eq:exact-example-critical-bound}
 \sup_{n\ge2}\max_{1\le i,j\le n}
 \E\frac{|a_{ij}|^4}
 {\log(e+|a_{ij}|)}
 <\infty,
\end{equation}
but the family
\begin{equation}\label{eq:exact-example-not-ui}
 \left\{
 \frac{|a_{ij}|^4}
 {\log(e+|a_{ij}|)}:
 n\ge2,\ 1\le i,j\le n
 \right\}
\end{equation}
is not uniformly integrable.

There exist indices $m_k\uparrow\infty$ and a number $\sigma^2 \in \left[ \frac{17}{16}, 1+\frac{\log2}{8} \right]$ such that
\begin{equation}\label{eq:exact-example-limit}
 W_{N_{m_k}}^{\mathrm e}(A_{N_{m_k}})
 \Rightarrow
 \mathcal N(0,\sigma^2).
\end{equation}
In particular, $\sigma^2>1$, so the standard normal limit fails.

Consequently, the uniform-integrability assumption in
Theorem~\ref{thm:exact-clt} cannot, in general, be replaced by the
requirement that the family in \eqref{eq:exact-clt-assumption} be
bounded in $L^1$.
The proof of Example~\ref{ex:exact-threshold} will be given in Section~\ref{sec:proof-examples}.
\end{example}

\vspace{0.5cm}
Next we give the Berry--Esseen bounds for the deterministic and exact-mean centerings, respectively.

\vspace{0.5cm}

\begin{theorem}\label{thm:det-be}
Let $A=(A_n)_{n\ge1}$ be a triangular array of random matrices such that, for each $n$, the entries of $A_n = (a_{ij})_{1\leq i,j\leq n}$ are independent, real-valued random variables and let $0<\delta\le\frac12$. Suppose that there is a positive constant $K<\infty$ such that for all $n\geq 2$, $1\le i,j\le n$
\begin{equation}\label{eq:det-be-assumption}
 \E a_{ij}=0,\qquad
 \E a_{ij}^2=1,\qquad
 \sup_{n,i,j}\E\frac{|a_{ij}|^4}
 {\{\log(e+|a_{ij}|)\}^{1/2-\delta}}\le K<\infty.
\end{equation}
Then, for every $n\ge 2$,
\begin{equation}\label{eq:det-be-bound}
 \dk(W_n^{\mathrm d}(A_n),\Normal)
 \le C(K,\delta)(\log n)^{-\delta},
\end{equation}
where $C(K,\delta)$ is a finite constant depending on $K$ and $\delta$ only.
\end{theorem}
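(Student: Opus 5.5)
We follow the Nguyen--Vu strategy, using the base-times-height decomposition and replacing their subexponential tail inputs with truncation arguments adapted to the weighted fourth moment in \eqref{eq:det-be-assumption}. Write $\mathcal V_i$ for the span of the first $i$ rows of $A_n$ and $\gamma_{i+1}=\mathrm{dist}(X_{i+1},\mathcal V_i)$. Then
\[
 \mathcal L_n=\frac12\sum_{i=0}^{n-1}\log \gamma_{i+1}^2 .
\]
We split the rows into three ranges:
\begin{itemize}
 \item the bulk $0\le i\le n-n^{\epsilon_0}$, where the codimension $n-i$ is large;
 \item an intermediate range $n-n^{\epsilon_0}<i\le n-(\log n)^{C}$;
 \item the last $(\log n)^{C}$ rows.
\end{itemize}
In the last two ranges, lower-tail (small-ball) bounds for $\gamma_{i+1}$ will show that they contribute $O(\log\log n)$ to $\mathcal L_n$ with probability $1-O((\log n)^{-1})$. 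We use the anti-concentration of distances to subspaces, valid under finite second moments plus a least-singular-value input, and the fact that $\sum_{k\le n^{\epsilon_0}}\Var(\log\chi_k^2)/\log n$ is only a fraction $\epsilon_0$ of the variance. Taking $\epsilon_0$ small (or $\epsilon_0\asymp(\log n)^{-\delta}$-dependent bookkeeping), these pieces cost at most $O((\log n)^{-1/2})$ in Kolmogorov distance after normalization.

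For the bulk, we write
\[
 \gamma_{i+1}^2=X_{i+1}^{\top}P_iX_{i+1},
\]
where $P_i$ is the projection onto $\mathcal V_i^\perp$, of rank $n-i$. We then Taylor-expand:
\[
 \log\frac{\gamma_{i+1}^2}{n-i}=Z_i-\tfrac12 Z_i^2+O(|Z_i|^3),\qquad Z_i=\frac{X_{i+1}^\top P_iX_{i+1}-(n-i)}{n-i}.
\]
The $Z_i$ are martingale differences with respect to the row filtration. Their conditional variance is
\[
 \E(Z_i^2\mid\mathcal F_i)=\frac{2}{n-i}+\frac{\sum_j(\E a_{i+1,j}^4-3)(P_i)_{jj}^2}{(n-i)^2}.
\]
So the main term $\sum_i Z_i$ has conditional variance $\sum_i 2/(n-i)+R$, which is about $2\log n$, matching the normalization $\frac12\log n$ for $\mathcal L_n$. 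The quadratic term $-\frac12\sum Z_i^2$ has mean $-\sum 1/(n-i)\approx-\log n+\dots$, and combined with $\sum\log(n-i)$ this produces the centering $\frac12\log(n-1)!$ up to an error involving the fourth-cumulant remainder $R'=\sum_i(n-i)^{-2}\sum_j(\E a^4-3)(P_i)_{jj}^2$.

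We then apply a martingale Berry--Esseen bound (Heyde--Brown/Haeusler or Mourrat type). This gives a Kolmogorov error of order
\[
 \bigl(\E|V_n^2-1|+\textstyle\sum\E|\xi_i|^4\bigr)^{1/5},
\]
or, in the sharp Haeusler form, $(\cdot)^{1/(2p+1)}$ with $p$ allowing a high conditional-variance moment. To reach $(\log n)^{-\delta}$ we first truncate entries at level $\sqrt{n-i}$ row-wise (more precisely, truncate $a_{ij}$ at $T$ with $T^2\asymp n$), recentre and rescale, and compare with the original. Under \eqref{eq:det-be-assumption}, the truncated fourth moment satisfies
\[
 \E a^4\1\{|a|\le T\}\le K(\log T)^{1/2-\delta},
\]
and the probability that some row changes is controlled row by row via an interpolation/perturbation argument for $\gamma_{i+1}$ rather than a union bound.

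The key quantitative point, and the main obstacle, is the deterministic shift coming from the fourth moments. Using $(P_i)_{jj}\le1$ and $\sum_j(P_i)_{jj}^2\le n-i$, we get
\[
 |R'|\lesssim\sum_i\frac{\max_j\E a^4\1\{|a|\le T\}}{n-i}\lesssim(\log n)^{1/2-\delta}\log n \text{ crudely}.
\]
This is too big, so we need the finer bound $\sum_j(P_i)_{jj}^2\approx (n-i)^2/n$, i.e.\ a delocalization of the projection diagonal (which holds with high probability since $\mathcal V_i$ is random). That bound gives $R'=O(\max\E a^4\1\{|a|\le T\})=O((\log n)^{1/2-\delta})$. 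Divided by $\sqrt{\log n}$, this is exactly a shift of size $(\log n)^{-\delta}$. So the hardest step is proving this delocalization of $P_i$ with enough probability, uniformly over the bulk, under only the weighted fourth-moment assumption.

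Having done so, the martingale CLT error for the variance term is $O((\log n)^{-1/2})$ after conditioning on the delocalization event. The cubic Taylor remainder contributes $O(1)$ in total, hence $(\log n)^{-1/2}$. Collecting all errors yields \eqref{eq:det-be-bound}.
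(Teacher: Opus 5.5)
Your bulk analysis agrees with the paper on the decisive point. The fourth-cumulant drift is $\tfrac12\kappa\sum_i d_i$, and delocalization of the projection diagonal (Lemma~\ref{lem:diagonal-projection-bounds}, $\sum_i\E d_i=O(1)$) turns it into a shift of order $M/\sqrt{\log n}\asymp(\log n)^{-\delta}$. The genuine gap is in how you dispose of the rows of small codimension.

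\emph{The intermediate range.} The range $(\log n)^C<n-i\le n^{\epsilon_0}$ cannot contribute $O(\log\log n)$. It carries variance about $2\epsilon_0\log n$ and mean about $-\epsilon_0\log n$, which are still of order $(\log n)^{1-\delta}$ if $\epsilon_0\asymp(\log n)^{-\delta}$. So it must stay inside the CLT. This is harmless: once the codimension is at least a power of $\log n$, moment bounds alone control the Taylor expansion and the lower tail of $Z_{i+1}$ (Lemma~\ref{lem:common-lower-tail}, with $s_1=\lfloor \ell^{3a}\rfloor$).

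\emph{The last $(\log n)^C$ rows.} The input you cite does not suffice here. A least-singular-value bound only gives $\gamma_{i+1}\gtrsim\epsilon n^{-1/2}$, which is a loss of order $\log n$ in $\mathcal L_n$. You would need unit-scale small-ball bounds for $\Pp(\operatorname{dist}(X_{i+1},\cV_i)\le\epsilon\sqrt{k_i})$ for all $k_i\le(\log n)^C$. That is an inverse Littlewood--Offord statement for the random normal directions, and your proposal neither establishes it nor can it take it off the shelf from weighted fourth moments for possibly discrete, non-identically distributed entries.

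\emph{The endpoint $\delta=\tfrac12$.} Even granting such bounds, knowing the terminal block only to $O(\log\log n)$ costs $\log\log n/\sqrt{\log n}$ through Lemma~\ref{lem:perturb}(i). The deterministic centering $\tfrac12\log(n-1)!$ also requires the terminal mean $-\sum_{k\le(\log n)^C}k^{-1}$ to $O(1)$ accuracy. So the endpoint $\delta=\tfrac12$ is out of reach this way.

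The missing idea is the paper's Gaussian replacement of the terminal block, one row at a time. Expanding along the last row gives $\det C_n=\Lambda\sum_j w_jc_{nj}$, with $\Lambda$ and $(w_j)$ measurable with respect to the other rows. Hence $\{W_n^{\mathrm d}(C_n)\le x\}=\{|\sum_jw_jc_{nj}|\le\tau_x\}$ with $\tau_x$ also measurable with respect to those rows. The conditional Berry--Esseen theorem then bounds the effect of swapping that row for a Gaussian one by $C\,\E\max_j|w_j|=C\,\E\sqrt{\max_jp_{jj}(n-1)}\le C\ell^{-4a}$ (Lemma~\ref{lem:terminal-diagonal}). Over $s_1$ rows this costs $\ell^{-a}$, and no small-ball estimate for non-Gaussian rows is ever needed. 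Afterwards the terminal heights are exactly independent $\chi^2_k/k$ variables, independent of the bulk. Their mean $-\sum_{k\le s_1}k^{-1}+O(1)$ cancels in the harmonic correction, and their variance $O(\log\ell)$ costs only $C\log\ell/\ell$ by Lemma~\ref{lem:perturb}(ii).

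Two further points need fixing.
\begin{itemize}
\item A martingale Berry--Esseen bound of the form $(L_{2p}+N_{2p})^{1/(2p+1)}$ yields $(\log n)^{-1/2}$ only if the bracket is $O((\log n)^{-p-1/2})$. That requires $L^p$-concentration of the random conditional variance (the $\kappa d_i$ terms), which you do not provide. The paper's telescoping Lemma~\ref{lem:det-telescope} avoids this, because the drift and conditional-variance errors enter linearly through $\sum_ih_i\le CM$.
\item Truncating at $T^2\asymp n$ leaves a union bound of order $K\ell^{1/2-\delta}$, which is not small, and your row-by-row interpolation is unspecified. This is why the paper truncates at $T_n=\sqrt n\,\ell^{1/8}$ with a moment-preserving coupling, giving $\Pp(A_n\ne\widetilde A_n)\le C\ell^{-\delta}$ by a plain union bound.
\end{itemize}
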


\vspace{0.5cm}

\begin{theorem}\label{thm:exact-be}
Let $A=(A_n)_{n\ge1}$ be a triangular array of random matrices such that, for each $n$, the entries of $A_n = (a_{ij})_{1\leq i,j\leq n}$ are independent, real-valued random variables and let $0<\gamma\le1$. Suppose that there are two positive constants $K, M_{0}<\infty$ such that for all $n\geq 2$, $1\le i,j\le n$ 
\begin{equation}\label{eq:exact-be-assumption}
 \E a_{ij}=0,\qquad
 \E a_{ij}^2=1,\qquad
 \sup_{n,i,j}\E\frac{|a_{ij}|^4}
 {\{\log(e+|a_{ij}|)\}^{1-\gamma}}\le K<\infty,
\end{equation}
and $a_{ij}$ has a density $f_{ij}^{(n)}$ satisfying $\|f_{ij}^{(n)}\|_\infty\le M_0$.
Then,
\begin{equation}\label{eq:exact-be-bound}
 \dk(W_n^{\mathrm e}(A_n),\Normal)
 \le C(K,M_0,\gamma)(\log n)^{-\gamma},
\end{equation}
where $C(K,M_0,\gamma)$ is a finite constant depending on $K$, $M_0$ and $\gamma$ only.
\end{theorem}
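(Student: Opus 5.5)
We follow the row-by-row base-times-height decomposition used by \cite{NguyenVu2014} and \cite{BaoPanZhou2015}, and track the moment condition \eqref{eq:exact-be-assumption} quantitatively. Write $\mathcal V_i$ for the span of the first $i$ rows and $\gamma_{i+1}=\operatorname{dist}(X_{i+1},\mathcal V_i)$, where $X_{i+1}$ is the $(i+1)$st row. Then $\mathcal L_n=\frac12\sum_{i=0}^{n-1}\log\gamma_{i+1}^2$. Conditionally on $\mathcal F_i=\sigma(X_1,\dots,X_i)$, we have $\gamma_{i+1}^2=X_{i+1}^\top P_iX_{i+1}$, where $P_i$ is the projection onto $\mathcal V_i^\perp$, of rank $n-i$. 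Its conditional mean is exactly $n-i$. Its conditional variance is $2(n-i)+\sum_j(\E a_{i+1,j}^4-3)(P_i)_{jj}^2$.

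The plan is to split the index range into three parts.

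\textbf{Bulk range $n-i\ge (\log n)^{C}$.} Set $\xi_{i+1}=\log\frac{\gamma_{i+1}^2}{n-i}$ and use the martingale decomposition
\[
\mathcal L_n-\E\mathcal L_n=\tfrac12\sum_i\bigl(\xi_{i+1}-\E[\xi_{i+1}\mid\mathcal F_i]\bigr)+\tfrac12\sum_i\bigl(\E[\xi_{i+1}\mid\mathcal F_i]-\E\xi_{i+1}\bigr).
\]
A second-order Taylor expansion of $\log(1+x)$ shows that the martingale differences are approximately $Q_{i+1}=\frac{\gamma_{i+1}^2-(n-i)}{n-i}$. Their conditional variances sum to
\[
\sum_i \frac{2}{n-i}+\sum_i\frac{\sum_j(\E a^4-3)(P_i)_{jj}^2}{(n-i)^2}.
\]
The first sum equals $2\log n+O(1)$, which gives the variance $\frac12\log n$ after the factor $\frac14$. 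For the second sum we need the diagonal delocalization $(P_i)_{jj}\approx (n-i)/n$, or at least $\sum_j(P_i)_{jj}^2\lesssim (n-i)^2/n$ on an event of high probability. This makes the fourth-moment correction $O(\sum_i 1/n)=O(1)$, independent of heavy tails of order $\log$.

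The logarithmic truncation enters through the remainder. Truncate the entries at level $\sqrt{n-i}$, or at the natural level $n^{1/4}$. Condition \eqref{eq:exact-be-assumption} gives $\E a^4\1\{|a|>t\}\le K(\log t)^{\gamma-1}\cdot(\log t)^{1-\gamma}$, which is only bounded, with no decay. What matters, however, is the aggregated contribution $\sum_i \frac{1}{(n-i)^2}\sum_j \E a^4\1\{|a|\lesssim\sqrt{n-i}\}(P_i)_{jj}^2$. The exact-mean centering removes the drift term $\E[\xi\mid\mathcal F_i]-\E\xi$ at first order, so the surviving error is a variance error of size $(\log n)^{-\gamma}$ after dividing by $\log n$.

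Apply a martingale Berry--Esseen bound (Haeusler / Bolthausen type). This gives
\[
d_K\lesssim \Bigl(\E\bigl|V_n^2-1\bigr|+\textstyle\sum\E|D_i|^{2p}\Bigr)^{1/(2p+1)}.
\]
This would lose powers. To reach the sharp rate $(\log n)^{-1}$ we instead use a Stein/exchangeable-pair bound for martingales, or compare with the Gaussian ensemble through a Lindeberg replacement on the quadratic forms. Our concrete choice is the Lindeberg swap for the bulk rows, with third-order Taylor error controlled by the truncated fourth moment divided by $(n-i)^{3/2}$. The Gaussian case \eqref{eq:Gaussian-exact-BE-bound} then supplies the rate $(\log n)^{-1}$ for the comparison model.

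\textbf{Edge range $n-i\le(\log n)^C$.} There are only $(\log n)^C$ rows here, but each $\log\gamma^2$ may be heavy-tailed near $0$. Here the bounded density $M_0$ is essential. It gives the small-ball estimate $\Pp(\gamma_{i+1}^2\le \varepsilon(n-i))\le (CM_0\sqrt{\varepsilon})^{n-i}$ via Rudelson--Vershynin-type anticoncentration of projections of vectors with bounded density. Hence $\E|\log\gamma^2|^2<\infty$ uniformly. The total contribution of this range has variance $O(\log\log n)$, which affects the Kolmogorov distance only at order $\log\log n/\log n$. This is not quite $(\log n)^{-1}$, so the cutoff must be taken as $n-i\le C$ with a fixed constant. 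The intermediate range $C<n-i<(\log n)^C$ is then handled by the Lindeberg comparison as well, still using the density bound for the negative moments.

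\textbf{Main obstacle.} The hardest step is the delocalization estimate $\max_j (P_i)_{jj}\lesssim (n-i)/n\cdot(1+o(1))$, or a sufficient averaged version, under only $4-$ moments. It is needed simultaneously for all $i$ with failure probability $o((\log n)^{-1})$. We would try first to use the exchangeability of columns together with $\tr P_i=n-i$. Since the marginals need not be identical, we would fall back on Schur-complement representations $(P_i)_{jj}=1-x_j^\top(\cdots)^{-1}x_j$ and least-singular-value bounds for rectangular matrices with bounded densities.
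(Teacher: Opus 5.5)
Your skeleton (Gram--Schmidt heights, a bulk martingale, separate treatment of the last rows, delocalization of $\mathrm{diag}\,P_i$ via a Schur-complement formula) matches the paper's, but both ingredients specific to exact centering are missing. The first is the mean transfer. For $\gamma<1$ the hypothesis allows $\E a_{ij}^4=\infty$. Your claim that $\E a^4\1\{|a|>t\}$ stays bounded is therefore false; one only has $\Pp(|a|>t)\le K\{\log(e+t)\}^{1-\gamma}t^{-4}$ and $\E a^4\1\{|a|\le t\}\le K\{\log(e+t)\}^{1-\gamma}$. So truncation is unavoidable, and afterwards the fourth-moment correction $\kappa\sum_id_i$ to the variance is of order $\ell^{1-\gamma}$, not $O(1)$; it is exactly the source of the rate. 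Your levels $n^{1/4}$ or $\sqrt{n-i}$ are not rare events: tails $\asymp t^{-5}$ give about $n^{3/4}$ entries above $n^{1/4}$. The paper truncates at $T_n=\sqrt n\,\ell^{3/8}$, where $n^2\Pp(|a|>T_n)\le C\ell^{-1/2-\gamma}$ and the truncated fourth moment is $M\le C\ell^{1-\gamma}$. It restores mean, variance and a bounded density by a uniform replacement (Proposition~\ref{prop:truncation}), so that Proposition~\ref{prop:exact-growing-fourth} gives $CM/\ell\le C\ell^{-\gamma}$. But $\Pp(A_n\ne\widetilde A_n)\to0$ says nothing about $\E\mathcal L_n$, the center of $W_n^{\mathrm e}$. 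Because of the logarithmic singularity, a rare modification can move the mean by an uncontrolled amount; a truncation with atoms even makes it $-\infty$. You need Lemma~\ref{prop:exact-transfer}. Swap one row at a time and write the two determinants as $\Lambda S$ and $\Lambda\widetilde S$, with $S,\widetilde S$ unit linear forms in the swapped row. Such forms have density at most $\sqrt2M_0$ (Lemma~\ref{lem:projection-density}), and with Lemma~\ref{lem:log-comparison} this gives $|\E\log|\det A_n|-\E\log|\det\widetilde A_n||\le Cn\rho_n\log(e/\rho_n)=O(\ell^{1/2-\gamma})$ for $\rho_n\le Cn^{-1}\ell^{-1/2-\gamma}$, i.e.\ $O(\ell^{-\gamma})$ after normalization. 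This is where the density hypothesis is really needed, beyond the small-ball bound you mention.

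The second missing ingredient is the drift: exact centering removes only the mean of $\sum_i\E_i\xi_{i+1}$, not its fluctuation. Here $\E_i\log Z_{i+1}\approx-1/k_i-\frac{\kappa}{2}d_i$, with $d_i=\sum_rq_{rr}(i)^2$ random and $\kappa$ up to order $\ell^{1-\gamma}$. So you must show $|\sum_i(\E_i\xi_{i+1}-\E\xi_{i+1})|\le C\ell^{1/2-\gamma}$ outside an event of probability $O(\ell^{-\gamma})$; the trivial $L^1$ bound $CM$ is too weak. The paper does this in Lemma~\ref{lem:exact-drift} by Efron--Stein. It uses that resampling one row changes each $P_i$ by rank at most two (Lemma~\ref{lem:row-resample-geometry}) and the stability estimate of Lemma~\ref{lem:exact-rank-stability}, which needs the support bound $|\widetilde a_{ij}|\le3T_n$; an $L^1$ bound handles $s_1\le k_i<s_2$.

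Your concrete device for the rate also fails as stated. A row swap controlled by Berry--Esseen for $\sum_jw_ja_{ij}$ costs at least order $n^{-1/2}$ per row (as $\sum_jw_j^2=1$), so it is affordable only for polylogarithmically many rows. A Lindeberg swap with smooth test functions loses powers of $\ell$ when the indicator is smoothed. The paper instead telescopes characteristic functions of $\eta_{i+1}=\zeta_{i+1}-\E_i\zeta_{i+1}$ against an independent Gaussian future. The errors $\E|\E_i\eta_{i+1}^2-\E\eta_{i+1}^2|$, $\E|\E_i\eta_{i+1}^3|$, $\E\eta_{i+1}^4$ are all at most $Ch_i$ with $\sum_ih_i\le CM$, and are damped by $e^{-ct^2v_{\eta,i}}$. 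It swaps only the last $s_1=\lfloor\ell^{3a}\rfloor$ rows to Gaussian. The resulting chi-square block is independent of the bulk and enters the normalizing variance $s_*^2$ rather than being treated as a perturbation, so there is no $\log\ell/\ell$ loss and no constant edge cutoff. Finally, your delocalization target $\max_j(P_i)_{jj}\le(1+o(1))(n-i)/n$ for all $i$ is false near the edge: $P_{n-1}=ww^T$, and $\max_jw_j^2$ is of order $\log n/n$ even for Gaussian entries. The paper needs only $\E\max_rp_{rr}(i)\le C\ell^{-8a}$ for $k_i<s_2$, plus bounds on $\E p_{rr}(i)^2$ in the middle range.
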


\vspace{0.5cm}

At $\delta=1/2$, Theorem~\ref{thm:det-be} gives the classical order
$(\log n)^{-1/2}$ under a uniform fourth moment.  At $\gamma=1$,
Theorem~\ref{thm:exact-be} gives the sharper exact-centered order $(\log n)^{-1}$. Both are optimal even for Gaussian entries. 

\vspace{0.5cm}

\section{Proof of main results}\label{sec:proof-main-results}

First, we give a lemma that reformulates the uniform-integrability conditions in Theorems~\ref{thm:det-clt} and \ref{thm:exact-clt} which is used repeatedly in the proofs of the main results. The following lemma is a consequence of the de la Vallée--Poussin criterion for uniform integrability; see, for example, \cite[Theorem~6.19]{Klenke2014}.

\begin{lemma}\label{lem:ui-moment-control}
Let $A=(A_n)_{n\ge1}$ be a triangular array, where
$A_n=(a_{ij})_{1\le i,j\le n}$, and let
\[
 q(x)=\sqrt{\log(e+x)}
 \qquad\text{or}\qquad
 q(x)=\log(e+x).
\]
Suppose that the family
\begin{equation}
  \begin{aligned}
    \left\{
    \frac{|a_{ij}|^4}{q(|a_{ij}|)}:
    n\ge2,\ 1\le i,j\le n
    \right\}, \quad \text{is uniformly integrable}. \nonumber
  \end{aligned}
\end{equation}
Then there exist a constant $K<\infty$
and a finite-valued nondecreasing function
$\psi:[0,\infty)\to[1,\infty)$ such that
\begin{equation}
  \begin{aligned}\label{eq:equal-condition-to-uni-integ-condition}
    \psi(x)=o(q(x))
    \quad (x\to\infty), 
    \qquad 
    \text{and} 
    \qquad
    \sup_{n\ge2}\max_{1\le i,j\le n}
    \E\frac{|a_{ij}|^4}{\psi(|a_{ij}|)}
    \le K.
  \end{aligned}
\end{equation}
\end{lemma}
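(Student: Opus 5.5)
The plan is to apply the de la Vallée--Poussin criterion to the uniformly integrable family $X_{ij}:=|a_{ij}|^4/q(|a_{ij}|)$. It produces a nondecreasing convex function $G:[0,\infty)\to[0,\infty)$ with $G(t)/t\to\infty$ and $\sup_{n,i,j}\E G(X_{ij})<\infty$. One may take $G$ to be piecewise linear, built from an increasing sequence of thresholds chosen by uniform integrability. Because $G(t)/t$ is nondecreasing for convex $G$ with $G(0)=0$, the ratio $h(t):=\max\{1,G(t)/t\}$ is nondecreasing and tends to $\infty$. We then want $\E X_{ij}h(X_{ij})\le \E G(X_{ij})+\E X_{ij}$, which is bounded uniformly, since $\sup\E X_{ij}<\infty$ also follows from uniform integrability.

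Next we define $\psi$. The map $x\mapsto \phi(x):=x^4/q(x)$ is continuous and nondecreasing on $[0,\infty)$ for both choices of $q$, because $x^4$ grows faster than any power of $\log$; one can check $\phi'\ge0$ directly. Moreover $\phi(x)\to\infty$. Set $\psi_0(x):=q(x)/h(\phi(x))$. With this choice,
\[
\frac{|a_{ij}|^4}{\psi_0(|a_{ij}|)}=\phi(|a_{ij}|)\,h(\phi(|a_{ij}|))=X_{ij}h(X_{ij}),
\]
so its expectation is uniformly bounded. Also $\psi_0(x)/q(x)=1/h(\phi(x))\to0$, which gives $\psi_0=o(q)$.

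The remaining task is to meet the requirements that $\psi$ be nondecreasing and take values in $[1,\infty)$. This is the only real obstacle, since $\psi_0$ need not be monotone. To fix it, slow the growth of $h$: replace $h$ by a nondecreasing $\tilde h\le h$ with $\tilde h\to\infty$, chosen so that $q(x)/\tilde h(\phi(x))$ is nondecreasing and at least $1$. A concrete way to do this is to let $\tilde h$ be a step function $\tilde h(t)=k$ on $[t_k,t_{k+1})$ and define
\[
\psi(x):=\max\Bigl\{1,\ \sup_{0\le y\le x}\frac{q(y)}{\tilde h(\phi(y))}\Bigr\}.
\]
This $\psi$ is nondecreasing, finite and $\ge1$. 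Since $\psi\ge \psi_0$ up to the replacement $h\to\tilde h$, the moment bound still holds, because $|a|^4/\psi\le \phi(|a|)\tilde h(\phi(|a|))\le X h(X)$.

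To keep $\psi=o(q)$, choose the jump points $t_k$ sparse enough that $q(\phi^{-1}(t_{k+1}))\ge k^2 q(\phi^{-1}(t_k))$; this is possible because $q\to\infty$. Then for $x$ in the $k$-th block the supremum is at most $\max\{q(x)/k,\ q(\phi^{-1}(t_k))/(k-1)\}$, and both terms are $o(q(x))$. Finally, set $K:=\sup_{n,i,j}\E X_{ij}h(X_{ij})$.
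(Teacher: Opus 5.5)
Your proposal is correct and takes essentially the paper's route: the paper justifies this lemma only by citing the de la Vallée--Poussin criterion, and your construction supplies the details left implicit there, namely $h=\max\{1,G(t)/t\}$, $\psi_0=q/h(\phi)$, and the monotone step-function regularization. The sparsity condition on the $t_k$ is harmless but not actually needed, since $\tilde h\to\infty$ by itself forces $\sup_{y\le x}q(y)/\tilde h(\phi(y))=o(q(x))$.
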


\subsection{Proof of Theorem~\ref{thm:det-clt}}\label{subsec:proof-det-clt}
Before giving the proof of Theorem~\ref{thm:det-clt}, we first give a truncation result that preserves the mean and variance exactly.
We need a truncation that changes an entry only rarely (when applying to all entries of $A_{n}$ we have $\Pp(A_n\ne\widetilde A_n) \le\sum_{i,j}\Pp(a_{ij}\ne\widetilde a_{ij})$ is very small.) and preserves its mean and variance exactly.  To state one truncation result that serves both two centering cases, let $q$ denote
\begin{equation}\label{eq:truncation-two-functions}
 q(x)=\sqrt{\log(e+x)}
 \qquad\text{or}\qquad
 q(x)=\log(e+x),
 \qquad x\ge0.
\end{equation}
For a nondecreasing function $\psi$ satisfying $\psi(x)=o(q(x))$, define
\begin{equation}\label{eq:epsilon-definition}
 \varepsilon_{\psi,q}(T):=\sup_{x\ge T}\frac{\psi(x)}{q(x)}, \quad T>0.
\end{equation}
We then have
\begin{equation}
  \begin{aligned}
    \varepsilon_{\psi,q}(T)\longrightarrow0
    \qquad (T\to\infty).    \nonumber 
  \end{aligned}
\end{equation}
The quantity $\varepsilon_{\psi,q}(T)$ defined in \eqref{eq:epsilon-definition} tends to zero and records precisely how far the moment assumption lies below the relevant threshold. The required moment-preserving truncation is the following proposition.

\vspace{0.5cm}

\begin{proposition}[Moment-preserving truncation]
\label{prop:truncation}
Let $q$ be one of the two functions in
\eqref{eq:truncation-two-functions}, let
$\psi:[0,\infty)\to[1,\infty)$ be a finite-valued nondecreasing
function satisfying $\psi(x)=o(q(x))$, and suppose that
\begin{equation}\label{eq:truncation-assumptions}
 \E X=0,\qquad
 \E X^2=1,\qquad
 \E\frac{|X|^4}{\psi(|X|)}\le K.
\end{equation}
For every sufficiently large $T$, one can couple $X$ to a mean-zero
and variance-one random variable $\widetilde X_T$ such that
\begin{align}
 |\widetilde X_T|
 &\le2T,
 &
 \Pp(\widetilde X_T\ne X)
 &\le
 CK\varepsilon_{\psi,q}(T)\frac{q(T)}{T^4},
 \label{eq:discrete-truncation-coupling}\\
 \E|\widetilde X_T|^4
 &\le
 CK\varepsilon_{\psi,q}(T)q(T),
 &
 \E|\widetilde X_T|^3
 &\le
 C\E|X|^3
 \le
 C_{K,\psi}.
 \label{eq:discrete-truncation-moments}
\end{align}
Here $C$ is an absolute constant, whereas $C_{K,\psi}<\infty$
depends only on $K$ and $\psi$.

If, in addition, $X$ has a density $f_X$ satisfying
$\|f_X\|_\infty\le M_0$, then the coupling can be chosen so that
$\widetilde X_T$ is absolutely continuous.  The probability bound in
\eqref{eq:discrete-truncation-coupling} and both moment bounds in
\eqref{eq:discrete-truncation-moments} remain valid, while the support
bound is replaced by $|\widetilde X_T|\le3T$.
Moreover,
\begin{equation}\label{eq:continuous-truncation-density}
 \|f_{\widetilde X_T}\|_\infty
 \le
 M_0+\frac{C}{T}.
\end{equation}
\end{proposition}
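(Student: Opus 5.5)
The construction keeps $X$ on the event $\{|X|\le T\}$ and replaces it by a bounded correction variable on the complement $\{|X|>T\}$. The correction is chosen so that the first two moments are restored exactly.

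The basic estimate comes from monotonicity of $\psi$ together with the fact that $x^4/q(x)$ is increasing for large $x$. For $|X|>T$,
\[
 1 \le \frac{|X|^4}{\psi(|X|)}\cdot\frac{\psi(|X|)}{q(|X|)}\cdot\frac{q(|X|)}{|X|^4}
 \le \frac{|X|^4}{\psi(|X|)}\,\varepsilon_{\psi,q}(T)\,\frac{q(T)}{T^4}.
\]
Hence $p_T:=\Pp(|X|>T)\le K\varepsilon_{\psi,q}(T)q(T)/T^4$. Multiplying the same chain by $|X|^k$, $k\le 3$, gives the truncated moment bounds
\[
 m_1:=\E X\1\{|X|>T\},\qquad s:=\E X^2\1\{|X|>T\}\le K\varepsilon\,q(T)T^{-2},\qquad |m_1|\le K\varepsilon\,q(T)T^{-3}.
\]
Since $\E X=0$ and $\E X^2=1$, the truncated variable $X\1\{|X|\le T\}$ has mean $-m_1$ and second moment $1-s$. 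To repair this deficit, on the event $\{|X|>T\}$ we set $\widetilde X_T:=Z$, where $Z$ is independent of $X$ and is supported on two points $\pm$ within $[-2T,2T]$. We require $p_T\E Z=m_1$ and $p_T\E Z^2=s$, i.e. $\E Z=m_1/p_T$ and $\E Z^2=s/p_T$.

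The main obstacle is showing that such a $Z$ exists with $|Z|\le 2T$. This needs two conditions: $s/p_T\le 4T^2$, and compatibility of the mean and second moment, $(m_1/p_T)^2\le s/p_T$. The second is simply Cauchy--Schwarz. The first is the delicate one, because $s/p_T=\E(X^2\mid |X|>T)$ can exceed $4T^2$. I would handle it by not discarding the whole tail. Instead, on $\{|X|>T\}$ I would use $Z$ with a probability of order $s/T^2$, and on the rest of the tail put mass at $0$, i.e. a three-point law on $\{-2T,0,2T\}$ or a nearby one. The change probability is then at most $p_T$, the mean and variance can be matched exactly, and the support stays in $[-2T,2T]$. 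Exact solvability of the two linear constraints for this three-point law follows from $|m_1|\le s/T$, which holds because $|X|>T$ on the tail event. This is the step I would check most carefully, since the required mass on the atoms must not exceed $p_T$ once the mean is also corrected; a two-point law $\{aT,-bT\}$ with $a,b\in[1,2]$ provides enough freedom for this.

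The moment bounds then follow directly. For the fourth moment,
\[
 \E\widetilde X_T^4\le \E X^4\1\{|X|\le T\}+16T^4p_T .
\]
The last term is at most $CK\varepsilon q(T)$ by the bound on $p_T$. For the first term, split at $\sqrt T$ and use $x^4=(x^4/\psi)\psi\le (x^4/\psi)\,\varepsilon(\sqrt T)q(T)$, together with a direct bound for small $x$. Replacing $\varepsilon(T)$ by this slightly larger quantity would be harmless, but matching the exact form may require defining $\varepsilon$ via a sup over $x\ge T$ in a way that absorbs this; I expect this is how the authors' constant works. For the third moment, $\E|\widetilde X_T|^3\le \E|X|^3+8T^3p_T$, and $T^3p_T\le \E|X|^3\1\{|X|>T\}$.

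For the density version, replace the atoms by uniform densities on intervals of length of order $T$ inside $[T,3T]$ and $[-3T,-T]$, and replace the atom at $0$ by a smooth or uniform density on $[-1,1]$ scaled appropriately. These still match the two moments by continuity in the parameters. The added density is at most $p_T\cdot C/T$ from the outer pieces plus a contribution of order $p_T$ from the inner one, hence bounded by $C/T$ for large $T$. The retained part contributes a density of at most $M_0$, which gives the bound \eqref{eq:continuous-truncation-density} and the support bound $|\widetilde X_T|\le 3T$.
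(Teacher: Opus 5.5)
You confine the correction to the tail event $\{|X|>T\}$, so that the change probability is at most $p_T$. This is exactly where the argument breaks. Any variable bounded by $2T$ that is nonzero only on an event of probability $p_T$ contributes at most $4T^2p_T$ to the second moment. You need it to contribute $s=\E[X^2\1\{|X|>T\}]$, so your $Z$ exists only if $\E(X^2\mid |X|>T)\le 4T^2$.

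This fails in general. Suppose the tail of $|X|$ sits on the lacunary set $\{100^j\}$, with $\Pp(|X|=100^j)\asymp 100^{-4j}j^{-2}$; this is compatible with all hypotheses since $\psi\ge1$. At $T=100^j/10$ every tail value is at least $10T$, so $s\ge 100\,T^2p_T$. None of your variants escapes this:
\begin{itemize}
\item an atom at $0$ only lowers the achievable second moment;
\item a two-point law on $\{aT,-bT\}$ with $a,b\le2$ has the same cap $4T^2p_T$;
\item the density version with pieces in $[T,3T]$ has cap $9T^2p_T$.
\end{itemize}
Separately, with atoms at $\pm2T$ the sign condition needs $|m_1|\le s/(2T)$, which $|m_1|\le s/T$ does not give.

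The missing idea is that the correction must be allowed to act \emph{off} the tail event, with probability of order $s/T^2$. This can be much larger than $p_T$ but is still admissible: the bound you already derived, $s\le K\varepsilon(T)q(T)T^{-2}$, gives $s/T^2\le K\varepsilon(T)q(T)T^{-4}$.

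The paper takes $\widetilde X_T=X\1\{|X|\le T\}+Z_T$, with $Z_T$ independent of $X$, supported on $\{-T,0,T\}$, and $\Pp(Z_T=\pm T)=\tfrac12\bigl(s_T/T^2\pm m_T/T\bigr)$. Here $s_T=v_T+2m_T^2$; the extra $2m_T^2$ compensates the cross term $2\E[X\1\{|X|\le T\}]\,\E Z_T=-2m_T^2$. Nonnegativity uses $|m_T|\le v_T/T$, and then $\Pp(\widetilde X_T\ne X)\le p_T+s_T/T^2$.

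In the density case the paper enlarges $\{|X|>T\}$, via an independent uniform randomizer, to an event $E_T$ of probability $2v_T/T^2\ge 2p_T$. It replaces $X$ on $E_T$ by an independent uniform variable matching the conditional mean and variance of $X$ on $E_T$. That interval has length $\asymp T$, which gives both the $C/T$ density increment and $|\widetilde X_T|\le 3T$.

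Two smaller points. First, no $\sqrt T$ split is needed for the retained fourth moment. Monotonicity gives $\E[X^4\1\{|X|\le T\}]\le K\psi(T)\le K\varepsilon(T)q(T)$ directly, taking $x=T$ in the supremum defining $\varepsilon(T)$. Second, the bound $\E|X|^3\le C_{K,\psi}$ still needs proof. It follows from $\psi(x)\le x$ for $x\ge R_\psi$ (because $\psi=o(q)$ and $q(x)=o(x)$), which gives $\E|X|^3\le R_\psi+K$.
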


\vspace{0.5cm}

The proof is given in Section~\ref{subsec:proof-prop-truncation}.  The proposition is preferable to an ordinary truncation followed by recentering because it leaves the original variable unchanged outside a rare event. Then, we give a proposition that gives a normal approximation for a matrix whose fourth moments may grow with $n$, but remain below the scale $\sqrt{\log n}$.

\vspace{0.5cm}
\begin{proposition}\label{prop:det-growing-fourth}
Let $B_n=(b_{ij})_{1\leq i,j \leq n}$ be an $n\times n$ random matrix with independent entries satisfying
\begin{equation}\label{eq:det-growing-assumptions}
 \E b_{ij}=0,\qquad \E b_{ij}^2=1,\qquad
 \max_{i,j}\E|b_{ij}|^3\le R_3,\qquad
 M:=\max\left\{3,\max_{i,j}\E|b_{ij}|^4\right\}.
\end{equation}
If $M\le K_0\sqrt{\log n}$, for some absolute constant $K_0>0$, then
\begin{equation}\label{eq:det-growing-bound}
 \dk(W_n^{\mathrm d}(B_n),\Normal)
 \le C(K_0,R_3)\left\{\frac{M}{\sqrt{\log n}}\right\}.
\end{equation}
\end{proposition}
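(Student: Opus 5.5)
We follow the Nguyen--Vu strategy, based on the base-times-height decomposition, while tracking how the fourth moment $M$ enters each error term. Let $\mathcal V_i$ be the span of the first $i$ rows of $B_n$, let $\gamma_{i+1}=\operatorname{dist}(X_{i+1},\mathcal V_i)$, and write
\[
\mathcal L_n=\tfrac12\sum_{i=0}^{n-1}\log \gamma_{i+1}^2 .
\]
We split the index range at $n-m$, where $m=\lfloor(\log n)^{c}\rfloor$ for a suitable constant $c$ (or $m=n^{o(1)}$). The last $m$ rows, with small codimension, contribute at most $O_P(\log m+\text{small})$ to $\mathcal L_n$. We control this using a lower-tail estimate for the smallest singular value of the $(n-m)\times n$ rectangular submatrix, or a Rudelson--Vershynin type bound for $\gamma$ in small codimension. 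After normalization by $\sqrt{\log n}$ its effect on the Kolmogorov distance is $O(\log\log n/\sqrt{\log n})$, which is at most $CM/\sqrt{\log n}$ since $M\ge3$. Throughout we use that $\dk(X+Y,\Normal)\le \dk(X,\Normal)+\Pp(|Y|>\epsilon)+C\epsilon$.

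\textbf{Main block.} For $i\le n-m$, let $P_i$ be the projection onto $\mathcal V_i^\perp$, which has rank $d=n-i$. Conditionally on $\mathcal F_i$, we have $\E(\gamma_{i+1}^2\mid\mathcal F_i)=d$. Writing $\gamma_{i+1}^2/d=1+Z_i$, the conditional variance of the quadratic form $X^TP_iX$ is
\[
\Var(Z_i\mid\mathcal F_i)=\frac{2}{d}+\frac{1}{d^2}\sum_k(\E b_k^4-3)(P_i)_{kk}^2\le \frac{2}{d}+\frac{(M-3)\max_k(P_i)_{kk}}{d}.
\]
The key point is that $\sum_k (P_i)_{kk}^2\le \max_k (P_i)_{kk}\cdot d$. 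We Taylor expand $\log(1+Z_i)=Z_i-Z_i^2/2+O(|Z_i|^3)$, truncating on $|Z_i|\le 1/2$ and handling the complement with a Hanson--Wright or Chebyshev bound plus the small-ball lower bound for $\gamma$. The martingale differences $Z_i$ then have conditional variances summing to
\[
\sum_d \frac{2}{d}\,(1+\text{error})=2\log n+O\Bigl(\sum_i \frac{M\max_k(P_i)_{kk}}{d}\Bigr).
\]
The second-order terms $-Z_i^2/2$ have mean $-1/d+O(M\max_k(P_i)_{kk}/d)$. Together with $\E\log\chi^2$-type constants, this reproduces $\log(n-1)!$ up to $O(1)$. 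We then apply a martingale Berry--Esseen bound (Heyde--Brown or Haeusler) to $\sum Z_i$. Its error is controlled by $\|V_n^2/s_n^2-1\|_\infty^{1/2}$ (or an $L^p$ version) together with $\sum\E|Z_i|^3/s_n^3$. The third-moment part uses $R_3$ and $M$, for instance $\E(|Z_i|^{3}\mid\mathcal F_i)\lesssim (M/d^2)^{3/4}$, and its sum over $d$ is $O(M/\sqrt{\log n})$ after normalization.

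\textbf{Main obstacle.} The hard step is showing that the variance perturbation $\sum_i M\max_k(P_i)_{kk}/d$, and the analogous mean shift, are $O(M)$ rather than $O(M\log n)$. If this holds, the centering error is $O(M)/\sqrt{\log n}$ and the variance ratio error is $O(M/\log n)$, which yields the claimed bound $CM/\sqrt{\log n}$. This requires delocalization of $\mathcal V_i^\perp$: we need $\max_k (P_i)_{kk}\lesssim d/n$ up to logarithmic factors when $d\ll n$. Granting that, $\sum_d (d/n)M/d=O(M)$. We would prove delocalization by writing $(P_i)_{kk}=1-\|\text{projection of } e_k \text{ onto } \mathcal V_i\|^2$. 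By exchangeability-free concentration, using the leave-one-column-out Schur complement and the least singular value of the $i\times(n-1)$ submatrix, this equals $(d/n)(1+o(1))$ with high probability. On the exceptional event we use only the trivial bound $(P_i)_{kk}\le1$, whose probability is summable.

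Finally, the martingale conditional variance must be controlled in probability rather than deterministically, so we use the Haeusler version, which contributes a term of order $(\E|V_n^2/s_n^2-1|^p)^{1/(2p+1)}$. We must check that the random fluctuations of $\sum_k(\E b_k^4-3)(P_i)_{kk}^2$ remain of order $M/\log n$ relative to $s_n^2$, so that this term is also $O(M/\sqrt{\log n})$ under $M\le K_0\sqrt{\log n}$.
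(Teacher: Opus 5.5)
Your outline has two genuine gaps. Each one costs more than the target $CM/\sqrt{\log n}$.

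\textbf{Terminal block.} Your step ``$O(\log\log n/\sqrt{\log n})$, which is at most $CM/\sqrt{\log n}$ since $M\ge3$'' is false. The constant $C=C(K_0,R_3)$ cannot grow with $n$, and $M$ may stay bounded. Bounded $M$ is exactly the case that gives the optimal $(\log n)^{-1/2}$ in Theorem~\ref{thm:det-be}.

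To lose nothing here you need two things:
\begin{itemize}
\item the centre of $\sum_{d\le m}\log(\gamma^2/d)$ to within $O(1)$;
\item a fluctuation cost equal only to its \emph{variance} $O(\log m)$ divided by $\log n$, as in Lemma~\ref{lem:perturb}(ii), which requires independence from the bulk.
\end{itemize}
A dependent fluctuation of size $\sqrt{\log m}$ absorbed through Lemma~\ref{lem:perturb}(i) costs at least $\sqrt{\log m/\log n}\gg(\log n)^{-1/2}$. Small-ball or least-singular-value bounds give only order-of-magnitude control. In particular, $\gamma\ge s_n(B_n)$ only yields $\log\gamma^2\gtrsim-\log n$.

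The paper meets both requirements by swapping the last $s_1=\lfloor\ell^{3a}\rfloor$ rows for Gaussian rows, one at a time:
\begin{itemize}
\item Cofactor expansion gives $\det C_n=\Lambda\sum_jw_jc_{nj}$, so the conditional classical Berry--Esseen bound costs $CR_3\E\max_j|w_j|$ per row.
\item Since $w_j^2=p_{jj}(n-1)$, Lemma~\ref{lem:terminal-diagonal} makes this cost $C\ell^{-4a}$ per row.
\item After the swap, the terminal block is an independent sum of $\log(\chi_k^2/k)$ with explicit mean and variance $O(\log\ell)$.
\end{itemize}

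\textbf{Bulk.} Heyde--Brown/Haeusler bounds have the form $(L_{2p}+N_{2p})^{1/(2p+1)}$ with $p>1$. The Lyapunov term needs $\E|Z_i|^{2p}$. Through the diagonal part $\sum_kq_{kk}(i)(b_{ik}^2-1)$, this requires entry moments of order $4p>4$, which are not assumed. Your bound $\E(|Z_i|^3\mid\mathcal F_i)\lesssim(M/d^2)^{3/4}$ has the same defect.

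Even when those moments are finite, the rate fails. Under $\E b^4\le M$, up to $\asymp M$ rows can contain a spike of size $\asymp\sqrt n$. Each such row contributes $|Z_i|\asymp1$, so together they contribute $\asymp M(\log n)^{-p}$ to the normalized Lyapunov sum. The bound you obtain is then no better than $(M(\log n)^{-p})^{1/(2p+1)}$, which is $\gg M(\log n)^{-1/2}$ for bounded $M$.

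The same spikes show that the rows with $|Z_i|>1/2$ cannot be discarded by a union bound. If you instead carry them as an $L^1$ perturbation of size $O(M)$ and apply Markov, you lose a square root.

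The paper avoids all of this in two steps:
\begin{itemize}
\item It works with $\zeta_{i+1}=\log(Z_{i+1}\vee\theta)+1/k_i$. The conditional mean (the non-centred drift) and the second, third and fourth conditional moments, including bad-event contributions, are bounded by $h_i$ with $\sum_ih_i\le CM$. This uses only $\E_iU^2\le CMd_i$ and $\E_iV^4\le CM^2k_i^{-2}$ (Lemma~\ref{lem:common-one-step}).
\item A Lindeberg telescoping against a Gaussian future (Lemma~\ref{lem:det-telescope}) lets each of these errors enter \emph{linearly}, with weight $\int_0^{\sqrt\ell}t^pe^{-t^2v_{\zeta,i}/2}\,dt=O(1)$. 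This holds because $v_{\zeta,i}\ge c_0$ when $k_i\ge s_2$, and the remaining indices carry only $\sum h_i\le CM\ell^{-12}$.
\end{itemize}
The result is $C\sum_ih_i/s_\zeta\le CM/\sqrt\ell$, with no need to concentrate $V_n^2$ or the drift.

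Your delocalization idea is right in spirit, but you only need expectation bounds on $d_i$. The claim $(P_i)_{kk}=(d/n)(1+o(1))$ down to polylogarithmic $d$ is more than is available. The paper uses a resolvent regularized at $\lambda=n^{-1/6}$ and settles for $\E\max_rp_{rr}(i)\le C\ell^{-8a}$ when $k_i<s_2$.

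Finally, for discrete entries you also need a reduction ensuring $\operatorname{rank}P_i=n-i$ almost surely, as in Lemma~\ref{lem:det-smoothing}.
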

\vspace{0.5cm}

Its proof is given in Section~\ref{subsec:proof-prop-det-growing-fourth}.

\vspace{0.5cm}
\begin{proof}[Proof of Theorem~\ref{thm:det-clt}]
For the deterministic-centering case, take $q(x)=\sqrt{\log(e+x)}$. By the uniform-integrability assumption \eqref{eq:det-clt-assumption} and
Lemma~\ref{lem:ui-moment-control}, there exist a constant $K<\infty$
and a nondecreasing function
$\psi:[0,\infty)\to[1,\infty)$ such that \eqref{eq:equal-condition-to-uni-integ-condition} holds and $\varepsilon(T)\to0$ when $T\to \infty$, where $\varepsilon(T)$ is defined in \eqref{eq:epsilon-definition}.

For brevity of the proof, we adopt the notation
\begin{equation}
  \begin{aligned}\label{def:ell}
    \ell=\log n, 
  \end{aligned}
\end{equation}
and we will keep this notation throughout the rest of the paper.
We choose a proper truncation level $T_n$ that grows with $n$ as
\begin{equation}\label{eq:det-clt-truncation-level}
 T_n=\sqrt n\,\ell^{1/8}.
\end{equation}
Apply the first part of Proposition~\ref{prop:truncation} independently to all entries and call the resulting truncated matrix $\widetilde A_n$.  Since $T_n^4=n^2\ell^{1/2}$ and $q(T_n)\asymp\sqrt\ell$, \eqref{eq:discrete-truncation-coupling} gives
\begin{equation}\label{eq:det-clt-global-coupling}
 \Pp(A_n\ne\widetilde A_n)
  \le n^{2}CK \varepsilon_{\psi,q}(T_n) \frac{q(T_{n})}{T_{n}^{4}}
 \le C\varepsilon_{\psi,q}(T_n)\longrightarrow0.
\end{equation}
The fourth-moment estimate in \eqref{eq:discrete-truncation-moments} yields
\begin{equation}\label{eq:det-clt-effective-fourth}
 \max_{i,j}\E|\widetilde a_{ij}|^4
 \le C \varepsilon_{\psi,q}(T_n)\sqrt\ell
 =o(\sqrt\ell),
\end{equation}
and the third moments are uniformly bounded.  At this point the coupling reduction is complete.  What remains is a normal approximation for the matrix $\widetilde A_n$, whose fourth moments may grow with $n$, but remain below the scale $\sqrt{\ell}$.

We apply Proposition~\ref{prop:det-growing-fourth} to $\widetilde A_{n}$. 
By \eqref{eq:det-clt-effective-fourth},
it follows that
\[
 M
 =
 \max\left\{
 3,\max_{i,j}\E|\widetilde a_{ij}|^4
 \right\}
 \le
 3+C\varepsilon(T_n)\sqrt\ell
 =
 o(\sqrt\ell).
\]
Thus
\begin{equation}\label{eq:det-clt-truncated-limit}
 \dk(W_n^{\mathrm d}(\widetilde A_n),\Normal)\longrightarrow0.
\end{equation}
On the event $\{A_n=\widetilde A_n\}$, the two determinant
statistics are equal. Hence
\[
 \dk(W_n^{\mathrm d}(A_n),\Normal)
 \le
 \dk(W_n^{\mathrm d}(\widetilde A_n),\Normal)
 +\Pp(A_n\ne\widetilde A_n).
\]
The two terms on the right-hand side tend to zero by
\eqref{eq:det-clt-truncated-limit} and
\eqref{eq:det-clt-global-coupling}, respectively. Therefore
\[
 W_n^{\mathrm d}(A_n)\Rightarrow\Normal,
\]
which proves the theorem \ref{thm:det-clt}.
\end{proof}

\subsection{Proof of Theorem~\ref{thm:exact-clt}}\label{subsec:proof-exact-clt}

We apply the continuous part of Proposition~\ref{prop:truncation} to all entries of $A_n$. We use a continuous correction at a larger scale $\sqrt n\ell^{3/8}$.  It preserves a uniform density bound, gives the support bound required by the exact-centered determinant estimate, and reduces the fourth moments to $o(\ell)$. A separate proposition is then needed to compare the two exact expectations, because equality of the coupled matrices with high probability does not by itself control their means.

Similar to Proposition~\ref{prop:det-growing-fourth}, we first state a proposition.

\vspace{0.5cm}
\begin{proposition}\label{prop:exact-growing-fourth}
Let $B_n=(b_{ij})_{1\leq i,j \leq n}$ be $n\times n$ random matrix with independent entries satisfying
\begin{equation}\label{eq:exact-growing-assumptions}
 \E b_{ij}=0,\qquad \E b_{ij}^2=1,\qquad
 \max_{i,j}\E|b_{ij}|^3\le R_3,
 \qquad \|f_{b_{ij}}\|_\infty\le M_1.
\end{equation}
Suppose that
\begin{equation}\label{eq:exact-growing-support}
 |b_{ij}|\le (K_0 n\ell^{3/4})^{1/2},
\end{equation}
for some fixed constant $K_0>0$, and put
\begin{equation}\label{eq:exact-growing-fourth}
 M:=\max\left\{3,\max_{i,j}\E|b_{ij}|^4\right\}.
\end{equation}
If $M=o(\ell)$, then we have 
\begin{equation}\label{eq:exact-growing-asymptotic}
 W_n^{\mathrm e}(B_n)\Rightarrow\Normal.
\end{equation}
Moreover, if $0<\gamma\le1$ and $M\le K_1\ell^{1-\gamma}$, then
\begin{equation}\label{eq:exact-growing-bound}
 \dk(W_n^{\mathrm e}(B_n),\Normal)
 \le C\frac{M}{\ell},
\end{equation}
where $C$ depends only on $\gamma,R_3,M_1,K_0,K_1$.
\end{proposition}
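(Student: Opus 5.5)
Throughout, $\ell=\log n$. The plan follows the standard route (Nguyen--Vu, Bao--Pan--Zhou): write $\mathcal L_n$ via base-times-height as $\sum_{k} \log \gamma_{k}$, where $\gamma_{k}$ is the distance from the $k$th row to the span of the previous rows, and treat the result as a martingale. Two facts do most of the work. First, $\E(\gamma_{k}^2\mid\mathcal F_{k-1})=n-k+1$ exactly, since entries are centered with unit variance. Second, the conditional variance of $\gamma_k^2$ equals $2(n-k+1)+\sum_j(\E b_{kj}^4-3)\pi_{jj}^2$, where $\pi$ is the projection onto the orthogonal complement of the span. Since $\sum_j\pi_{jj}^2\le (n-k+1)^2/n$ is small compared with $n-k+1$ only for large codimension, the fourth moments enter only through the last $O(\cdot)$ rows.

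\textbf{Step 1: split the rows.} Set a cutoff $m=n-\lfloor \ell^{c}\rfloor$ (or $n-n/\ell^{c}$) and split
\[
\mathcal L_n=\sum_{k\le m}\log\gamma_k+\sum_{k>m}\log\gamma_k .
\]
For rows with codimension $d=n-k+1$ large, write $\log\gamma_k=\tfrac12\log d+\tfrac12 X_k-\tfrac14X_k^2+O(|X_k|^3)$, where $X_k=(\gamma_k^2-d)/d$. The sum $\sum_k X_k/2$ is a martingale with conditional variance
\[
\sum_k\frac{1}{2d}+O\Bigl(\frac{M}{n}\Bigr)\approx \tfrac12\ell .
\]
The quadratic terms have mean $-\tfrac1{2d}+O(M/(nd))$. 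This reproduces the $\log(n-1)!$ shift up to $O(\sum M/n)=O(M\ell/n)$, which is negligible. Fluctuations of $X_k^2$ around their mean are handled in $L^1$ or $L^2$ using $\E|b|^3\le R_3$ and $M$; the cubic remainder is summable.

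\textbf{Step 2: exact centering and the main obstacle.} With exact centering, the mean discrepancy disappears, so the target error is $M/\ell$ rather than $M/\sqrt\ell$. The main obstacle is the last $\ell^{O(1)}$ rows, where $d$ is small and $\log\gamma_k$ is not close to linear. There I would need:
\begin{enumerate}[label=(\roman*)]
\item a lower-tail bound on $\gamma_k$ from the bounded densities, via a small-ball estimate $\Pp(\gamma_k\le t\mid\mathcal F_{k-1})\le (CM_1t)^{d}$-type bounds for projections of vectors with bounded density, which makes $\E|\log\gamma_k|^2$ finite and uniformly bounded;
\item a bound $\E(\log\gamma_k)^2=O(1+\log d)$ and, crucially, a variance bound for $\sum_{k>m}\log\gamma_k$ of order $O(\ell^{c'})$ with $c'<1$, so that it is $o(\ell)$ and contributes $O(\ell^{c'-1})$ to the variance ratio.
\end{enumerate}
The support bound $|b_{ij}|\le(K_0n\ell^{3/4})^{1/2}$ is used to control $\E X_k^4$-type terms in the bulk, whose size is bounded by $\|b\|_\infty^2M/d^2$-type quantities summing to $o(1)$.

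\textbf{Step 3: the Berry--Esseen statement.} For \eqref{eq:exact-growing-bound}, I would apply a martingale Berry--Esseen bound in Kolmogorov distance (Haeusler's, or a Stein/Lindeberg smoothing version). This requires:
\begin{enumerate}[label=(\roman*)]
\item the conditional variance to deviate from $\tfrac12\ell$ by $O(M)$ in $L^1$, giving error $M/\ell$;
\item third conditional moments $\sum_k\E|X_k|^3\lesssim\sum_k d^{-3/2}R_3$-type terms, which are $O(1)$ after normalization by $\ell^{3/2}$;
\item the nonlinear and last-rows remainders to be $O(M/\ell)$ in probability, with matching tail bounds.
\end{enumerate}
Combining these perturbations through the Kolmogorov distance and the standard normal density bound gives the stated bound. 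The CLT statement \eqref{eq:exact-growing-asymptotic} then follows from the same estimates with $M=o(\ell)$.
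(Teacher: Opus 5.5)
The Gram--Schmidt martingale skeleton matches the paper's, but there are genuine gaps, and the first is at the last $\ell^{O(1)}$ rows, where the target rate leaves no room. For $\gamma=1$ and bounded $M$ you need $C/\ell$. Yet already for Gaussian rows $\Var\sum_{k>m}\log\gamma_k^2\asymp\sum_{d\le\ell^c}d^{-1}\asymp\log\ell$. So your Step~3(iii), which asks for a last-rows remainder that is $O(M/\ell)$ in probability, is false. A variance bound as in Step~2 yields at best $O(\log\ell/\ell)$ even if the block were independent of the bulk. For non-Gaussian rows it is not independent (it depends on the bulk through $P_m$), and a bound of the form $\Pp(|R|>\eta)+\eta$ then gives only about $(\log\ell/\ell)^{1/3}$.

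The missing idea is to swap the last $s_1$ rows for Gaussian rows one at a time. Conditionally on the other $n-1$ rows, $\det=\Lambda\sum_jw_jb_{nj}$, where $w$ is the unit kernel vector and $w_j^2=p_{jj}(n-1)$. Hence each swap costs $CR_3\E\max_j|w_j|$ in Kolmogorov distance, plus a shift of the exact mean. The bounded densities control that shift through a $\rho\log(e/\rho)$ comparison of logarithmic expectations (Lemma~\ref{lem:log-comparison}). Summing $s_1$ swaps needs the delocalization estimate $\E\max_jp_{jj}(n-1)\le C\ell^{-8a}$ (Lemma~\ref{lem:terminal-diagonal}), a genuine random-matrix estimate that nothing in your outline supplies. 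After the swap the terminal block is independent of the bulk with law $\sum_k\log(\chi_k^2/k)$. It is then folded into the main normal approximation by a joint characteristic-function comparison (error $O(\ell^{-3/2})$), rather than treated as an error term.

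In the bulk two further ingredients are missing. First, your inequality $\sum_j\pi_{jj}^2\le d^2/n$ is reversed: Cauchy--Schwarz gives $\sum_j\pi_{jj}^2\ge d^2/n$, and only $\sum_j\pi_{jj}^2\le d$ is free. With the free bound, and $\kappa=\E b^4-3$, you cannot exclude a fourth-moment contribution $\kappa\sum_d d^{-2}\sum_j\pi_{jj}^2$ of order $M\ell$ to the conditional variance. Identifying that variance as $\tfrac12\ell+O(M)$ therefore requires $\sum_d d^{-2}\E\sum_j\pi_{jj}^2=O(1)$, i.e.\ delocalization of the projection diagonals for codimensions below $n/2$ (Lemma~\ref{lem:diagonal-projection-bounds}). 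Relatedly, the fourth moments act mainly through the early rows with $d\asymp n$, where they produce a non-vanishing term of order $\kappa$; Example~\ref{ex:exact-threshold} is built on exactly this. They do not act through the last rows, and not at size $O(M/n)$ or $O(M\ell/n)$.

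Second, the centered sum of conditional means of the increments (e.g.\ $\sum_k(\E_{k-1}X_k^2-\E X_k^2)$) is a functional of the random projections, not a martingale. Entrywise moment bounds give only an $L^1$ bound $O(M)$, which via Markov costs about $(M/\sqrt\ell)^{1/2}$. A genuine concentration bound is needed. The paper applies Efron--Stein to the rows with codimension at least $n\ell^{-20a}$, using two facts: resampling one row moves each $P_i$ by rank two (Lemma~\ref{lem:row-resample-geometry}), and the conditional mean is stable under such moves (Lemma~\ref{lem:exact-rank-stability}). That stability estimate, not $\E X_k^4$, is where the support bound $|b_{ij}|\le(K_0n\ell^{3/4})^{1/2}$ is used. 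The remaining range is treated in $L^1$ via delocalization.

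Finally, a black-box martingale Berry--Esseen bound such as Haeusler's involves a fractional power of the Lyapunov and conditional-variance terms, so it will not give the linear rate $M/\ell$. The paper instead uses Esseen smoothing with a Gaussian-future telescoping, integrating $L^1$ bounds on conditional-variance fluctuations against $e^{-ct^2v_i}$ up to $t=\ell$.
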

\vspace{0.5cm}

The proof is given in Section~\ref{subsec:proof-prop-exact-growing-fourth}. 

\vspace{0.5cm}
\begin{proof}[Proof of Theorem~\ref{thm:exact-clt}]
For the exact-centering case, take $q(x)=\log(e+x)$. By the uniform-integrability assumption \eqref{eq:exact-clt-assumption} and
Lemma~\ref{lem:ui-moment-control}, there exist a constant $K<\infty$
and a finite-valued nondecreasing function
$\psi:[0,\infty)\to[1,\infty)$ such that \eqref{eq:equal-condition-to-uni-integ-condition} holds and $\varepsilon(T)\to0$ when $T\to \infty$, where $\varepsilon(T)$ is defined in \eqref{eq:epsilon-definition}.

We choose a proper truncation level $T_n$ that grows with $n$ as
\begin{equation}\label{eq:exact-clt-truncation-level}
 T_n=\sqrt n\,\ell^{3/8}.
\end{equation}
Apply the continuous part of Proposition~\ref{prop:truncation} independently to all entries.  Denote the truncated matrix by $\widetilde A_n$, let $a_{i}$ and $\widetilde a_{i}$ denote the $i$th rows of $A_n$ and $\widetilde A_n$, respectively.
Because $T_n^4=n^2\ell^{3/2}$ and $q(T_n)\asymp\ell$, \eqref{eq:discrete-truncation-coupling}--\eqref{eq:continuous-truncation-density} give
\begin{align}
 \max_{1\leq i\leq n}\Pp(a_{i}\ne\widetilde a_{i})
 &\le C\varepsilon_{\psi,q}(T_n)n^{-1}\ell^{-1/2},
 \label{eq:exact-clt-rows-coupling}\\
 \Pp(A_n\ne\widetilde A_n)
 &\le C\varepsilon_{\psi,q}(T_n)\ell^{-1/2}=o(1),
 \label{eq:exact-clt-global-coupling}\\
 \max_{i,j}\E|\widetilde a_{ij}|^4
 &\le C\varepsilon_{\psi,q}(T_n)\ell=o(\ell),
 \label{eq:exact-clt-effective-fourth}\\
 |\widetilde a_{ij}|^2&\le9n\ell^{3/4},
 \label{eq:exact-clt-support}
\end{align}
The third moments and density remain uniformly bounded.

By \eqref{eq:exact-clt-support}, the support condition
\eqref{eq:exact-growing-support} holds for $\widetilde A_n$ with $K_0=9$.
Applying Proposition~\ref{prop:exact-growing-fourth} to $\widetilde A_n$ with
\[
 M=\max\left\{3,\max_{i,j}\E|\widetilde a_{ij}|^4\right\}
 \le 3+C\varepsilon_{\psi,q}(T_n)\ell=o(\ell),
\]
we obtain
\begin{equation}
  \begin{aligned}\label{eq:truncated-A-CLT}
    \dk(W_n^{\mathrm e}(\widetilde A_n),\Normal) \to 0, \quad \text{and} \quad W_n^{\mathrm e}( \widetilde A_n )\Rightarrow\Normal.
  \end{aligned}
\end{equation}
Clearly, we can transfer the Kolmogorov distance from matrix $\widetilde A_n$ to the original matrix $A_n$ as
\begin{equation}
  \begin{aligned}\label{eq:exact-BE-bound-decomposition}
    \dk(W_n^{\mathrm e}(A_n),\Normal) \leq & \dk(W_n^{\mathrm e}(\widetilde A_n),\Normal) + \Pp(A_n\ne\widetilde A_n)\\
    & + \frac{|\E\log|\det A_n|-\E\log|\det\widetilde A_n||}{\sqrt{\ell}}.
  \end{aligned}
\end{equation}
By \eqref{eq:exact-clt-global-coupling} and \eqref{eq:truncated-A-CLT}, the first two terms on the right-hand side of \eqref{eq:exact-BE-bound-decomposition} tend to zero. Therefore, to finish the proof of $W_n^{\mathrm e}(A_n)\Rightarrow\Normal$, it remains to prove that
\begin{equation}
  \begin{aligned}\label{eq:expectation-diff-need-to-prove}
    |\E\log|\det A_n|-\E\log|\det\widetilde A_n|| = o(\sqrt{\ell}), 
  \end{aligned}
\end{equation}
which is a special case of the following lemma.

\vspace{0.5cm}
\begin{lemma}\label{prop:exact-transfer}
Let $A_n$ and $\widetilde A_n$ have independent rows, and couple corresponding rows independently across row indices.  Assume that within every row the coordinates are independent, centered, have variance one, and have densities bounded by a common constant $M_0$.  If
\begin{equation}\label{eq:row-coupling-probability}
 \max_{1\le i\le n}\Pp(a_i\ne\widetilde a_i)\le\rho_n\le e^{-1},
\end{equation}
then both logarithmic determinants are integrable and
\begin{equation}\label{eq:general-mean-transfer}
 |\E\log|\det A_n|-\E\log|\det\widetilde A_n||
 \le C_{M_0}n\rho_n\log\frac e{\rho_n}.
\end{equation}
\end{lemma}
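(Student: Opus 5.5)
The natural route is a hybrid (Lindeberg-type) row-replacement argument. It bounds the change in $\E\log|\det\cdot|$ caused by swapping a single row. For $0\le k\le n$ let $H_k$ be the matrix whose first $k$ rows are $\widetilde a_1,\dots,\widetilde a_k$ and whose remaining rows are $a_{k+1},\dots,a_n$. Then $H_0=A_n$ and $H_n=\widetilde A_n$, and
\[
 \E\log|\det A_n|-\E\log|\det\widetilde A_n|=\sum_{k=1}^{n}\bigl(\E\log|\det H_{k-1}|-\E\log|\det H_k|\bigr).
\]
$H_{k-1}$ and $H_k$ differ only in row $k$, and only on the event $E_k=\{a_k\neq\widetilde a_k\}$, which has probability at most $\rho_n$. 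So it suffices to show that each summand is at most $C_{M_0}\rho_n\log(e/\rho_n)$.

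Fix $k$. Let $G$ be the $(n-1)\times n$ matrix of the other rows, which is common to both hybrids and independent of the pair $(a_k,\widetilde a_k)$. Let $u$ be a unit normal vector to the row space of $G$ (a.s.\ unique, since the rows have densities). By the base-times-height formula,
\[
 \log|\det H_{k-1}|=\log\operatorname{vol}(G)+\log|\langle a_k,u\rangle|,
\]
and similarly with $\widetilde a_k$. The volume term cancels in the difference, provided it is integrable. Hence the summand equals
\[
 \E\bigl[(\log|\langle a_k,u\rangle|-\log|\langle \widetilde a_k,u\rangle|)\1_{E_k}\bigr].
\]
Condition on $G$, and hence on $u$. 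Write $X=\langle a_k,u\rangle$ and $Y=\langle\widetilde a_k,u\rangle$. Since $|\log|X||\le\log^+|X|+\log^-|X|$, the term reduces to estimating $\E[\log^{\pm}|X|\,\1_E]$ and the same with $Y$, where $\Pp(E)\le\rho:=\rho_n$.

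Two single-row estimates, uniform in the unit vector $u$, are needed. For the upper part, $\E X^2=1$. For any event $E$ with $\Pp(E)\le\rho$, use $\log^+|X|\le\frac12\log(1+X^2)$ together with Jensen's inequality, or split according to whether $X^2\le \rho^{-1}$. This gives $\E[\log^+|X|\1_E]\le C\rho\log(e/\rho)$. For the lower part, the density of $X$ is bounded. Pick a coordinate with $|u_j|\ge n^{-1/2}$; the density of $X$ is then at most $M_0/|u_j|\le M_0\sqrt n$. That bound is too weak, because it would leave a $\log n$ loss. The better route is a small-ball bound uniform in the direction: for independent coordinates with densities bounded by $M_0$ and any unit vector $u$, the density of $\langle a_k,u\rangle$ is at most $C M_0$. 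This is the Rudelson--Vershynin / Ball-type result on densities of marginals of product measures, and I would prove or quote it in Section~\ref{sec:proof-lemmas}. Granting it, $\Pp(|X|\le t)\le CM_0t$, and then
\[
 \E[\log^-|X|\1_E]\le\int_0^\infty\min\{\rho,\Pp(\log^-|X|>s)\}\,ds\le \int_0^\infty\min\{\rho,CM_0e^{-s}\}\,ds\le C_{M_0}\rho\log\frac e\rho .
\]
The same bounds hold for $Y$, because $\widetilde a_k$ satisfies the same hypotheses. Summing over $k$ gives \eqref{eq:general-mean-transfer}.

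Integrability comes from the same tools. Write $\log|\det A_n|=\sum_i\log\operatorname{dist}(a_i,\Span\{a_1,\dots,a_{i-1}\})$. Each distance equals $|\langle a_i,P v\rangle|$-type projections onto a subspace of dimension $n-i+1$. Its logarithm has positive part bounded via the second moment, which is $n-i+1$. Its negative part is controlled by a small-ball bound for projections. Even the crude bound $\Pp(\operatorname{dist}\le t)\le\Pp(|\langle a_i,w\rangle|\le t)\le CM_0t$, with $w$ a unit vector in the orthogonal complement, gives integrability. The same argument justifies cancelling $\log\operatorname{vol}(G)$. The main obstacle I expect is the dimension-free density bound for one-dimensional marginals $\langle a,u\rangle$ under merely bounded coordinate densities. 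Without it, the constant would depend on $n$. If quoting it is not allowed, I would first try proving it via Fourier/Hölder, in the style of Ball's cube-slicing argument, for directions with all $|u_j|$ small, and handle directions with a large $|u_j|$ by conditioning on the other coordinates.
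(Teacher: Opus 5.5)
Your proposal is correct and follows essentially the same route as the paper: a row-by-row hybrid replacement, the factorization of each determinant as a common factor times a unit linear form in the swapped row (your $\operatorname{vol}(G)$ and normal vector $u$ are the paper's cofactor norm $\Lambda$ and normalized cofactor vector $w$), and a conditional comparison of $\E\log|X|$ with $\E\log|Y|$ using the unit second moment together with the dimension-free marginal density bound $\sqrt2 M_0$, which the paper quotes from Livshyts--Paouris--Pivovarov just as you propose to quote it. The only cosmetic difference is that the paper goes through a general logarithmic-comparison lemma (coupling $\Rightarrow$ Kolmogorov distance at most $\rho$, then integrating tail differences), whereas you bound $\E\bigl[\,|\log|X||\,\1_{E_k}\bigr]$ directly on the coupling-failure event; both yield the same $C_{M_0}\rho_n\log(e/\rho_n)$ per row.
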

\vspace{0.5cm}

Its proof is given in Section~\ref{sec:proof-lemmas}.  
Applying Lemma~\ref{prop:exact-transfer} and using \eqref{eq:exact-BE-bound-decomposition}, we obtain
\begin{equation}
\begin{aligned}\label{eq:general-distribution-transfer}
 \dk(W_n^{\mathrm e}(A_n),\Normal)
 \le& \dk(W_n^{\mathrm e}(\widetilde A_n),\Normal)+\Pp(A_n\ne\widetilde A_n)\\
 &+C_{M_0}\frac{n\rho_n\log(e/\rho_n)}{\sqrt{\ell}}.
\end{aligned} 
\end{equation}
By \eqref{eq:exact-clt-rows-coupling}, we have
\[
 \rho_n
 \le
 C\frac{\varepsilon_{\psi,q}(T_n)}
 {n\sqrt{\ell}}.
\]
Since $\varepsilon_{\psi,q}(T_n)\to0$, the right-hand side is at most
$e^{-1}$ for all sufficiently large $n$.  Since, the function
\[
 x\longmapsto x\log(e/x)
\]
is increasing on $(0,1]$, it follows that
\begin{align*}
 \rho_n = o(\frac{1}{n\sqrt{\ell}}) \quad \text{and} \quad
   \frac{n\rho_n\log(e/\rho_n)}{\sqrt{\ell}} = o(1).
\end{align*}
Consequently, $n\rho_n\log(e/\rho_n) = o(\sqrt{\ell})$, and thus, we prove \eqref{eq:expectation-diff-need-to-prove}.
Here we use $\ell=\log n$,
$\varepsilon_{\psi,q}(T_n)\to0$,
$\log\ell/\ell\to0$, and
\[
 x\log(1/x)\longrightarrow0
 \qquad (x\downarrow0).
\]
Then, combining \eqref{eq:exact-clt-global-coupling}, \eqref{eq:truncated-A-CLT}, \eqref{eq:expectation-diff-need-to-prove} with \eqref{eq:exact-BE-bound-decomposition}, we conclude that
\[
 \dk(W_n^{\mathrm e}(A_n),\Normal)\longrightarrow0, \quad \text{and} \quad W_n^{\mathrm e}(A_n)\Rightarrow\Normal.
\]
This completes the proof of Theorem~\ref{thm:exact-clt}.
\end{proof}

\subsection{Proof of Theorem~\ref{thm:det-be}}\label{subsec:proof-det-be}

We apply Proposition~\ref{prop:truncation} and \ref{prop:det-growing-fourth}. For this proof, set
\begin{equation}\label{eq:det-be-control-function}
 \psi(x)=\{\log(e+x)\}^{1/2-\delta}.
\end{equation}
and take $q(x)=\sqrt{\log(e+x)}$.  The ratio in \eqref{eq:epsilon-definition} satisfies
\begin{equation}\label{eq:det-be-epsilon}
 \varepsilon_{\psi,q}(T_n)
 \asymp\{\log(e+T_n)\}^{-\delta}\asymp\ell^{-\delta},
\end{equation}
where $T_n = \sqrt n\,\ell^{1/8}$ is given by \eqref{eq:det-clt-truncation-level}, and $q(T_{n}) \asymp \sqrt{\ell}$.  Applying Proposition~\ref{prop:truncation} entrywise and repeating the union-bound calculation in \eqref{eq:det-clt-global-coupling} gives
\begin{equation}\label{eq:det-be-coupling}
 \Pp(A_n\ne\widetilde A_n)\le C\ell^{-\delta}.
\end{equation}
By \eqref{eq:discrete-truncation-moments} and \eqref{eq:det-be-epsilon},
\begin{equation}\label{eq:det-be-M}
 M_n:=\max_{i,j}\E|\widetilde a_{ij}|^4
 \le C\ell^{1/2-\delta},
\end{equation}
and the third moments are uniformly bounded.  Proposition~\ref{prop:det-growing-fourth} and \eqref{eq:det-be-M} yield
\begin{equation}\label{eq:det-be-truncated}
 \dk(W_n^{\mathrm d}(\widetilde A_n),\Normal)
 \le C\ell^{-\delta}.
\end{equation}
The laws of the two determinant statistics differ by at most the coupling failure probability in \eqref{eq:det-be-coupling}.  Combining \eqref{eq:det-be-coupling} and \eqref{eq:det-be-truncated} proves \eqref{eq:det-be-bound} for all sufficiently large $n$. Thus we complete the proof of Theorem~\ref{thm:det-be}.

\subsection{Proof of Theorem~\ref{thm:exact-be}}\label{subsec:proof-exact-be}
Applying the continuous part of Proposition~\ref{prop:truncation} to all entries.
The continuous correction serves two purposes: it produces the bounded support and effective fourth moment needed by the exact-centered determinant estimate, and it preserves a uniform density bound so that the exact logarithmic means can be compared.  Proposition~\ref{prop:exact-growing-fourth} contributes $M_n/\ell=O(\ell^{-\gamma})$, and the center-transfer error has the same order.

We apply Proposition~\ref{prop:truncation}, \ref{prop:exact-growing-fourth}, and Lemma~\ref{prop:exact-transfer}.
Set
\begin{equation}\label{eq:exact-be-control-function}
 \psi(x)=\{\log(e+x)\}^{1-\gamma}.
\end{equation}
For this proof, take $q(x)=\log(e+x)$.  At the truncation level $T_n=\sqrt n\ell^{3/8}$ in \eqref{eq:exact-clt-truncation-level},
\begin{equation}\label{eq:exact-be-epsilon}
 \varepsilon_{\psi,q}(T_n)
 \asymp\{\log(e+T_n)\}^{-\gamma}\asymp\ell^{-\gamma}, \qquad q(T_n)\asymp\ell.
\end{equation}
As in the proof of Theorem~\ref{thm:exact-clt}, \eqref{eq:exact-clt-global-coupling}--\eqref{eq:exact-clt-support} hold, and
\begin{align}
 \Pp(A_n\ne\widetilde A_n)&\le C\ell^{-1/2-\gamma},
 \label{eq:exact-be-coupling}\\
 M_n:=\max_{i,j}\E|\widetilde a_{ij}|^4
 &\le C\ell^{1-\gamma}.
 \label{eq:exact-be-M}
\end{align}
Applying Proposition~\ref{prop:exact-growing-fourth} to
$\widetilde A_n$ with $K_0=9$, and using
\eqref{eq:exact-clt-support} and \eqref{eq:exact-be-M}, we obtain
\begin{equation}\label{eq:exact-be-truncated}
 \dk(W_n^{\mathrm e}(\widetilde A_n),\Normal)
 \le C\frac{M_n}{\ell}\le C\ell^{-\gamma}.
\end{equation}

The probability that a fixed row changes is at most
\begin{equation}\label{eq:exact-be-row-change}
 \rho_n\le\frac{C}{n\ell^{1/2+\gamma}}.
\end{equation}
Use Lemma~\ref{prop:exact-transfer} with \eqref{eq:exact-be-row-change}, we obtain $|\E\log|\det A_n|-\E\log|\det\widetilde A_n||$ is bounded by
\begin{equation}
 \frac{|\E\log|\det A_n|-\E\log|\det\widetilde A_n||}{\sqrt{\ell}} 
 \leq C\frac{n\rho_n\log(e/\rho_n)}{\sqrt\ell}
 \le C\ell^{-\gamma}. \nonumber
\end{equation}
Thus, we have
\begin{equation}
  \begin{aligned}\label{eq:exact-be-center-transfer}
    \dk(W_n^{\mathrm e}(A_n),\Normal)
    \le& \dk(W_n^{\mathrm e}(\widetilde A_n),\Normal)+\Pp(A_n\ne\widetilde A_n)+C\ell^{-\gamma}.
  \end{aligned}
\end{equation}
Combining \eqref{eq:exact-be-coupling}, \eqref{eq:exact-be-truncated} and \eqref{eq:exact-be-center-transfer} proves \eqref{eq:exact-be-bound}. Thus we complete the proof of Theorem~\ref{thm:exact-be}.

\section{Proof of propositions}\label{sec:proof-propositions}
We first give some useful lemmas which will be uesd repeatedly in the proofs of propositions. 

\begin{lemma}[Esseen smoothing]\label{lem:esseen}
For a real-valued random variable $X$, its \emph{characteristic function} is
\begin{equation}\label{eq:characteristic-function-definition}
 \varphi_X(t):=\E e^{itX},\qquad t\in\R.
\end{equation}
For every $T>0$,
\[
 \dk(X,\Normal)
 \le \frac CT+C\int_0^T\frac{|\varphi_X(t)-e^{-t^2/2}|}{t}\dd t.
\]
\end{lemma}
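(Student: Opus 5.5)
This is the classical Esseen smoothing inequality, and we would prove it via the standard convolution argument. Fix $T>0$ and let $F$ be the distribution function of $X$ and $G=\Phi$. Choose a smoothing kernel $v_T$ of Fej\'er type,
\[
 v_T(x)=\frac{1-\cos(Tx)}{\pi T x^2},
\]
a probability density whose Fourier transform is $(1-|t|/T)_+$, supported in $[-T,T]$. Write $\Delta=F-G$ and $\Delta_T=\Delta*v_T$.

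\textbf{Step 1: Fourier bound on the smoothed difference.} Since $\widehat{v_T}$ has compact support, Fourier inversion applied to the difference of the smoothed measures gives
\[
 \Delta_T(x)=\frac{1}{2\pi}\int_{-T}^{T}e^{-itx}\frac{\varphi_X(t)-e^{-t^2/2}}{-it}\,\widehat{v_T}(t)\dd t .
\]
The integrand is integrable near $0$ exactly when $\int_0^T|\varphi_X(t)-e^{-t^2/2}|/t\,dt<\infty$; otherwise the claim is trivial. Bounding $|\widehat{v_T}|\le1$ and using $|\varphi_X(-t)-e^{-t^2/2}|=|\varphi_X(t)-e^{-t^2/2}|$ yields
\[
 \sup_x|\Delta_T(x)|\le \frac1\pi\int_0^T\frac{|\varphi_X(t)-e^{-t^2/2}|}{t}\dd t .
\]
To justify the formula without moment assumptions, first establish it for $\Delta_T$ with an extra Gaussian smoothing $N(0,\sigma^2)$, then let $\sigma\to0$ at continuity points.

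\textbf{Step 2: Unsmoothing.} This is the main technical step. Let $\eta=\sup_x|\Delta(x)|$, and assume without loss of generality that it is approached at some $x_0$ with $\Delta(x_0)\ge\eta-\epsilon$; the opposite sign is symmetric, using left limits. We use two facts.
\begin{itemize}
\item Since $F$ is nondecreasing and $\Phi'\le(2\pi)^{-1/2}$, we have $\Delta(x_0+s)\ge\eta-\epsilon-s/\sqrt{2\pi}$ for $s\ge0$.
\item The kernel satisfies $\int_{|y|>h}v_T\le 4/(\pi Th)$.
\end{itemize}
Take $h=\eta\sqrt{2\pi}/2$ and evaluate $\Delta_T$ at $x_0+h$. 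On $|y|\le h$ we get $\Delta\ge\eta/2-\epsilon$, and elsewhere $\Delta\ge-\eta$. Hence
\[
 \Delta_T(x_0+h)\ge\Bigl(\tfrac\eta2-\epsilon\Bigr)\Bigl(1-\tfrac{4}{\pi Th}\Bigr)-\eta\,\tfrac{4}{\pi Th}.
\]
Substituting $h\asymp\eta$ gives $\eta/2\le\sup|\Delta_T|+C/T$ after letting $\epsilon\to0$. This follows the usual Feller-style argument, in which $\eta$ cancels to produce the constant $C/T$.

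\textbf{Conclusion.} Combining Step 1 and Step 2,
\[
 \dk(X,\Normal)\le\frac CT+C\int_0^T\frac{|\varphi_X(t)-e^{-t^2/2}|}{t}\dd t,
\]
with absolute constants; for instance $C=24/(\pi\sqrt{2\pi})$ works in the first term and $2/\pi$ in the second. Alternatively, one may cite Feller, Vol.~II, \S XVI.3, Lemma 2 directly.
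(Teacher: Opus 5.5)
The paper gives no proof of Lemma~\ref{lem:esseen}; it quotes it as the classical Esseen smoothing inequality. Your argument is the standard Feller proof (same Fej\'er kernel, same choice $h=\eta\sqrt{2\pi}/2$, same constants $24/(\pi\sqrt{2\pi})$ and $2/\pi$), so the approach is right.

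There is one incorrect justification in Step~2. With $h=\eta\sqrt{2\pi}/2$, it is not true that $\Delta\ge\eta/2-\epsilon$ pointwise on $|y|\le h$. Monotonicity of $F$ only gives
$\Delta(x_0+h-y)\ge\eta-\epsilon-(h-y)/\sqrt{2\pi}=\eta/2-\epsilon+y/\sqrt{2\pi}$,
which drops to $-\epsilon$ at $y=-h$. Your displayed lower bound for $\Delta_T(x_0+h)$ is nevertheless correct, but for a reason you never state. The kernel $v_T$ is even, so $\int_{-h}^{h}y\,v_T(y)\dd y=0$. The linear term therefore vanishes after integration, leaving $(\eta/2-\epsilon)\int_{-h}^{h}v_T$. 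Alternatively, take $h=\eta\sqrt{2\pi}/4$, for which the pointwise bound does hold; this doubles the constant in the $C/T$ term. With either fix the argument is complete.

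A minor remark on Step~1: the Gaussian-smoothing detour is unnecessary. Suppose $\int_0^T|\varphi_X(t)-e^{-t^2/2}|/t\,\dd t<\infty$. Then $(\varphi_X(t)-e^{-t^2/2})\widehat{v_T}(t)/t$ is integrable on $[-T,T]$, so the right-hand side of your inversion formula is differentiable in $x$. Its derivative is the difference of the densities of $F*v_T$ and $\Phi*v_T$, by Fourier inversion, since $\varphi_X\widehat{v_T}$ is integrable. It also tends to $0$ at $\pm\infty$ by Riemann--Lebesgue. Since $\Delta_T$ has the same derivative and also vanishes at $\pm\infty$, the two coincide, with no moment assumption on $X$.
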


\vspace{0.5cm}

\begin{lemma}\label{lem:perturb}
Let $X$ and $R$ be real-valued random variables and let $Z\sim\Normal$.
\begin{enumerate}[label=(\roman*)]
\item For every $\eta>0$,
\begin{equation}\label{eq:perturb-arbitrary}
 \dk(X+R,Z)\le\dk(X,Z)+\Pp(|R|>\eta)+\eta/\sqrt{2\pi}.
\end{equation}
\item If $R$ is independent of $X$, $\E R=0$, and $\E R^2<\infty$, then
\begin{equation}\label{eq:perturb-centered}
 \dk(X+R,Z)\le\dk(X,Z)+C\E R^2.
\end{equation}
\item For $c\in\R$,
\begin{equation}\label{eq:perturb-shift}
 \dk(X+c,Z)\le\dk(X,Z)+C|c|.
\end{equation}
\item If $|r-1|\le1/2$, then
\begin{equation}\label{eq:perturb-scale}
 \dk(rX,Z)\le\dk(X,Z)+C|r-1|.
\end{equation}
\end{enumerate}
\end{lemma}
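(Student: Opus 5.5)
We prove the four parts in turn. Let $Z\sim\Normal$ with distribution function $\Phi$. The only analytic input is that $\Phi$ is Lipschitz with constant $\|\Phi'\|_\infty=1/\sqrt{2\pi}$, together with the elementary estimate
\[
|\Phi(x/r)-\Phi(x)|\le C|r-1|
\]
uniformly in $x$ whenever $|r-1|\le 1/2$. Throughout, we fix $x\in\R$ and bound $|\Pp(\cdot\le x)-\Phi(x)|$ before taking the supremum over $x$.

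For (i), we sandwich the event. We have
\[
\{X\le x-\eta\}\setminus\{|R|>\eta\}\subset\{X+R\le x\}\subset\{X\le x+\eta\}\cup\{|R|>\eta\}.
\]
Hence $\Pp(X+R\le x)\le \Pp(X\le x+\eta)+\Pp(|R|>\eta)\le \Phi(x+\eta)+\dk(X,Z)+\Pp(|R|>\eta)$. The Lipschitz bound then gives $\Phi(x+\eta)\le\Phi(x)+\eta/\sqrt{2\pi}$. The lower bound is symmetric, which yields \eqref{eq:perturb-arbitrary}. Part (iii) is the special case of (i) with $R\equiv c$ and $\eta=|c|$, since then $\Pp(|R|>\eta)=0$.

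For (iv), we write $\Pp(rX\le x)$ as $\Pp(X\le x/r)$ when $r>0$, which holds because $r\ge1/2$. Then
\[
|\Pp(X\le x/r)-\Phi(x)|\le \dk(X,Z)+|\Phi(x/r)-\Phi(x)|.
\]
To bound the last term, we use the mean value theorem: it equals $|x||1/r-1|\,\phi(\xi)$ with $\xi$ between $x$ and $x/r$. Because $r\in[1/2,3/2]$, we have $|\xi|\ge 2|x|/3$, so $|x|\phi(\xi)\le |x|\phi(2x/3)$, which is bounded uniformly in $x$. Also $|1/r-1|\le 2|r-1|$, and this gives the claim.

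For (ii), which is the main step, we condition on $R$ using independence:
\[
\Pp(X+R\le x)=\E F_X(x-R).
\]
We compare $F_X(x-R)$ with $\Phi(x-R)$, at a cost of $\dk(X,Z)$. It then remains to bound $|\E\Phi(x-R)-\Phi(x)|$. A second-order Taylor expansion gives
\[
\Phi(x-R)=\Phi(x)-R\phi(x)+\tfrac12R^2\phi'(\xi).
\]
Since $\E R=0$, the linear term vanishes in expectation. The remainder is bounded by $\tfrac12\|\phi'\|_\infty\E R^2$, which yields \eqref{eq:perturb-centered}. The only subtlety is integrability, and this is guaranteed by $\E R^2<\infty$. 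None of the steps is a real obstacle; the point to watch is that in (ii) the independence is what allows the linear term to cancel exactly. Without it we would only obtain the weaker estimate (i).
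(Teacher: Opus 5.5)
Your proof is correct: the sandwich argument for (i), its specialization $R\equiv c$, $\eta=|c|$ for (iii), the change of variables $x\mapsto x/r$ together with the mean-value bound $|x|\,|1/r-1|\,\phi(2x/3)\le C|r-1|$ for (iv), and the conditioning identity $\Pp(X+R\le x)=\E F_X(x-R)$ followed by a second-order Taylor expansion of $\Phi$ for (ii) all go through as written. The paper states this lemma as an elementary fact and gives no proof; your argument supplies the standard one, with explicit constants $1/\sqrt{2\pi}$ in (i) and (iii) and $\tfrac12\|\phi'\|_\infty$ in (ii).
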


\vspace{0.5cm}

Then, in the following sections, we give the proofs of Propositions~\ref{prop:truncation}, \ref{prop:det-growing-fourth} and \ref{prop:exact-growing-fourth}.

\subsection{Proof of Proposition~\ref{prop:truncation}}\label{subsec:proof-prop-truncation}

We use two constructions.  In the general case, we first truncate
$X$ at level $T$.  The truncated variable is bounded, but it
generally does not have mean zero or variance one.  We restore these
two moments by adding an independent random variable supported on
$\{-T,0,T\}$.  Its probabilities are chosen so that the corrected
variable has mean zero and variance one.

Now assume that $X$ has a bounded density.  The discrete correction
above is not suitable because it creates atoms.  We therefore replace
$X$ on a small event by a uniform random variable on a suitable
interval.  The replacement is chosen to preserve mean zero and
variance one, and it gives
\[
 \|f_{\widetilde X_T}\|_\infty\le M_0+C/T.
\]

\vspace{0.5cm}

\begin{proof}[Proof of Proposition~\ref{prop:truncation}]
Let $q(x)=\sqrt{\log(e+x)}$ or $q(x)=\log(e+x)$ and let $\varepsilon(T)=\varepsilon_{\psi,q}(T)$ as in \eqref{eq:epsilon-definition}. Clearly, we have for $x\ge T$ and $r\in\{2,4\}$, 
\begin{equation}\label{eq:truncation-tail-comparison}
 \frac{\psi(x)}{x^r}
 \le \varepsilon(T)\frac{q(x)}{x^r}
 \le C\varepsilon(T)\frac{q(T)}{T^r}.
\end{equation}
Multiplying \eqref{eq:truncation-tail-comparison} by $|X|^4/\psi(|X|)$ and taking expectations give
\begin{align*}
  \E\left[ |X|^{4-r} \1_{\{|X|>T\}}\right] \leq CK\varepsilon(T)\frac{q(T)}{T^r}.
\end{align*}
Then taking $r=4$ and $r=2$ gives
\begin{align}
 \Pp(|X|>T)
 &\le CK\varepsilon(T)\frac{q(T)}{T^4},
 \label{eq:truncation-tail-probability}\\
 v_T:=\E[X^2\1_{\{|X|>T\}}]
 &\le CK\varepsilon(T)\frac{q(T)}{T^2}.
 \label{eq:truncation-tail-second}
\end{align}
On the retained region, monotonicity of $\psi$ and the definition of $\varepsilon(T)$ yield
\begin{equation}\label{eq:truncation-retained-fourth}
 \E[|X|^4\1_{\{|X|\le T\}}]
 \le K\psi(T)
 \le K\varepsilon(T)q(T).
\end{equation}
Finally, since $\psi=o(q)$ and $q(x)=o(x)$, there exists
$R_\psi\ge1$, depending only on $\psi$, such that $\psi(x)\le x$, $x\ge R_\psi$.
Consequently,
\begin{equation}\label{eq:truncation-original-third}
 \begin{aligned}
 \E|X|^3
 &\le
 \E\bigl[|X|^3\1_{\{|X|\le R_\psi\}}\bigr]
 +
 \E\bigl[|X|^3\1_{\{|X|>R_\psi\}}\bigr]\\
 &\le
 R_\psi\E X^2
 +
 \E\frac{|X|^4}{\psi(|X|)}\\
 &\le
 R_\psi+K
 =:C_{K,\psi}.
 \end{aligned}
\end{equation}
We first construct the discrete correction.  Define
\begin{equation}\label{eq:discrete-correction-data}
 X_0=X\1_{\{|X|\le T\}},\qquad
 m_T=\E[X\1_{\{|X|>T\}}],\qquad
 s_T=v_T+2m_T^2.
\end{equation}
Cauchy--Schwarz in the form $|m_T|\le v_T/T$ shows that, for large $T$, the following three numbers are nonnegative and sum to one:
\begin{equation}
\begin{aligned}\label{eq:discrete-correction-law}
 \Pp(Z_T=T)&=\frac12\left(\frac{s_T}{T^2}+\frac{m_T}{T}\right),
 &\Pp(Z_T=-T)&=\frac12\left(\frac{s_T}{T^2}-\frac{m_T}{T}\right),\\
 \Pp(Z_T=0)&=1-\frac{s_T}{T^2}.&&
\end{aligned}  
\end{equation}
Take $Z_T$ independent of $X$ and put $\widetilde X_T=X_0+Z_T$.  The choice \eqref{eq:discrete-correction-law} is dictated by the two equations $\E Z_T=m_T$ and $\E Z_T^2=s_T$: using only the two atoms $\pm T$ would generally force total mass larger than one, whereas the additional atom at zero makes the total correction probability as small as the lost second moment.

Since $\E X_0=-m_T$, independence and \eqref{eq:discrete-correction-data} give
\begin{align}
 \E\widetilde X_T&=-m_T+m_T=0,
 \label{eq:discrete-correction-mean}\\
 \E\widetilde X_T^2
 &=\E X_0^2+2(\E X_0)(\E Z_T)+\E Z_T^2
 =(1-v_T)-2m_T^2+s_T=1.
 \label{eq:discrete-correction-variance}
\end{align}
Moreover, $s_T\le2v_T$ for all sufficiently large $T$.  Therefore \eqref{eq:truncation-tail-probability}, \eqref{eq:truncation-tail-second} and \eqref{eq:discrete-correction-law} imply
\begin{equation}\label{eq:discrete-correction-change}
 \Pp(\widetilde X_T\ne X)
 \le \Pp(|X|>T)+\Pp(Z_T\ne0)
 \le CK\varepsilon(T)\frac{q(T)}{T^4}.
\end{equation}
The correction satisfies
\begin{equation}\label{eq:discrete-correction-moments}
 \E|Z_T|^4=T^2s_T\le CK\varepsilon(T)q(T),
 \qquad
 \E|Z_T|^3=Ts_T\le C_K.
\end{equation}
Combining \eqref{eq:truncation-retained-fourth}, \eqref{eq:truncation-original-third}, \eqref{eq:discrete-correction-change} and \eqref{eq:discrete-correction-moments} with $|u+v|^r\le2^{r-1}(|u|^r+|v|^r)$ proves \eqref{eq:discrete-truncation-coupling}--\eqref{eq:discrete-truncation-moments}.

Assume now that $X$ has a density bounded by $M_0$.  If $v_T=0$, then $X$ is already supported in $[-T,T]$ almost surely, and taking $\widetilde X_T=X$ proves every assertion of the continuous part.  We may therefore assume $v_T>0$.  A discrete atom would destroy absolute continuity, so we replace the tail by a short continuous distribution.  Let $A_T=\{|X|>T\}$ and set
\begin{equation}\label{eq:continuous-correction-probability}
 p_T=\Pp(A_T),\qquad
 q_T=\frac{2v_T}{T^2}.
\end{equation}
Because $v_T\ge T^2p_T$, one has $q_T\ge2p_T$. And for sufficient large $T$, $q_T\le1$ because $v_T\le1$.

We enlarge the event $A_{T}$ to an event $E_{T}$ of probability $q_{T}$, and then replace $X$ on $E_{T}$ by some continuous random variable $R_T$ which is independent of all preceding variables.  The enlargement is done by means of an independent uniform randomizer, as follows.
\begin{equation}
  \begin{aligned}
    E_{T} = A_{T} \cup \left\{ A_{T}^{c}, U < \frac{q_T - p_T}{1 - p_T} \right\},  \qquad \Pp(E_T) = q_T,  \nonumber
  \end{aligned}
\end{equation}
where $U$ is uniform on $[0,1]$ and independent of all preceding variables.
If
\begin{equation}\label{eq:continuous-correction-data}
 m_E=\E[X\1_{E_T}],\qquad v_E=\E[X^2\1_{E_T}],\qquad
 \mu_T=\frac{m_E}{q_T},\qquad
 \sigma_T^2=\frac{v_E}{q_T}-\mu_T^2,
\end{equation}
then $2p_{T} \leq q_{T} \leq 1$ and $\E X=0$ implying $|m_E|\le|m_T|$.
Also, the added part of $E_T$ lies in $\{|X|\le T\}$, so
\begin{equation}\label{eq:continuous-correction-range}
 v_T\le v_E\le v_T+T^2q_T=3v_T,
 \qquad |\mu_T|\le T/2,
 \qquad T^2/4\le\sigma_T^2\le3T^2/2.
\end{equation}
Let $V$ be uniformly distributed on $[-\sqrt3,\sqrt3]$, independent of all preceding variables, set $R_T=\mu_T+\sigma_TV$, and define
\begin{equation}\label{eq:continuous-correction-variable}
 \widetilde X_T=X\1_{E_T^c}+R_T\1_{E_T}.
\end{equation}
The definitions in \eqref{eq:continuous-correction-data} give
\begin{equation}\label{eq:continuous-correction-matching}
 \E\widetilde X_T=\E[X\1_{E_T^c}]+q_T\mu_T=0,
 \qquad
 \E\widetilde X_T^2=\E[X^2\1_{E_T^c}]+q_T(\mu_T^2+\sigma_T^2)=1.
\end{equation}
By \eqref{eq:continuous-correction-range}, $|R_T|<3T$.  Equations \eqref{eq:truncation-tail-second} and \eqref{eq:continuous-correction-probability} give
\begin{equation}\label{eq:continuous-correction-change}
 \Pp(\widetilde X_T\ne X)=q_T
 \le CK\varepsilon(T)\frac{q(T)}{T^4}.
\end{equation}

Since $\Pp(E_T)=q_T=2v_T/T^2$, $R_T$ is independent of $E_T$, and
$|R_T|\le3T$, the contribution of the replacement part to the fourth
moment satisfies
\[
 \begin{aligned}
 q_T\E|R_T|^4
 &\le
 81q_TT^4
 =
 162T^2v_T
 \le
 CK\varepsilon(T)q(T),
 \end{aligned}
\]
where the last inequality follows from
\eqref{eq:truncation-tail-second}.  Similarly,
\[
 \begin{aligned}
 q_T\E|R_T|^3
 &\le
 27q_TT^3
 =
 54Tv_T
 =
 54\E\left[
 TX^2\1_{\{|X|>T\}}
 \right]
 \le
 54\E\left[
 |X|^3\1_{\{|X|>T\}}
 \right]
 \le
 54\E|X|^3.
 \end{aligned}
\]
Since $E_T$ contains $\{|X|>T\}$, it follows from
\eqref{eq:truncation-retained-fourth} that
\[
 \begin{aligned}
 \E|\widetilde X_T|^4
 &=
 \E\left[|X|^4\1_{E_T^c}\right]
 +
 q_T\E|R_T|^4\\
 &\le
 \E\left[
 |X|^4\1_{\{|X|\le T\}}
 \right]
 +
 q_T\E|R_T|^4\\
 &\le
 CK\varepsilon(T)q(T).
 \end{aligned}
\]
Moreover, by \eqref{eq:truncation-original-third},
\[
 \begin{aligned}
 \E|\widetilde X_T|^3
 &=
 \E\left[|X|^3\1_{E_T^c}\right]
 +
 q_T\E|R_T|^3
 \le
 C\E|X|^3
 \le
 C_{K,\psi}.
 \end{aligned}
\]
Thus, we prove the moment bounds in \eqref{eq:discrete-truncation-moments}. And it remains to check the density, which is the reason for the uniform replacement.  The retained component has density at most $M_0$, because its density is the original density multiplied by a number in $[0,1]$.  Conditional on replacement, $R_T$ has density $(2\sqrt3\sigma_T)^{-1}$ on an interval; hence its unconditional contribution is at most $Cq_T/T\le C/T$.  This proves \eqref{eq:continuous-truncation-density} and completes the proof of Proposition~\ref{prop:truncation}.
\end{proof}

\vspace{0.5cm}

\subsection{Common scales and notation for the determinant estimates}
\label{subsec:common-determinant-setup}

The proofs of Propositions~\ref{prop:det-growing-fourth} and
\ref{prop:exact-growing-fourth} use the same parameter scales and
Gram--Schmidt projection notation.  We introduce this common setup
before treating the deterministic- and exact-centering cases
separately.

Set
\begin{equation}\label{eq:common-parameters}
 \ell=\log n,\qquad a=20,\qquad
 s_1=\lfloor\ell^{3a}\rfloor,\qquad
 s_2=\lfloor n\ell^{-20a}\rfloor,\qquad
 m=n-s_1,\qquad \lambda=n^{-1/6}.
\end{equation}
We assume that $n$ is large enough that $1\le s_1<s_2<n/2$.

For any $n\times n$ random matrix $A_{n}$, let $a_1^T,\ldots,a_n^T\in\R^n$ be the rows of $A_{n}$, and let $A(i)$ be the $i\times n$ matrix consisting of the first \(i\) rows of $A_n$
define
\begin{align}
 \cV_i&=\Span(a_1,\ldots,a_i),
 &P_i&=I_n-A(i)^T(A(i)A(i)^T)^{-1}A(i), \quad P_0:=I_n, 
 \label{eq:common-projections}\\
 k_i&=n-i,
 &Q_i&=k_i^{-1}P_i.
 \label{eq:common-normalized-projections}
\end{align}
Here $P_{i}$ is the orthogonal projection onto $\cV_i^\perp$, and $\cV_i^\perp$ is the set of vectors orthogonal to every vector in $\cV_i$, and $P_ix$ is the component of a vector $x$ lying in that orthogonal subspace.  Thus $P_i$ is completely determined by $\cV_i$.  Write $P_i=(p_{rs}(i))$ and $Q_i=(q_{rs}(i))$, so that $p_{rr}(i)$ are the diagonal entries of the projection matrix and $q_{rr}(i)=p_{rr}(i)/k_i$ are their normalized versions.
In every application below, the first $i$ rows are linearly independent almost surely.  Consequently, $P_i$ has rank $k_i$, and
\begin{equation}\label{eq:common-projection-traces}
 \tr Q_i=k_i^{-1}\tr P_i=1,
 \qquad
 \tr Q_i^2=k_i^{-2}\tr P_i=k_i^{-1}.
\end{equation}
When the first $i$ rows are revealed, let $\cF_i=\sigma(a_1,\ldots,a_i)$ be the filtration, and write $\E_i=\E(\,\cdot\mid\cF_i)$ and $\Pp_i=\Pp(\,\cdot\mid\cF_i)$. Let
\begin{equation}\label{eq:common-heights}
 Z_{i+1}=a_{i+1}^TQ_i a_{i+1},\qquad
 d_i=\sum_{r=1}^nq_{rr}(i)^2.
\end{equation}
Hence $Z_{i+1}$ is the normalized squared Gram--Schmidt height of row $a_{i+1}$, while $d_i$ is the squared Euclidean energy of the normalized diagonal-projection vector.
We call the first $m=n-s_1$ rows the \emph{bulk} and the last $s_1$ rows the \emph{terminal block}.  Finally, $A_n^{\mathrm{hyb}}$ denotes the matrix obtained from $A_n$ by replacing its terminal block by independent standard Gaussian rows.

In the next two Sections~\ref{subsec:proof-prop-det-growing-fourth} and \ref{subsec:proof-prop-exact-growing-fourth}, we prove Propositions~\ref{prop:det-growing-fourth} and \ref{prop:exact-growing-fourth}, respectively, using the common setup above.

\subsection{Proof of Proposition~\ref{prop:det-growing-fourth}}\label{subsec:proof-prop-det-growing-fourth}
Our goal is to prove \eqref{eq:det-growing-bound}.  We compare $B_n$
with the hybrid matrix $B_n^{\mathrm{hyb}}$.  By the triangle
inequality,
\[
 \dk\bigl(W_n^{\mathrm d}(B_n),\Normal\bigr)
 \le
 \dk\bigl(
 W_n^{\mathrm d}(B_n),
 W_n^{\mathrm d}(B_n^{\mathrm{hyb}})
 \bigr)
 +
 \dk\bigl(
 W_n^{\mathrm d}(B_n^{\mathrm{hyb}}),
 \Normal
 \bigr).
\]
We estimate the two terms on the right separately.

For the first term, we replace the last $s_1$ rows one at a time.
The total replacement error is
\begin{align}\label{eq:total-replacement-error}
 \dk\bigl(
 W_n^{\mathrm d}(B_n),
 W_n^{\mathrm d}(B_n^{\mathrm{hyb}})
 \bigr)
 \le C\ell^{-a}.
\end{align}

For the second term, we use the Gram--Schmidt decomposition of
$B_n^{\mathrm{hyb}}$.  It splits the logarithmic determinant into a
non-Gaussian bulk part and a Gaussian terminal part.  We estimate the
two parts separately and obtain
\begin{equation}\label{eq:second-term-error}
 \dk\bigl(
 W_n^{\mathrm d}(B_n^{\mathrm{hyb}}),
 \Normal
 \bigr)
 \le CM\ell^{-1/2}.
\end{equation}

Combining the two bounds gives
\[
 \dk\bigl(W_n^{\mathrm d}(B_n),\Normal\bigr)
 \le
 C\ell^{-a}+CM\ell^{-1/2}
 \le
 CM\ell^{-1/2},
\]
where the last inequality uses $a=20$ and $M\ge3$.  This proves
\eqref{eq:det-growing-bound}.

\vspace{0.5cm}

\begin{proof}[Proof of Proposition~\ref{prop:det-growing-fourth}]

Some formulas below require full rank, for example the formula for $P_i$
contains $(A(i)A(i)^T)^{-1}$ and this inverse exists when the first
$i$ rows are linearly independent.  The next lemma shows that we may
assume that every square submatrix of $B_n$ is invertible almost surely.

\vspace{0.5cm}

\begin{lemma}[Reduction to almost-sure nonsingularity]\label{lem:det-smoothing}
It is enough to prove Proposition~\ref{prop:det-growing-fourth} under the additional assumption that all square submatrices of $B_n$ are invertible almost surely.
\end{lemma}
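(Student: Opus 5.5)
The reduction I would use is a Gaussian smoothing of the entries. Given $B_n$ satisfying \eqref{eq:det-growing-assumptions} with $M\le K_0\sqrt{\log n}$, fix a small parameter $\theta=\theta_n>0$. Let $G_n=(g_{ij})$ be an independent matrix of i.i.d.\ standard Gaussians, and put
\[
 B_n^\theta=\frac{B_n+\theta G_n}{\sqrt{1+\theta^2}}.
\]
The entries of $B_n^\theta$ are independent, centered, of variance one and absolutely continuous. Hence every square submatrix has a determinant that is a nonzero polynomial in absolutely continuous independent variables. Conditioning on all but one entry, the determinant is an affine function of that entry, with a coefficient that is a.s.\ nonzero by induction on the size. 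So $B_n^\theta$ has a.s.\ invertible square submatrices.

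Next I check that the hypotheses transfer with harmless constants. The third moment satisfies $\E|b^\theta_{ij}|^3\le C(R_3+1)$. The fourth moment satisfies $\E|b^\theta_{ij}|^4\le \E b_{ij}^4+6\theta^2+3\theta^4$, which is at most $2M$ for $\theta\le1$. So the max with $3$ still gives $M^\theta\le 2M\le 2K_0\sqrt{\log n}$. Applying the proposition in the nonsingular case then gives
\[
 \dk(W_n^{\mathrm d}(B_n^\theta),\Normal)\le C(2K_0,CR_3)\,M/\sqrt{\log n}.
\]

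It remains to let $\theta\downarrow0$ for fixed $n$. As $\theta\to0$ we have $B_n^\theta\to B_n$ almost surely. The map $A\mapsto\log|\det A|\in[-\infty,\infty)$ is continuous on this extended range, because $\det$ is continuous and $\log|\cdot|$ extends continuously to $-\infty$ at $0$. Hence $W_n^{\mathrm d}(B_n^\theta)\to W_n^{\mathrm d}(B_n)$ almost surely as extended random variables, and therefore in distribution. For each $x\in\R$ this gives
\[
 \Pp(W_n^{\mathrm d}(B_n)\le x)\le\liminf_{\theta}\Pp(W_n^{\mathrm d}(B_n^\theta)\le x)
\]
and
\[
 \Pp(W_n^{\mathrm d}(B_n)<x)\ge\limsup_{\theta}\Pp(W_n^{\mathrm d}(B_n^\theta)<x).
\]
Both one-sided limits are within the bound above of $\Phi(x)$. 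Since $\Phi$ is continuous, taking $x'\downarrow x$ in the second inequality gives $|\Pp(W_n^{\mathrm d}(B_n)\le x)-\Phi(x)|\le C M/\sqrt{\log n}$ for every $x$. The atom at $-\infty$ from singular $B_n$ is handled automatically, because it is counted in $\{W\le x\}$ for every $x$.

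The only delicate point is the a.s.\ nonsingularity of all minors of the smoothed matrix, including minors that mix perturbed entries. The affine-in-one-entry induction handles this cleanly: the event that the coefficient vanishes has probability zero by induction, and given it is nonzero, the entry has a density, so the determinant is a.s.\ nonzero. Everything else is bookkeeping of constants. The constant in \eqref{eq:det-growing-bound} depends only on $K_0$ and $R_3$, so replacing them by $2K_0$ and $C(R_3+1)$ does not affect the statement.
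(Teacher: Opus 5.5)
Your overall route is the paper's: perturb by small independent absolutely continuous noise, renormalize by $\sqrt{1+\theta^2}$, check that the third- and fourth-moment hypotheses survive with adjusted constants, apply the nonsingular case, and let the noise level tend to zero for fixed $n$. The paper uses uniform noise and argues that, conditionally on $B_n$, $\det(B_n[I,J]+\rho\Theta_n[I,J])$ is a nonzero polynomial in the noise, with top-degree part $\rho^{|I|}\det\Theta_n[I,J]$. Your Gaussian noise combined with the cofactor induction is equally valid and a bit more elementary. There is one harmless slip: $\E b_{ij}^4+6\theta^2+3\theta^4\le 2M$ needs, e.g., $\theta\le 1/2$. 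If you keep the factor $(1+\theta^2)^{-2}$, you get $\E|b^\theta_{ij}|^4\le\max\{\E b_{ij}^4,3\}\le M$ directly.

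The step that is wrong as written is the passage to the limit: both of your displayed inequalities point the wrong way. Write $W=W_n^{\mathrm d}(B_n)$ and $W^\theta=W_n^{\mathrm d}(B_n^\theta)$. Almost sure convergence in $[-\infty,\infty)$ gives $\limsup_\theta\Pp(W^\theta\le x)\le\Pp(W\le x)$ for the closed half-line, and $\Pp(W<x)\le\liminf_\theta\Pp(W^\theta<x)$ for the open one. Your versions fail whenever $W$ has an atom at $x$ that $W^\theta$ approaches from the wrong side; think of $W^\theta=W+\theta$ for the first and $W^\theta=W-\theta$ for the second. Atoms are exactly what discrete entries produce, which is the case this lemma exists for.

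The repair is immediate. Let $\epsilon$ be the uniform bound on $\dk(W^\theta,\Normal)$. The closed-set inequality gives $\Pp(W\le x)\ge\limsup_\theta\Pp(W^\theta\le x)\ge\Phi(x)-\epsilon$. For $x'>x$ one has $\Pp(W\le x)\le\Pp(W<x')\le\liminf_\theta\Pp(W^\theta\le x')\le\Phi(x')+\epsilon$, and letting $x'\downarrow x$ gives the matching upper bound. The paper reaches the same conclusion differently: it uses $F_q(x)\to F(x)$ at continuity points of $F$, then extends to all $x$ by density of those points, right-continuity of $F$, and continuity of $\Phi$.
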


\vspace{0.5cm}

\begin{proof}[Proof of Lemma~\ref{lem:det-smoothing}]
Assume that Proposition~\ref{prop:det-growing-fourth} has been proved under the additional condition that all its square submatrices are invertible almost surely. We show that the same estimate then holds without this additional condition.

Fix $n$.  Let $\Theta_n=(\theta_{ij})$ have independent entries,
independent of $B_n$, with each $\theta_{ij}$ uniformly distributed on
$[-\sqrt3,\sqrt3]$, and set
\[
 B_n^{(\rho)}
 =
 \frac{B_n+\rho\Theta_n}{\sqrt{1+\rho^2}},
 \qquad \rho>0.
\]
The entries of $B_n^{(\rho)}$ remain independent.  Moreover, since
$b_{ij}$ and $\theta_{ij}$ are independent, centered, and have variance
one,
\[
 \E b_{ij}^{(\rho)}=0,
 \qquad
 \E\bigl(b_{ij}^{(\rho)}\bigr)^2=1.
\]

We first verify the required nonsingularity.  Fix any square submatrix
with row and column sets $I$ and $J$, where $|I|=|J|$.  Conditionally
on $B_n$,
\[
 \det B_n^{(\rho)}[I,J]
 =
 (1+\rho^2)^{-|I|/2}
 \det\bigl(B_n[I,J]+\rho\Theta_n[I,J]\bigr)
\]
is a nonzero polynomial in the entries of $\Theta_n[I,J]$: its
homogeneous part of highest degree is
$\rho^{|I|}\det\Theta_n[I,J]$.  Since these noise variables have a
joint density, the polynomial vanishes with conditional probability
zero.  There are only finitely many square submatrices, so all square
submatrices of $B_n^{(\rho)}$ are invertible almost surely.

For $p=3,4$, the inequality
$|x+y|^p\le2^{p-1}(|x|^p+|y|^p)$ gives, uniformly in
$0<\rho\le1$,
\[
 \E|b_{ij}^{(\rho)}|^p
 \le
 C_p\{\E|b_{ij}|^p+\E|\theta_{ij}|^p\}.
\]
Consequently,
\[
 \max_{i,j}\E|b_{ij}^{(\rho)}|^3\le C(R_3+1),
 \qquad
 M_\rho
 :=
 \max\left\{3,\max_{i,j}\E|b_{ij}^{(\rho)}|^4\right\}
 \le CM.
\]
In particular, $M\le K_0\sqrt{\log n}$ implies
$M_\rho\le CK_0\sqrt{\log n}$.  Applying the Proposition~\ref{prop:det-growing-fourth} under the additional nonsingularity assumption, with $K_0$ and $R_3$ replaced
by fixed multiples if necessary, yields
\[
 \dk\bigl(W_n^{\mathrm d}(B_n^{(\rho)}),\Normal\bigr)
 \le
 C(K_0,R_3)
 \left\{
 (\log n)^{-1/2}
 +
 \frac{M}{\sqrt{\log n}}
 \right\},
 \qquad 0<\rho\le1,
\]
where the constant is independent of $\rho$.

Now let $\rho_q\downarrow0$.  Using the same noise matrix
$\Theta_n$ for every $q$, we have
$B_n^{(\rho_q)}\to B_n$ entrywise almost surely, and hence
$\det B_n^{(\rho_q)}\to\det B_n$.  If $\det B_n\ne0$, the logarithms
converge in the usual sense; if $\det B_n=0$, then
$\log|\det B_n^{(\rho_q)}|\to-\infty$.  Therefore
\[
 W_n^{\mathrm d}(B_n^{(\rho_q)})
 -
 W_n^{\mathrm d}(B_n)
 \longrightarrow 0
\]
almost surely in the extended real line.

Let $F_q$ and $F$ be the corresponding distribution functions and
let $\Phi$ be the standard normal distribution function.  At every
continuity point $x$ of $F$, one has $F_q(x)\to F(x)$, and hence
\[
 |F(x)-\Phi(x)|
 \le
 \liminf_{q\to\infty}
 \dk\bigl(W_n^{\mathrm d}(B_n^{(\rho_q)}),\Normal\bigr).
\]
Since the continuity points of $F$ are dense, while $F$ is
right-continuous and $\Phi$ is continuous, taking the supremum over
$x\in\mathbb R$ gives
\[
 \dk\bigl(W_n^{\mathrm d}(B_n),\Normal\bigr)
 \le
 \liminf_{q\to\infty}
 \dk\bigl(W_n^{\mathrm d}(B_n^{(\rho_q)}),\Normal\bigr).
\]
Combining the last two displays proves the desired bound for $B_n$,
and hence we complete the proof.
\end{proof}

\vspace{0.5cm}

We now return to the proof of Proposition~\ref{prop:det-growing-fourth}.
From now on, we assume that every square submatrix of $B_n$ is
invertible almost surely.

We now prove the bound for the first term \eqref{eq:total-replacement-error} in the proof
strategy.  We replace the last $s_1$ rows by independent standard
Gaussian rows, one row at a time.  By the triangle inequality, the
total replacement error is at most the sum of the $s_1$ one-row
errors. It is enough to prove a bound $C\ell^{-4a}$ for each step, since
\[
 C s_1\ell^{-4a}
 \le
 C\ell^{3a}\ell^{-4a}
 =
 C\ell^{-a}.
\]
The next lemma proves this one-row bound.

\vspace{0.5cm}

\begin{lemma}[Replacing one terminal row]\label{lem:det-one-row}
Let $C_n$ and $\overline C_n$ have identical first $n-1$ rows satisfying the assumptions of Proposition~\ref{prop:det-growing-fourth}; suppose the last row of $C_n$ has independent mean-zero, variance-one entries with third absolute moments at most $R_3$, and the last row of $\overline C_n$ is independent standard Gaussian.  If all square submatrices are invertible almost surely, then
\begin{equation}\label{det:eq:one-row}
 \sup_x|\Pp(W_n^{\mathrm d}(C_n)\le x)-\Pp(W_n^{\mathrm d}(\overline C_n)\le x)|
 \le C\ell^{-4a}.
\end{equation}
\end{lemma}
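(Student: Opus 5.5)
Conditioning on the first $n-1$ rows $\cF_{n-1}$ reduces the lemma to a one-dimensional comparison. By the base-times-height formula,
\[
 \mathcal L_n=\log|\det C(n-1)C(n-1)^T|^{1/2}+\tfrac12\log\bigl(a_n^TP_{n-1}a_n\bigr),
\]
where the first term and $P_{n-1}$ are common to $C_n$ and $\overline C_n$. Here $k_{n-1}=1$, so $P_{n-1}=uu^T$ for a unit vector $u\perp\cV_{n-1}$, which is $\cF_{n-1}$-measurable and almost surely well defined by the nonsingularity assumption. The last-row contribution is therefore $\log|\langle u,a_n\rangle|$ for $C_n$ and $\log|\langle u,g\rangle|$ for $\overline C_n$, where $g$ is standard Gaussian.

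Write $Y=\langle u,a_n\rangle$. Conditionally on $\cF_{n-1}$, $\langle u,g\rangle\sim\Normal$ exactly. Writing $x'=x\sqrt{\ell/2}+\frac12\log(n-1)!-\mathcal L_{n-1}$, the event $\{W_n^{\mathrm d}\le x\}$ becomes $\{\log|Y|\le x'\}$, i.e. $\{|Y|\le e^{x'}\}$. Hence
\[
 \bigl|\Pp(W_n^{\mathrm d}(C_n)\le x)-\Pp(W_n^{\mathrm d}(\overline C_n)\le x)\bigr|
 \le \E\sup_{t\ge0}\bigl|\Pp_{n-1}(|Y|\le t)-\Pp(|G|\le t)\bigr|
 \le 2\,\E\,\dk(Y\mid\cF_{n-1},\Normal).
\]
By the Berry--Esseen theorem for independent non-identical summands $u_ja_{nj}$ (whose variances sum to $\|u\|^2=1$),
\[
 \dk(Y\mid\cF_{n-1},\Normal)\le C R_3\sum_j|u_j|^3\le CR_3\|u\|_\infty .
\]
The lemma thus reduces to a delocalization estimate for the unit normal vector $u$ of the hyperplane spanned by $n-1$ independent rows:
\[
 \E\min\{1,\|u\|_\infty\}\le C\ell^{-4a}.
\]
This estimate is the main obstacle. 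Since $\ell^{-4a}=(\log n)^{-80}$ is a very weak requirement, it suffices to show $\Pp(\|u\|_\infty>n^{-c})\le C\ell^{-4a}$ for some small $c>0$.

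I would attempt this through the standard Nguyen--Vu/Tao--Vu argument. Write $u_r^2=p_{rr}(n-1)$, the squared distance-type quantity obtained via the column structure: $|u_r|$ equals the ratio $|\det C^{(r)}|/\bigl(\sum_s\det (C^{(s)})^2\bigr)^{1/2}$ of cofactors of the $(n-1)\times n$ matrix. Equivalently, $u_r^{-2}=1+\|w\|^2$, where $w$ solves a linear system $Dw=c_r$ with $D$ the $(n-1)\times(n-1)$ submatrix omitting column $r$ and $c_r$ column $r$. Then $\|w\|^2\ge \|c_r\|^2/\sigma_{\max}(D)^2$ is too weak, so instead I would use the identity $\|w\|^2=\sum_k\langle v_k,c_r\rangle^2/\sigma_k^2$ over singular pairs of $D$. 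Since $c_r$ is independent of $D$ with independent unit-variance entries, a Hanson--Wright/second-moment bound using bounded third moments and $\E|b|^4\le M\le K_0\sqrt\ell$ gives that this projection mass is at least $c\,n^{-1}\sum_k\sigma_k^{-2}\cdot(\text{bulk})$ with high probability. Combining this with the bound $\sigma_k\le C\sqrt n$ for a positive fraction of $k$ (a Frobenius-norm/Markov count, with no operator-norm estimate needed) yields $\|w\|^2\ge c n^{1-\epsilon}$ outside probability $O(n^{-c})$. A union bound over $r\le n$ then requires per-$r$ failure probability $o(n^{-1}\ell^{-4a})$. I expect this to be the hardest quantitative step, since fourth-moment-only entries do not give exponential concentration; I would try first a truncated fourth-moment version of Hanson--Wright applied to $\langle P c_r,c_r\rangle$ with $P$ the projection onto the span of the top $n/2$ singular directions, controlling the variance by $M\,\tr P^2\lesssim \sqrt\ell\, n$. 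If this only gives polynomially small but insufficient probabilities, I would instead bound $\E\|u\|_\infty\le \bigl(\E\sum_r u_r^{2p}\bigr)^{1/(2p)}$ using exchangeability-free moment bounds on $u_r^2=1/(1+\|w\|^2)$.
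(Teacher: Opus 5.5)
\textbf{There is a genuine gap in the delocalization step.} Your reduction is the same as the paper's. Expanding along the last row, the conditional Berry--Esseen bound gives a Kolmogorov error $CR_3\,\E\max_j|w_j|$ with $w_j^2=p_{jj}(n-1)$. Everything therefore hinges on $\E\max_r p_{rr}(n-1)\le C\ell^{-8a}$, followed by Jensen; your plan for this bound does not work.

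The identity $u_r^{-2}=1+\|D^{-1}c_r\|^2$ is correct, but your lower bound for $\|D^{-1}c_r\|^2=\sum_k\langle U_k,c_r\rangle^2/\sigma_k^2$ fails. The Frobenius/Markov count only tells you that $\sigma_k\le C\sqrt n$ for a positive fraction of $k$, i.e.\ $\sigma_k^{-2}\ge c/n$ on a subspace of dimension $\asymp n$. Since $\|Pc_r\|^2\approx\rank P$, this yields only $\|D^{-1}c_r\|^2\gtrsim 1$ and $u_r^2\le c<1$, not $\|D^{-1}c_r\|^2\ge cn^{1-\epsilon}$. The same happens for any rank-$n/2$ projection $P$, whichever half of the spectrum it sees, because the median singular value is of order $\sqrt n$.

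What actually makes $p_{rr}$ small is that the square matrix $D$ has many singular values far below $\sqrt n$. This is hard-edge Marchenko--Pastur behaviour: roughly $n\delta$ of them lie below $\delta\sqrt n$, which gives $\|D^{-1}c_r\|^2\gtrsim\delta^{-1}$. That is a spectral input no norm count supplies, and your proposal has no substitute for it. There is a second problem: the unregularized $\|D^{-1}c_r\|^2$ is dominated by the few smallest singular values. A Hanson--Wright or second-moment bound around its conditional mean $\tr(DD^T)^{-1}$ therefore will not give the per-$r$ failure probability $o(n^{-1}\ell^{-4a})$ that your union bound requires.

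The paper handles both problems by regularizing at $\lambda=n^{-1/6}$.
\begin{itemize}
\item From $(A_rA_r^T)^{-1}\succeq n^{-1}G_r$ with $G_r=(n^{-1}A_rA_r^T+\lambda I)^{-1}$, one gets $p_{rr}\le(1+n^{-1}b_r^TG_rb_r)^{-1}$.
\item The conditional mean $T_r=n^{-1}\tr G_r$ satisfies $T_r\ge T$ by rank-one interlacing, so a single event controls all $r$ at once.
\item By Lemma~\ref{lem:regularized-inverse}, $\E T$ is within $C(n^{-2/3}+Mn^{-1/2})$ of the Marchenko--Pastur value $ys_y(\lambda)\ge c\min\{(1-y)^{-1},\lambda^{-1/2}\}\ge c\ell^{20a}$, and $\Var(T)\le Cn^{-2/3}$ by Efron--Stein. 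This is exactly the small-singular-value count, in resolvent form.
\item Because $\|G_r\|\le\lambda^{-1}$, the quadratic form then concentrates. After truncating $b_r$ at $\ell^a$, the fourth-moment bounds of Lemma~\ref{lem:quadratic-forms} give per-$r$ failure probabilities at most of order $M^2n^{-4/3}$ for both the diagonal and off-diagonal parts. These survive the union bound over $r$ using only fourth moments.
\end{itemize}
To complete your argument you would need this regularized-resolvent step or an equivalent hard-edge estimate. The Hanson--Wright refinements and the moment fallback you mention both presuppose the missing lower bound on $\|D^{-1}c_r\|^2$ rather than supply it.
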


\vspace{0.5cm}

\begin{proof}[Proof of Lemma~\ref{lem:det-one-row}]
Let $C(n-1)$ be the common $(n-1)\times n$ matrix formed by the first
$n-1$ rows of $C_n$ and $\overline C_n$, and let
$\cF_{n-1}=\sigma(C(n-1))$.  Write the last rows of the two matrices as
$(c_{n1},\ldots,c_{nn})$ and
$(\overline c_{n1},\ldots,\overline c_{nn})$, respectively.

For each $1\le j\le n$, let $\alpha_j$ be the cofactor corresponding
to the $(n,j)$ entry.  Since the two matrices have the same first
$n-1$ rows, the cofactors $\alpha_1,\ldots,\alpha_n$ are the same for
both matrices and are $\cF_{n-1}$-measurable.  Expansion along the last
row gives
\[
 \det C_n=\sum_{j=1}^n\alpha_jc_{nj},
 \qquad
 \det\overline C_n=\sum_{j=1}^n\alpha_j\overline c_{nj}.
\]
The nonsingularity assumption implies that the common first $n-1$
rows have rank $n-1$ almost surely.  Hence the vector
$(\alpha_1,\ldots,\alpha_n)$ is nonzero.  Put $\Lambda=\left(\sum_{j=1}^n\alpha_j^2\right)^{1/2}$, $w_j=\frac{\alpha_j}{\Lambda}$.
Then $\Lambda>0$ and $\sum_jw_j^2=1$.  With
\[
 S=\sum_{j=1}^nw_jc_{nj},
 \qquad
 \overline S=\sum_{j=1}^nw_j\overline c_{nj},
 \qquad 
 \overline S \mid \mathcal F_{n-1} \sim \Normal,
\]
the two determinant expansions become
\begin{equation}\label{eq:det-one-row-linear-forms}
 \det C_n=\Lambda S,
 \qquad
 \det\overline C_n=\Lambda\overline S.
\end{equation}

Conditionally on $\cF_{n-1}$, the coefficients $w_j$ are deterministic,
while the variables $c_{n1},\ldots,c_{nn}$ are independent, centered,
have variance one, and have third absolute moments at most $R_3$.
Consequently, $\sum_{j=1}^n \E\left((w_jc_{nj})^2\mid\cF_{n-1}\right) = \sum_{j=1}^nw_j^2 = 1$,
and
$
\sum_{j=1}^n
\E\left(|w_jc_{nj}|^3\mid\cF_{n-1}\right)
\le
R_3\sum_{j=1}^n|w_j|^3
\le
R_3\max_j|w_j|$.
The classical Berry--Esseen inequality therefore gives
\begin{equation}\label{eq:det-one-row-BE}
 \sup_{u\in\R}
 \left|
 \Pp(S\le u\mid\cF_{n-1})-\Pp(\overline S\le u\mid\cF_{n-1})
 \right|
 \le
 CR_3\max_j|w_j|.
\end{equation}
For $t\ge0$, applying \eqref{eq:det-one-row-BE} at $t$ and $-t$ gives
\begin{equation}\label{eq:det-one-row-absolute-values}
 \sup_{t\ge0}
 \left|
 \Pp(|S|\le t\mid\cF_{n-1})
 -
 \Pp(|\overline S|\le t\mid\cF_{n-1})
 \right|
 \le
 CR_3\max_j|w_j|.
\end{equation}
For $x\in\R$, define
$
\tau_x
=
\Lambda^{-1}
\exp\left\{
\frac12\log(n-1)!
+
x\sqrt{\frac12\ell}
\right\}$.
By \eqref{eq:det-one-row-linear-forms},
\[
 \{W_n^{\mathrm d}(C_n)\le x\}
 =
 \{|S|\le\tau_x\},
 \qquad
 \{W_n^{\mathrm d}(\overline C_n)\le x\}
 =
 \{|\overline S|\le\tau_x\}.
\]
Since $\tau_x$ is $\cF_{n-1}$-measurable, the estimate
\eqref{eq:det-one-row-absolute-values} is uniform over all
$t\ge0$.  Hence
\[
 \left|
 \Pp(W_n^{\mathrm d}(C_n)\le x\mid\cF_{n-1})
 -
 \Pp(W_n^{\mathrm d}(\overline C_n)\le x\mid\cF_{n-1})
 \right|
 \le
 CR_3\max_j|w_j|.
\]
Taking expectations and then the supremum over $x$ gives
\begin{equation}\label{eq:det-one-row-distribution}
 \sup_x
 \left|
 \Pp(W_n^{\mathrm d}(C_n)\le x)
 -
 \Pp(W_n^{\mathrm d}(\overline C_n)\le x)
 \right|
 \le
 CR_3\,\E\max_j|w_j|.
\end{equation}

It remains to show that $\E\max_j|w_j|\le C\ell^{-4a}$.
We first relate the coefficients $w_j$ to the diagonal entrie $p_{jj}(n-1)$
of the projection matrix $P_{n-1}$ and claim that $p_{jj}(n-1) = w_{j}^{2}$. And later we use the estimate of $\E\max_jp_{jj}(n-1)$ to bound $\E\max_j|w_j|$.

Here, we show that $p_{jj}(n-1) = w_{j}^{2}$. Let
\[
 \alpha=(\alpha_1,\ldots,\alpha_n)^T,
\]
be the vector of cofactors corresponding to the last row of $C_n$.
Recall that $w=\alpha/\Lambda$.  
We first show that $\alpha\in\ker C(n-1)$. Here, $\ker C(n-1) =\{ x\in \mathbb{R}^{n} : C(n-1)x=0\}$ is the orthogonal complement of the row space of the common first $n-1$ rows. Since this kernel isone-dimensional, it will then follow that $\alpha$ spans it.

To show that $\alpha\in\ker C(n-1)$, we need to show that $C(n-1)\alpha=0$. Since 
\begin{equation}
  \begin{aligned}
    C(n-1)\alpha
    =
    \left(\sum_{j = 1}^{n} c_{1j}\alpha_j, \ldots, \sum_{j = 1}^{n} c_{(n-1)j}\alpha_j\right)^T, \nonumber
  \end{aligned}
\end{equation}
we need to show that for each $1\le r\le n-1$, $\sum_{j=1}^n c_{rj}\alpha_j=0$. 
To see this, we fix $1\le r\le n-1$ and append the $r$th row $(c_{r1},\ldots,c_{rn})$ of $C(n-1)$ to $C(n-1)$ as its last row.  The resulting $n\times n$ matrix has two identical
rows, and hence its determinant is zero.  Its cofactors along the
last row are precisely $\alpha_1,\ldots,\alpha_n$.  Expanding along the
last row therefore gives
\[
 \sum_{j=1}^n c_{rj}\alpha_j=0.
\]
Since this holds for every $1\le r\le n-1$, we obtain $C(n-1)\alpha=0$.
Because $\rank C(n-1)=n-1$ almost surely and $\alpha\ne0$,
\[
 \ker C(n-1)
 =
 \operatorname{span}\{\alpha\}
 =
 \operatorname{span}\{w\}.
\]
By \eqref{eq:common-projections}, $P_{n-1}$ is the orthogonal
projection onto $\ker C(n-1)$. Since $w$ is a unit
vector, the orthogonal projection onto $\operatorname{span}\{w\}$ maps $x\in\R^n$ to $(w^Tx)w$, and hence
\begin{equation}\label{eq:det-one-row-projection}
 P_{n-1}=ww^T,
 \qquad
 p_{jj}(n-1)=w_j^2.
\end{equation}
Then, we give a useful Lemma~\ref{lem:terminal-diagonal} to bound $\E\max_jp_{jj}(n-1)$.

\vspace{0.5cm}

\begin{lemma}\label{lem:terminal-diagonal}
If $M \leq C_{0}\ell$, for some absolute constant $C_{0}$, then there exists a constant $C$ such that, uniformly for $n-s_2\le i\le n-1$,
\begin{equation}\label{ex:eq:terminal-diagonal-max}
 \E\max_{1\le r\le n}p_{rr}(i)\le C\ell^{-8a}.
\end{equation}
\end{lemma}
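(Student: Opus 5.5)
We need to bound $\E\max_r p_{rr}(i)$ for $i\ge n-s_2$, i.e. codimension $k_i=n-i\le s_2=n\ell^{-20a}$. We have $\tr P_i = k_i$, so the average diagonal entry is $k_i/n\le \ell^{-20a}$, which is much smaller than $\ell^{-8a}$. The task is to show that no single coordinate direction $e_r$ is almost contained in $\cV_i^\perp$. Equivalently, $1-p_{rr}(i)=\|\Pi_{\cV_i}e_r\|^2$, and we need this to be close to $1$ for all $r$ simultaneously.

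\textbf{Reduction to one coordinate.} Since $P_j\succeq P_i$ for $j\le i$, we have $p_{rr}(i)\le p_{rr}(n-s_2)$. It therefore suffices to treat $i_0=n-s_2$. Set $k=s_2$ and $p_{rr}:=p_{rr}(i_0)$. Using $\E\max_r p_{rr}\le t+\sum_r\Pp(p_{rr}>t)$ with $t=\ell^{-8a}$, it suffices to show
\[
\Pp\bigl(p_{rr}(i_0)>\ell^{-8a}\bigr)\le C n^{-1}\ell^{-8a}
\quad\text{for each } r .
\]

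\textbf{Expressing $p_{rr}$ through the column.} By the symmetric role of columns, write $A(i_0)=[\,c_r\mid A^{(r)}\,]$, where $c_r\in\R^{i_0}$ is the $r$th column and $A^{(r)}$ is the $i_0\times(n-1)$ remainder, independent of $c_r$. A Schur-complement computation gives
\[
1-p_{rr}=e_r^TA(i_0)^T\bigl(A(i_0)A(i_0)^T\bigr)^{-1}A(i_0)e_r
=c_r^T\bigl(G+c_rc_r^T\bigr)^{-1}c_r=\frac{x}{1+x},
\qquad x=c_r^TG^{-1}c_r ,
\]
with $G=A^{(r)}A^{(r)T}$. Hence $p_{rr}=1/(1+c_r^TG^{-1}c_r)$, and we need $c_r^TG^{-1}c_r\ge \ell^{8a}$ with overwhelming probability. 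Now $G$ is an $i_0\times i_0$ Gram matrix of $i_0$ rows of length $n-1$, with aspect ratio $1-s_2/n$. Conditionally on $G$, a Hanson--Wright type lower bound (or simply a second-moment/Chebyshev bound, using independence of the entries of $c_r$ and $\E c_{jr}^4\le M\le C_0\ell$) gives
\[
c_r^TG^{-1}c_r\ge\tfrac12\tr G^{-1}
\]
outside an event of probability at most $CM\,\tr G^{-2}/(\tr G^{-1})^2$. So we need a lower bound on $\tr G^{-1}$ and an upper bound on $\tr G^{-2}$. A crude approach suffices. The operator norm satisfies $\|G\|\le Cn\ell^{C}$ with high probability after truncation (entries have fourth moment $\le C\ell$; alternatively bound $\tr G\approx i_0 n$ in expectation and use Markov). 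This gives $\tr G^{-1}\ge i_0^2/\tr G\gtrsim 1$ by Cauchy--Schwarz, i.e.\ $\tr G^{-1}\gtrsim i_0/n\cdot$(order) — in fact $\sum_j\lambda_j^{-1}\ge i_0^2/\sum_j\lambda_j\approx i_0/n\approx 1$. This is too weak; we need order $n/s_2=\ell^{20a}$. The correct size comes from the small singular values: for a rectangular matrix with aspect ratio defect $s_2/n$, the smallest eigenvalues of $G$ are of order $s_2^2/n$, and $\tr G^{-1}\approx i_0/(n-1-i_0)\approx n/s_2$.

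\textbf{Obtaining $\tr G^{-1}\gtrsim n/s_2$.} I would use the negative second moment identity
\[
\tr G^{-1}=\sum_{j=1}^{i_0}\mathrm{dist}(a^{(r)}_j,H_j)^{-2},
\]
where $H_j$ is the span of the other rows of $A^{(r)}$. Each distance squared has conditional mean equal to the codimension $n-1-(i_0-1)=s_2$, and it concentrates because $s_2\to\infty$ polynomially. The concentration follows from the variance bound
\[
\Var\bigl(\mathrm{dist}^2\mid H_j\bigr)\le C M\,s_2+2s_2,
\]
obtained from the quadratic-form variance with the diagonal sum $\sum p_{rr}^2\le s_2$. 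Thus each inverse distance is at least $(2s_2)^{-1}$ with probability $1-CM/s_2$. Markov on the number of failures then gives $\tr G^{-1}\ge i_0/(4s_2)$ with probability $1-CM/s_2$, which is polynomially small. For the upper bound on $\tr G^{-2}\le \|G^{-1}\|\tr G^{-1}$, I need $\|G^{-1}\|$, i.e.\ the least singular value of $A^{(r)}$. This is the main obstacle: under only $M\le C_0\ell$ and no density, a rectangular least-singular-value bound $s_{\min}\gtrsim s_2/\sqrt n$ (Rudelson--Vershynin type for $N\times(N+s)$ matrices) is needed. To avoid it, I would instead replace the conditional Chebyshev step by a lower-tail bound for $c^TG^{-1}c$ that uses only $\sum_j (G^{-1})_{jj}$-type quantities. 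For nonnegative quadratic forms, a Paley--Zygmund/small-ball argument requires only $\tr G^{-2}\le(\tr G^{-1})^2$ after restricting to the spectral part carrying most of the trace. Combined with the tail budget $n^{-1}\ell^{-8a}$ from a union bound, which requires polynomially small failure probabilities, this is where the argument is delicate. If it is too weak, I would fall back on the paper's truncated setting (bounded support) to get an exponential concentration inequality for each distance, which makes the union over $r$ trivial.
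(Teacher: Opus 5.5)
Your opening steps are sound. The reduction to $i_0=n-s_2$ via $P_i\preceq P_{i_0}$ is valid, and slightly simpler than the paper's uniform treatment. The identity $p_{rr}=(1+c_r^TG^{-1}c_r)^{-1}$ is exactly \eqref{ex:eq:exact-diagonal-projection}. The negative-second-moment argument does give $\tr G^{-1}\gtrsim n/s_2$ off an event of probability $CM/s_2$, but only if you prove it once for the full $i_0\times n$ matrix and transfer it to every column via $A^{(r)}A^{(r)T}\preceq A(i_0)A(i_0)^T$. Done column by column, the failures add up to $n\cdot CM/s_2\asymp M\ell^{20a}$.

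The genuine gap is the step you flag yourself and then leave open. You have no control of $\tr G^{-2}$, nor of the higher traces needed for the union bound, because $\|G^{-1}\|$ requires a least-singular-value bound for a nearly square $i_0\times(n-1)$ matrix with only $M\le C_0\ell$ and no density. Neither fallback closes it:
\begin{itemize}
\item Paley--Zygmund bounds $c_r^TG^{-1}c_r$ from below only with constant probability, not with the failure probability $n^{-1}\ell^{-8a}$ per column that your union bound needs. Restricting to a spectral window is essentially regularization, but you give no argument that the restricted trace is still of order $n/s_2$, which is the real content.
\item The lemma has no bounded-support hypothesis. In Proposition~\ref{prop:det-growing-fourth} the entries are unbounded, and the truncation level available elsewhere is of order $\sqrt n$ up to logarithms, far too large for exponential concentration at scale $s_2$.
\end{itemize}
Even with a least-singular-value bound, your conditional Chebyshev step gives per-column failure $\asymp M\tr G^{-2}/(\tr G^{-1})^2\asymp M/s_2$. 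That again does not survive the union over $n$ columns, so second moments are not enough.

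The idea you are missing is regularization. Since $H^{-1}\succeq(H+n\lambda I)^{-1}$ for $H=A_rA_r^T$, one has
\[
p_{rr}\le\{1+n^{-1}b_r^TG_rb_r\}^{-1},\qquad G_r=(n^{-1}A_rA_r^T+\lambda I_i)^{-1},\quad \lambda=n^{-1/6}.
\]
This is still an upper bound on $p_{rr}$, but now $\|G_r\|\le\lambda^{-1}$ deterministically. Hence $\tr G_r^2\le n\lambda^{-2}$ and $\tr G_r^4\le n\lambda^{-4}$ with no singular-value input.

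What must then be shown is that the regularized trace $T=n^{-1}\tr G$ stays large:
\begin{itemize}
\item Lemma~\ref{lem:regularized-inverse} gives $\E T\approx ys_y(\lambda)\ge c\min\{n/k_i,\lambda^{-1/2}\}\ge c\ell^{20a}$. The regularization costs nothing because $\lambda^{-1/2}=n^{1/12}$ exceeds $\ell^{20a}$.
\item $\Var T\le Cn^{-2/3}$ by Efron--Stein, since each rank-one update moves the trace by at most $\lambda^{-1}$.
\item The monotonicity $T_r\ge T$ then handles all columns at once.
\end{itemize}

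Finally, the union bound is closed with fourth rather than second moments. The diagonal part is treated after truncating entries at $\ell^a$, which only decreases $b_{ur}^2$, so a lower bound survives and eighth moments are at most $CM\ell^{4a}$. The off-diagonal part is handled by \eqref{eq:off-diag-part}. Each column then fails with probability $O(M^2n^{-4/3})$, for a total of $O(M^2n^{-1/3})=o(\ell^{-8a})$.
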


\vspace{0.5cm}
The proof of Lemma~\ref{lem:terminal-diagonal} is given in Section~\ref{sec:proof-lemmas}

For all sufficiently large $n$, the assumption
$M\le K_0\sqrt\ell$ implies $M\le\ell$.  Lemma
\ref{lem:terminal-diagonal}, applied with $i=n-1$, therefore gives
\[
 \E\max_jp_{jj}(n-1)\le C\ell^{-8a}.
\]
The estimate remains valid when some of the common first $n-1$ rows
are Gaussian.  Using \eqref{eq:det-one-row-projection} and Jensen's
inequality,
\[
 \begin{aligned}
 \E\max_j|w_j|
 &=
 \E\sqrt{\max_jw_j^2}
 =
 \E\sqrt{\max_jp_{jj}(n-1)}
 \le
 \left(\E\max_jp_{jj}(n-1)\right)^{1/2}
 \le
 C\ell^{-4a}.
 \end{aligned}
\]
Substituting this estimate into
\eqref{eq:det-one-row-distribution} proves
\eqref{det:eq:one-row}, after absorbing the fixed factor $R_3$ into
the constant.
\end{proof}

\vspace{0.5cm}

We now return to the proof of Proposition~\ref{prop:det-growing-fourth}.
We compare $B_n$ with $B_n^{\mathrm{hyb}}$ by replacing the last
$s_1$ rows one at a time.  Let $g_{m+1},\ldots,g_n$ be independent
standard Gaussian rows, independent of $B_n$.  For
$0\le r\le s_1$, let $B_n^{[r]}$ be the matrix obtained from $B_n$
by replacing rows $m+1,\ldots,m+r$ with
$g_{m+1},\ldots,g_{m+r}$.  Thus
\[
 B_n^{[0]}=B_n,
 \qquad
 B_n^{[s_1]}=B_n^{\mathrm{hyb}}.
\]
The same polynomial argument as in Lemma~\ref{lem:det-smoothing}
shows that every $B_n^{[r]}$ has all square submatrices invertible
almost surely.

Fix $1\le r\le s_1$.  The matrices $B_n^{[r-1]}$ and $B_n^{[r]}$
differ only in row $m+r$.  Let $\Pi_r$ be a permutation matrix that
moves row $m+r$ to the last position.  We apply the same permutation
to both matrices.  Then $\Pi_r B_n^{[r-1]}$ and
$\Pi_r B_n^{[r]}$ have the same first $n-1$ rows.  Their last rows
are the original $(m+r)$th row of $B_n$ and the Gaussian row
$g_{m+r}$, respectively.

Since $|\det\Pi_r|=1$, the row permutation does not change the
absolute determinant.  Hence
\[
 W_n^{\mathrm d}\bigl(\Pi_r B_n^{[j]}\bigr)
 =
 W_n^{\mathrm d}\bigl(B_n^{[j]}\bigr),
 \qquad
 j\in\{r-1,r\}.
\]
All assumptions of Lemma~\ref{lem:det-one-row} therefore hold for
the permuted pair.  The lemma gives
\[
 \dk\left(
 W_n^{\mathrm d}\bigl(B_n^{[r-1]}\bigr),
 W_n^{\mathrm d}\bigl(B_n^{[r]}\bigr)
 \right)
 \le
 C\ell^{-4a}.
\]
Applying the triangle inequality to the chain
$B_n^{[0]},B_n^{[1]},\ldots,B_n^{[s_1]}$, we obtain
\begin{equation}\label{eq:det-growing-total-replacement}
 \begin{aligned}
 \dk\left(
 W_n^{\mathrm d}(B_n),
 W_n^{\mathrm d}(B_n^{\mathrm{hyb}})
 \right)
 &=
 \dk\left(
 W_n^{\mathrm d}\bigl(B_n^{[0]}\bigr),
 W_n^{\mathrm d}\bigl(B_n^{[s_1]}\bigr)
 \right)\\
 &\le
 \sum_{r=1}^{s_1}
 \dk\left(
 W_n^{\mathrm d}\bigl(B_n^{[r-1]}\bigr),
 W_n^{\mathrm d}\bigl(B_n^{[r]}\bigr)
 \right)\\
 &\le
 Cs_1\ell^{-4a}
 \le
 C\ell^{-a}.
 \end{aligned}
\end{equation}

This proves the bound for the row-replacement term in the proof
strategy.  It remains to prove
\[
 \dk\bigl(
 W_n^{\mathrm d}(B_n^{\mathrm{hyb}}),
 \Normal
 \bigr)
 \le
 CM\ell^{-1/2}.
\]
The next lemma gives this estimate.

\vspace{0.5cm}

\begin{lemma}\label{lem:det-hybrid}
Under the assumptions of Proposition~\ref{prop:det-growing-fourth}, suppose in addition that all square submatrices of $B_n$ are invertible almost surely.  Then
\begin{equation}\label{det:eq:hybrid-bound}
 \dk(W_n^{\mathrm d}(B_n^{\mathrm{hyb}}),\Normal)
 \le CM\ell^{-1/2}.
\end{equation}
\end{lemma}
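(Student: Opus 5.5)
I will use the Gram--Schmidt (base-times-height) decomposition of $B_n^{\mathrm{hyb}}$ along the filtration $\cF_i$. With $k_i=n-i$ and $Z_{i+1}=a_{i+1}^TQ_ia_{i+1}$, we have
\[
 \log|\det B_n^{\mathrm{hyb}}|=\frac12\sum_{i=0}^{n-1}\log(k_iZ_{i+1})
 =\frac12\log n!+\frac12\sum_{i=0}^{n-1}\log Z_{i+1}.
\]
The sum splits into a bulk part $\sum_{i<m}\log Z_{i+1}$ and a terminal part $\sum_{i\ge m}\log Z_{i+1}$. For the terminal block the rows are standard Gaussian and independent of $\cF_m$. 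By rotational invariance, conditionally on $\cF_m$ the terminal part is exactly distributed as $\sum_{k=1}^{s_1}\log(\chi^2_k/k)$, and it is independent of the bulk. This piece carries variance $\asymp\log s_1\asymp 3a\log\ell$, together with a mean $\sum_{k\le s_1}\E\log(\chi_k^2/k)$. Its normal approximation at scale $\sqrt{\log s_1}$ is classical, with error $O((\log s_1)^{-1/2})$.

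For the bulk I will write $\log Z_{i+1}=(Z_{i+1}-1)-\frac12(Z_{i+1}-1)^2+R_{i+1}$. Conditionally on $\cF_i$, $\E_i Z_{i+1}=\tr Q_i=1$. Using independence of coordinates with unit variance and fourth moment at most $M$,
\[
 \E_i(Z_{i+1}-1)^2=2\tr Q_i^2+(\E a^4-3)\text{-type correction}\le \frac{2}{k_i}+(M-3)_+\,d_i .
\]
Summing $\frac12\E_i(Z-1)^2$ over $i<m$ produces $\sum_{k>s_1}1/k$, which matches the Gaussian mean $\log(n-1)!$ centering up to $O(1/\sqrt\ell)$ after normalization. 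The non-Gaussian correction is $\frac12(M-3)\sum_i d_i$. Since $d_i\le\max_r q_{rr}(i)\cdot\tr Q_i\le\max_r p_{rr}(i)/k_i$ and typically $d_i\approx 1/n$ in the bulk, I expect $\sum_{i<m}\E d_i=O(1)$. This is the key delocalization input; Lemma~\ref{lem:terminal-diagonal} covers $i\ge n-s_2$, and for $i<n-s_2$ I will use a cruder bound $p_{rr}(i)\lesssim k_i/n$ plus a concentration estimate. The shift is then $O(M)$, and after dividing by $\sqrt{\ell/2}$ it costs $O(M/\sqrt\ell)$ by Lemma~\ref{lem:perturb}(iii). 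This is precisely where the threshold $M\lesssim\sqrt\ell$ enters.

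The martingale part $\sum_{i<m}(Z_{i+1}-1)$ has conditional variances summing to $2\sum_{k>s_1}1/k+O(M)=2\log(n/s_1)+O(M)$. Its fluctuation around $\ell$ is only $O(\log\ell+M)$, which is a relative error $O(M/\ell)$. I will combine this with the independent terminal Gaussian piece and apply a martingale Berry--Esseen bound, or Esseen smoothing (Lemma~\ref{lem:esseen}) with conditional characteristic functions, to get error $O(\ell^{-1/2})$. The cubic and remainder terms $R_{i+1}$ are controlled on the event $|Z_{i+1}-1|\le1/2$ via $\E_i|Z-1|^3\lesssim k_i^{-3/2}+M^{3/2}d_i^{3/2}$, which gives a summable contribution. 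The complementary event, small heights, is handled by Lemma~\ref{lem:perturb}(i) with a small-ball bound on $Z_{i+1}$ for $k_i\ge s_1$. The quadratic term's fluctuation $(Z-1)^2-\E_i(Z-1)^2$ has variance $\sum_i O(k_i^{-2}+M^2d_i^2)=O(1)$, so by Lemma~\ref{lem:perturb}(i) it is negligible at scale $\sqrt\ell$.

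The \textbf{main obstacle} is uniform control of the diagonal energy $\sum_{i<m}\E d_i$ and of small-ball probabilities for $Z_{i+1}$ for non-Gaussian rows with growing fourth moments, since these require the geometric delocalization of the random projections $P_i$. I would first try the Schur-complement identity $p_{rr}(i)=1/(1+\text{col}_r^T(\cdot)^{-1}\text{col}_r)$, combined with a quadratic-form concentration inequality.
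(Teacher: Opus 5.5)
\textbf{There is a genuine gap in your treatment of the bulk.} Your overall architecture is the paper's. That includes the Gram--Schmidt split into a bulk and a Gaussian terminal block, and the harmonic-number centering. It also includes the delocalization input $\sum_{i<m}\E d_i=O(1)$, obtained via the Schur-complement formula for $p_{rr}(i)$ and Lemma~\ref{lem:terminal-diagonal}. The problem is that you expand $\log Z_{i+1}=(Z_{i+1}-1)-\frac12(Z_{i+1}-1)^2+R_{i+1}$ and dispose of the pieces separately. Proposition~\ref{prop:det-growing-fourth} gives only $\E b_{ij}^4\le M$ and $\E|b_{ij}|^3\le R_3$, and that is not enough for these polynomial pieces.

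\emph{Missing moments.} Write $Z_{i+1}-1=U_{i+1}+V_{i+1}$ with $U_{i+1}=\sum_rq_{rr}(i)(b_{i+1,r}^2-1)$. By the lower Rosenthal bound for this independent sum, $\E_i|Z_{i+1}-1|^3=\infty$ as soon as some $\E|b_{i+1,r}|^6=\infty$. Likewise $\Var_i\{(Z_{i+1}-1)^2\}$ requires eighth moments. So three of your steps are unavailable: the bound $\E_i|Z_{i+1}-1|^3\le C(k_i^{-3/2}+M^{3/2}d_i^{3/2})$, the claimed $O(k_i^{-2}+M^2d_i^2)$ variance of the quadratic fluctuation, and any martingale Berry--Esseen bound for $\sum_i(Z_{i+1}-1)$.

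\emph{The bad event is not small.} The complement of $\{|Z_{i+1}-1|\le1/2\}$ is not only ``small heights''. A symmetric law with atoms of mass $\asymp n^{-2}$ at $\pm\sqrt n$ keeps $\E b^4=O(1)$. Yet with probability bounded below some early bulk row contains such an entry, giving $Z_{i+1}\approx2$. In general you only have $\sum_i\Pp(|U_{i+1}|>1/4)\le16M\sum_i\E d_i=O(M)$, so Lemma~\ref{lem:perturb}(i) cannot discard this event.

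\emph{Random perturbations lose a square root.} The fourth-cumulant drift $\frac12\sum_{i<m}\sum_r(\E b_{i+1,r}^4-3)q_{rr}(i)^2$ is random, so Lemma~\ref{lem:perturb}(iii) does not apply as stated. That term could be rescued by an Efron--Stein concentration of $\sum_id_i$. However, the quadratic fluctuation $\sum_i(U_{i+1}^2-\E_iU_{i+1}^2)$ and the Taylor remainder on the bad event are only $O(M)$ in $L^1$. Markov plus Lemma~\ref{lem:perturb}(i) turns that into $\sqrt M\,\ell^{-1/4}$, not $M\ell^{-1/2}$. For bounded $M$ this is $\ell^{-1/4}$, which loses the rate needed for Theorem~\ref{thm:det-be}.

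\textbf{The missing idea is to never split the logarithm in the normal approximation.} The paper lower-truncates at $\theta=\ell^{-a/2}$ and sets $\zeta_{i+1}=\log(Z_{i+1}\vee\theta)+k_i^{-1}$. It feeds these increments whole into the noncentered telescoping bound of Lemma~\ref{lem:det-telescope}. There, four quantities enter only through expectations: the predictable drift $\E_i\zeta_{i+1}$, the fluctuation of $\E_i\zeta_{i+1}^2$, the conditional third moment, and the fourth moment. Each is bounded by $Ch_i$, with $\sum_ih_i\le CM$ (Lemma~\ref{lem:common-one-step}). The Gaussian damping $e^{-t^2v_{\zeta,i}/2}$ has $v_{\zeta,i}\ge c_0$ for $k_i\ge s_2$ (Lemma~\ref{lem:det-weighted}). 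Because of this damping, a random drift costs only its $L^1$ norm divided by $s_\zeta$, which produces the linear $CM/\sqrt\ell$.

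These conditional moments of $\zeta_{i+1}$ need only fourth moments of the entries:
\begin{enumerate}
\item The Taylor expansion is used only on $\mathcal A_i=\{|U_{i+1}|\le1/4,\ |V_{i+1}|\le1/4\}$. There every power of $U_{i+1}$ is dominated by $U_{i+1}^2$, and $\E_iU_{i+1}^2\le CMd_i$.
\item On $\mathcal A_i^c$, the positive part is handled by slow growth: $\{\log(1+x)\}^r\le C_rx^2$ for $x\ge1/4$.
\item On $\mathcal A_i^c$, the negative part is handled by Lemma~\ref{lem:common-lower-tail} down to the level $\theta$. The truncation is removed at the end by a union bound.
\end{enumerate}
If you restructure the bulk this way, your delocalization and small-ball inputs are exactly what is needed.
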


\vspace{0.5cm}

\begin{proof}[Proof of Lemma~\ref{lem:det-hybrid}]
We divide the proof into five steps.

\medskip
\noindent\textbf{Step 1: Gram--Schmidt and the Gaussian terminal block.}
Let $b_1^T,\ldots,b_n^T$ be the rows of
$B_n^{\mathrm{hyb}}$.  Since its first $m=n-s_1$ rows are same as
the first $m$ rows of $B_n$, the nonsingularity assumption implies that these rows
have rank $m$ almost surely.

Recall that $Z_{i+1}=b_{i+1}^TQ_i b_{i+1} = \frac{\operatorname{dist}(b_{i+1},\mathcal V_i)^2}{k_i}$.
Thus
\[
 \operatorname{dist}(b_{i+1},\mathcal V_i)^2
 =
 k_iZ_{i+1}.
\]
Since $\prod_{i=0}^{n-1}k_i=n!$, Gram--Schmidt gives
\begin{equation}\label{det:eq:GS-logdet}
 \log\{(\det B_n^{\mathrm{hyb}})^2\}
 -\log(n-1)!
 =
 \sum_{i=0}^{n-1}\log Z_{i+1}+\log n.
\end{equation}
Consequently,
\begin{equation}
  \begin{aligned}
    W_n^{\mathrm d}(B_n^{\mathrm{hyb}})
    =&
    \frac{
    \sum_{i=0}^{n-1}\log Z_{i+1}+\log n
    }{\sqrt{2\ell}}\\
    =& 
    \underbrace{
    \frac{
    \sum_{i=0}^{m-1}\log Z_{i+1}+\log n + \sum_{i=m}^{n-1}\mathbb{E}\log Z_{i+1}
    }{\sqrt{2\ell}}
    }_{\text{bulk part}} 
    + 
    \underbrace{
    \frac{\sum_{i=m}^{n-1}(\log Z_{i+1} - \mathbb{E}\log Z_{i+1})
    }{\sqrt{2\ell}}
    }_{\text{Gaussian replacement part}}.
    \nonumber     
  \end{aligned}
\end{equation} 
Firstly, we estimate the Gaussian replacement part. Set $\mathcal G_{s_1}^{\circ} = \sum_{i=m}^{n-1}\log Z_{i+1}$, we give a useful Lemma~\ref{lem:gaussian-terminal-block} to estimate the mean and variance of $\mathcal G_{s_1}^{\circ}$. Whose proof is postponed in Section~\ref{sec:proof-lemmas}.

\vspace{0.5cm}
\begin{lemma}[Gaussian terminal block]\label{lem:gaussian-terminal-block}
Let $1\le r\le n$.  For any $n\times n$ random matrix $A$, suppose that the first $n-r$ rows have rank
$n-r$ almost surely, and that the last $r$ rows are independent
standard Gaussian rows, independent of all preceding rows.
Then we have conditional on the first $n-r$ rows, the last $r$ normalized squared
Gram--Schmidt heights are independent and, in chronological order,
have laws
\[
 \frac{\chi_r^2}{r},
 \frac{\chi_{r-1}^2}{r-1},
 \ldots,
 \chi_1^2,
\]
where $\chi_1^2,\ldots,\chi_r^2$ are independent chi-square random
variables with respective degrees of freedom $1,\ldots,r$.

Let $\mathcal G_r^\circ$ be a random variable with law $\mathcal G_r^\circ \stackrel{d}{=} \sum_{k=1}^r\log(\chi_k^2/k)$ and denote 
\begin{equation}\label{eq:gaussian-terminal-block-definition}
 \mu_r=\E\mathcal G_r^\circ,
 \qquad
 \mathcal G_r^{\mathrm c}
 =
 \mathcal G_r^\circ-\mu_r.
\end{equation}
Then
\begin{align}
 \left|\mu_r+\sum_{k=1}^r\frac1k\right|
 \le C,
 \label{eq:gaussian-terminal-mean}\\
 \Var(\mathcal G_r^{\mathrm c})
 =
 2\sum_{k=1}^r\frac1k+O(1),
 \label{eq:gaussian-terminal-variance}\\
 \sum_{k=1}^r
 \E\left|
 \log(\chi_k^2/k)-\E\log(\chi_k^2/k)
 \right|^3
 \le C.
 \label{eq:gaussian-terminal-third}
\end{align}
The constants are absolute and uniform in $r$.
\end{lemma}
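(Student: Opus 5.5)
The plan has two parts. First, use rotational invariance of the Gaussian rows to get the conditional independence and the chi-square laws. Then reduce the three numerical estimates to classical facts about the digamma and trigamma functions, plus a Gaussian-type concentration bound for $\log\chi_k^2$.

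\textbf{Distributional statement.} Condition on the first $n-r$ rows, so $\cV_{n-r}$ is a fixed subspace of dimension $n-r$. Its complement $\cV_{n-r}^\perp$ has dimension $r$. Choose an orthonormal basis $e_1,\dots,e_r$ of $\cV_{n-r}^\perp$. Then reveal the Gaussian rows $g_1,\dots,g_r$ one at a time.

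\begin{enumerate}[label=(\roman*)]
\item The components $(\langle g_j,e_1\rangle,\dots,\langle g_j,e_r\rangle)$ are i.i.d.\ $\mathcal N(0,I_r)$ vectors, independent of the conditioning.
\item The distance from $g_{j}$ to $\Span(\cV_{n-r},g_1,\dots,g_{j-1})$ equals the distance of its $\R^r$-component to the span of the previous $j-1$ such components.
\item Conditionally on the previous rows, this distance squared is $\chi^2_{r-j+1}$ by rotational invariance of $\mathcal N(0,I_r)$. Its law does not depend on the previous rows, which gives independence.
\item Dividing by $k_{n-r+j-1}=r-j+1$ yields the stated laws $\chi^2_r/r,\dots,\chi^2_1$.
\end{enumerate}

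Full rank of the whole matrix holds almost surely, since Gaussian vectors avoid proper subspaces. This is exactly the Bartlett-type argument behind \eqref{eq:Gaussial-decompose}.

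\textbf{Mean.} Use $\E\log\chi_k^2=\log2+\psi_0(k/2)$, where $\psi_0$ is the digamma function, together with $\psi_0(x)=\log x-\frac1{2x}+O(x^{-2})$. This gives
\[
\E\log(\chi_k^2/k)=-\tfrac1k+O(k^{-2})
\]
for $k\ge2$, and a finite constant for $k=1$. Summing, $\sum_k O(k^{-2})$ converges, which gives \eqref{eq:gaussian-terminal-mean}.

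\textbf{Variance.} By independence,
\[
\Var\mathcal G_r^{\mathrm c}=\sum_{k\le r}\psi_1(k/2),
\]
where $\psi_1$ is the trigamma function. Use $\psi_1(x)=\frac1x+O(x^{-2})$, so that $\psi_1(k/2)=\frac2k+O(k^{-2})$. Summing gives \eqref{eq:gaussian-terminal-variance}.

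\textbf{Third moments.} This is the only mildly delicate step. For large $k$, write $\chi_k^2/k=1+U_k$ with $\E U_k^2=2/k$. The claimed third central absolute moment of $\log(\chi_k^2/k)$ is $O(k^{-3/2})$, which is summable.

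\begin{enumerate}[label=(\roman*)]
\item Upper tail: the Laurent--Massart chi-square tail bounds give $\Pp(|U_k|>t)\le 2e^{-ckt^2}$ for $t\le1$. Hence $\E|\log(1+U_k)|^3\1_{\{|U_k|\le1/2\}}\le C\E|U_k|^3\le Ck^{-3/2}$.
\item Lower tail: on $\{\chi_k^2\le k/2\}$, use the density bound $\Pp(\chi_k^2\le\epsilon k)\le(C\epsilon)^{k/2}$. This makes $\E|\log(\chi_k^2/k)|^3\1_{\{\chi_k^2\le k/2\}}$ exponentially small in $k$, for $k\ge3$ say.
\item Centering: replacing $\log(\chi_k^2/k)$ by its centered version costs only the mean, which is $O(1/k)$ by the previous step, so $|\text{mean}|^3=O(k^{-3})$.
\item Small $k$: the finitely many terms with $k\le k_0$ are finite constants, because $\log\chi_1^2$ has all moments.
\end{enumerate}

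Summing over $k$ gives \eqref{eq:gaussian-terminal-third}, with absolute constants uniform in $r$.
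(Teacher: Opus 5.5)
Your proposal is correct. For the distributional statement, the mean and the variance, it is the paper's own argument: rotational invariance after projecting onto the $r$-dimensional complement, then digamma/trigamma asymptotics from $\E(\chi_k^2/k)^t=(2/k)^t\Gamma(k/2+t)/\Gamma(k/2)$.

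The real difference is in the third-moment bound \eqref{eq:gaussian-terminal-third}. The paper stays inside the Gamma-function formula. For $j\ge2$ the $j$th cumulant of $Y_k=\log(\chi_k^2/k)-\E\log(\chi_k^2/k)$ is $\frac{d^j}{dx^j}\log\Gamma(x)$ at $x=k/2$, so $|\kappa_4(Y_k)|\le Ck^{-3}$. Hence $\E Y_k^4=\kappa_4(Y_k)+3(\E Y_k^2)^2\le Ck^{-2}$, and Cauchy--Schwarz gives $\E|Y_k|^3\le Ck^{-3/2}$ in two lines, with no truncation. Your route (Taylor control on a good event, chi-square concentration, a small-ball bound near zero, then re-centering) reaches the same $O(k^{-3/2})$ without using the exact Gamma structure. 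It is the same mechanism the paper uses for non-Gaussian heights (Lemma~\ref{lem:projection-density} and the good-event expansions in the proof of Lemma~\ref{lem:common-one-step}). So it is more robust, but longer.

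Two details need tightening. First, the event $\{U_k>1/2\}$ is not covered by your items (i)--(ii); the inequality $|\log(1+x)|\le2|x|$ for $x\ge-1/2$ extends (i) to it. Second, a bound $\Pp(\chi_k^2\le\epsilon k)\le(C\epsilon)^{k/2}$ that holds for all $k$ must have $C\ge e>2$, so it says nothing for $\epsilon$ near $1/2$. Use instead the Chernoff bound $\Pp(\chi_k^2\le\epsilon k)\le(\epsilon e^{1-\epsilon})^{k/2}\le\min\{(\sqrt e/2)^{k/2},(e\epsilon)^{k/2}\}$ for $\epsilon\le1/2$. Alternatively, use your Laurent--Massart lower tail on the moderate range. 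Either way you get the claimed exponential smallness of the lower-tail contribution.
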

\vspace{0.5cm}

Lemma~\ref{lem:gaussian-terminal-block}, applied with $r=s_1$,
shows that $\mathcal G_{s_1}^{\circ}$ is independent of $\mathcal F_m$ and
has the same law as $\sum_{k=1}^{s_1}\log(\chi_k^2/k)$, where $\chi_k^2$ are independent Chi-square random variables with degree $k$. Denote $ \E \mathcal G_{s_1}^{\circ} = \mu_{s_1}$, by Lemma~\ref{lem:gaussian-terminal-block} we have
\[
 \qquad
 \Var( \mathcal G_{s_1}^{\circ} )
 \le C(1+\log s_1)
 \le C\log\ell.
\]
Since the bulk sum is $\mathcal F_m$-measurable,
$\mathcal G_{s_1}^{\circ}-\mu_{s_1}$ is an independent centered
perturbation.  Lemma~\ref{lem:perturb}(ii) therefore gives
\begin{equation}\label{det:eq:gaussian-block-perturb}
 \dk\left(
 W_n^{\mathrm d}(B_n^{\mathrm{hyb}}),\Normal
 \right)
 \le
 \dk\left(
 \frac{
 \sum_{i=0}^{m-1}\log Z_{i+1}
 +\log n+\mu_{s_1}
 }{\sqrt{2\ell}},
 \Normal
 \right)
 +
 C\frac{\log\ell}{\ell}.
\end{equation}

\medskip
\noindent\textbf{Step 2: lower truncation and harmonic correction.}
Let
\begin{equation}
  \begin{aligned}\label{eq:truncate-notation-definition}
    \theta=& \ell^{-a/2},
    \quad
    H=|\log\theta|=\frac a2\log\ell,\\
    L_{i+1}=& \log(Z_{i+1}\vee\theta),
    \quad
    \zeta_{i+1}= L_{i+1}+\frac1{k_i},
    \quad 0\le i<m.
  \end{aligned}
\end{equation}
Since $k_i$ ranges from $n$ down to $s_1+1$ as $i$ ranges from
$0$ to $m-1$,
\[
 \sum_{i=0}^{m-1}\frac1{k_i}
 =
 \sum_{k=s_1+1}^{n}\frac1k.
\]
Hence
\begin{equation}\label{det:eq:harmonic-correction}
 \sum_{i=0}^{m-1}L_{i+1}
 +\log n+\mu_{s_1}
 =
 \sum_{i=0}^{m-1}\zeta_{i+1}+C_0,
\end{equation}
where
$
 C_0
 =
 \log n+\mu_{s_1}
 -
 \sum_{k=s_1+1}^{n}\frac1k$.
Moreover,
\[
 C_0
 =
 \left(
 \log n-\sum_{k=1}^{n}\frac1k
 \right)
 +
 \left(
 \mu_{s_1}+\sum_{k=1}^{s_1}\frac1k
 \right).
\]
The first parenthesis is uniformly bounded by the standard
harmonic-number estimate, while the second is uniformly bounded by \eqref{eq:gaussian-terminal-mean}.  Therefore
\[
 |C_0|\le C.
\]
Lemma~\ref{lem:perturb}(iii) now gives
\begin{equation}\label{det:eq:harmonic-shift-bound}
 \dk\left(
 \frac{
 \sum_{i=0}^{m-1}L_{i+1}
 +\log n+\mu_{s_1}
 }{\sqrt{2\ell}},
 \Normal
 \right)
 \le
 \dk\left(
 \frac{\sum_{i=0}^{m-1}\zeta_{i+1}}{\sqrt{2\ell}},
 \Normal
 \right)
 +
 C\ell^{-1/2}.
\end{equation}

\medskip
\noindent\textbf{Step 3: normal approximation for the corrected bulk.}
Put
\begin{equation}
  \begin{aligned}\label{def:xi-zeta-first}
    s_\zeta^2
    =
    \sum_{i=0}^{m-1}\E\zeta_{i+1}^2,
    \qquad
    \xi_j=\frac{\zeta_j}{s_\zeta},
    \qquad
    \sigma_{\zeta,j}^2
    =
    \frac{\E\zeta_{j}^2}{s_\zeta^2} = \E \xi_{j}^{2}.
    \qquad 1\le j\le m.    
  \end{aligned}
\end{equation}
Then $\xi_j$ is $\mathcal F_j$-measurable and
\[
 \sum_{j=1}^{m}\E\xi_j^2=1,\qquad  v_{\zeta,i}=\sum_{j=i+1}^{m-1}\sigma_{\zeta,j+1}^2,
 \qquad 0\le i<m,
\]
Combining Step1 and Step2, it remains to estimate $ \dk\left( \frac{\sum_{i=0}^{m-1}\zeta_{i+1}}{\sqrt{2\ell}}, \Normal \right)$ in \eqref{det:eq:harmonic-shift-bound} and the loss caused by the lower truncation (which is estimated in the step 4). In this step, we firstly estimate $\dk\left( \sum_{i=0}^{m-1}\xi_{i+1}, \Normal \right)$ and $ \left| \frac{s_\zeta}{\sqrt{2\ell}}-1 \right|$ seperately.  Then we combine these two estimates and using Lemma~\ref{lem:perturb}(iv) to get the bound of $ \dk\left( \frac{\sum_{i=0}^{m-1}\zeta_{i+1}}{\sqrt{2\ell}}, \Normal \right)$.

To estimate $\dk\left( \sum_{i=0}^{m-1}\xi_{i+1}, \Normal \right)$, we give the following lemma.

\vspace{0.5cm}

\begin{lemma}\label{lem:det-telescope}
Let $\xi_j$ be $\cF_j$-measurable, $1\le j\le N$, and put
$S_j=\sum_{r=1}^j\xi_r$.  Assume
\[
 \sigma_j^2=\E\xi_j^2,
 \qquad \sum_{j=1}^N\sigma_j^2=1,
 \qquad v_j=\sum_{r=j+1}^N\sigma_r^2.
\]
Write
\[
 m_j=\E(\xi_j\mid\cF_{j-1}),
 \quad a_j=\E(\xi_j^2\mid\cF_{j-1}),
 \quad c_j=\E(\xi_j^3\mid\cF_{j-1}).
\]
For $t\ge0$, set
\begin{align*}
 \mathcal M(t)&=\sum_j\E[e^{itS_{j-1}}e^{-t^2v_j/2}m_j],\\
 \mathcal B(t)&=\sum_j\E[e^{itS_{j-1}}e^{-t^2v_j/2}(a_j-\sigma_j^2)],\\
 \mathcal C(t)&=\sum_j\E[e^{itS_{j-1}}e^{-t^2v_j/2}c_j],\\
 \mathcal R_4(t)&=\sum_je^{-t^2v_j/2}(\E|\xi_j|^4+\sigma_j^4).
\end{align*}
There are absolute constants $c_0,C>0$ such that, whenever
$T^2\max_j\sigma_j^2\le c_0$,
\begin{equation}\label{det:eq:noncentered-telescope-bound}
 \dk(S_N,\Normal)
 \le\frac CT+C\int_0^T
 \{|\mathcal M(t)|+t|\mathcal B(t)|
 +t^2|\mathcal C(t)|+t^3\mathcal R_4(t)\}\dd t.
\end{equation}
\end{lemma}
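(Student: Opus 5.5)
We prove Lemma~\ref{lem:det-telescope} by Esseen smoothing (Lemma~\ref{lem:esseen}) combined with a Lindeberg-type telescoping of characteristic functions in the martingale-free setting, i.e.\ without assuming $m_j=0$. By Lemma~\ref{lem:esseen} it suffices to show that, for $0\le t\le T$,
\[
 \bigl|\E e^{itS_N}-e^{-t^2/2}\bigr|
 \le C\bigl\{t|\mathcal M(t)|+t^2|\mathcal B(t)|+t^3|\mathcal C(t)|+t^4\mathcal R_4(t)\bigr\},
\]
since dividing by $t$ and integrating over $[0,T]$ gives \eqref{det:eq:noncentered-telescope-bound}. The identity we telescope is
\[
 \E e^{itS_N}-e^{-t^2/2}
 =\sum_{j=1}^N\Bigl(\E\bigl[e^{itS_j}\bigr]e^{-t^2v_j/2}-\E\bigl[e^{itS_{j-1}}\bigr]e^{-t^2v_{j-1}/2}\Bigr),
\]
using $v_0=1$ and $v_N=0$. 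Since $v_{j-1}=v_j+\sigma_j^2$, the $j$th summand equals
\[
 e^{-t^2v_j/2}\,\E\Bigl[e^{itS_{j-1}}\bigl(e^{it\xi_j}-e^{-t^2\sigma_j^2/2}\bigr)\Bigr].
\]

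Next we condition on $\cF_{j-1}$ and Taylor expand each exponential to fourth order. For the first,
\[
 \E_{j-1}e^{it\xi_j}=1+itm_j-\tfrac{t^2}{2}a_j-\tfrac{it^3}{6}c_j+O\bigl(t^4\E_{j-1}\xi_j^4\bigr).
\]
For the second,
\[
 e^{-t^2\sigma_j^2/2}=1-\tfrac{t^2}{2}\sigma_j^2+O(t^4\sigma_j^4).
\]
Subtracting, the constant terms cancel. What remains is $itm_j-\frac{t^2}{2}(a_j-\sigma_j^2)-\frac{it^3}{6}c_j$ plus a remainder of size $O(t^4(\E_{j-1}\xi_j^4+\sigma_j^4))$. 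We multiply by $e^{itS_{j-1}}e^{-t^2v_j/2}$, take expectations and sum over $j$. The first three pieces are exactly $it\,\mathcal M(t)$, $-\frac{t^2}{2}\mathcal B(t)$ and $-\frac{it^3}{6}\mathcal C(t)$. For the remainder we use $|e^{itS_{j-1}}|=1$ and $\E\E_{j-1}\xi_j^4=\E\xi_j^4$, which bounds it by $Ct^4\mathcal R_4(t)$. This yields the claimed pointwise bound, with absolute constants coming from $|e^{ix}-\sum_{k\le3}(ix)^k/k!|\le x^4/24$ and $|e^{-y}-1+y|\le y^2/2$ for $y\ge0$.

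The hypothesis $T^2\max_j\sigma_j^2\le c_0$ does not seem to be needed for the above, because the remainder bound is exact. I expect its role is technical: in the normalized form of the telescoping, one divides by $e^{-t^2\sigma_j^2/2}$, and the condition keeps that factor bounded. For instance, the identity $e^{-t^2v_{j-1}/2}=e^{-t^2v_j/2}e^{-t^2\sigma_j^2/2}$ together with $e^{-t^2\sigma_j^2/2}\ge e^{-c_0/2}$ lets one replace $e^{-t^2v_{j-1}/2}$ by $e^{-t^2v_j/2}$ at the cost of an absolute constant if the terms are arranged differently. I would keep the condition, invoke it only in this way, and otherwise follow the direct expansion above. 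The main point to check carefully is that the constant-term cancellation and the definition of $\mathcal B$, which is centered by $\sigma_j^2$ rather than by $\E a_j$, match exactly, so that no extra $t^2$ error arises. The index conventions for $v_j$ also need care, since the paper defines $v_{\zeta,i}$ with shifted indices.
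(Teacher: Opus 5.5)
Your proof is correct and is the paper's argument: Esseen smoothing, telescoping against a Gaussian future via $v_{j-1}=v_j+\sigma_j^2$, and fourth-order Taylor expansion of $\E(e^{it\xi_j}\mid\cF_{j-1})$ and of $e^{-t^2\sigma_j^2/2}$. You are also right that the hypothesis $T^2\max_j\sigma_j^2\le c_0$ is superfluous, since $0\le e^{-y}-1+y\le y^2/2$ for all $y\ge0$; the paper invokes the hypothesis only for that Gaussian expansion.
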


\vspace{0.5cm}

\begin{proof}[Proof of Lemma~\ref{lem:det-telescope}]
Let $\varphi(t)=\E e^{itS_N}$.  Lemma~\ref{lem:esseen} gives
\[
 \dk(S_N,\Normal)
 \le\frac CT+C\int_0^T\frac{|\varphi(t)-e^{-t^2/2}|}{t}\dd t.
\]
Since $v_{j-1}=\sigma_j^2+v_j$, telescoping a Gaussian future yields
\begin{align*}
 \varphi(t)-e^{-t^2/2}
 =\sum_{j=1}^N\E\left[e^{itS_{j-1}}e^{-t^2v_j/2}
 \left\{\E(e^{it\xi_j}\mid\cF_{j-1})
 -e^{-t^2\sigma_j^2/2}\right\}\right].
\end{align*}
Taylor expansion gives
\[
 \E(e^{it\xi_j}\mid\cF_{j-1})
 =1+itm_j-\frac{t^2}{2}a_j-\frac{it^3}{6}c_j
 +O(t^4\E(|\xi_j|^4\mid\cF_{j-1})),
\]
while $T^2\max_j\sigma_j^2\le c_0$ gives
\[
 e^{-t^2\sigma_j^2/2}
 =1-\frac{t^2}{2}\sigma_j^2+O(t^4\sigma_j^4).
\]
Subtraction, division by $t$, and integration prove
\eqref{det:eq:noncentered-telescope-bound}. Thus we complete the proof of Lemma~\ref{lem:det-telescope}.
\end{proof}

\vspace{0.5cm}

We now return to the proof of Lemma~\ref{lem:det-hybrid}.
To apply Lemma~\ref{lem:det-telescope}, we need to check the smallness condition $T^2\max_j\sigma_{\zeta,j}^2\le c_0$. Thus, we give the following Lemma~\ref{lem:common-one-step} to estimate the moments of $\zeta_{i}$, and we can use this lemma to estimate $\max_j\sigma_{\zeta,j}^2$.

\vspace{0.5cm}

\begin{lemma}\label{lem:common-one-step}
Recall that
\[
 M=\max\left\{3,\max_{1\le i,j\le n}\E|b_{ij}|^4\right\},
\]
where $B_n=(b_{ij})_{1\le i,j\le n}$ denotes the matrix before
the terminal Gaussian replacement. Also, $M$ bounds the
fourth moments of all entries after this replacement, since
standard Gaussian entries have fourth moment $3$.
For $1\le r\le4$, define
\begin{equation}\label{det:eq:lambda-growing-origin}
 \lambda_i^{(r)}=
 \int_{T_0}^{H}u^{r-1}
 \left\{
      M^2e^{4u}k_i^{-2}
      +M\ell^{4a}k_i^{-3}
      +M^2k_i^{-2}
 \right\}\dd u,
\end{equation}
where $T_0>1$ is an absolute constant. And we define
\begin{equation}\label{eq:common-one-step-error}
 H_i^*
 =
 Md_i
 +\sqrt M\sqrt{\frac{d_i}{k_i}}
 +M^{3/2}k_i^{-3/2}
 +M^2k_i^{-2}
 +\sum_{q=1}^4\lambda_i^{(q)},
 \qquad
 h_i=\E H_i^*.
\end{equation}
If $M \leq C_{0}\ell$, for some absolute constant $C_{0}>0$, then there is an absolute constant $C>0$ such that for every $0\le i<m$, almost surely,
\begin{equation}\label{eq:common-one-step-conditional}
 \left|\E_i\zeta_{i+1}\right|
 +
 \left|\E_i\zeta_{i+1}^2-\frac2{k_i}\right|
 +
 \left|\E_i\zeta_{i+1}^3\right|
 +
 \E_i|\zeta_{i+1}|^4
 \le CH_i^*.
\end{equation}
Consequently,
\begin{align}
 \E\left|\E_i\zeta_{i+1}\right|
 &\le Ch_i,
 \label{eq:common-one-step-mean}\\
 \E\left|
 \E_i\zeta_{i+1}^2-\E\zeta_{i+1}^2
 \right|
 &\le Ch_i,
 \label{eq:common-one-step-second}\\
 \E\left|\E_i\zeta_{i+1}^3\right|
 &\le Ch_i,
 \label{eq:common-one-step-third}\\
 \E|\zeta_{i+1}|^4
 &\le Ch_i,
 \label{eq:common-one-step-fourth}\\
 \left|
 \E\zeta_{i+1}^2-\frac2{k_i}
 \right|
 &\le Ch_i.
 \label{eq:common-one-step-second-mean}
\end{align}
Moreover,
\begin{equation}\label{eq:common-h-sum-point}
 \sum_{i=0}^{m-1}h_i\le CM,
 \qquad
 h_i\le\frac{CM}{k_i},
 \quad 0\le i<m,
\end{equation}
and
\begin{equation}\label{eq:common-h-terminal}
 \sum_{s_1\le k_i<s_2}h_i
 \le CM\ell^{-12}.
\end{equation}
\end{lemma}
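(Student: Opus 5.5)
The argument is a conditional expansion of $\zeta_{i+1}=\log(Z_{i+1}\vee\theta)+1/k_i$ around $Z_{i+1}\approx1$, given $\cF_i$. The first step computes the conditional moments of the quadratic form $Z_{i+1}=b_{i+1}^TQ_ib_{i+1}$, using that the row $b_{i+1}$ is independent of $\cF_i$ with independent centered unit-variance entries. Since $\tr Q_i=1$ and $\tr Q_i^2=k_i^{-1}$, we get $\E_iZ_{i+1}=1$. The conditional variance is
\[
\E_i(Z_{i+1}-1)^2=2\tr Q_i^2+\sum_r(\E b_r^4-3)q_{rr}(i)^2=\frac2{k_i}+O(Md_i).
\]
The third and fourth centered moments come from the standard formulas for quadratic forms, with diagonal corrections. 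I expect
\[
\E_i|Z_{i+1}-1|^3\le C\Bigl(M^{3/2}k_i^{-3/2}+\sqrt M\sqrt{d_i/k_i}+Md_i\Bigr),\qquad \E_i|Z_{i+1}-1|^4\le C\bigl(M^2k_i^{-2}+Md_i\bigr).
\]
These use $\E|b|^p\le M^{p/4}$ for $p\le4$ and $\max_r q_{rr}\le k_i^{-1}$, together with $\sum_r q_{rr}^4\le d_i k_i^{-2}$ and the cross terms bounded by Cauchy--Schwarz. For the off-diagonal part $\sum_{r\ne s}q_{rs}b_rb_s$, fourth moments are controlled by $M^2(\tr Q_i^2)^2$.

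The second step is a Taylor expansion. On the good event $\{|Z_{i+1}-1|\le1/2\}$, write
\[
\log Z=(Z-1)-\tfrac12(Z-1)^2+\tfrac13(Z-1)^3-\tfrac14(Z-1)^4+O(|Z-1|^5).
\]
Then $\E_i\log Z_{i+1}=-\tfrac1{k_i}+O(Md_i)+O(\text{third/fourth moment terms})$, and the $+1/k_i$ correction cancels the leading $-1/k_i$. This gives the bound on $|\E_i\zeta_{i+1}|$. In the same way $\E_i\zeta^2=\E_i(Z-1)^2+\dots=2/k_i+O(H_i^*)$, while $\E_i\zeta^3$ and $\E_i\zeta^4$ are dominated by the third and fourth moments of $Z-1$. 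The fifth-order remainder is handled by $|Z-1|^5\le|Z-1|^4$ on the good event.

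The third step controls the bad event, including both the upper tail and the lower region $Z<1/2$, where the truncation at $\theta=\ell^{-a/2}$ caps $|\zeta|\le H+1$. I would use the layer-cake formula
\[
\E_i|\zeta|^r\1_{\text{bad}}\le C\int_{T_0}^{H}u^{r-1}\Pp_i(|\log(Z\vee\theta)|>u)\,du+\dots
\]
The upper tail $\Pp_i(Z>e^u)\le\E_i(Z-1)^4/e^{4u}$ alone gives a term of order $M^2k_i^{-2}e^{-4u}$. The harder part is the lower tail $\Pp_i(Z<e^{-u})$ for $u\le H$, which is a small-ball estimate for the quadratic form. I plan to obtain it from a Chebyshev bound on $Z-1$ using the fourth moment, which gives $M^2k_i^{-2}$. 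The weights $e^{4u}$ in $\lambda_i^{(r)}$ I read as reflecting $\Pp(Z<e^{-u})\le\Pp(|Z-1|>1/2)$, combined with a crude growth factor. The term $M\ell^{4a}k_i^{-3}$ would absorb the contribution near $\theta$ through $H^4\le\ell^{4a}$-type bounds. Combining the three steps gives the almost-sure estimate \eqref{eq:common-one-step-conditional}. The consequences \eqref{eq:common-one-step-mean}--\eqref{eq:common-one-step-second-mean} then follow by taking expectations. For \eqref{eq:common-one-step-second}, I insert $2/k_i$ and use the triangle inequality. For \eqref{eq:common-one-step-fourth}, I use the tower property.

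The last step is the summation. We have $\sum_i\sqrt{d_i/k_i}\le(\sum_i\E d_i)^{1/2}(\sum_i k_i^{-1})^{1/2}$, and $\sum_i k_i^{-3/2}$, $\sum_i k_i^{-2}\le Cs_1^{-1/2}$ are tiny; here $\lambda$ uses $e^{4H}=\ell^{2a}$ and $k_i\ge s_1=\ell^{3a}$. The main obstacle is $\sum_i\E d_i$. Since $d_i\le\max_r q_{rr}(i)\cdot\sum_rq_{rr}(i)=\max_r p_{rr}(i)/k_i$, we get $\sum_i\E d_i\le\sum_i\E\max_rp_{rr}(i)/k_i$. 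This requires delocalization of the projection diagonals, namely $\E\max_r p_{rr}(i)\lesssim k_i/n$ in the bulk; there $d_i\approx1/n$ and the sum is $O(1)$. In the terminal range, Lemma~\ref{lem:terminal-diagonal} gives $\ell^{-8a}$, so $\sum_{s_1\le k_i<s_2}\E d_i\le\ell^{-8a}\log n\le\ell^{-12}$ since $a=20$. For the bulk range I would prove the analogue of Lemma~\ref{lem:terminal-diagonal} by a concentration argument for $p_{rr}(i)=\operatorname{dist}(e_r,\cV_i)^2$, which is the step I expect to be hardest. With that in hand, $\sum h_i\le CM$ and $h_i\le CM/k_i$ (from $d_i\le1/k_i$) follow.
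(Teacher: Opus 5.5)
The architecture (Taylor expansion of $\zeta_{i+1}$ around $Z_{i+1}=1$ on a good event, separate control of the bad event, then summation through bounds on the projection diagonals) is the paper's. However, the bad-event step does not work as you describe.

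First, the moments you posit for $Z_{i+1}-1$ are not available. For the diagonal part $U_{i+1}=\sum_rq_{rr}(i)(b_{i+1,r}^2-1)$ one has $\E_iU_{i+1}^4\ge\sum_rq_{rr}(i)^4\E(b_{i+1,r}^2-1)^4$, which is an eighth moment; the third absolute moment likewise needs sixth moments. Proposition~\ref{prop:det-growing-fourth} assumes only fourth moments and no support bound. Only $\E_iU_{i+1}^2\le CMd_i$ and $\E_iV_{i+1}^4\le CM^2k_i^{-2}$ (for the off-diagonal part) are usable. This is why the paper takes the good event $\{|U_{i+1}|,|V_{i+1}|\le1/4\}$, on which $|U|^3,U^4\le U^2$. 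On its complement it uses $\{\log(1+|U|+|V|)\}^r\le C(|U|+|V|)^2$ together with $V^2\le U^2+16V^4$.

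Second, and more seriously, no Chebyshev bound produces the small-ball estimate encoded in $\lambda_i^{(r)}$. The most it gives is $\Pp_i(Z_{i+1}<e^{-u})\le\Pp_i(|Z_{i+1}-1|>1/2)\le C(Md_i+M^2k_i^{-2})$, uniformly in $u$; the $Md_i$ term is present even with your claimed moments. Integrating against $u^{r-1}$ over $[T_0,H]$ then gives $H^rMd_i$. Since $H=\frac a2\log\ell\to\infty$ and $H_i^*$ is of order $Md_i$ in the bulk, this is not $\le CH_i^*$. Summed over $i$ it gives $M(\log\ell)^4$ rather than $CM$.

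The missing idea is Lemma~\ref{lem:common-lower-tail}. Split $\{Z_{i+1}<e^{-u}\}\subseteq\{\sum_rq_{rr}(i)b_{i+1,r}^2<2e^{-u}\}\cup\{|V_{i+1}|>e^{-u}\}$. Markov's inequality for $V_{i+1}^4$ at level $e^{-u}$ is the source of $M^2e^{4u}k_i^{-2}$; it is not a crude growth factor. For the nonnegative diagonal sum, truncate the entries at $\ell^a$, which can only decrease the sum, then renormalize. Apply \eqref{eq:diag-part} with truncated eighth moments $\le CM\ell^{4a}$ and $\tr Q_i^4=k_i^{-3}$, $\tr Q_i^2=k_i^{-1}$. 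This is where $M\ell^{4a}k_i^{-3}+M^2k_i^{-2}$ comes from. The restriction $u\le H$, i.e. $e^{-u}\ge\ell^{-a/2}$, absorbs the recentering error $CM^2\ell^{-6a}$. These bounds carry $k_i^{-2}$ rather than $d_i$, which is what makes $\sum_i\sum_r\lambda_i^{(r)}$ negligible despite the weight $u^{r-1}$.

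The summation step also fails as written. Your bound $\sum_iR_i\le(\sum_iD_i)^{1/2}(\sum_ik_i^{-1})^{1/2}$ is of order $\sqrt\ell$: $d_i\ge1/n$ forces $\sum_iD_i\ge m/n\ge1/2$, while $\sum_ik_i^{-1}\asymp\ell$. Hence the $\sqrt M R_i$ terms contribute $\sqrt{M\ell}$, which is not $O(M)$ when $M\ll\ell$. Downstream, the bound on the $\mathcal M(t)$ integral in Lemma~\ref{lem:det-hybrid} would then be of order $\sqrt M$ rather than $M/\sqrt\ell$.

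You need Jensen term by term, $R_i\le\sqrt{D_i/k_i}$, together with $D_i\le C/n+CM/(k_in^{5/6})$ in the middle range $s_2\le k_i<n/2$, so that $\sum_i(nk_i)^{-1/2}=O(1)$. That middle-range estimate is exactly what you leave open. The paper proves it coordinatewise, with no maximum over $r$:
\begin{itemize}
\item $p_{rr}(i)\le(1+n^{-1}b_r^TG_rb_r)^{-1}$, with the regularized resolvent $G_r$;
\item $T_r\ge T$, where $\E T\ge c\,n/k_i$ and $\Var T\le Cn^{-2/3}$ by Lemma~\ref{lem:regularized-inverse};
\item a conditional Chebyshev bound for the quadratic form.
\end{itemize}
This gives $\E p_{rr}(i)^2\le C\{(k_i/n)^2+Mk_in^{-11/6}+k_i^2n^{-8/3}\}$, and then $D_i=k_i^{-2}\sum_r\E p_{rr}(i)^2$. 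Your target $\E\max_rp_{rr}(i)\le Ck_i/n$ is stronger than needed. It would require union-bound tail estimates that fourth moments do not readily supply.
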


\vspace{0.5cm}

The proof of Lemma~\ref{lem:common-one-step} is given in Section~\ref{sec:proof-lemmas}. Using \eqref{eq:common-one-step-second-mean} and
\eqref{eq:common-h-sum-point} in Lemma~\ref{lem:common-one-step}, we have
\[
 s_\zeta^2
 =
 2\sum_{k=s_1+1}^{n}\frac1k
 +O\left(\sum_i h_i\right)
 =
 2\ell+O(M+\log\ell).
\]
Moreover,
\[
 \E\zeta_{i+1}^2
 \le
 \frac2{k_i}+Ch_i
 \le
 \frac{CM}{k_i},
\]
while $s_\zeta^2\asymp\ell$, because
$M\le K_0\sqrt\ell$ is the assumption of Proposition~\ref{prop:det-growing-fourth}.
Therefore
\[
 \sigma_{\zeta,i+1}^2
 =
 \frac{\E\zeta_{i+1}^2}{s_\zeta^2}
 \le
 \frac{CM}{\ell k_i}
 \le
 \frac{CM}{\ell s_1}.
\]
Taking the maximum over $0\le i<m$ gives \eqref{eq:det-zeta-scale}
\begin{equation}\label{eq:det-zeta-scale}
 s_\zeta^2
 =
 2\ell+O(M+\log\ell),
 \qquad
 \max_{0\le i<m}\sigma_{\zeta,i+1}^2
 \le
 \frac{CM}{\ell s_1}.
\end{equation}

Therefore, we can verify the smallness condition in Lemma~\ref{lem:det-telescope}.
We apply Lemma~\ref{lem:det-telescope} with $N=m$ and
$T=\sqrt\ell$. By \eqref{eq:det-zeta-scale},
\[
 T^2\max_{0\le i<m}\sigma_{\zeta,i+1}^2
 \le
 \ell\frac{CM}{\ell s_1}
 =
 \frac{CM}{s_1}
 \le
 \frac{C\sqrt{\ell}}{\ell}
 =
 o(1).
\]
Thus the smallness condition in
Lemma~\ref{lem:det-telescope} holds for all sufficiently large $n$. And using Lemma~\ref{lem:det-telescope} gives
\begin{equation}
  \begin{aligned}\label{eq:xi-telescope-middle-result}
    \dk\left( \sum_{i=0}^{m-1}\xi_{i+1}, \Normal \right) 
    \leq 
    C\ell^{-1/2}+C\int_0^{\sqrt{\ell}}
    \{|\mathcal M(t)|+t|\mathcal B(t)|
    +t^2|\mathcal C(t)|+t^3\mathcal R_4(t)\}\dd t,
  \end{aligned}
\end{equation}
where $\mathcal M,\mathcal B,\mathcal C$, and $\mathcal R_4$ be the
quantities in Lemma~\ref{lem:det-telescope} for the present sequence $\{\xi_i\}_{i \geq 1}$ defined in \eqref{def:xi-zeta-first}.  

To estimate the integrals in \eqref{eq:xi-telescope-middle-result}, we use the one-step estimates in Lemma~\ref{lem:common-one-step} to bound the conditional moments of $\xi_{i+1}$. The one-step estimates in Lemma~\ref{lem:common-one-step} give
\begin{equation}
  \begin{aligned}\label{eq:one-step-estimate-Lemm4.8}
    \E\left|\E_i\xi_{i+1}\right|
    \le\frac{Ch_i}{s_\zeta},
    \qquad
    \E\left|
    \E_i\xi_{i+1}^2-\E\xi_{i+1}^2
    \right|
    \le\frac{Ch_i}{s_\zeta^2},\\
    \E\left|\E_i\xi_{i+1}^3\right|
    \le\frac{Ch_i}{s_\zeta^3},
    \qquad
    \E|\xi_{i+1}|^4
    \le\frac{Ch_i}{s_\zeta^4}.
  \end{aligned}
\end{equation}
By the definitions of $\mathcal M$, $\mathcal B$, and $\mathcal C$,
each summand contains the factors $e^{itS_i}$ and
$e^{-t^2v_{\zeta,i}/2}$.  We have
\[
 |e^{itS_i}|=1,
 \qquad
 0<e^{-t^2v_{\zeta,i}/2}\le1.
\]
Therefore, the triangle inequality and \eqref{eq:one-step-estimate-Lemm4.8} give
\begin{equation}
  \begin{aligned}\label{eq:estimate-M-B-C-intergal}
  |\mathcal M(t)|
  &\le
  \sum_{i=0}^{m-1}
  e^{-t^2v_{\zeta,i}/2}
  \E\left|\E_i\xi_{i+1}\right|
  \le
  \frac C{s_\zeta}
  \sum_{i=0}^{m-1}
  h_i e^{-t^2v_{\zeta,i}/2},\\
  |\mathcal B(t)|
  &\le
  \sum_{i=0}^{m-1}
  e^{-t^2v_{\zeta,i}/2}
  \E\left|
  \E_i\xi_{i+1}^2-\E\xi_{i+1}^2
  \right|
  \le
  \frac C{s_\zeta^2}
  \sum_{i=0}^{m-1}
  h_i e^{-t^2v_{\zeta,i}/2},\\
  |\mathcal C(t)|
  &\le
  \sum_{i=0}^{m-1}
  e^{-t^2v_{\zeta,i}/2}
  \E\left|\E_i\xi_{i+1}^3\right|
  \le
  \frac C{s_\zeta^3}
  \sum_{i=0}^{m-1}
  h_i e^{-t^2v_{\zeta,i}/2}.
  \end{aligned}
\end{equation}
The fourth-moment estimate gives, in the same way,
\begin{equation}
  \begin{aligned}\label{eq:estimate-forth-moment-integral}
    \sum_{i=0}^{m-1}
    e^{-t^2v_{\zeta,i}/2}\E|\xi_{i+1}|^4
    \le
    \frac C{s_\zeta^4}
    \sum_{i=0}^{m-1}h_i e^{-t^2v_{\zeta,i}/2}.
  \end{aligned}
\end{equation}
Thus combining \eqref{eq:estimate-M-B-C-intergal} and \eqref{eq:estimate-forth-moment-integral}, to estimate \eqref{eq:xi-telescope-middle-result}, we need to calculate the integrals of the form $\int_0^{\sqrt\ell}t^p\sum_{i=0}^{m-1}h_i e^{-t^2v_{\zeta,i}/2}\dd t$ for $p=0,1,2,3$.  The following Lemma~\ref{lem:det-weighted} gives a uniform bound for these integrals.

\vspace{0.5cm}

\begin{lemma}\label{lem:det-weighted}
Define
\[
 v_{\zeta,i}=\sum_{j=i+1}^{m-1}\sigma_{\zeta,j+1}^2,
 \qquad 0\le i<m,
\]
where the empty sum is understood to be zero.  Then, for every fixed $c>0$ and $p=0,1,2,3$,
\begin{equation}\label{det:eq:weighted-growing}
 \int_0^{\sqrt\ell}t^p
 \sum_{i=0}^{m-1}h_i e^{-ct^2v_{\zeta,i}}\dd t
 \le C_pM,
\end{equation}
where $C_p$ may depend on $c$ and on the constants in the standing
assumptions, but not on $n$ or $M$.
\end{lemma}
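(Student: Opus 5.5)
Since every term in the sum is nonnegative, I would exchange sum and integral and bound, for each $i$, the one-dimensional integral
\[
 J_p(v):=\int_0^{\sqrt\ell}t^pe^{-ct^2v}\dd t
 \le \min\Bigl\{\frac{\ell^{(p+1)/2}}{p+1},\;C_{p,c}\,v^{-(p+1)/2}\Bigr\},
\]
where the second bound comes from substituting $u=t\sqrt v$ and extending to $[0,\infty)$. The statement then reduces to
\[
 \sum_{i=0}^{m-1}h_i\min\bigl\{\ell^{(p+1)/2},v_{\zeta,i}^{-(p+1)/2}\bigr\}\le C_pM .
\]
For this we need two inputs: a lower bound for the remaining variance $v_{\zeta,i}$ in terms of $k_i$, and the bounds on $h_i$ from Lemma~\ref{lem:common-one-step}.

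\textbf{Lower bound on $v_{\zeta,i}$.} By \eqref{eq:common-one-step-second-mean}, $\E\zeta_{j+1}^2\ge 2/k_j-Ch_j$. Summing over $j>i$ (so that $k_j$ runs from $k_i-1$ down to $s_1+1$) gives
\[
 s_\zeta^2v_{\zeta,i}\ge 2\sum_{k=s_1+1}^{k_i-1}\frac1k-C\sum_{j}h_j
 \ge 2\log\frac{k_i}{s_1}-CM-C .
\]
Since $s_\zeta^2\asymp\ell$ by \eqref{eq:det-zeta-scale}, we get
\[
 v_{\zeta,i}\ge c'\,\frac{\log(k_i/s_1)-C'M}{\ell}.
\]
This is useful only where $\log(k_i/s_1)\ge 2C'M$. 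Because $M\le K_0\sqrt\ell$, this holds for all $k_i\ge s_1e^{2C'M}$, which is a range of $k$ of logarithmic length $\ell-O(\sqrt\ell)$.

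\textbf{Splitting by $k_i$ and summing.} I would split the indices into three ranges.
\begin{enumerate}[label=(\roman*)]
\item \emph{Terminal range $s_1\le k_i<s_2$.} Use the crude bound $\ell^{(p+1)/2}\le\ell^2$ together with \eqref{eq:common-h-terminal}, giving a contribution of at most $CM\ell^{-12}\ell^2\le CM$.
\item \emph{Intermediate range $s_2\le k_i<s_1e^{2C'M}$.} This range is empty unless $e^{2C'M}>s_2/s_1=n\ell^{-23a}$, which is impossible for $M=O(\sqrt\ell)$ and large $n$. Since $\log(s_2/s_1)\ge\ell/2$, the lower bound above already applies throughout $k_i\ge s_2$.
\item \emph{Bulk range $k_i\ge s_2$.} Here $v_{\zeta,i}\ge c''\log(k_i/s_1)/\ell\ge c$, so the bound $J_p\le C_{p,c}v^{-(p+1)/2}\le C$ applies, and summing with $\sum_ih_i\le CM$ from \eqref{eq:common-h-sum-point} gives $CM$.
\end{enumerate}

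\textbf{Main obstacle.} The delicate point is getting the lower bound on $v_{\zeta,i}$ uniformly in $i$ without losing a factor of $M$. If $M$ could be as large as $C\ell$, the error term $C\sum_j h_j$ could swamp $\log(k_i/s_1)$ on a large range of $i$. In that case I would instead sum the error only over $j>i$, and use the pointwise bound $h_j\le CM/k_j$ with $h_j$ concentrated near small $k_j$, so that the error is at most $CM\ell^{-12}$ plus the tail over $k_j\ge s_2$. In the present setting $M\le K_0\sqrt\ell$, and I expect the three-range split above to close directly, giving $\int_0^{\sqrt\ell}t^p\sum_ih_ie^{-ct^2v_{\zeta,i}}\dd t\le C_pM$.
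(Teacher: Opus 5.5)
Your proof is correct and follows the paper's proof essentially step for step. For $k_i\ge s_2$ the paper also lower-bounds $s_\zeta^2v_{\zeta,i}$ by $2\log\bigl(s_2/(s_1+1)\bigr)-CM-O(1)\ge c_1\ell$ (using $M\le K_0\sqrt\ell$ and $s_\zeta^2\le C\ell$) to get $v_{\zeta,i}\ge c_0$ and then sums $h_i\le CM$; on $s_1\le k_i<s_2$ it uses $e^{-ct^2v_{\zeta,i}}\le1$ together with \eqref{eq:common-h-terminal}. Your empty ``intermediate range'' and the closing remark about $M\asymp\ell$ are unnecessary, since $M\le K_0\sqrt\ell$ already gives $CM=o(\ell)$ throughout $k_i\ge s_2$.
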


\vspace{0.5cm}

\begin{proof}[Proof of Lemma~\ref{lem:det-weighted}]
As throughout this section, it is enough to consider all sufficiently
large $n$.

Suppose first that $k_i\ge s_2$.  By the definitions of
$v_{\zeta,i}$ and $\sigma_{\zeta,j+1}^2$, $s_\zeta^2v_{\zeta,i} = \sum_{j=i+1}^{m-1}\E\zeta_{j+1}^2$.
Using \eqref{eq:common-one-step-second-mean}, we obtain
\begin{align*}
 s_\zeta^2v_{\zeta,i}
 &\ge
 2\sum_{j=i+1}^{m-1}\frac1{k_j}
 -
 C\sum_{j=i+1}^{m-1}h_j
 =
 2\sum_{q=s_1+1}^{k_i-1}\frac1q
 -
 C\sum_{j=i+1}^{m-1}h_j.
\end{align*}
Here we used $k_j=n-j$ and $m=n-s_1$, so that $k_j$ ranges
from $k_i-1$ down to $s_1+1$ as $j$ ranges from $i+1$ to
$m-1$.

Since $k_i\ge s_2$,
\[
 \sum_{q=s_1+1}^{k_i-1}\frac1q
 \ge
 \int_{s_1+1}^{k_i}\frac{\dd x}{x}
 \ge
 \log\frac{s_2}{s_1+1}.
\]
For all sufficiently large $n$, $s_2\ge\frac12n\ell^{-20a}$,
$s_1+1\le2\ell^{3a}$,
and therefore
\[
 \log\frac{s_2}{s_1+1}
 \ge
 \ell-23a\log\ell-O(1).
\]
Moreover, \eqref{eq:common-h-sum-point} gives $\sum_{j=i+1}^{m-1}h_j\le CM$.
Consequently,
\[
 s_\zeta^2v_{\zeta,i}
 \ge
 2\ell-46a\log\ell-CM-O(1).
\]
Since $M\le K_0\sqrt\ell$, the right-hand side is at least
$c_1\ell$ for some constant $c_1>0$ and all sufficiently large
$n$.  On the other hand,
\eqref{eq:det-zeta-scale} gives $s_\zeta^2\le C\ell$.
It follows that
\[
 v_{\zeta,i}\ge c_0>0
 \qquad\text{whenever }k_i\ge s_2.
\]

Hence, for every $p=0,1,2,3$,
\[
 \int_0^{\sqrt\ell}t^pe^{-ct^2v_{\zeta,i}}\dd t
 \le
 \int_0^\infty t^pe^{-cc_0t^2}\dd t
 \le C_p.
\]
Therefore,
\begin{align*}
 \int_0^{\sqrt\ell}t^p
 \sum_{k_i\ge s_2}h_i e^{-ct^2v_{\zeta,i}}\dd t
 &=
 \sum_{k_i\ge s_2}h_i
 \int_0^{\sqrt\ell}t^pe^{-ct^2v_{\zeta,i}}\dd t
 \le
 C_p\sum_{k_i\ge s_2}h_i
 \le C_pM,
\end{align*}
where the last inequality follows from
\eqref{eq:common-h-sum-point}.

It remains to consider the range $s_1\le k_i<s_2$.  Here we use only
$e^{-ct^2v_{\zeta,i}}\le1$.  By
\eqref{eq:common-h-terminal},
\begin{align*}
 &\int_0^{\sqrt\ell}t^p
 \sum_{s_1\le k_i<s_2}
 h_i e^{-ct^2v_{\zeta,i}}\dd t
 \le
 \left(
 \sum_{s_1\le k_i<s_2}h_i
 \right)
 \int_0^{\sqrt\ell}t^p\dd t
 \le
 CM\ell^{-5}
 \frac{\ell^{(p+1)/2}}{p+1}.
\end{align*}
Since $p\le3$, one has
$\ell^{(p+1)/2}\le\ell^2$, and hence the last expression is bounded
by
\[
 C_pM\ell^{-3}\le C_pM.
\]
Combining the two ranges proves
\eqref{det:eq:weighted-growing}. Thus the proof of Lemma~\ref{lem:det-weighted} is complete.
\end{proof}

\vspace{0.5cm}

We now return to the proof of Lemma~\ref{lem:det-hybrid}.
Applying Lemma~\ref{lem:det-weighted} with $c=1/2$ and
$p=0,1,2,3$, respectively, gives
\begin{align}
 \int_0^{\sqrt\ell}|\mathcal M(t)|\dd t
 &\le C\frac M{s_\zeta},
 \label{det:eq:M-growing-final}\\
 \int_0^{\sqrt\ell}t|\mathcal B(t)|\dd t
 &\le C\frac M{s_\zeta^2},
 \label{det:eq:B-growing-final}\\
 \int_0^{\sqrt\ell}t^2|\mathcal C(t)|\dd t
 &\le C\frac M{s_\zeta^3},
 \label{det:eq:C-growing-final}\\
 \int_0^{\sqrt\ell}t^3
 \sum_{i=0}^{m-1}
 e^{-t^2v_i/2}\E|\xi_{i+1}|^4\dd t
 &\le C\frac M{s_\zeta^4}.
 \label{det:eq:R-growing-zeta}
\end{align}

For \eqref{eq:xi-telescope-middle-result}, it remains to control the Gaussian Taylor remainder.  Since
$\sum_i\sigma_{\zeta,i+1}^2=1$,
\[
 \sum_{i=0}^{m-1}\sigma_{\zeta,i+1}^4
 \le
 \max_{0\le i<m}\sigma_{\zeta,i+1}^2\sum_{i=0}^{m-1}\sigma_{\zeta,i+1}^2
 =
 \max_{0\le i<m}\sigma_{\zeta,i+1}^2.
\]
Therefore
\begin{align}
 &\int_0^{\sqrt\ell}t^3
 \sum_{i=0}^{m-1}
 e^{-t^2v_{\zeta,i}/2}\sigma_{\zeta,i+1}^4\dd t
 \le
 \frac{\ell^2}{4}
 \max_{0\le i<m}\sigma_{\zeta,i+1}^2
 \le
 C\frac{M\ell}{s_1}
 =
 o\left(\frac M{\sqrt\ell}\right).
 \label{det:eq:R-growing-sigma}
\end{align}

By \eqref{eq:det-zeta-scale}, $s_\zeta^2\asymp\ell$.
Substituting
\eqref{det:eq:M-growing-final}--\eqref{det:eq:R-growing-sigma}
into \eqref{eq:xi-telescope-middle-result}, we obtain
\begin{equation}
  \begin{aligned}
  \dk\left(
  \frac{\sum_{i=0}^{m-1}\zeta_{i+1}}{s_\zeta},
  \Normal
  \right)
  &\le
  C\ell^{-1/2}
  +
  C\left\{
  \frac M{s_\zeta}
  +\frac M{s_\zeta^2}
  +\frac M{s_\zeta^3}
  +\frac M{s_\zeta^4}
  \right\}
  +
  o\left(\frac M{\sqrt\ell}\right)\\
  &\le
  C\left(
  \ell^{-1/2}+\frac M{\sqrt\ell}
  \right).
  \label{det:eq:bulk-standard-s}
  \end{aligned}
\end{equation}

Next,
\[
 \left|
 \frac{s_\zeta}{\sqrt{2\ell}}-1
 \right|
 =
 \frac{|s_\zeta^2-2\ell|}
 {\sqrt{2\ell}\{s_\zeta+\sqrt{2\ell}\}}
 \le
 C\frac{M+\log\ell}{\ell}.
\]
This quantity tends to zero because $M\le K_0\sqrt\ell$, and is
therefore at most $1/2$ for all sufficiently large $n$.  Hence
Lemma~\ref{lem:perturb}(iv) gives
\begin{equation}\label{det:eq:corrected-bulk-growing}
 \dk\left(
 \frac{\sum_{i=0}^{m-1}\zeta_{i+1}}{\sqrt{2\ell}},
 \Normal
 \right)
 \le
 C\left(
 \ell^{-1/2}+\frac M{\sqrt\ell}
 \right).
\end{equation}

\medskip
\noindent\textbf{Step 4: remove the lower truncation.}

In the step2 \eqref{eq:truncate-notation-definition}, we have truncated the logarithm $\log Z_{i+1}$ at $\theta=\ell^{-a/2}$ to control the lower tail.  In this step, we show that the truncation has negligible effect on the distribution of the bulk sum.  The following Lemma~\ref{lem:common-lower-tail} gives a bound for the lower tail of $Z_{i+1}$ and shows that the truncation error is small, whose proof is postponed in Section~\ref{sec:proof-lemmas}.

\vspace{0.5cm}

\begin{lemma}\label{lem:common-lower-tail}
Assume that $M \leq C_{0}\ell$, for some absolute constant $C_{0}>0$. Then we have, there is an absolute constant $T_0>1$ such that, for all $T_0\le u\le H$ and $0\le i<m$,
\begin{equation}\label{det:eq:lower-tail-Z}
 \Pp_i(Z_{i+1}<e^{-u})
 \le C\left\{
      M^2e^{4u}k_i^{-2}
      +M\ell^{4a}k_i^{-3}
      +M^2k_i^{-2}
 \right\}.
\end{equation}
For $1\le r\le4$, using the notation of $\lambda_i^{(r)}$ in \eqref{det:eq:lambda-growing-origin},
\begin{equation}
  \begin{aligned}
    \lambda_i^{(r)}=
    \int_{T_0}^{H}u^{r-1}
    \left\{
          M^2e^{4u}k_i^{-2}
          +M\ell^{4a}k_i^{-3}
          +M^2k_i^{-2}
    \right\}\dd u, \nonumber
  \end{aligned}
\end{equation}
we have
\begin{equation}\label{det:eq:lambda-growing-total}
 \sum_{i=0}^{m-1}\sum_{r=1}^4\lambda_i^{(r)}
 \le C(\log\ell)^4
 \left(M^2\ell^{-a}+M\ell^{-2a}+M^2\ell^{-3a}\right).
\end{equation}
Moreover,
\begin{equation}\label{det:eq:lower-truncation-growing-error}
 \Pp\{L_{i+1}\ne\log Z_{i+1}
 \text{ for some }0\le i<m\}
 \le C\left(M^2\ell^{-a}+M\ell^{-2a}+M^2\ell^{-3a}\right).
\end{equation}
\end{lemma}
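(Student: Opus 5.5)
Conditionally on $\cF_i$, the plan is to treat $Z_{i+1}=a_{i+1}^TQ_ia_{i+1}$ as a quadratic form in the independent, centered, unit-variance coordinates of the fresh row $a_{i+1}$, with a fixed matrix $Q_i$ satisfying $\tr Q_i=1$ and $\tr Q_i^2=k_i^{-1}$ by \eqref{eq:common-projection-traces}. Its conditional mean is $1$ and its conditional variance is at most $C M(d_i+k_i^{-1})$. The small-ball event $\{Z_{i+1}<e^{-u}\}$ is then a large downward deviation of size at least $1-e^{-u}\ge1/2$. A plain second-moment Chebyshev bound would only give $O(M/k_i)$, which is too weak, so the proof has to use fourth moments together with the structure of the projection.

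The first route to try is the fourth-moment Chebyshev inequality $\Pp_i(|Z_{i+1}-1|>1/2)\le 16\,\E_i(Z_{i+1}-1)^4$. Expanding the quartic moment of a quadratic form in independent entries with bounded fourth moment and $\|Q_i\|_{op}=k_i^{-1}$ gives terms of size $(\tr Q_i^2)^2\asymp k_i^{-2}$ times $M^2$, plus diagonal-heavy terms involving $M\sum_r q_{rr}^4$. Entries with bounded fourth moment do not have controlled eighth moments, so the row should first be truncated at level $\ell^{a}$ (or $\sqrt{k_i}$). The truncated part has moments of order $\E|b|^{p}\le M\ell^{a(p-4)}$, and controlling it produces the term $M\ell^{4a}k_i^{-3}$. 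The piece discarded by truncation is only positive-semidefinite loss in the quadratic form, since dropping coordinates decreases $x^TPx$ only through cross terms, so it can be handled by a decoupling or conditioning argument.

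The factor $e^{4u}$ is expected to come from a different small-ball mechanism. Write $Z_{i+1}$ through the projection onto the orthogonal complement and compare it to a Gaussian via a smoothing/Lindeberg replacement at scale $e^{-u}$. The smoothing error for a test function with derivatives of order $e^{u}$ costs $e^{4u}\times(\text{fourth-order Lindeberg remainder})\asymp e^{4u}M^2k_i^{-2}$, while the Gaussian small-ball probability $\Pp(\chi^2_{k_i}/k_i<e^{-u})$ is negligible. This is the step I expect to be the main obstacle. The Lindeberg remainder involves $\sum_r q_{rr}^2$-type terms, and these must be absorbed into $M^2k_i^{-2}$ uniformly, or deferred to the averaged $d_i$-estimates. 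In that case the statement would only need to hold after the a.s. bounds obtained from Lemma~\ref{lem:terminal-diagonal}-type delocalization. I would take the pointwise bound \eqref{det:eq:lower-tail-Z} as the union of these two mechanisms.

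Given \eqref{det:eq:lower-tail-Z}, the rest is integration. For the sum of the $\lambda_i^{(r)}$, use $\int_{T_0}^H u^{r-1}e^{4u}\,du\le H^{r-1}e^{4H}=C(\log\ell)^{r-1}\ell^{2a}$ and $\int_{T_0}^{H}u^{r-1}du\le CH^r\le C(\log\ell)^4$. Since $k_i\ge s_1\approx\ell^{3a}$, we have $\sum_{k\ge s_1}k^{-2}\le C\ell^{-3a}$ and $\sum_{k\ge s_1}k^{-3}\le C\ell^{-6a}$. This gives the three terms $M^2\ell^{2a}\ell^{-3a}=M^2\ell^{-a}$, $M\ell^{4a}\ell^{-6a}=M\ell^{-2a}$, and $M^2\ell^{-3a}$, each multiplied by $(\log\ell)^4$, which is \eqref{det:eq:lambda-growing-total}. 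For \eqref{det:eq:lower-truncation-growing-error}, note that $L_{i+1}\ne\log Z_{i+1}$ exactly when $Z_{i+1}<\theta=e^{-H}$. Apply \eqref{det:eq:lower-tail-Z} with $u=H$, take expectations, and sum over $i$ by the union bound, using the same harmonic-tail sums. This yields the stated bound without logarithmic factors.
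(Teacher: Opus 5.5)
Your integration step is correct and matches the paper. The bounds $e^{4H}=\ell^{2a}$, $\sum_{k>s_1}k^{-2}\le C\ell^{-3a}$ and $\sum_{k>s_1}k^{-3}\le C\ell^{-6a}$ give \eqref{det:eq:lambda-growing-total}, and the union bound at $u=H$ gives \eqref{det:eq:lower-truncation-growing-error}.

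The gap is the pointwise bound \eqref{det:eq:lower-tail-Z}, which is the real content of the lemma. Your truncation step assumes that truncating coordinates can only lower $x^TPx$, and that is false. With $\widehat x$ the truncated vector, $x^TPx-\widehat x^TP\widehat x=2\widehat x^TP(x-\widehat x)+(x-\widehat x)^TP(x-\widehat x)$, and the cross term has no sign. For instance, take $P$ the projection onto $(e_1-e_2)/\sqrt2$, $x_1$ just above $\ell^a$ and $x_2$ just below: truncation turns $(x_1-x_2)^2/2\approx0$ into roughly $\ell^{2a}/2$. So $\{Z_{i+1}<e^{-u}\}$ does not imply a small-ball event for the truncated row. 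Nor can you discard the event that some coordinate exceeds $\ell^a$: its probability is only bounded by $nM\ell^{-4a}$, which can be close to one. The unspecified ``decoupling or conditioning argument'' is exactly the missing step.

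The paper's idea is to split $Z_{i+1}=\sum_rq_{rr}(i)b_{i+1,r}^2+V_{i+1}$ before truncating, so that $\{Z_{i+1}<e^{-u}\}\subseteq\{\sum_rq_{rr}(i)b_{i+1,r}^2<2e^{-u}\}\cup\{|V_{i+1}|>e^{-u}\}$. Only the diagonal part is truncated. It is monotone because $q_{rr}(i)\ge0$, so $\widehat b_r^{\,2}\le b_{i+1,r}^2$ turns the first event into $\{\sum_rq_{rr}(i)\widetilde b_r^{\,2}\le 8e^{-u}+CM^2\ell^{-6a}\}$. Since $u\le H$ gives $e^{-u}\ge\ell^{-a/2}\gg M^2\ell^{-6a}$ (this is why the lemma restricts to $u\le H$), this forces $\sum_rq_{rr}(i)(\widetilde b_r^{\,2}-1)\le-1/2$ once $u\ge T_0$. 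Then \eqref{eq:diag-part}, with $\E|\widetilde b_r|^8\le CM\ell^{4a}$, $\tr Q_i^4=k_i^{-3}$ and $\tr Q_i^2=k_i^{-1}$, bounds its probability by $C\{M\ell^{4a}k_i^{-3}+M^2k_i^{-2}\}$.

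The off-diagonal part is never truncated, because \eqref{eq:off-diag-part} needs only fourth moments and gives $\E_iV_{i+1}^4\le CM^2k_i^{-2}$. The factor $e^{4u}$ is then just Markov's inequality, $\Pp_i(|V_{i+1}|>e^{-u})\le e^{4u}\E_iV_{i+1}^4$. No Gaussian comparison is needed.

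Your Lindeberg mechanism would not deliver this bound as sketched. The entries match the Gaussian in only two moments, so the replacement error starts at third order, not fourth. As you note yourself, it also produces $\sum_rq_{rr}(i)^2$-type terms that you cannot control pointwise.

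Your fallback, deferring these terms to averaged $d_i$ estimates or to delocalization from Lemma~\ref{lem:terminal-diagonal}, would prove a different statement. \eqref{det:eq:lower-tail-Z} has a deterministic right-hand side and must hold almost surely for every realization of $\cF_i$ (that is, for every fixed rank-$k_i$ projection) and every $0\le i<m$. Lemma~\ref{lem:terminal-diagonal} controls only $\E\max_rp_{rr}(i)$, and only in the range $k_i\le s_2$.
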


\vspace{0.5cm}
By  \eqref{det:eq:lower-truncation-growing-error} in Lemma~\ref{lem:common-lower-tail}, we obtain
\begin{align*}
 &\Pp\left(
 L_{i+1}\ne\log Z_{i+1}
 \text{ for some }0\le i<m
 \right)
 \le
 C\left\{
 M^2\ell^{-a}
 +M\ell^{-2a}
 +M^2\ell^{-3a}
 \right\}.
\end{align*}
Since $M\le K_0\sqrt\ell$ and $a=20$, the right-hand side is more less than $C\frac M{\sqrt\ell}$.
On the complementary event,
\[
 \sum_{i=0}^{m-1}L_{i+1}
 =
 \sum_{i=0}^{m-1}\log Z_{i+1}.
\]
The coupling inequality, followed by
\eqref{det:eq:harmonic-shift-bound} and
\eqref{det:eq:corrected-bulk-growing}, therefore gives
\begin{equation}\label{det:eq:untruncated-bulk-growing}
 \dk\left(
 \frac{
 \sum_{i=0}^{m-1}\log Z_{i+1}
 +\log n+\mu_{s_1}
 }{\sqrt{2\ell}},
 \Normal
 \right)
 \le
 C\left(
 \ell^{-1/2}+\frac M{\sqrt\ell}
 \right).
\end{equation}

\medskip
\noindent\textbf{Step 5: restore the Gaussian terminal block.}
Combining
\eqref{det:eq:gaussian-block-perturb} and
\eqref{det:eq:untruncated-bulk-growing}, and using
\[
 \frac{\log\ell}{\ell}\le C\ell^{-1/2},
\]
we conclude that
\[
 \dk\left(
 W_n^{\mathrm d}(B_n^{\mathrm{hyb}}),\Normal
 \right)
 \le
 C\left(
 \ell^{-1/2}+\frac M{\sqrt\ell}
 \right)
 \le
 C\frac M{\sqrt\ell}.
\]
This proves \eqref{det:eq:hybrid-bound}. And thus we complete the proof of Lemma~\ref{lem:det-hybrid}.
\end{proof}

We now return to the proof of Proposition~\ref{prop:det-growing-fourth}.Combining \eqref{eq:det-growing-total-replacement} and \eqref{det:eq:hybrid-bound}, we obtain
\begin{equation}\label{eq:det-growing-combination}
 \dk(W_n^{\mathrm d}(B_n),\Normal)
 \le C\ell^{-a}+CM\ell^{-1/2}.
\end{equation}
Because $a=20$, the first term in \eqref{eq:det-growing-combination} is absorbed by $CM\ell^{-1/2}$.  This proves \eqref{eq:det-growing-bound}. Thus we complete the proof of Proposition~\ref{prop:det-growing-fourth}.
\end{proof}

\vspace{0.5cm}

\subsection{Proof of Proposition~\ref{prop:exact-growing-fourth}}\label{subsec:proof-prop-exact-growing-fourth}

\paragraph*{Proof strategy}

Our goal is to prove \eqref{eq:exact-growing-bound} and
\eqref{eq:exact-growing-asymptotic}.  We compare $B_n$ with the
hybrid matrix $B_n^{\mathrm{hyb}}$.  By the triangle inequality,
\[
 \dk\bigl(W_n^{\mathrm e}(B_n),\Normal\bigr)
 \le
 \dk\bigl(
 W_n^{\mathrm e}(B_n),
 W_n^{\mathrm e}(B_n^{\mathrm{hyb}})
 \bigr)
 +
 \dk\bigl(
 W_n^{\mathrm e}(B_n^{\mathrm{hyb}}),
 \Normal
 \bigr).
\]
We estimate the two terms on the right separately.

For the first term, we replace the last $s_1$ rows one at a time.
Each replacement changes both the logarithmic determinant and its
exact mean.  The bounded-density assumption allows us to compare the
two exact means.  Summing the one-row estimates gives
\[
 \dk\bigl(
 W_n^{\mathrm e}(B_n),
 W_n^{\mathrm e}(B_n^{\mathrm{hyb}})
 \bigr)
 \le
 C\ell^{-a}\log\ell
 =
 o(\ell^{-1}).
\]

For the second term, we use the Gram--Schmidt decomposition of
$B_n^{\mathrm{hyb}}$.  It splits the logarithmic determinant into a
non-Gaussian bulk part and a Gaussian terminal part.  In the bulk, we
subtract the conditional mean from each increment.  This gives a
martingale sum and a centered sum of conditional means.  We approximate
the martingale sum together with the Gaussian terminal part by a normal
law.  We show that the centered conditional means and the truncation
error are small.  Under the quantitative assumptions of the
proposition, this gives
\[
 \dk\bigl(
 W_n^{\mathrm e}(B_n^{\mathrm{hyb}}),
 \Normal
 \bigr)
 \le
 C\frac{M}{\ell}.
\]
If $M=o(\ell)$, the same hybrid distance tends to zero.

Combining the two quantitative bounds gives
\[
 \dk\bigl(W_n^{\mathrm e}(B_n),\Normal\bigr)
 \le
 C\ell^{-a}\log\ell
 +
 C\frac{M}{\ell}
 \le
 C\frac{M}{\ell},
\]
where the last inequality uses $a=20$ and $M\ge3$.  This proves
\eqref{eq:exact-growing-bound}.  If $M=o(\ell)$, then
\[
 \dk\bigl(W_n^{\mathrm e}(B_n),\Normal\bigr)
 \le
 o(\ell^{-1})+o(1)
 =
 o(1),
\]
which proves \eqref{eq:exact-growing-asymptotic}.

\begin{proof}[Proof of Proposition~\ref{prop:exact-growing-fourth}]

Unlike the deterministic-centering case, no smoothing reduction is
needed here.  Since the entries are independent and have densities,
every square submatrix of $B_n$ is invertible almost surely.  We first
control the row-replacement term in the proof strategy.  The next
lemma gives the required one-row estimate for $W_n^{\mathrm e}$.

\vspace{0.5cm}

\begin{lemma}[Replacing one terminal row under exact centering]\label{lem:exact-one-row}
Let $C_n$ and $\overline C_n$ have identical first $n-1$ rows.  Suppose the last row of $\overline C_n$ is standard Gaussian, and suppose that all non-Gaussian entries of $C_n$ and $\overline C_n$ satisfy the assumptions of Proposition~\ref{prop:exact-growing-fourth}. Then, we have
\begin{equation}\label{ex:eq:one-row}
 \sup_{x\in\R}\left|
 \Pp\bigl(W_n^{\mathrm e}(C_n)\le x\bigr)
 -\Pp\bigl(W_n^{\mathrm e}(\overline C_n)\le x\bigr)
 \right|
 \le C\ell^{-4a}\log\ell.
\end{equation}
\end{lemma}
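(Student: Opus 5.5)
The plan is to follow the proof of Lemma~\ref{lem:det-one-row} and add one new ingredient: a comparison of the two exact means. Write $\det C_n=\Lambda S$ and $\det\overline C_n=\Lambda\overline S$ as in \eqref{eq:det-one-row-linear-forms}. Here $\Lambda$ and the unit vector $w$ are $\cF_{n-1}$-measurable, $S=\sum_j w_jc_{nj}$, and $\overline S\mid\cF_{n-1}\sim\Normal$. Since all entries have densities, every square submatrix is invertible almost surely, so no smoothing step is needed. Put $D=\E\log|\det C_n|-\E\log|\det\overline C_n|=\E\bigl[\E(\log|S|\mid\cF_{n-1})-\E(\log|\overline S|\mid\cF_{n-1})\bigr]$. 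Then
\[
 \{W_n^{\mathrm e}(C_n)\le x\}=\{|S|\le\tau_x'\},\qquad \{W_n^{\mathrm e}(\overline C_n)\le x\}=\{|\overline S|\le\tau_x'e^{-D}\},
\]
with an $\cF_{n-1}$-measurable threshold $\tau_x'$. The conditional Berry--Esseen bound \eqref{eq:det-one-row-absolute-values} handles the first comparison at a common threshold and gives an error $CR_3\E\max_j|w_j|\le C\ell^{-4a}$, by Lemma~\ref{lem:terminal-diagonal} and \eqref{eq:det-one-row-projection}. The threshold shift by $e^{-D}$ is then paid for by the fact that $\log|\overline S|$ has a bounded density given $\cF_{n-1}$. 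Hence
\[
 \sup_x\bigl|\Pp(W_n^{\mathrm e}(C_n)\le x)-\Pp(W_n^{\mathrm e}(\overline C_n)\le x)\bigr|\le C\ell^{-4a}+C|D|.
\]

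The main step is to show $|D|\le C\ell^{-4a}\log\ell$. I would prove that, conditionally on $\cF_{n-1}$,
\[
 \bigl|\E\log|S|-\E\log|\overline S|\bigr|\le C\,\varepsilon\log(e/\varepsilon),\qquad \varepsilon:=\max_j|w_j|,
\]
and then take expectations, using Jensen and concavity of $x\mapsto x\log(e/x)$ together with $\E\varepsilon\le C\ell^{-4a}$.

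To prove the conditional estimate, split $\log|y|$ into three pieces. On $\{\eta\le|y|\le R\}$ the function is smooth, so the Berry--Esseen bound for $S$ versus $\overline S$, applied through integration by parts, gives an error $C\varepsilon\log(R/\eta)$.

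The upper tail is controlled by the second moment, since $\E S^2=\E\overline S^2=1$; it contributes $O(R^{-1})$.

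The lower tail $\{|S|<\eta\}$ is where the density hypothesis enters. Choose $j_0$ with $|w_{j_0}|=\varepsilon\ge n^{-1/2}$. Conditioning on the other coordinates, $S$ has density at most $M_1/|w_{j_0}|$, so this bound is useless here. Instead I would use the fact that $S$ is a sum over many coordinates. Grouping indices so that some subset carries weight $\sum w_j^2\ge1/2$ with small individual weights, the conditional CLT gives $\Pp(|S|<\eta)\le C(\eta+\varepsilon)$. To reach the scale below $\varepsilon$, I would again use a single coordinate with $|w_j|\ge\varepsilon$: the density of $w_jc_{nj}$ is at most $M_1/\varepsilon$, which gives $\Pp(|S|<t)\le C t/\varepsilon$ for $t<\varepsilon$. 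Integrating the tail then yields
\[
 \E\bigl[|\log|S||\,\1_{\{|S|<\eta\}}\bigr]\le C(\eta+\varepsilon)\log(1/\eta)+C\varepsilon .
\]
With $\eta=\varepsilon$ and $R=1/\varepsilon$, all three pieces are $O(\varepsilon\log(e/\varepsilon))$.

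The obstacle I expect is this lower-tail (small-ball) step, in particular keeping the constant uniform while $|w_{j_0}|$ may be small. As a fallback, I would bound $D$ directly via Lemma~\ref{prop:exact-transfer}-type arguments, since $\log(e/\varepsilon)\asymp\log\ell$ on the typical event $\varepsilon\approx\ell^{-4a}$. Combining the two bounds gives \eqref{ex:eq:one-row}.
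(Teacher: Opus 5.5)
Your proposal is correct. Its outer structure matches the paper's proof:
\begin{itemize}
\item the factorization $\det C_n=\Lambda S$, $\det\overline C_n=\Lambda\overline S$;
\item the conditional Berry--Esseen comparison at a common $\cF_{n-1}$-measurable threshold;
\item paying for the change of center through the bounded density of $\log|\overline S|$, which gives the error $C\ell^{-4a}+C|D|$;
\item finishing with concavity and monotonicity of $x\mapsto x\log(e/x)$ together with $\E\max_j|w_j|\le C\ell^{-4a}$.
\end{itemize}

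Where you genuinely differ is the small-ball input for the mean comparison. The paper uses the sharp density bound for unit linear forms of independent bounded-density variables. This is the rank-one case of Lemma~\ref{lem:projection-density}, due to Livshyts, Paouris and Pivovarov, and it gives $\Pp(|S|\le r\mid\cF_{n-1})\le 2\sqrt2M_1r$ uniformly in $w$. The paper then applies the general Kolmogorov-to-log-mean comparison, Lemma~\ref{lem:log-comparison}. You avoid that theorem and use instead a two-scale bound
\[
 \Pp(|S|<t\mid\cF_{n-1})\le C\min\{t+\varepsilon,\ t/\varepsilon\}.
\]
The first bound comes from Berry--Esseen; no grouping of indices is needed, since $\Pp(|S|<t)\le\Pp(|\overline S|\le t)+2\Delta(w)$ already gives it. The second comes from the density $M_1/\varepsilon$ of the largest coordinate term $w_{j_0}c_{nj_0}$. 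Your middle-range integration by parts and upper-tail bound are essentially the paper's log-comparison lemma. Your route is more elementary and self-contained for this lemma. The paper's route is shorter because it needs the projection density bound anyway, in Lemmas~\ref{lem:log-integrability}, \ref{lem:exact-lower-truncation} and \ref{lem:exact-rank-stability}.

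One bookkeeping point needs fixing. With $\eta=\varepsilon$, if you use only $\Pp(|S|<t)\le Ct/\varepsilon$ below $\eta$, as your text suggests, the lower-tail integral is
\[
 \int_{\log(1/\varepsilon)}^\infty Ce^{-u}/\varepsilon\,du=C.
\]
That is $O(1)$, not $O(\varepsilon)$. The fix is to split the range:
\begin{itemize}
\item on $\varepsilon^2\le t\le\varepsilon$, use the Berry--Esseen bound $C(t+\varepsilon)\le2C\varepsilon$, which contributes $O(\varepsilon\log(1/\varepsilon))$;
\item only for $t<\varepsilon^2$, use the single-coordinate bound, which contributes $C\varepsilon^2/\varepsilon=C\varepsilon$.
\end{itemize}
Equivalently, take $\eta=\varepsilon^2$, which changes $\log(R/\eta)$ only by a constant factor. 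With this split, the uniformity worry about small $|w_{j_0}|$ disappears and the fallback is unnecessary. The conditional bound $C\varepsilon\log(e/\varepsilon)$ then holds for all $\varepsilon\in(0,1]$, with $C$ depending only on $M_1$ and $R_3$.
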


\vspace{0.5cm}

\begin{proof}[Proof of Lemma~\ref{lem:exact-one-row}]
First, we use a very similar argument like proof of Lemma~\ref{lem:det-one-row}, and we briefly sketch the main steps.
Let $\mathcal{F}_{n-1}$ be the $\sigma$-algebra generated by the first common $n-1$ rows of $C_n$ and $\overline C_n$, and let $\alpha = (\alpha_{1},\dots\alpha_{n})$ be the vector of cofactors corresponding to the last row.
It is nonzero almost surely; write its Euclidean norm as $\Lambda$ and normalize it to $w=(w_1,\ldots,w_n)$ with $\sum_jw_j^2=1$.  Then
\begin{equation}
  \begin{aligned}\label{eq:det-Cn-set-Cn-bar-decompose}
    \det C_n=\Lambda S_0,\qquad
    \det\overline C_n=\Lambda S_G,    
  \end{aligned}
\end{equation}
where $S_0=\sum_jw_jc_{nj}$ and $S_G \mid \mathcal{F}_{n-1} \sim N(0,1)$.  Conditionally on the first $n-1$ rows, the classical Berry--Esseen inequality and the uniform third moment give
\begin{equation}\label{ex:eq:row-BE}
 \Delta(w):=\sup_t|\Pp(S_0\le t\mid\cF_{n-1})-\Phi(t)|
 \le CR_3\sum_j|w_j|^3
 \le CR_3\max_j|w_j|.
\end{equation}
For $r\ge0$, applying \eqref{ex:eq:row-BE} at $r$ and $-r$ gives
\[
 |\Pp(|S_0|\le r\mid\cF_{n-1})-\Pp(|S_G|\le r)|
 \le2\Delta(w) \leq CR_3\max_j|w_j|.
\]
Hence the two logarithmic determinants, when centered by the same deterministic number, have conditional Kolmogorov distance at most $CR_3\max_j|w_j|$. 
Thus, let 
\begin{equation}
  \begin{aligned}
    d_n= (\E\log(\det \overline C_n)^{2} - \E\log(\det C_n)^{2})/\sqrt{2\ell}, \nonumber
  \end{aligned}
\end{equation}
we have
\begin{equation}
  \begin{aligned}
    &\sup_x\left|
    \Pp\left( W_{n}^{\mathrm e}(C_{n})\le x\right) - \Pp\left( W_{n}^{\mathrm e}(\overline C_{n})\le x\right) \right|\\
    \leq &
    \sup_x\left|
    \Pp\left( W_{n}^{\mathrm e}(C_{n})\le x\right)
    -\Pp\left( W_{n}^{\mathrm e}(\overline C_{n}) + d_{n}\le x\right)
    \right|
    +  \sup_x\left|
    \Pp\left( W_{n}^{\mathrm e}(\overline C_{n}) + d_{n}\le x\right) - \Pp\left( W_{n}^{\mathrm e}(\overline C_{n})\le x\right) \right| \\
    \le& CR_3\E \max_j|w_j| + \sup_x\left| \Pp\left( W_{n}^{\mathrm e}(\overline C_{n}) + d_{n}\le x\right) - \Pp\left( W_{n}^{\mathrm e}(\overline C_{n})\le x\right) \right|, \nonumber
  \end{aligned}
\end{equation}
By \eqref{eq:det-one-row-projection}, we have
\[
 w_j^2=p_{jj}(n-1).
\]
Lemma~\ref{lem:terminal-diagonal} and Jensen imply
$\E\max_j|w_j|\le C\ell^{-4a}$. Thus, we have
\begin{equation}
  \begin{aligned}
    &\sup_x\left|
    \Pp\left( W_{n}^{\mathrm e}(C_{n})\le x\right) - \Pp\left( W_{n}^{\mathrm e}(\overline C_{n})\le x\right) \right|\\
    \le& C\ell^{-4a} + \sup_x\left| \Pp\left( W_{n}^{\mathrm e}(\overline C_{n}) + d_{n}\le x\right) - \Pp\left( W_{n}^{\mathrm e}(\overline C_{n})\le x\right) \right|, \nonumber
  \end{aligned}
\end{equation}
Since $\log(\det\overline C_n)^{2}=2\log\Lambda+\log S_G^2$, conditionally on the first $n-1$ rows, the density of $\log S_G^2$ is $f(y)=\frac{e^{y/2}}{\sqrt{2\pi}}e^{-e^y/2}, y\in\R$, and is bounded. Thus, we have 
\begin{equation}
  \begin{aligned}\label{eq:need-dn-eatimate}
    &\sup_x\left|
    \Pp\left( W_{n}^{\mathrm e}(C_{n})\le x\right) - \Pp\left( W_{n}^{\mathrm e}(\overline C_{n})\le x\right) \right|
    \le C\ell^{-4a} + C\sqrt{\ell}|d_{n}|.
  \end{aligned}
\end{equation}

We now estimate $|d_n|$.  Since \eqref{eq:det-Cn-set-Cn-bar-decompose} gives $\det C_n=\Lambda S_0$, $\det\overline C_n=\Lambda S_G$, the common factor $\Lambda$ cancels from the difference of the exact
means.  Hence
\[
 \begin{aligned}
 \sqrt{2\ell}\,|d_n|
 &=
 2\left|
 \E\log|S_0|-\E\log|S_G|
 \right|
 \le
 2\E\left|
 \E(\log|S_0|\mid\cF_{n-1})
 -
 \E\log|S_G|
 \right|.
 \end{aligned}
\]
Thus it is enough to compare the logarithmic expectations of $S_0$
and $S_G$.

The main difficulty comes from values of $S_0$ close to zero.  The
positive part of $\log|S_0|$ is controlled by the second moment, since $\log^+|S_0|\le S_0^2$.
For the negative part, we have
\[
 \E\bigl((-\log|S_0|)_+\mid\cF_{n-1}\bigr)
 =
 \int_0^\infty
 \Pp\bigl(|S_0|\le e^{-t}\mid\cF_{n-1}\bigr)\,dt.
\]
Thus we need a bound for the probability that $S_0$ is close to zero.
Such a bound follows from a uniform density estimate.  
Indeed, if $\left\|f_{S_0\mid\cF_{n-1}}\right\|_\infty\le C$,
then
\[
 \Pp(|S_0|\le r\mid\cF_{n-1})
 \le 2Cr,
 \qquad r>0.
\]
Therefore, we first establish a uniform density bound for $S_0$.
The following Lemma~\ref{lem:projection-density} provides the required estimate.

\vspace{0.5cm}

\begin{lemma}\label{lem:projection-density}
Let $X=(X_1,\ldots,X_n)$ have independent coordinates whose densities are bounded by $B$.  If $P$ is an orthogonal projection of rank $k$, then $PX$, viewed in $\operatorname{Range}(P)$, has a density $g_P$ satisfying
\begin{equation}\label{ex:eq:projection-density}
 \|g_P\|_\infty\le (\sqrt2 B)^k.
\end{equation}
Consequently, there is $u_B = \max\{1,4\log(2\sqrt{\pi e}B)\} \ge1$, depending only on $B$, such that for every $u\ge u_B$,
\begin{equation}\label{ex:eq:projection-smallball}
 \Pp\left(\frac{X^TPX}{k}\le e^{-u}\right)\le e^{-ku/4}.
\end{equation}
For $k=1$, every unit linear form $\sum_jw_jX_j$ therefore has density bounded by $\sqrt2 B$.
\end{lemma}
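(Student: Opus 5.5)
The first assertion is a bound on sections of a product density, so I would reduce it to Ball's cube-slicing theorem. The second follows from the first by integrating over a Euclidean ball.

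\textbf{Density bound.} Write $E=\operatorname{Range}(P)$, of dimension $k$, and let $f=\prod_j f_j$ be the joint density of $X$. For $y\in E$, the density of $PX$ at $y$ is the integral of $f$ over the fibre $y+E^\perp$:
\[
 g_P(y)=\int_{E^\perp} \prod_{j=1}^n f_j\bigl((y+z)_j\bigr)\,dz .
\]
This is an integral over an affine subspace of codimension $k$. So it suffices to bound
\[
 \sup_F\int_F\prod_j f_j
\]
over all affine subspaces $F$ of codimension $k$, uniformly over densities with $\|f_j\|_\infty\le B$.

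I would invoke the known extremal result of Rudelson--Vershynin, with the sharp constant due to Livshyts--Paouris--Pivovarov. It says that this supremum is largest when every $f_j$ is the uniform density $B\,\mathbf 1_{[-1/(2B),1/(2B)]}$. For that choice, the integral equals $B^n$ times the $(n-k)$-volume of a section of the cube $[-1/(2B),1/(2B)]^n$. Ball's theorem states that every codimension-$k$ section of the unit cube has volume at most $2^{k/2}$. Rescaling the cube by $1/B$ gives section volume at most $B^{-(n-k)}2^{k/2}$, hence $g_P\le B^n\cdot B^{-(n-k)}2^{k/2}=(\sqrt2 B)^k$.

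The main obstacle is the reduction from general bounded densities to the uniform cube. It is a rearrangement argument: a Brascamp--Lieb--Luttinger type inequality, followed by the observation that, under a sup-norm constraint, the extremal symmetric decreasing functions are indicators. I would cite this step rather than reprove it.

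\textbf{Small ball.} Note that $X^TPX=|PX|^2$, since $P$ is an orthogonal projection. Put $r=\sqrt{k e^{-u}}$. Then
\[
 \Pp\Bigl(\tfrac{X^TPX}{k}\le e^{-u}\Bigr)=\Pp(|PX|\le r)\le (\sqrt2B)^k\,|B_2^k(r)|.
\]
The volume of the $k$-dimensional ball is $|B_2^k(r)|=\pi^{k/2}r^k/\Gamma(k/2+1)$. Using $\Gamma(k/2+1)\ge (k/(2e))^{k/2}$, this is at most $(2\pi e/k)^{k/2}r^k=(2\pi e\,e^{-u})^{k/2}$. Combining,
\[
 \Pp\Bigl(\tfrac{X^TPX}{k}\le e^{-u}\Bigr)\le \bigl(2B\sqrt{\pi e}\,e^{-u/2}\bigr)^k .
\]
The right-hand side is at most $e^{-ku/4}$ exactly when $2\sqrt{\pi e}\,B\le e^{u/4}$, that is, when $u\ge 4\log(2\sqrt{\pi e}B)$. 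This holds for every $u\ge u_B$, which gives \eqref{ex:eq:projection-smallball}.

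\textbf{The case $k=1$.} Take $P=ww^T$ with $w$ a unit vector. Then $PX=(w^TX)\,w$, so under the isometry $\operatorname{Range}(P)\cong\R$ the projection $PX$ is identified with $\sum_j w_jX_j$. By the density bound, this linear form has density at most $\sqrt2 B$.
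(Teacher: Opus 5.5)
Your proposal is correct and follows the paper's proof. Like you, the paper takes $\|g_P\|_\infty\le(\sqrt2B)^k$ directly from Livshyts--Paouris--Pivovarov, though without your sketch of the reduction to cube sections and Ball's theorem, and then bounds $\Pp(\|PX\|\le\sqrt k\,e^{-u/2})$ by $\|g_P\|_\infty\,|B_2^k(\sqrt k\,e^{-u/2})|$; your estimate $\Gamma(k/2+1)\ge(k/(2e))^{k/2}$ gives exactly the paper's volume bound $|B_2^k(r)|\le(\sqrt{2\pi e}\,r/\sqrt k)^k$, and the threshold $u_B$ comes out the same.
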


\vspace{0.5cm}

\begin{proof}[Proof of Lemma~\ref{lem:projection-density}]
Firstly, \eqref{ex:eq:projection-density} was proved by \cite{livshyts2016sharp}. Then we consider \eqref{ex:eq:projection-smallball}. Let $B_2^k(r)=\{x\in\R^k:\|x\|_2\le r\}$ be the $k$-dimensional Euclidean ball of radius $r$.  Its volume satisfies
\[
 |B_2^k(r)|\le\left(\frac{\sqrt{2\pi e}\,r}{\sqrt k}\right)^k.
\]
Since $X^TPX=\|PX\|^2$, taking $r=\sqrt{k}e^{-u/2}$ gives
\[
 \Pp\left(\frac{X^TPX}{k}\le e^{-u}\right)
 =\Pp\left( \|PX\|\leq r \right)
 \le \|g_P\|_{L^\infty(E)}|B_2^k(r)|
 \le(2\sqrt{\pi e}\,B e^{-u/2})^k.
\]
Choosing $u_B=\max\{1,4\log(2\sqrt{\pi e}B)\}$, then for  \(u\ge u_M\),
\[
  2\sqrt{\pi e}\,Me^{-u/2}\le e^{-u/4},
\]
and thus proves \eqref{ex:eq:projection-smallball} and complete the proof of Lemma~\ref{lem:projection-density}.
\end{proof}

\vspace{0.5cm}
We now return to the proof of Lemma~\ref{lem:exact-one-row}.
We apply Lemma~\ref{lem:projection-density} to $S_0$.
Conditionally on $\cF_{n-1}$, the vector $w$ is deterministic and
satisfies $\sum_{j=1}^n w_j^2=1$.
Hence the rank-one case of Lemma~\ref{lem:projection-density} gives
\[
 \left\|f_{S_0\mid\cF_{n-1}}\right\|_\infty
 \le
 \sqrt2 M_1.
\]
The variable $S_G$ is standard Gaussian, so its density is also
uniformly bounded.  Moreover,
\[
 \E(S_0^2\mid\cF_{n-1})=1,
 \qquad
 \E S_G^2=1.
\]
We already know from \eqref{ex:eq:row-BE} that, conditionally on
$\cF_{n-1}$,
\[
 \dk(S_0,S_G)
 \le
 \Delta(w).
\]
We now need to convert this Kolmogorov bound into a bound for the
logarithmic expectations.  The following lemma gives this estimate.

\vspace{0.5cm}

\begin{lemma}[Logarithmic expectation comparison]\label{lem:log-comparison}
Let random variables $X,Y$ have densities bounded by $B$ and satisfy $\E X^2+\E Y^2\le B$.  If $\dk(X,Y)\le\rho\le e^{-1}$, then
\begin{equation}\label{ex:eq:log-compare}
 |\E\log|X|-\E\log|Y||
 \le C_B\rho\log\frac e\rho.
\end{equation}
The same conclusion holds if $X,Y$ are coupled with $\Pp(X\ne Y)\le\rho$.
\end{lemma}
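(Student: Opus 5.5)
I would split $\log|x| = \log^+|x| - (-\log|x|)_+$ and treat the large-value and small-value parts separately. In each part the difference of expectations is written as an integral of the difference of tail probabilities, and the integral is cut at a threshold that depends on $\rho$.

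\textbf{Large values.} By the layer-cake formula,
\[
\E\log^+|X|-\E\log^+|Y|=\int_0^\infty\bigl\{\Pp(|X|>e^t)-\Pp(|Y|>e^t)\bigr\}\dd t .
\]
The integrand is bounded by $2\rho$, since $\dk(X,Y)\le\rho$ controls $|\Pp(|X|\le r)-\Pp(|Y|\le r)|\le 2\rho$. For the tail, Chebyshev with $\E X^2+\E Y^2\le B$ bounds the integrand by $Be^{-2t}$. Cutting at $t_0=\tfrac12\log(1/\rho)$ gives
\[
2\rho t_0+\tfrac12 Be^{-2t_0}\le C_B\rho\log(e/\rho).
\]

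\textbf{Small values.} Similarly,
\[
\E(-\log|X|)_+=\int_0^\infty\Pp(|X|\le e^{-t})\dd t .
\]
The difference of integrands is again at most $2\rho$. The density bound gives $\Pp(|X|\le e^{-t})\le 2Be^{-t}$, and the same for $Y$. Cutting at $t_1=\log(1/\rho)$, the difference is at most
\[
2\rho\log(1/\rho)+4B\rho\le C_B\rho\log(e/\rho).
\]
Together the two parts also show that both logarithmic expectations are finite, so the subtraction is legitimate. The condition $\rho\le e^{-1}$ guarantees $\log(1/\rho)\ge1$, which lets the constant terms be absorbed.

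\textbf{Coupling version.} If instead $\Pp(X\ne Y)\le\rho$, then $\dk(X,Y)\le\rho$ immediately, because $|\Pp(X\in A)-\Pp(Y\in A)|\le\Pp(X\ne Y)$ for every Borel set $A$. So the same argument applies verbatim.

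The main point of care is the small-ball part. Without the bounded-density assumption, $\E(-\log|X|)_+$ could be infinite, or the Kolmogorov bound could not be integrated over unbounded $t$. The density bound $B$ is exactly what yields the exponentially decaying tail $2Be^{-t}$ beyond the cutoff, so the $\log(1/\rho)$ loss comes only from the range $[0,t_1]$.
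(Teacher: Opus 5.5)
Your proposal is correct and matches the paper's proof essentially step for step. The shared steps are the split $\log|x|=\log^+|x|-(-\log|x|)_+$, the layer-cake representation, the uniform $2\rho$ bound on tail-probability differences, a cutoff at order $\log(1/\rho)$ (with Chebyshev above it for large values and the density bound $2Be^{-t}$ for small values), and the observation that a coupling gives $\dk(X,Y)\le\rho$. The only differences are cosmetic choices of cutoff, such as your $t_0=\tfrac12\log(1/\rho)$ versus the paper's $\log(1/\rho)$ in the positive part, which affect only the constant $C_B$.
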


\vspace{0.5cm}

\begin{proof}[Proof of Lemma~\ref{lem:log-comparison}]
First we define $\log ^{+}|x|:=\max\{\log |x|,0\}$ and $(-\log|x|)_{+}:= \max\{-\log|x|,0\}$, thus $\log |x| = \log ^{+}|x| - (-\log|x|)_{+}$ and
\begin{equation}
  \begin{aligned}
    |\E \log |X| - \E\log |Y| | =& \left| \E\log^{+}|X| - \E\log^{+}|Y| - (\E(-\log|X|)_{+} - \E(-\log|Y|)_{+}  ) \right|\\
    \leq & \left| \E\log^{+}|X| - \E\log^{+}|Y| \right| + \left| (\E(-\log|X|)_{+} - \E(-\log|Y|)_{+}  ) \right|. \nonumber
  \end{aligned}
\end{equation}
Write $F_X,F_Y$ for the distribution functions.  For every $r\ge0$,
\[
 \big|\Pp(|X|\le r)-\Pp(|Y|\le r)\big|
 \le |F_X(r)-F_Y(r)|+|F_X((-r)-)-F_Y((-r)-)|\le2\rho.
\]
Since the two densities are bounded by $B$,
\[
 \Pp(|X|\le r)+\Pp(|Y|\le r)\le4Br.
\]
To integrate the logarithmic tails, we use the following elementary identity: for every nonnegative random variable $Z$ and every $p>0$,
\begin{equation}\label{eq:layer-cake-identity}
 \E Z^p=p\int_0^\infty t^{p-1}\Pp(Z>t)\dd t.
\end{equation}
For the negative part, since for $u\geq 0$, we have $\{(-\log|X|)_+>u\} = \{|X|<e^{-u}\}$. Applying \eqref{eq:layer-cake-identity} with $p=1$ and splitting at $u_0=\log(e/\rho)$ gives
\begin{align*}
 &\left|\E(-\log|X|)_+-\E(-\log|Y|)_+\right|\\
 &\quad\le \int_0^{u_0}2\rho\,\dd u
 +\int_{u_0}^\infty4Be^{-u}\,\dd u
 \le C_B\rho\log(e/\rho).
\end{align*}
For the positive part,
\[
 \E\log^+|X|=\int_0^\infty\Pp(|X|\ge e^u)\dd u = \int_0^{u_{1}}\Pp(|X|\ge e^u)\dd u + \int_{u_1}^\infty\Pp(|X|\ge e^u)\dd u.
\]
The difference of the tail probabilities is at most $2\rho$.  With
$u_1=\log(1/\rho)$, the integral over $[0,u_1]$ is bounded by
$2\rho u_1$.  On $[u_1,\infty)$, Markov's inequality and the assumed second moments give
\[
 \int_{u_1}^\infty
 \{\Pp(|X|\ge e^u)+\Pp(|Y|\ge e^u)\}\dd u
 \le B\int_{u_1}^\infty e^{-2u}\dd u\le C_B\rho^2.
\]
Combining the positive and negative parts proves \eqref{ex:eq:log-compare}.  A coupling with failure probability at most $\rho$ implies
$|\Pp(X\in A)-\Pp(Y\in A)|\le\rho$ for every Borel set $A$, and in particular $\dk(X,Y)\le\rho$. Thus, we complete the proof of Lemma~\ref{lem:log-comparison}.
\end{proof}

\vspace{0.5cm}

We now return to the proof of Lemma~\ref{lem:exact-one-row}.
On the event $\{\Delta(w)\le e^{-1}\}$, we apply
Lemma~\ref{lem:log-comparison} conditionally on $\cF_{n-1}$, with
$X=S_0$, $Y=S_G$, and $\rho=\Delta(w)$.  The density and
second-moment assumptions were verified above.  Therefore,
\[
 \left|
 \E(\log|S_0|\mid\cF_{n-1})
 -
 \E\log|S_G|
 \right|
 \le
 C_{M_1}\Delta(w)\log\frac e{\Delta(w)}.
\]
On the event $\{\Delta(w)>e^{-1}\}$, we use a uniform bound for the
logarithmic expectations.  For $S_0$,
\[
 \begin{aligned}
 \E\bigl(|\log|S_0||\mid\cF_{n-1}\bigr)
 &=
 \E\bigl(\log^+|S_0|\mid\cF_{n-1}\bigr)
 +
 \E\bigl((-\log|S_0|)_+\mid\cF_{n-1}\bigr)\\
 &\le
 \E(S_0^2\mid\cF_{n-1})
 +
 \int_0^\infty
 \Pp(|S_0|\le e^{-t}\mid\cF_{n-1})\,dt\\
 &\le
 1+
 \int_0^\infty
 \min\{1,2\sqrt2M_1e^{-t}\}\,dt\\
 &\le
 C_{M_1}.
 \end{aligned}
\]
The same type of bound holds for $S_G$.  Hence
\[
 \begin{aligned}
 &\left|
 \E(\log|S_0|\mid\cF_{n-1})
 -
 \E\log|S_G|
 \right|
 \le
 C_{M_1}\Delta(w)\log\frac e{\Delta(w)}
 +
 C_{M_1}\1_{\{\Delta(w)>e^{-1}\}}.
 \end{aligned}
\]

Since $x\mapsto x\log(e/x)$ is concave and non-decreasing on $[0,1]$ and
$\Pp(\Delta>e^{-1})\le e\E\Delta$, combining with $\E\Delta(w)\le C\ell^{-4a}$ we obtain
\begin{equation}
  \begin{aligned}
    \E (\Delta(w)\log\frac e{\Delta(w)}) \leq & \E (\Delta(w))\log\frac e{\E \Delta(w)} \leq C\ell^{-4a}\log \ell, \nonumber
  \end{aligned}
\end{equation}
and thus
\begin{equation}\label{ex:eq:row-mean}
 \sqrt{2\ell}|d_{n}| = |\E\log(\det C_n)^{2} - \E\log(\det(\overline C_n)^{2}|
 \le C\ell^{-4a}\log\ell.
\end{equation}

Combining this estimate with \eqref{eq:need-dn-eatimate} and \eqref{ex:eq:row-mean} proves \eqref{ex:eq:one-row}. Thus, we complete the proof of Lemma~\ref{lem:exact-one-row}.
\end{proof}

\vspace{0.5cm}

We now return to the proof of Proposition~\ref{prop:exact-growing-fourth}.
We compare $B_n$ with $B_n^{\mathrm{hyb}}$ by replacing the last
$s_1$ rows one at a time.  Let $g_{m+1},\ldots,g_n$ be independent
standard Gaussian rows, independent of $B_n$.  For
$0\le r\le s_1$, let $B_n^{[r]}$ be the matrix obtained from $B_n$
by replacing rows $m+1,\ldots,m+r$ with
$g_{m+1},\ldots,g_{m+r}$.  Thus
\[
 B_n^{[0]}=B_n,
 \qquad
 B_n^{[s_1]}=B_n^{\mathrm{hyb}}.
\]

Fix $1\le r\le s_1$.  The matrices $B_n^{[r-1]}$ and $B_n^{[r]}$
differ only in row $m+r$.  Let $\Pi_r$ be a permutation matrix that
moves row $m+r$ to the last position.  We apply the same permutation
to both matrices.  Then $\Pi_r B_n^{[r-1]}$ and
$\Pi_r B_n^{[r]}$ have the same first $n-1$ rows.  Their last rows
are the original $(m+r)$th row of $B_n$ and the Gaussian row
$g_{m+r}$, respectively.

The row permutation does not change the absolute determinant, since
$|\det\Pi_r|=1$.  It also does not change the exact mean.  Hence
\[
 W_n^{\mathrm e}\bigl(\Pi_r B_n^{[j]}\bigr)
 =
 W_n^{\mathrm e}\bigl(B_n^{[j]}\bigr),
 \qquad
 j\in\{r-1,r\}.
\]
All assumptions of Lemma~\ref{lem:exact-one-row} hold for the
permuted pair.  Therefore,
\[
 \dk\left(
 W_n^{\mathrm e}\bigl(B_n^{[r-1]}\bigr),
 W_n^{\mathrm e}\bigl(B_n^{[r]}\bigr)
 \right)
 \le
 C\ell^{-4a}\log\ell.
\]

Applying the triangle inequality to
$B_n^{[0]},B_n^{[1]},\ldots,B_n^{[s_1]}$, we obtain
\begin{equation}\label{eq:exact-growing-total-replacement}
 \begin{aligned}
 \dk\left(
 W_n^{\mathrm e}(B_n),
 W_n^{\mathrm e}(B_n^{\mathrm{hyb}})
 \right)
 &=
 \dk\left(
 W_n^{\mathrm e}\bigl(B_n^{[0]}\bigr),
 W_n^{\mathrm e}\bigl(B_n^{[s_1]}\bigr)
 \right)\\
 &\quad\le
 \sum_{r=1}^{s_1}
 \dk\left(
 W_n^{\mathrm e}\bigl(B_n^{[r-1]}\bigr),
 W_n^{\mathrm e}\bigl(B_n^{[r]}\bigr)
 \right)\\
 &\quad\le
 Cs_1\ell^{-4a}\log\ell\\
 &\quad\le
 C\ell^{-a}\log\ell
 =
 o(\ell^{-1}).
 \end{aligned}
\end{equation}

This proves the bound for the row-replacement term in the proof
strategy.  It remains to prove
\[
 \dk\bigl(
 W_n^{\mathrm e}(B_n^{\mathrm{hyb}}),
 \Normal
 \bigr)
 \le
 C\frac{M}{\ell}.
\]
The next lemma gives this estimate.

\vspace{0.5cm}

\begin{lemma}\label{lem:exact-hybrid}
Under the assumptions of Proposition~\ref{prop:exact-growing-fourth}, we have
\begin{equation}\label{ex:eq:hybrid-bound}
 \dk(W_n^{\mathrm e}(B_n^{\mathrm{hyb}}),\Normal)
 \le C\frac{M}{\ell}.
\end{equation}
More generally, if $M=o(\ell)$, then
\begin{equation}\label{ex:eq:hybrid-asymptotic}
 \dk(W_n^{\mathrm e}(B_n^{\mathrm{hyb}}),\Normal)
 \longrightarrow0.
\end{equation}
\end{lemma}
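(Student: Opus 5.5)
The plan is to redo the Gram--Schmidt argument of Lemma~\ref{lem:det-hybrid}, now centring every increment exactly and pushing the Esseen cutoff up to $T\asymp\ell$. That larger cutoff is what should turn the rate $M/\sqrt\ell$ into $M/\ell$.

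\textbf{Decomposition.} By \eqref{det:eq:GS-logdet} and Lemma~\ref{lem:gaussian-terminal-block},
\[
 W_n^{\mathrm e}(B_n^{\mathrm{hyb}})=\frac{\sum_{i=0}^{m-1}(\log Z_{i+1}-\E\log Z_{i+1})+\mathcal G_{s_1}^{\mathrm c}}{\sqrt{2\ell}},
\]
where $\mathcal G^{\mathrm c}_{s_1}$ is independent of $\cF_m$. Both sides are integrable, because Lemma~\ref{lem:projection-density} gives $\Pp_i(Z_{i+1}\le e^{-u})\le e^{-k_iu/4}$ for $u\ge u_{M_1}$.

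\textbf{Step 1: lower truncation and its effect on the mean.} Replace $\log Z_{i+1}$ by $L_{i+1}=\log(Z_{i+1}\vee\theta)$. By \eqref{det:eq:lower-truncation-growing-error}, the probability that some truncation is active is $O(M^2\ell^{-a})=o(\ell^{-1})$. The exact-centring issue is that the means also move. Write $\E|\log Z_{i+1}-L_{i+1}|\le\int_H^\infty\Pp(Z_{i+1}<e^{-u})\,du$. Split this integral at $u_{M_1}$: below $u_{M_1}$ use Lemma~\ref{lem:common-lower-tail}, and beyond it use the small-ball bound \eqref{ex:eq:projection-smallball}. Summing over $i$ should give a total mean shift of $O(\ell^{-a}(\log\ell)^4M^2)$, which Lemma~\ref{lem:perturb}(iii) absorbs.

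\textbf{Step 2: martingale decomposition.} Set
\[
 D_{i+1}=L_{i+1}-\E_iL_{i+1},\qquad R=\sum_{i<m}(\E_iL_{i+1}-\E L_{i+1}),
\]
so that the statistic equals $(\sum_i D_{i+1}+\mathcal G^{\mathrm c}_{s_1}+R)/\sqrt{2\ell}$. The martingale part together with the independent chi-square block will be treated by Lemma~\ref{lem:det-telescope}. Because $D$ is a martingale difference, $\mathcal M\equiv0$, and the Gaussian block contributes exactly known factors.

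I will apply Lemma~\ref{lem:det-telescope} with $T=c\ell$. The smallness condition holds because $T^2\max\sigma^2\le C\ell^2M/(\ell s_1)$ and $s_1=\lfloor\ell^{60}\rfloor$. Lemma~\ref{lem:det-weighted} needs to be extended to the range $t\le\ell$. For $k_i\ge s_2$ the Gaussian factor $e^{-ct^2v_i}$ with $v_i\ge c_0$ is integrable on all of $[0,\infty)$, so nothing changes there. For the terminal range, \eqref{eq:common-h-terminal} gives a contribution at most $CM\ell^{-12}\ell^{4}$, which is negligible. With $s^2\asymp\ell$ the resulting bounds are: $t\mathcal B$ gives $O(M/\ell)$; $t^2\mathcal C$ gives $O(M/\ell^{3/2})$, and also absorbs the third cumulants $O(k^{-2})$ of $\log(\chi_k^2/k)$ by \eqref{eq:gaussian-terminal-third}; $t^3\mathcal R_4$ gives $O(M/\ell^2)$; and the cutoff term $1/T$ gives $O(1/\ell)$. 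The variance matching $s^2=2\ell+O(M+\log\ell)$ from \eqref{eq:common-one-step-second-mean} feeds into Lemma~\ref{lem:perturb}(iv) with relative error $O(M/\ell)$.

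\textbf{Step 3: the conditional-mean fluctuation $R$ (main obstacle).} Lemma~\ref{lem:perturb}(i) requires $\Pp(|R|>\eta\sqrt\ell)+\eta=O(M/\ell)$. The crude estimate $\E|R|\le C\sum_i h_i\le CM$ is far too weak for this. My first attempt is to avoid treating $R$ as a separate perturbation and instead put $\zeta_{i+1}-\E\zeta_{i+1}$ directly into Lemma~\ref{lem:det-telescope} with nonzero $m_j$. Then $\mathcal M(t)=\sum_j\Cov(e^{itS_{j-1}}e^{-t^2v_j/2},m_j)$, since $\E m_j=0$. The key input is the expansion from the proof of Lemma~\ref{lem:common-one-step},
\[
 \E_i\zeta_{i+1}=c\sum_r(\E b^4_{i+1,r}-3)q_{rr}(i)^2+O(\text{terms in }H_i^*),
\]
whose fluctuating part is driven by $d_i-\E d_i$. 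I then need to show two things: that $\E|d_i-\E d_i|$ is of order at most $k_i^{-3/2}$, using the concentration of diagonal entries of random projections underlying Lemma~\ref{lem:terminal-diagonal}; and that the remaining $H_i^*$ terms have centred $L^1$ norm summable to $O(M/\sqrt\ell)$. If both hold, then $\int_0^{\ell}|\mathcal M(t)|\,dt=O(M/\ell)$.

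This fluctuation estimate is where I expect the real work to be. Under the sole assumption $M=o(\ell)$, the same chain of bounds with $o(\cdot)$ in place of $O(\cdot)$ yields \eqref{ex:eq:hybrid-asymptotic}.
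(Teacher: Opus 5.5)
Your reduction is right, and Step~3 correctly identifies the crux, but that is exactly where the proposal has a genuine gap. Nothing in it controls the fluctuation of the centred drift $R=\sum_{i<m}(\E_iL_{i+1}-\E L_{i+1})$ (the paper's $\mathcal D_m$), and the route you suggest cannot do so. The expansion $\E_i\zeta_{i+1}=c\sum_r(\E b_{i+1,r}^4-3)q_{rr}(i)^2+O(H_i^*)$ is vacuous, because its main term is itself bounded by $CMd_i\le CH_i^*$. It therefore says no more than $|g_i|\le CH_i^*$, which is Lemma~\ref{lem:common-one-step}. The remainder is an inexplicit functional of $P_i$: it contains, for example, the bad-event pieces $\E_i[L_{i+1}\1_{\mathcal A_i^c}]$, which were only bounded through Markov's inequality by $C(Md_i+M^2k_i^{-2}+\lambda_i^{(r)})$. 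A size bound on it yields only $\sum_i\E|r_i-\E r_i|\le C\sum_ih_i\le CM$, which falls short of the $O(M/\sqrt\ell)$ you need by a factor $\sqrt\ell$. So your second ``thing to show'' is the original problem restated, not a checkable lemma. A symptom is that your argument never uses the support hypothesis \eqref{eq:exact-growing-support}; in the paper it enters precisely at this point.

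The missing idea is to prove concentration of the \emph{sum} of conditional means directly, by Efron--Stein over rows. The paper (Lemma~\ref{lem:exact-drift}) handles the terminal range $s_1\le k_i<s_2$ in $L^1$ via $\sum_{s_1\le k_i<s_2}h_i\le CM\ell^{-12}$. For $k_i\ge s_2$ it writes $g_i=\Gamma_i(P_i)$, with $\Gamma_i$ a deterministic functional of the projection. Resampling row $r$ replaces each $P_i$, $i\ge r$, by a rank-$k_i$ projection whose range meets that of $P_i$ in dimension at least $k_i-1$ (Lemma~\ref{lem:row-resample-geometry}). The rank-stability estimate, Lemma~\ref{lem:exact-rank-stability}, then bounds each change by $C(M^2k_i^{-2}+B^2Mk_i^{-3})$ plus exponentially small terms. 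That lemma uses the bounded densities and, through the mixed sixth moment of Lemma~\ref{lem:exact-mixed-six}, the support bound $B^2=K_0n\ell^{3/4}$. Summing over $k_i\ge s_2$ gives $|F-F^{(r)}|\le Cn^{-1}\ell^{C_a}$ for $F=\sum_{k_i\ge s_2}\Gamma_i(P_i)$, hence $\Var(F)\le Cn^{-1}\ell^{2C_a}$. Chebyshev's inequality and Lemma~\ref{lem:perturb}(i) at threshold $M/\ell$ then finish the argument.

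Two smaller repairs are also needed.
\begin{enumerate}
\item Suppose the chi-square block enters Lemma~\ref{lem:det-telescope} in its natural last position. Its low-$k$ terms have $\sigma_j^2\asymp\ell^{-1}$ and $v_j\lesssim(\log k)/\ell$, so $T^2\max_j\sigma_j^2\to\infty$ at $T=c\ell$. Their $t^3\mathcal R_4$ contributions are then not small; the final one, with $v_j=0$, is of order $\ell^2$. Either place that independent block first, or factor it out as the paper does and use the damping $e^{-ct^2}$ supplied by the Gaussian surrogate $s_\eta N_0$.
\item The normalizing variance must include the terminal block: $s_\eta^2+\Var(\mathcal G^{\mathrm c}_{s_1})=2\ell+O(M+1)$, since the $2\log s_1$ terms cancel. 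With $2\ell+O(M+\log\ell)$, Lemma~\ref{lem:perturb}(iv) gives only $(M+\log\ell)/\ell$, which is not $O(M/\ell)$ for bounded $M$.
\end{enumerate}
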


\vspace{0.5cm}

\begin{proof}[Proof of Lemma~\ref{lem:exact-hybrid}]
We divide the proof into five steps.  The main term is the martingale bulk plus the
centered terminal block generated by the Gaussian rows.  The centered
drift and the lower-truncation remainder will be added at the end.

\medskip
\noindent\textbf{Step 1: decompose the exact-centered logarithmic determinant.}

In the previous deterministic centering case, we first do the truncation and estimate the the Kolmogorov distance of secquence $\sum_{i = 0}^{m-1}\zeta_{i+1}$, see \eqref{det:eq:corrected-bulk-growing} in the step 3 of the Proof of Lemma~\ref{lem:det-hybrid}, and finally remove the truncation. 

Here in the exact centering case, we decompose this sequence into martingale differences and conditional means.
For $0\le i<m$ define
\begin{equation}\label{ex:eq:mart-pred-def}
 g_i:=\E_i\zeta_{i+1},\qquad
 \eta_{i+1}:=\zeta_{i+1}-g_i,
 \qquad
 \mathcal M_m:=\sum_{i=0}^{m-1}\eta_{i+1},
 \qquad
 \mathcal D_m:=\sum_{i=0}^{m-1}(g_i-\E g_i).
\end{equation}
By \eqref{ex:eq:mart-pred-def}, $\eta_{i+1}$ is $\cF_{i+1}$-measurable and $\E_i\eta_{i+1}=0$; hence $(\eta_{i+1})$ is a martingale-difference sequence.  Since the deterministic correction $1/k_i$ cancels under exact centering, recall the definition of $\zeta_{i+1}$ in \eqref{eq:common-truncated-increment}, the centered sum of the lower-truncated bulk increments has the exact identity
\begin{equation}\label{ex:eq:exact-decomp}
 \sum_{i=0}^{m-1}(\zeta_{i+1}-\E\zeta_{i+1})
 =\mathcal M_m+\mathcal D_m.
\end{equation}

Denote the centered Gaussian terminal block as $\mathcal G_{s_1}^{\mathrm c} = \sum_{i=m}^{n-1}(\log Z_{i+1} - \E \log Z_{i+1})$.
By Gram--Schmidt, \eqref{ex:eq:exact-decomp}, and Lemma~\ref{lem:gaussian-terminal-block}, we have 
\begin{equation}\label{ex:eq:exact-hybrid-decomposition}
 \log\{(\det B_n^{\mathrm{hyb}})^2\}
 -
 \E\log\{(\det B_n^{\mathrm{hyb}})^2\}
 =
 \mathcal M_m
 +
 \mathcal D_m
 +
 \mathcal G_{s_1}^{\mathrm c}
 +
 \mathcal R_n.
\end{equation}
Hence
\[
 W_n^{\mathrm e}(B_n^{\mathrm{hyb}})
 =
 \frac{
 \mathcal M_m
 +
 \mathcal D_m
 +
 \mathcal G_{s_1}^{\mathrm c}
 +
 \mathcal R_n
 }{\sqrt{2\ell}},
\]
the remainder is
\[
 \mathcal R_n
 =
 \sum_{i=0}^{m-1}
 \left[
 \log Z_{i+1}-L_{i+1}
 -
 \E\bigl(\log Z_{i+1}-L_{i+1}\bigr)
 \right].
\]
The following lemma gives a bound for the lower-truncation error which can be used to control the remainder $\mathcal R_n$.

\vspace{0.5cm}

\begin{lemma}[Lower-truncation error]\label{lem:exact-lower-truncation}
Assume the bounded-density hypotheses of
Proposition~\ref{prop:exact-growing-fourth}. Then there exist
constants $c>0$ and $n_0<\infty$ such that, for every $n\ge n_0$,
\begin{equation}\label{ex:eq:lower-truncation-error}
 \E\sum_{i=0}^{m-1}
 \left|\log Z_{i+1}-L_{i+1}\right|
 \le
 \exp\{-c\ell^{3a}\log\ell\}.
\end{equation}
Here $n_0$ depend on the common density bound, while $c$ depend only on $a$.
\end{lemma}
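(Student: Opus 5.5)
We bound each summand separately and then sum. Since $L_{i+1}=\log(Z_{i+1}\vee\theta)$ with $\theta=\ell^{-a/2}$, we have $\log Z_{i+1}-L_{i+1}=0$ unless $Z_{i+1}<\theta$, and on that event $|\log Z_{i+1}-L_{i+1}|=\log(\theta/Z_{i+1})$. The layer-cake identity \eqref{eq:layer-cake-identity} with $p=1$ then gives, conditionally on $\cF_i$,
\[
 \E_i|\log Z_{i+1}-L_{i+1}|
 =\int_0^\infty \Pp_i\bigl(Z_{i+1}<\theta e^{-v}\bigr)\dd v
 =\int_H^\infty \Pp_i\bigl(Z_{i+1}<e^{-u}\bigr)\dd u,
\]
where $H=|\log\theta|=\frac a2\log\ell$. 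The task therefore reduces to a small-ball estimate for $Z_{i+1}=a_{i+1}^TQ_ia_{i+1}=\|P_ia_{i+1}\|^2/k_i$ that is uniform in $u\ge H$.

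For this we use Lemma~\ref{lem:projection-density}. Conditionally on $\cF_i$, the projection $P_i$ is a fixed orthogonal projection of rank $k_i$ (the rows are a.s. linearly independent because the entries have densities). The row $a_{i+1}$ is independent of $\cF_i$, and its coordinates are independent with densities bounded by $M_1$. In the hybrid matrix the bulk rows $i<m$ are non-Gaussian, but Gaussian coordinates would also be fine, with density bound $\max\{M_1,(2\pi)^{-1/2}\}=:B$. By \eqref{ex:eq:projection-smallball}, for every $u\ge u_B$,
\[
 \Pp_i(Z_{i+1}<e^{-u})\le e^{-k_iu/4}.
\]
Take $n_0$ large enough that $H=\frac a2\log\ell\ge u_B$ for $n\ge n_0$; this is the only place the density bound enters $n_0$. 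Integrating gives
\[
 \E_i|\log Z_{i+1}-L_{i+1}|\le\int_H^\infty e^{-k_iu/4}\dd u=\frac4{k_i}e^{-k_iH/4}=\frac4{k_i}\,\ell^{-ak_i/8}.
\]

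Finally we sum over the bulk. For $0\le i<m$ we have $k_i\ge s_1+1> \ell^{3a}$, so each term is at most $\exp\{-\frac a8 k_i\log\ell\}$. The sum over $k\ge s_1+1$ is geometric with ratio $\ell^{-a/8}\le 1/2$ for large $n$, so it is dominated by twice its first term:
\[
 \E\sum_{i=0}^{m-1}|\log Z_{i+1}-L_{i+1}|\le 2\exp\Bigl\{-\frac a8(s_1+1)\log\ell\Bigr\}\le\exp\{-c\ell^{3a}\log\ell\},
\]
with, for instance, $c=a/16$, after enlarging $n_0$ to absorb the factor $2$ and the floor in $s_1$.

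The only step that needs care is the conditional application of Lemma~\ref{lem:projection-density}. Independence of $a_{i+1}$ from $\cF_i$ and the a.s. full rank of the first $i$ rows make $P_i$ a deterministic rank-$k_i$ projection under $\Pp_i$. Measurability in $\cF_i$ then lets us take expectations of the conditional bound.
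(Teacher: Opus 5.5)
Your proposal is correct and follows essentially the same route as the paper. Both arguments use the layer-cake reduction to $\int_H^\infty \Pp_i(Z_{i+1}<e^{-u})\dd u$, then the small-ball bound from Lemma~\ref{lem:projection-density} once $H\ge u_{M_1}$, then a geometric summation over $k\ge s_1+1$. The only cosmetic difference is that you keep a factor $2$ and obtain $c=a/16$, whereas the paper absorbs the prefactor $8/(s_1+1)$ and gets $c=a/8$.
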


\vspace{0.5cm}

The proof of Lemma~\ref{lem:exact-lower-truncation} is given in Section~\ref{sec:proof-lemmas}.
Lemma~\ref{lem:exact-lower-truncation}
gives
\begin{equation}
  \begin{aligned}\label{eq:remainder-Rn-contral}
    \E|\mathcal R_n|
    \le
    2\exp\{-c\ell^{3a}\log\ell\}.
  \end{aligned}
\end{equation}
Moreover, $\mathcal G_{s_1}^{\mathrm c}$ is independent of
$\cF_m$, while $\mathcal M_m$, $\mathcal D_m$, and
$\mathcal R_n$ are $\cF_m$-measurable.  

\medskip
\noindent\textbf{Step 2: identify the variance of the main random part.}

Since $W_n^{\mathrm e}(B_n^{\mathrm{hyb}})=
\frac{
\mathcal M_m
+
\mathcal D_m
+
\mathcal G_{s_1}^{\mathrm c}
+
\mathcal R_n
}{\sqrt{2\ell}}$,
we divide it into a main random part and two small error terms.  The main random part is
\[
 (\mathcal M_m+\mathcal G_{s_1}^{\mathrm c})/\sqrt{2\ell},
\]
which contains the martingale bulk $\mathcal M_m$ and the centered Gaussian terminal block $\mathcal G_{s_1}^{\mathrm c}$. The following Lemma~\ref{lem:exact-mart-bounds} give the moment estimates for the martingale differences $\eta_{i+1}$, which will be used to control $\mathcal M_m$.

\vspace{0.5cm}

\begin{lemma}\label{lem:exact-mart-bounds}
Let $(B_n)_{n\ge2}$, with
$B_n=(b_{ij})_{1\le i,j\le n}$, be a sequence of random matrices
whose entries are independent and real-valued.
Suppose that each $b_{ij}$ has a density $f_{b_{ij}}$ and that,
for fixed constants $R_3,M_1,K_0>0$ independent of $n$,
\[
 \E b_{ij}=0,\qquad
 \E b_{ij}^2=1,\qquad
 \E|b_{ij}|^3\le R_3,\qquad
 \|f_{b_{ij}}\|_\infty\le M_1,
\]
and
\[
 |b_{ij}|\le (K_0n\ell^{3/4})^{1/2}
 \quad\text{almost surely}
\]
for every $n\ge2$ and $1\le i,j\le n$, where $\ell=\log n$.
Put $M:=\max\left\{3,\max_{1\le i,j\le n}\E|b_{ij}|^4\right\}$,
and assume that $M=o(\ell)$ as $n\to\infty$.

Retain the scales and projection notation of
Section~\ref{subsec:common-determinant-setup}.
Let $\eta_{i+1}$ and $h_i$ be defined by
\eqref{ex:eq:mart-pred-def} and
\eqref{eq:common-one-step-error}, respectively,
using the first $m$ rows of $B_n$.
These bulk quantities are unchanged when $B_n$ is replaced by
$B_n^{\mathrm{hyb}}$, with the same fourth-moment bound $M$.
Let
\begin{equation}
  \begin{aligned}\label{eq:def-martingale-for-exact-centering}
    q_i=\E\eta_{i+1}^2,
    \qquad s_{\eta}^2=\sum_{i=0}^{m-1}\E\eta_{i+1}^2,
    \qquad \sigma_{\eta,i+1}^2=\E\eta_{i+1}^2/s_{\eta}^2,
    \qquad v_{\eta,i}=\sum_{j=i+1}^{m-1}\sigma_{\eta,j+1}^2.    
  \end{aligned}
\end{equation}
Then
\begin{align}
 \E|\E_i\eta_{i+1}^2-\E\eta_{i+1}^2| \le Ch_i,\label{ex:eq:mart-var-bound}\\
 \E|\E_i\eta_{i+1}^3|+\E|\eta_{i+1}|^4 \le Ch_i,\label{ex:eq:mart-third-fourth-bound}\\
 \E\eta_{i+1}^2 =2/k_i+O(h_i),\qquad \E\eta_{i+1}^2 \le CM/k_i,\label{ex:eq:mart-q}\\
 s_{\eta}^2 =2\sum_{k=s_1+1}^{n}k^{-1}+O(M)
 =2\ell-2\log s_1+O(M+1).\label{ex:eq:mart-s}
\end{align}
Moreover,
\begin{equation}\label{ex:eq:mart-max}
 \max_{i<m}\frac{\ell^2\E\eta_{i+1}^2}{s_{\eta}^2}=o(1).
\end{equation}
\end{lemma}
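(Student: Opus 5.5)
The plan is to derive every estimate in Lemma~\ref{lem:exact-mart-bounds} from the one-step bounds of Lemma~\ref{lem:common-one-step}. Its hypothesis $M\le C_0\ell$ holds because $M=o(\ell)$. Recall $\eta_{i+1}=\zeta_{i+1}-g_i$ with $g_i=\E_i\zeta_{i+1}$, and \eqref{eq:common-one-step-conditional} gives $|g_i|\le CH_i^*$ almost surely.

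\textbf{Conditional moments of $\eta_{i+1}$.} Expand the conditional moments:
\[
\E_i\eta_{i+1}^2=\E_i\zeta_{i+1}^2-g_i^2,\qquad
\E_i\eta_{i+1}^3=\E_i\zeta_{i+1}^3-3g_i\E_i\zeta_{i+1}^2+2g_i^3,
\]
and $\E_i\eta_{i+1}^4\le 8(\E_i\zeta_{i+1}^4+g_i^4)$.

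\textbf{Controlling the powers of $g_i$.} The terms $g_i^2$, $g_i^3$ and $g_i^4$ must be bounded by $CH_i^*$. By conditional Jensen, $|g_i|^p\le \E_i|\zeta_{i+1}|^p$, so $g_i^4\le\E_i\zeta_{i+1}^4\le CH_i^*$. By Lyapunov, $g_i^2\le(\E_i\zeta_{i+1}^4)^{1/2}$. A cruder but sufficient route is to note that $H_i^*\le C$ almost surely, after checking that $d_i\le 1/k_i$ (since $\sum_r q_{rr}(i)^2\le \max_r q_{rr}(i)\tr Q_i$) and that $M/k_i$ is small because $k_i\ge s_1$. Then $g_i^2\le CH_i^{*2}\le CH_i^*$.

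For the mixed third-moment term, use $|\E_i\zeta_{i+1}^2|\le 2/k_i+CH_i^*\le C$, so $|g_i\E_i\zeta_{i+1}^2|\le CH_i^*$.

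Taking expectations then gives \eqref{ex:eq:mart-third-fourth-bound}. For \eqref{ex:eq:mart-var-bound}, compare $\E_i\eta_{i+1}^2$ with $2/k_i$ up to $CH_i^*$. Then $|\E_i\eta^2_{i+1}-\E\eta^2_{i+1}|\le C(H_i^*+h_i)$, which integrates to $Ch_i$. The same comparison gives $\E\eta_{i+1}^2=2/k_i+O(h_i)$, and \eqref{eq:common-h-sum-point} ($h_i\le CM/k_i$) gives the second half of \eqref{ex:eq:mart-q}.

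\textbf{Summation and the maximal bound.} For \eqref{ex:eq:mart-s}, sum over $0\le i<m$. As $i$ runs over this range, $k_i$ runs from $n$ down to $s_1+1$, and $\sum_i h_i\le CM$. The harmonic estimate $\sum_{k=s_1+1}^n k^{-1}=\ell-\log s_1+O(1)$ completes it. For \eqref{ex:eq:mart-max}, note $\log s_1=3a\log\ell$ and $M=o(\ell)$, so $s_\eta^2\ge \ell$ for large $n$. Then
\[
\ell^2\E\eta_{i+1}^2/s_\eta^2\le CM\ell/k_i\le CM\ell/s_1 .
\]
This is at most $C\ell^{2-3a}\to0$.

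\textbf{Invariance under the hybrid replacement.} The claim that nothing changes when passing to $B_n^{\mathrm{hyb}}$ is immediate. The quantities $\eta_{i+1}$ for $i<m$ depend only on the first $m$ rows, which are shared by $B_n$ and $B_n^{\mathrm{hyb}}$, and Gaussian entries have fourth moment $3\le M$.

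\textbf{Main obstacle.} I expect the only delicate point to be the almost-sure boundedness $H_i^*\le C$ used to absorb $g_i^2$. The terms $\lambda_i^{(q)}$ contain $M^2e^{4H}k_i^{-2}=M^2\ell^{2a}k_i^{-2}$, which is small because $k_i\ge s_1\approx\ell^{3a}$. If that check failed, I would instead bound $\E g_i^2\le \E(\E_i\zeta^4_{i+1})^{1/2}$ and accept $h_i^{1/2}$-type terms. That fallback costs something, so I would verify the deterministic bound on $H_i^*$ first.
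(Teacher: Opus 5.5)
Your proof is correct. Its skeleton matches the paper's: you expand $\E_i\eta_{i+1}^2$, $\E_i\eta_{i+1}^3$ and $\E_i\eta_{i+1}^4$ in terms of $\zeta_{i+1}$ and $g_i$, then sum using \eqref{eq:common-h-sum-point}.

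The genuine difference is how you control the powers of $g_i$. The paper proves a separate fixed-projection estimate, Lemma~\ref{lem:exact-fixed-drift}. That lemma uses the bounded density, through the small-ball bound \eqref{ex:eq:projection-smallball}, to get $|g_i|\le CM/k_i$ almost surely. The paper then writes $\E g_i^2\le\|g_i\|_\infty\E|g_i|$, and similarly $\E|g_i|^3\le\|g_i\|_\infty^2\E|g_i|$ and $\E|g_i|^4\le\|g_i\|_\infty^3\E|g_i|$.

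You instead read a pointwise bound directly off the almost-sure estimate \eqref{eq:common-one-step-conditional}, namely $|g_i|\le CH_i^*$. You combine it with the deterministic fact that $H_i^*$ is small. This does hold: $d_i\le k_i^{-1}$ pointwise and the $\lambda_i^{(q)}$ are nonrandom. So the computation the paper uses to show $h_i\le CM/k_i$ gives the stronger almost-sure bound $H_i^*\le CM/k_i\le CM/s_1=o(1)$. Hence the check you flagged as the main obstacle goes through, and your fallback is not needed. You in fact recover the conclusion of Lemma~\ref{lem:exact-fixed-drift} with no extra work.

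What your route buys is economy. It makes Lemma~\ref{lem:exact-fixed-drift} redundant for this purpose. It also shows that the density bound $M_1$ and the support bound play no quantitative role in this lemma; the densities serve only to make the first $i$ rows almost surely linearly independent. The paper's route, in exchange, gives a standalone drift bound for an arbitrary deterministic rank-$k$ projection. That bound does not pass through the lower-tail bookkeeping inside Lemma~\ref{lem:common-one-step}.
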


\vspace{0.5cm}

The proof of Lemma~\ref{lem:exact-mart-bounds} is given in Section~\ref{sec:proof-lemmas}.
Using Lemma~\ref{lem:exact-mart-bounds}, we can identify the variance of the main random part $\mathcal M_m+\mathcal G_{s_1}^{\mathrm c}$.
Set
\[
 s_*^2
 =
 s_{\eta}^2
 +
 \Var(\mathcal G_{s_1}^{\mathrm c}).
\]
Since under assumption, $M=o(\ell)$.  By
\eqref{ex:eq:mart-s} and
\eqref{eq:gaussian-terminal-variance}, we have
\begin{equation}\label{ex:eq:sstar}
 \begin{aligned}
 s_*^2
 &=
 2\ell+O(M+1),
 \qquad
 \frac{s_{\eta}^2}{s_*^2}\ge c>0.
 \end{aligned}
\end{equation}
In particular, $s_*^2\asymp\ell$.

\medskip
\noindent\textbf{Step 3: approximate the main random part at scale $s_*$.}

Since \eqref{ex:eq:sstar} gives $s_*^2 = 2\ell+O(M+1)$, we firstly consider
\begin{equation}
  \begin{aligned}
    \frac{\mathcal M_m+\mathcal G_{s_1}^{\mathrm c}}{s_*}, \nonumber
  \end{aligned}
\end{equation}
and then replace $s_*$ by the tareget scale $\sqrt{2\ell}$ in the next step.

Let $N_0\sim\Normal$ be independent of all rows.  Since
$\mathcal G_{s_1}^{\mathrm c}$ is independent of $\mathcal M_m$,
conditioning on $\mathcal G_{s_1}^{\mathrm c}$ gives
\begin{equation}
  \begin{aligned}
    &\dk\left(
    \frac{\mathcal M_m+\mathcal G_{s_1}^{\mathrm c}}{s_*},
    \frac{s_{\eta}N_0+\mathcal G_{s_1}^{\mathrm c}}{s_*}
    \right)
    \le
    \dk\left(
    \frac{\mathcal M_m}{s_{\eta}},
    \Normal
    \right). \nonumber
  \end{aligned} 
\end{equation}
Then, we will bound the Kolmogorov distance $\dk\left(\frac{\mathcal M_m}{s_{\eta}},\Normal\right)$ by the characteristic function comparison.  Let $\phi_m$ be the characteristic function of $\mathcal M_m/s_{\eta}$.

For each $0\le i<m$, let $Q_{i+1}\sim \mathcal N\bigl(0,\E\eta_{i+1}^2\bigr)$
be independent Gaussian variables, independent also of all rows.
Since $s_\eta^2 = \sum_{i=0}^{m-1}\E\eta_{i+1}^2$,
we have
\[
 \frac1{s_\eta}\sum_{i=0}^{m-1}Q_{i+1}
 \sim \Normal.
\]
For $0\le i\le m$, define
\[
 T_i
 =
 \frac1{s_\eta}
 \left(
 \sum_{r=0}^{i-1}\eta_{r+1}
 +
 \sum_{r=i}^{m-1}Q_{r+1}
 \right).
\]
Then $T_0\sim\Normal$ and $T_m=\frac{\mathcal M_m}{s_\eta}$.
Hence, successive Gaussian replacement gives
\begin{equation}
  \begin{aligned}
    \phi_m(t)-e^{-t^2/2}
    =&
    \sum_{i=0}^{m-1}
    \left(
    \E e^{itT_{i+1}}
    -
    \E e^{itT_i}
    \right)\\
    =&\sum_{i=0}^{m-1}e^{-t^2v_{\eta,i}/2}
    \E\left[e^{it\sum_{r<i}\eta_{r+1}/s_{\eta}}
    \left\{\E_i e^{it\eta_{i+1}/s_{\eta}}-e^{-t^2\E\eta_{i+1}^2/(2s_{\eta}^2)}\right\}\right]. \nonumber
  \end{aligned}
\end{equation}
Since $\E_i\eta_{i+1}=0$, Taylor's formula gives, for
$0\le t\le\ell$,
\[
 \begin{aligned}
 \E_i e^{it\eta_{i+1}/s_\eta}
 &=
 1
 -
 \frac{t^2}{2s_\eta^2}\E_i\eta_{i+1}^2
 -
 \frac{it^3}{6s_\eta^3}\E_i\eta_{i+1}^3
 +
 O\left(
 \frac{t^4}{s_\eta^4}\E_i|\eta_{i+1}|^4
 \right),
 \end{aligned}
\]
while
\[
 \begin{aligned}
 \exp\left\{
 -\frac{t^2\E\eta_{i+1}^2}{2s_\eta^2}
 \right\}
 &=
 1
 -
 \frac{t^2\E\eta_{i+1}^2}{2s_\eta^2}
 +
 O\left(
 \frac{t^4(\E\eta_{i+1}^2)^2}{s_\eta^4}
 \right).
 \end{aligned}
\]
Since
\[
 (\E\eta_{i+1}^2)^2
 \le
 \E|\eta_{i+1}|^4,
\]
subtracting the two expansions gives
\begin{align*}
 \left|\frac{\phi_m(t)-e^{-t^2/2}}t\right|
 &\le \frac{Ct}{s_{\eta}^2}\sum_i e^{-t^2v_{\eta,i}/2}\E|\E_i\eta_{i+1}^2-\E\eta_{i+1}^2|\\
 &\quad+\frac{Ct^2}{s_{\eta}^3}\sum_i e^{-t^2v_{\eta,i}/2}\E|\E_i\eta_{i+1}^3|\\
 &\quad+\frac{Ct^3}{s_{\eta}^4}\sum_i e^{-t^2v_{\eta,i}/2}\E|\eta_{i+1}|^4.
\end{align*}
The most difference between the deterministic case (Step~3 in the Proof of Lemma~\ref{lem:det-hybrid}) is that, the linear term vanishes because $\E_i\eta_{i+1}=0$. Then, using Lemma~\ref{lem:esseen} with cutoff $T=\ell$, we have
\begin{equation}
  \begin{aligned}
    \dk\left(
    \frac{\mathcal M_m}{s_{\eta}},
    \Normal
    \right)
    \le&
    \frac{C}{\ell} + C\int_0^\ell t^{-1}\sum_{i=0}^{m-1}e^{-ct^2v_{\eta,i}}\E|\E_i\eta_{i+1}^2-\E\eta_{i+1}^2|\dd t\\
    &+C\int_0^\ell t\sum_{i=0}^{m-1}e^{-ct^2v_{\eta,i}}\E|\E_i\eta_{i+1}^3|\dd t\\
    &+C\int_0^\ell t^2\sum_{i=0}^{m-1}e^{-ct^2v_{\eta,i}}\E|\eta_{i+1}|^4\dd t.
  \end{aligned}
\end{equation}
To estimate the three integrals, we give the following Lemma~\ref{lem:exact-weighted}.

\vspace{0.5cm}

\begin{lemma}\label{lem:exact-weighted}
Under the assumptions of Proposition~\ref{prop:exact-growing-fourth}.
If $k_i\ge s_2$, then $v_{\eta,i}\ge c_0>0$.  Moreover, for $p=0,1,2,3$,
\begin{equation}\label{ex:eq:weighted}
 \int_0^\ell t^p\sum_{i=0}^{m-1}h_i e^{-ct^2v_{\eta,i}}\dd t\le C_pM.
\end{equation}
\end{lemma}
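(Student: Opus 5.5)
The argument mirrors the proof of Lemma~\ref{lem:det-weighted}, with $\zeta$ replaced by the martingale differences $\eta$ and the cutoff $\sqrt\ell$ replaced by $\ell$. We split the indices into the bulk range $k_i\ge s_2$ and the terminal range $s_1\le k_i<s_2$, as before.

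\textbf{Step 1: lower bound on $v_{\eta,i}$.} Fix $i$ with $k_i\ge s_2$. By the definition \eqref{eq:def-martingale-for-exact-centering},
\[
 s_\eta^2v_{\eta,i}=\sum_{j=i+1}^{m-1}\E\eta_{j+1}^2 .
\]
By \eqref{ex:eq:mart-q}, each summand equals $2/k_j+O(h_j)$. Hence
\[
 s_\eta^2v_{\eta,i}\ge 2\sum_{q=s_1+1}^{k_i-1}\frac1q-C\sum_jh_j
 \ge 2\log\frac{s_2}{s_1+1}-CM .
\]
Using \eqref{eq:common-h-sum-point} for the sum of the $h_j$ and the choice of $s_1,s_2$ in \eqref{eq:common-parameters}, this is at least
\[
 2\ell-46a\log\ell-CM-O(1).
\]
Since $M=o(\ell)$, or $M\le K_1\ell^{1-\gamma}$ in the quantitative case, the bound is at least $c_1\ell$ for large $n$. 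On the other hand, \eqref{ex:eq:mart-s} gives $s_\eta^2\le C\ell$. Dividing, we get $v_{\eta,i}\ge c_0>0$.

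A subtle point: the margin $M=o(\ell)$ is only just enough. The exact-centering regime allows $M$ of order $\varepsilon\ell$, so I would check that the constant in $CM$ does not swallow the $2\ell$. Taking $n$ large so that $CM\le\ell/2$ suffices.

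\textbf{Step 2: bulk range.} For $k_i\ge s_2$, Step~1 gives
\[
 \int_0^\ell t^pe^{-ct^2v_{\eta,i}}\dd t\le\int_0^\infty t^pe^{-cc_0t^2}\dd t=C_p .
\]
Summing against $h_i$ and using \eqref{eq:common-h-sum-point} bounds this part by $C_pM$.

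\textbf{Step 3: terminal range (main obstacle).} For $s_1\le k_i<s_2$, we only use $e^{-ct^2v_{\eta,i}}\le1$. Then
\[
 \int_0^\ell t^p\sum_{s_1\le k_i<s_2}h_i\,e^{-ct^2v_{\eta,i}}\dd t
 \le \Bigl(\sum_{s_1\le k_i<s_2}h_i\Bigr)\frac{\ell^{p+1}}{p+1}.
\]
Now $\ell^{p+1}\le\ell^4$, whereas in the deterministic case the factor was only $\ell^{2}$. So we need the terminal mass of the $h_i$ to decay faster than $\ell^{-4}$. The estimate \eqref{eq:common-h-terminal} gives exactly $CM\ell^{-12}$, so the contribution is at most $CM\ell^{-8}\le C_pM$.

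This is the step where the enlarged cutoff $T=\ell$ matters. If \eqref{eq:common-h-terminal} were not strong enough, I would fall back to re-deriving it from $h_i\le CM/k_i$ together with the explicit pieces of $H_i^*$. That would use $\E d_i\lesssim \ell^{-8a}$ from Lemma~\ref{lem:terminal-diagonal} and the $\lambda_i^{(r)}$ bounds from \eqref{det:eq:lambda-growing-total}. The resulting power of $\ell$ is far below $\ell^{-4}$ given $a=20$.

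Combining Steps~2 and~3 proves \eqref{ex:eq:weighted}.
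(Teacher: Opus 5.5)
Your proposal is correct and matches the paper's proof essentially line for line. Both split the indices into $k_i\ge s_2$ and $s_1\le k_i<s_2$. Both get $s_\eta^2v_{\eta,i}\ge 2\log(s_2/s_1)-CM-C\ge c\ell$ from \eqref{ex:eq:mart-q}, \eqref{eq:common-h-sum-point}, \eqref{ex:eq:mart-s} and $M=o(\ell)$. Both bound the terminal range by $C\ell^4\cdot M\ell^{-12}\le CM\ell^{-8}$ using \eqref{eq:common-h-terminal}, so your fallback argument in Step~3 is not needed.
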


\vspace{0.5cm}

\begin{proof}[Proof of Lemma~\ref{lem:exact-weighted}]
By \eqref{eq:common-h-sum-point}, \eqref{ex:eq:mart-q}, and \eqref{ex:eq:mart-s}, for $k_i\ge s_2$,
\[
 s_{\eta}^2v_{\eta,i}=\sum_{j=i+1}^{m-1}\E\eta_{j+1}^2
 \ge2\sum_{q=s_1+1}^{s_2-1}q^{-1}-C\sum_jh_j
 \ge2\log(s_2/s_1)-CM-C\ge c\ell.
\]
Here $\log(s_2/s_1)=\ell-O(\log\ell)$ and $M=o(\ell)$. Thus $v_{\eta,i}\ge c_0$.  For the indices with $k_i\ge s_2$, the integral of $t^pe^{-ct^2v_{\eta,i}}$ over $[0,\ell]$ is bounded by a constant depending only on $p$, and \eqref{eq:common-h-sum-point} gives a contribution at most $C_pM$.  For the terminal-range indices, $e^{-ct^2v_{\eta,i}}\le1$, so \eqref{eq:common-h-terminal} and $p\le3$ give
\[
 \int_0^\ell t^p\sum_{s_1\le k_i<s_2}h_i e^{-ct^2v_{\eta,i}}\dd t
 \le C\ell^4\,M\ell^{-12}\le CM\ell^{-8}.
\]
Adding the two ranges proves \eqref{ex:eq:weighted}. Thus we complete the proof of Lemma~\ref{lem:exact-weighted}.
\end{proof}

\vspace{0.5cm}

We now return to the proof of Lemma~\ref{lem:exact-hybrid}.
Using Lemmas~\ref{lem:exact-mart-bounds} and~\ref{lem:exact-weighted}, together with $s_{\eta}^2\asymp\ell$, show that the three integrals over $[0,\ell]$ are bounded by
\[
 CM/\ell,\qquad CM/\ell^{3/2},\qquad CM/\ell^2.
\]
Therefore, the total is at most $C(1+M)/\ell$. Thus we have 
\begin{equation}
  \begin{aligned}\label{eq:comparison-1}
    &\dk\left(
    \frac{\mathcal M_m+\mathcal G_{s_1}^{\mathrm c}}{s_*},
    \frac{s_{\eta}N_0+\mathcal G_{s_1}^{\mathrm c}}{s_*}
    \right)
    \le
    \dk\left(
    \frac{\mathcal M_m}{s_{\eta}},
    \Normal
    \right)
    \le
    C\frac{M}{\ell}.
  \end{aligned} 
\end{equation}

Then, we estimate
\[
  \dk\left(
  \frac{s_{\eta}N_0+\mathcal G_{s_1}^{\mathrm c}}{s_*},
  \Normal
  \right)
\]
Write $Y_{k} = \log Z_{m+k} - \E \log Z_{m+k}$, then we have $ \mathcal G_{s_1}^{\mathrm c}
=\sum_{k=1}^{s_1}Y_k$. We denote
\[
  \tau_k^2=\E Y_k^2,
  \qquad
  \widetilde Y_k\sim\mathcal N(0,\tau_k^2),
\]
where the $\widetilde Y_k$ are independent of each other, of $N_0$, and of all rows.  Then
\[
 s_{\eta}N_0
 +
 \sum_{k=1}^{s_1}\widetilde Y_k
 \sim
 \mathcal N(0,s_*^2).
\]
For $|u|\le1$, Taylor's expansion gives
\[
 \left|
 \E e^{iuY_k}
 -
 e^{-u^2\tau_k^2/2}
 \right|
 \le
 C|u|^3\E|Y_k|^3.
\]
Therefore, for $0\le t\le s_*$, since \eqref{ex:eq:sstar}, $s_{\eta}^2/s_*^2\ge c>0$. Combining with
a product telescope and \eqref{eq:gaussian-terminal-third} give
\begin{equation}
  \begin{aligned}\label{eq:two-chara-func-bound}
    \left|
    \E\exp\left\{
    \frac{it}{s_*}
    \left(
    s_{\eta}N_0+\mathcal G_{s_1}^{\mathrm c}
    \right)
    \right\}
    -
    e^{-t^2/2}
    \right|
    &=
    e^{-s_{\eta}^2t^2/(2s_*^2)}
    \left|
    \prod_{k=1}^{s_1}\E e^{itY_k/s_*}
    -
    \prod_{k=1}^{s_1}\E e^{it\widetilde Y_k/s_*}
    \right|\\
    &\le
    C\frac{t^3}{s_*^3}
    e^{-ct^2}
    \sum_{k=1}^{s_1}\E|Y_k|^3\\
    &\le
    C\frac{t^3}{s_*^3}e^{-ct^2}.
  \end{aligned}
\end{equation}
For $t>s_*$, the Gaussian factor $s_{\eta}N_0$ and
\eqref{ex:eq:sstar} give
\[
 \left|
 \E\exp\left\{
 \frac{it}{s_*}
 \left(
 s_{\eta}N_0+\mathcal G_{s_1}^{\mathrm c}
 \right)
 \right\}
 \right|
 \le
 e^{-ct^2}.
\]
After replacing $c$ by $\min\{c,1/2\}$, we have
\[
 e^{-t^2/2}\le e^{-ct^2}.
\]
Thus both characteristic functions in \eqref{eq:two-chara-func-bound} are bounded by $e^{-ct^2}$.
Lemma~\ref{lem:esseen}, with cutoff $T=s_*^3$, now yields
\begin{equation}
  \begin{aligned}\label{eq:comparison-2}
    \dk\left(
    \frac{s_{\eta}N_0+\mathcal G_{s_1}^{\mathrm c}}{s_*},
    \Normal
    \right)
    &\le
    \frac{C}{s_*^3}
    +
    \frac{C}{s_*^3}
    \int_0^{s_*}t^2e^{-ct^2}\,\dd t
    +
    C\int_{s_*}^{s_*^3}
    \frac{e^{-ct^2}}{t}\,\dd t\\
    &\le
    Cs_*^{-3}
    =
    O(\ell^{-3/2}).
  \end{aligned}
\end{equation}
Combining \eqref{eq:comparison-1} and \eqref{eq:comparison-2}, we obtain
\begin{equation}\label{ex:eq:exact-hybrid-natural-scale}
 \dk\left(
 \frac{\mathcal M_m+\mathcal G_{s_1}^{\mathrm c}}{s_*},
 \Normal
 \right)
 \le
 C\frac{M}{\ell},
\end{equation}
where we used $M\ge3$.

\medskip
\noindent\textbf{Step 4: replace $s_*$ by the target scale $\sqrt{2\ell}$.}
By \eqref{ex:eq:sstar},
\[
 \left|
 \frac{s_*}{\sqrt{2\ell}}-1
 \right|
 =
 \frac{
 |s_*^2-2\ell|
 }{
 \sqrt{2\ell}\{s_*+\sqrt{2\ell}\}
 }
 \le
 C\frac{M+1}{\ell}.
\]
This quantity is at most $1/2$ for all sufficiently large $n$.
Lemma~\ref{lem:perturb}(iv) and
\eqref{ex:eq:exact-hybrid-natural-scale} therefore give
\begin{equation}\label{ex:eq:exact-hybrid-main-normal}
 \dk\left(
 \frac{
 \mathcal M_m+\mathcal G_{s_1}^{\mathrm c}
 }{\sqrt{2\ell}},
 \Normal
 \right)
 \le
 C\frac{M}{\ell}.
\end{equation}

\medskip
\noindent\textbf{Step 5: restore the drift and the lower-truncation remainder.}

We have already controlled the Kolmogorov distance of the main random part $\mathcal M_m+\mathcal G_{s_1}^{\mathrm c}$ at the target scale $\sqrt{2\ell}$ in \eqref{ex:eq:exact-hybrid-main-normal}.  It remains to restore the centered drift $\mathcal D_m$ and the lower-truncation remainder $\mathcal R_n$. First, we control the centered drift $\mathcal D_m$. The following Lemma~\ref{lem:exact-drift} gives a concentration bound for the centered drift $\mathcal D_m$.

\vspace{0.5cm}

\begin{lemma}[Concentration of the sum of conditional means]\label{lem:exact-drift}
Under the assumptions of Proposition~\ref{prop:exact-growing-fourth}. Put $\varepsilon_n=\frac{M}{\ell}$.
Then, we have
\begin{equation}\label{ex:eq:drift-prob}
 \Pp\left(|\mathcal D_m|>\varepsilon_n\sqrt{2\ell}\right)
 \le C\varepsilon_n.
\end{equation}
\end{lemma}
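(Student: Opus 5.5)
The plan is a second-moment bound for $\mathcal D_m$ followed by Chebyshev's inequality. By \eqref{ex:eq:drift-prob} it suffices to show
\[
 \E\mathcal D_m^2\le C\,\varepsilon_n^3\,\ell = C\frac{M^3}{\ell^2},
\]
since then $\Pp(|\mathcal D_m|>\varepsilon_n\sqrt{2\ell})\le \E\mathcal D_m^2/(2\varepsilon_n^2\ell)\le C\varepsilon_n$. If this is too ambitious, any bound $\E\mathcal D_m^2\le C M^{3}/\ell^{2}$ or even $C M/\ell^{\,}\cdot(M/\ell)^{2}\ell$ suffices; the target exponent comes from the Chebyshev requirement. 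A weaker fallback is $\E|\mathcal D_m|\le C\varepsilon_n^2\sqrt{\ell}$ combined with Markov.

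Write $g_i=\E_i\zeta_{i+1}$. The first reduction is the martingale decomposition of $g_i-\E g_i$ with respect to the row filtration,
\[
 \mathcal D_m=\sum_{j=1}^{m}\Bigl(\sum_{i\ge j}(\E_j g_i-\E_{j-1}g_i)\Bigr).
\]
This gives $\E\mathcal D_m^2=\sum_j\E\bigl(\sum_{i\ge j}(\E_jg_i-\E_{j-1}g_i)\bigr)^2$. The key structural input is that $g_i$ is, up to the error $H_i^*$ of Lemma~\ref{lem:common-one-step}, an explicit function of the projection $Q_i$. A second-order expansion of $\E_i\log(Z_{i+1}\vee\theta)$ around $\E_iZ_{i+1}=1$ gives
\[
 g_i=-\tfrac12\bigl(\E_i(Z_{i+1}-1)^2-\tfrac2{k_i}\bigr)+O(\text{higher})=-\tfrac12\sum_r(\kappa_{i+1,r}-3)q_{rr}(i)^2+O(H_i^{*}),
\]
where $\kappa$ is the fourth moment of the entry $b_{i+1,r}$. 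Thus $g_i-\E g_i\approx -\tfrac12\sum_r(\kappa_{i+1,r}-3)(q_{rr}(i)^2-\E q_{rr}(i)^2)$, plus remainders whose $L^1$ norms sum to $O(\sum_i h_i\cdot\text{small})$.

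For the main term I would exploit the fact that $d_i=\sum_r q_{rr}(i)^2$ is close to its typical value $1/k_i$ with fluctuations of lower order. The quantity $d_i$ concentrates by delocalization of the projection diagonals: $p_{rr}(i)=k_i/n\,(1+O(\text{fluct}))$, which is essentially the content used in Lemma~\ref{lem:terminal-diagonal}. This should give
\[
 \E\Bigl|\sum_r(\kappa_r-3)(q_{rr}^2-\E q_{rr}^2)\Bigr|^2\le \frac{CM^2}{k_i^2}\cdot\frac{1}{k_i}
\]
in the bulk. Summed with the correlations between different $i$ (handled by Cauchy--Schwarz in $i$, which costs at most a factor from $\sum_i k_i^{-1}\lesssim\ell$), this yields $\E(\text{main})^2\lesssim M^2\ell\cdot\max_i$ of the relative fluctuation. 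That quantity must be shown to be $\lesssim M/\ell^3$. The remainders are controlled in $L^1$ by $\sum_ih_i$-type sums from \eqref{eq:common-h-sum-point} and \eqref{eq:common-h-terminal}, and then handled by Markov. Because Markov only gives $CM/( \varepsilon_n\sqrt\ell)$, those remainders must in fact be of smaller order than $\sum h_i$: I would use $|\E_i\zeta_{i+1}-(\text{explicit quadratic term})|\le C(M^{3/2}k_i^{-3/2}+\sum_q\lambda_i^{(q)})$, whose sum is $O(1)$ in the bulk and negligible by \eqref{det:eq:lambda-growing-total}.

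The main obstacle is the fluctuation estimate for the diagonal energies $d_i$, or more precisely for the weighted sums $\sum_r(\kappa_{i+1,r}-3)q_{rr}(i)^2$ across $i$, uniformly down to $k_i\approx s_1$. I would first try a resolvent/leave-one-column-out bound, $\Var(p_{rr}(i))\le CM/k_i$, combined with the cruder estimate $\Var(\sum_i\cdots)\le(\sum_i\|\cdot\|_2)^2$. If that is too lossy, the support bound \eqref{eq:exact-growing-support} should be used to get a bounded-difference (Efron--Stein) inequality over rows.
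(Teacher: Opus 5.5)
Your Chebyshev/Markov reduction is fine: you need $\E\mathcal D_m^2\le CM^3/\ell^2$, or $\E|\mathcal D_m|\le CM^2\ell^{-3/2}$. The gap is in the step meant to dispose of everything beyond the explicit quadratic term, and that step is false. After subtracting $-\tfrac12\sum_r(\kappa_{i+1,r}-3)q_{rr}(i)^2$ from $g_i$, what is left is essentially $\E_i[\log(1+X_{i+1})-X_{i+1}+\tfrac12X_{i+1}^2]$ with $X_{i+1}=U_{i+1}+V_{i+1}$. Its $U$-part admits only the bound $\E_i[|U_{i+1}|^3\1_{\{|U_{i+1}|\le1/4\}}]\le\tfrac14\E_iU_{i+1}^2\le CMd_i$, not $C(M^{3/2}k_i^{-3/2}+\sum_q\lambda_i^{(q)})$.

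This is sharp under the hypotheses of Proposition~\ref{prop:exact-growing-fourth}. Let each entry carry, with probability $(c^4n^2)^{-1}$, a spike of size $c\sqrt n$. This is compatible with \eqref{eq:exact-growing-support}, a bounded density and a bounded fourth moment. It is also the same type of large value left by the continuous truncation at level $\sqrt n\,\ell^{3/8}$ in Theorem~\ref{thm:exact-clt}. A spike makes $X_{i+1}\approx c^2$, so each bulk row contributes about $c^2/(3n)$ to the remainder. The bulk sum is then of order $c^2$, not $O(M^2\ell^{-3/2})$, so Markov applied to the uncentered remainder fails.

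That remainder is nearly deterministic. But showing that its \emph{centered} version is small requires a concentration argument for the full nonlinear functional of $P_i$, and your plan does not supply one. Conversely, the piece you flag as the main obstacle is easy for $k_i\ge s_2$. Resampling one row moves $d_i$ by at most $4/k_i^2$, as in the proof of Example~\ref{ex:exact-threshold}, so Efron--Stein gives variance $O(nM^2/s_2^2)$. Near $k_i\approx s_1$ the same bound is useless, so you should not aim for fluctuation bounds there at all.

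The paper never expands $g_i$. It splits $\mathcal D_m$ at $k_i=s_2$. The terminal part is handled crudely in $L^1$: $\E|\mathcal D_{\mathrm{terminal}}|\le2\sum_{s_1\le k_i<s_2}\E|g_i|\le CM\ell^{-12}$ by \eqref{eq:common-h-terminal}, which Markov absorbs. For $k_i\ge s_2$ it writes $g_i=\Gamma_i(P_i)$ with $\Gamma_i(P)=\E_{a_{i+1}}\log(a_{i+1}^TPa_{i+1}/k_i\vee\theta)+1/k_i$, and applies Efron--Stein over rows to $F=\sum_{k_i\ge s_2}\Gamma_i(P_i)$. Resampling one row only swaps one direction in the range of each later $P_i$ (Lemma~\ref{lem:row-resample-geometry}). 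Lemma~\ref{lem:exact-rank-stability} then gives $|\Gamma(P)-\Gamma(P')|\le C\{M^2/k^2+B^2M/k^3+(\cdots)e^{-ck}\}$, using the support bound and the density bound to control large values and the logarithmic singularity. Summing over $k_i\ge s_2$ yields $|F-F^{(r)}|\le Cn^{-1}\ell^{C_a}$ and hence $\Var(F)\le Cn^{-1}\ell^{2C_a}$, far below what Chebyshev requires. This rank-one stability estimate for the whole logarithmic functional, rather than for a Taylor approximation of it, is the idea missing from your proposal.
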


\vspace{0.5cm}

The proof of Lemma~\ref{lem:exact-drift} is given in Section~\ref{sec:proof-lemmas}.
Apply Lemma~\ref{lem:perturb}(i) to
$\mathcal D_m/\sqrt{2\ell}$ with threshold $ \varepsilon_n=\frac{M}{\ell}$.
Lemma~\ref{lem:exact-drift} and
\eqref{ex:eq:exact-hybrid-main-normal} give
\begin{align*}
 \dk\left(
 \frac{
 \mathcal M_m
 +
 \mathcal D_m
 +
 \mathcal G_{s_1}^{\mathrm c}
 }{\sqrt{2\ell}},
 \Normal
 \right)
 &\le
 \dk\left(
 \frac{
 \mathcal M_m+\mathcal G_{s_1}^{\mathrm c}
 }{\sqrt{2\ell}},
 \Normal
 \right)
 +
 \Pp\left(
 |\mathcal D_m|
 >
 \varepsilon_n\sqrt{2\ell}
 \right)
 +
 C\varepsilon_n\\
 &\le
 C\frac{M}{\ell}.
\end{align*}
We finally restore $\mathcal R_n$.  By Markov's inequality and the
bound on $\E|\mathcal R_n|$ from \eqref{eq:remainder-Rn-contral} in Step~1,
\[
 \Pp\left(
 |\mathcal R_n|
 >
 \ell^{-2}\sqrt{2\ell}
 \right)
 \le
 C\ell^{3/2}
 \exp\{-c\ell^{3a}\log\ell\}.
\]
A second application of Lemma~\ref{lem:perturb}(i), now with threshold
$\ell^{-2}$, together with
\eqref{ex:eq:exact-hybrid-decomposition}, gives
\[
 \begin{aligned}
 \dk\left(
 W_n^{\mathrm e}(B_n^{\mathrm{hyb}}),
 \Normal
 \right)
 &\le
 C\frac{M}{\ell}
 +
 C\ell^{-2}
 +
 C\ell^{3/2}
 \exp\{-c\ell^{3a}\log\ell\}\\
 &\le
 C\frac{M}{\ell}.
 \end{aligned}
\]
This proves \eqref{ex:eq:hybrid-bound}.

For the asymptotic assertion, assume that $M=o(\ell)$.  Then
\[
 \frac{1+M}{\ell}\longrightarrow0,
 \qquad
 \frac{s_*^2}{2\ell}
 =
 1+O\left(\frac{M+1}{\ell}\right)
 \longrightarrow1,
 \qquad
 \varepsilon_n=\frac{M}{\ell}\longrightarrow0.
\]
The terminal Gaussian comparison is $O(\ell^{-3/2})$, and the
lower-truncation remainder is super-polynomially small.  Therefore,
the same argument above gives
\[
 \dk\left(
 W_n^{\mathrm e}(B_n^{\mathrm{hyb}}),
 \Normal
 \right)
 \longrightarrow0.
\]
This proves \eqref{ex:eq:hybrid-asymptotic}. Thus we have completed the proof of Lemma~\ref{lem:exact-hybrid}.
\end{proof}

We now return to the proof of Proposition~\ref{prop:exact-growing-fourth}.
Under the hypothesis $M\le K_1\ell^{1-\gamma}$, Lemma~\ref{lem:exact-hybrid} and \eqref{eq:exact-growing-total-replacement} give
\begin{equation}\label{eq:exact-growing-quantitative-combination}
 \dk(W_n^{\mathrm e}(B_n),\Normal)
 \le o(\ell^{-1})+C\frac{M}{\ell},
\end{equation}
which proves \eqref{eq:exact-growing-bound}.

For the asymptotic assertion, suppose that $M=o(\ell)$. Combining \eqref{ex:eq:hybrid-asymptotic} with \eqref{eq:exact-growing-total-replacement} and the triangle inequality,
we obtain
\begin{align*}
 \dk(W_n^{\mathrm e}(B_n),\Normal)
 &\le
 \sup_x\left|
 \Pp(W_n^{\mathrm e}(B_n)\le x)
 -
 \Pp(W_n^{\mathrm e}(B_n^{\mathrm{hyb}})\le x)
 \right|\\
 &\quad+
 \dk(W_n^{\mathrm e}(B_n^{\mathrm{hyb}}),\Normal)\\
 &=o(\ell^{-1})+o(1)\\
 &=o(1).
\end{align*}
This proves \eqref{eq:exact-growing-asymptotic}. Thus we have completed the proof of Proposition~\ref{prop:exact-growing-fourth}.
\end{proof}

\section{Proof of lemmas}\label{sec:proof-lemmas}

\begin{proof}[Proof of Lemma~\ref{prop:exact-transfer}]
Before comparing the two exact means, we should verify that they are well defined. This follows from Lemma~\ref{lem:log-integrability}.

\vspace{0.5cm}

\begin{lemma}[Integrability of logarithmic determinants]\label{lem:log-integrability}
Let $C_n$ have independent rows, and suppose that within each row the coordinates are independent, centered, have variance one, and have densities bounded by $B$.  Then
\[
 \E|\log|\det C_n||<\infty.
\]
The same conclusion holds for every matrix obtained by replacing any collection of rows by independent rows satisfying the same assumptions.
\end{lemma}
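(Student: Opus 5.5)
We will use the base-times-height (Gram--Schmidt) formula
\[
 \log|\det C_n|=\sum_{i=0}^{n-1}\log\operatorname{dist}(c_{i+1},\cV_i),
\]
where $c_{i+1}$ is the $(i+1)$st row of $C_n$. The positive and negative parts of each summand will be bounded separately. Independence of rows means that, conditionally on $\cF_i$, the row $c_{i+1}$ is independent of $P_i$. Hence each height is a quadratic form $c_{i+1}^TP_ic_{i+1}$ with $P_i$ a fixed projection of rank $k_i=n-i$. This requires the first $i$ rows to be linearly independent almost surely. We get that from the bounded densities: $\det$ of any square submatrix is a nonconstant polynomial in independent absolutely continuous variables, so it vanishes with probability zero. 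Alternatively, apply Lemma~\ref{lem:projection-density} inductively, since a row with a density lies in a fixed proper subspace with probability zero.

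\textbf{Positive part.} By Jensen and $\log^+x\le x$,
\[
 \E\log^+\operatorname{dist}(c_{i+1},\cV_i)\le \tfrac12\E\log^+\|c_{i+1}\|^2\le \tfrac12\E\|c_{i+1}\|^2=\tfrac n2.
\]
Summing over $i$ gives $\E\bigl(\log|\det C_n|\bigr)_+\le n^2/2<\infty$.

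\textbf{Negative part.} Condition on $\cF_i$ and apply Lemma~\ref{lem:projection-density} with $P=P_i$ and $k=k_i\ge1$. For $u\ge u_B$ this gives
\[
 \Pp_i\!\left(\frac{\operatorname{dist}^2}{k_i}\le e^{-u}\right)\le e^{-k_iu/4}.
\]
Then
\[
 \E_i\Bigl(-\log\frac{\operatorname{dist}^2}{k_i}\Bigr)_+=\int_0^\infty \Pp_i\bigl(\operatorname{dist}^2/k_i<e^{-u}\bigr)\dd u\le u_B+\int_{u_B}^\infty e^{-u/4}\dd u\le C_B .
\]
This bound is uniform in $\cF_i$, so taking expectations and adding $\tfrac12\log k_i$ for the normalization shows that each negative part has finite expectation, bounded by a constant depending on $B$ and $n$. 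Summing over $i$ gives $\E\bigl(\log|\det C_n|\bigr)_-<\infty$, and hence $\E|\log|\det C_n||<\infty$.

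\textbf{Replaced rows.} Replacing any collection of rows by independent rows that again have independent centered unit-variance coordinates with densities bounded by $B$ leaves every hypothesis of the argument unchanged. The same bounds therefore hold. Rows from the two families may have different laws, but the argument used only per-row properties.

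The one delicate point is the conditioning step. We must check that, given $\cF_i$, the row $c_{i+1}$ still has independent coordinates with densities bounded by $B$. This holds because the rows are independent. We also need $P_i$ to be $\cF_i$-measurable with rank exactly $k_i$ almost surely, which follows from the almost-sure full rank established above. Everything else is routine.
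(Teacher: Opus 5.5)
Your proposal is correct and follows the paper's proof essentially step for step: the Gram--Schmidt decomposition, almost-sure full rank from absolute continuity, a second-moment bound for the positive parts, and for the negative parts the conditional small-ball estimate of Lemma~\ref{lem:projection-density} combined with the layer-cake formula. The only cosmetic difference is that you bound the positive part using $\operatorname{dist}(c_{i+1},\cV_i)\le\|c_{i+1}\|$, whereas the paper uses $\E_i\gamma_{i+1}^2=\tr P_i=k_i$; no Jensen step is actually needed there.
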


\vspace{0.5cm}

\begin{proof}[Proof of Lemma~\ref{lem:log-integrability}]
Write $c_1^T,\ldots,c_n^T$ for the rows and let $P_i$ be the projection onto the orthogonal complement of the first $i$ rows.  Absolute continuity implies that these rows are linearly independent at every intermediate stage almost surely.  Gram--Schmidt gives
\[
 \log|\det C_n|=\sum_{i=0}^{n-1}\log\gamma_{i+1},
 \qquad \gamma_{i+1}^2=c_{i+1}^TP_ic_{i+1}.
\]
Conditionally on the first $i$ rows, write $\E_i$ for the corresponding conditional expectation.  Then $P_i$ is a deterministic projection of rank $k_i=n-i$.  Put
\[
 Z_{i+1}=\frac{c_{i+1}^TP_ic_{i+1}}{k_i}.
\]
Notice that $|\log \gamma_{i+1}| = \log^+\gamma_{i+1} + (-\log\gamma_{i+1})_+$.
For the positive part, $\log^+\gamma_{i+1}\le \gamma_{i+1}^2$ and
$\E_i\gamma_{i+1}^2=\tr P_i=k_i$.  For the negative part, since
$-\log\gamma_{i+1} = -\frac{1}{2}\log k_{i}-\frac{1}{2}\log Z_{i+1} \leq -\frac{1}{2}\log Z_{i+1}$, we have 
\[
 (-\log\gamma_{i+1})_+
 \le \frac12(-\log Z_{i+1})_+,
\]
and the layer-cake identity \eqref{eq:layer-cake-identity} together with Lemma~\ref{lem:projection-density} gives
\[
 \E_i(-\log Z_{i+1})_+
 =\int_0^\infty \Pp_i(Z_{i+1}\le e^{-u})\dd u
 \le u_B+\int_{u_B}^\infty e^{-k_i u/4}\dd u<\infty.
\]
Therefore, $\E|\log \gamma_{i+1}|< \infty$ for $0\leq i\leq n-1$.
There are only $n$ summands.  This proves the assertion and also justifies every conditional logarithmic expectation used below.
\end{proof}

\vspace{0.5cm}

We now return to the proof of Lemma~\ref{prop:exact-transfer}.
Lemma~\ref{lem:log-integrability} shows that
\[
 \log|\det A_n|
 \qquad\text{and}\qquad
 \log|\det\widetilde A_n|
\]
are integrable.  Hence the two exact means are well defined.

We first prove the mean transfer estimate
\eqref{eq:general-mean-transfer}.  To compare the two exact means, we
replace the rows of $A_n$ by the corresponding rows of
$\widetilde A_n$ one at a time.  For $0\le r\le n$, define
\begin{equation}
  \begin{aligned}
    A_{n}^{(r)} =  
    \left( 
    \widetilde a_1,
    \cdots,
    \widetilde a_r,
    a_{r+1},
    \cdots,
    a_{n}
    \right)^{\mathrm T}, \nonumber
  \end{aligned}
\end{equation} 
where $A_{n}^{(0)}=A_n$ and $A_{n}^{(n)}=\widetilde A_n$, $a_{i}$ is the $i$-th row of $A_n$ and $\widetilde a_{i}$ is the $i$-th row of $\widetilde A_n$. Then we have 
\begin{equation}
  \begin{aligned}
    \E\log|\det A_n|-\E\log|\det \widetilde A_n|
    &=\sum_{r=0}^{n-1}\left(\E\log|\det A_{n}^{(r)}|-\E\log|\det A_{n}^{(r+1)}|\right). \nonumber
  \end{aligned}
\end{equation}
Let \(A_{(s,k)}^{(r)}\) be the \((n-1)\times(n-1)\) matrix obtained by $A_{n}^{(r)}$ deleting row $s$ and column \(k\), then $\alpha_{sk}^{(r)} = (-1)^{s+k}\det(A_{(s,k)}^{(r)})$, $1\leq s,k\leq n$, are the cofactor of the $(s,k)$ entry of $A_{n}^{(r)}$. Thus 
\begin{equation}
  \begin{aligned}
    \det A_{n}^{(r)} &= \sum_{k=1}^{n} a_{r+1,k}\alpha_{r+1,k}^{(r)},
    \qquad
    \det A_{n}^{(r+1)} &= \sum_{k=1}^{n} \widetilde a_{r+1,k}\alpha_{r+1,k}^{(r)}. \nonumber
  \end{aligned}
\end{equation}
For $0\leq r \leq n-1$, fix one replacement step and condition on $\mathcal G_{r+1} = \sigma(\text{unchanged rows})$.  Those $n-1$ rows have full row rank almost surely, so their cofactor vector is nonzero.  Write cofactor vector's Euclidean norm as $\Lambda$ and its normalization as $w=(w_1,\ldots,w_n)$, 
\begin{equation}
  \begin{aligned}
    \Lambda &= \sqrt{\sum_{k=1}^{n}|\alpha_{r+1,k}^{(r)}|^2}, 
    \qquad
    w_k &= \frac{\alpha_{r+1,k}^{(r)}}{\Lambda},\qquad 1\leq k\leq n. \nonumber
  \end{aligned}
\end{equation}
where $\sum_jw_j^2=1$.
The two determinants at this step have the form
\begin{equation}\label{eq:exact-transfer-linear-forms}
 \det A_{n}^{(r)}  = \Lambda S,
 \qquad 
 \det A_{n}^{(r+1)} = \Lambda\widetilde S,
 \qquad
 S=\sum_{j=1}^nw_ja_{r+1,j},
 \qquad
 \widetilde S=\sum_{j=1}^nw_j\widetilde a_{r+1,j}.
\end{equation}
Conditionally on the unchanged rows, both $S$ and $\widetilde S$ have variance one
\begin{equation}
  \begin{aligned}\label{eq:conditional-second-moment}
    \E(S^2\mid \mathcal G_{r+1}) &= \sum_{j=1}^{n}w_j^2\E(a_{r+1,j}^2) = 1,
    \quad
    \E(\widetilde S^2\mid \mathcal G_{r+1}) &= \sum_{j=1}^{n}w_j^2\E(\widetilde a_{r+1,j}^2) = 1.
  \end{aligned}
\end{equation}

To control their logarithms, a variance bound is not enough; we also need a uniform bound on the probability of lying close to zero. Thus, applying Lemma~\ref{lem:projection-density} with rank one gives density bounds depending only on $M_0$,
\begin{equation}
  \begin{aligned}\label{eq:conditional-density-bound}
    \|g_S \mid \mathcal G_{r+1}\|_\infty &\le \sqrt2 M_0,
    \qquad
    \|g_{\widetilde S} \mid \mathcal G_{r+1}\|_\infty \le \sqrt2 M_0,
  \end{aligned}
\end{equation}
where $g_S \mid \mathcal G_{r+1}$ and $g_{\widetilde S} \mid \mathcal G_{r+1}$ are the conditional densities of $S$ and $\widetilde S$, respectively. 
The coupling of the changing row is independent of the conditioned rows, and therefore \eqref{eq:row-coupling-probability} gives
\begin{equation}\label{eq:exact-transfer-one-row-coupling}
 \Pp(S\ne\widetilde S\mid \mathcal G_{r+1})\le\rho_n.
\end{equation}
Then, we can use Lemma~\ref{lem:log-comparison} to get a comparison of logarithmic expectations. Using Lemma~\ref{lem:log-comparison}, together with \eqref{eq:conditional-second-moment},\eqref{eq:conditional-density-bound} and \eqref{eq:exact-transfer-one-row-coupling}, yields
\begin{equation}\label{eq:exact-transfer-one-row-mean}
 \left|\E(\log|S|\mid \mathcal G_{r+1}) )
       -\E(\log|\widetilde S|\mid \mathcal G_{r+1})\right|
 \le C_{M_0}\rho_n\log\frac e{\rho_n}.
\end{equation}
The common term $\log\Lambda$ cancels.  Taking expectations in
\eqref{eq:exact-transfer-one-row-mean} and summing the $n$ row
replacements proves \eqref{eq:general-mean-transfer}.

We now prove the distribution transfer estimate
\eqref{eq:general-distribution-transfer}.  We first compare the two
logarithmic determinants with the same center, using the coupling.
And then  using \eqref{eq:general-mean-transfer} to change the center from
$\E\log|\det A_n|$ to $\E\log|\det\widetilde A_n|$.

On the event $\{A_n=\widetilde A_n\}$,
\[
 \log|\det A_n|
 =
 \log|\det\widetilde A_n|.
\]
Thus the laws of
\begin{equation}
  \begin{aligned}
    \frac{\log|\det A_n| - \E \log|\det A_n|}{ \sqrt{\tfrac12\log n} }, \quad \frac{\log|\det\widetilde A_n| - \E \log|\det A_n|}{ \sqrt{\tfrac12\log n} } \nonumber
  \end{aligned}
\end{equation}
differ by at most $\Pp(A_n\ne\widetilde A_n)$ in Kolmogorov distance.  The last step is to quantify the effect of changing a center.  We record the elementary perturbation bounds in Lemma~\ref{lem:perturb} used here and later in the paper.

Since 
\begin{equation}
  \begin{aligned}
    \frac{\log|\det\widetilde A_n| - \E \log|\det A_n|}{ \sqrt{\tfrac12\log n} } + \frac{\E \log|\det A_n| - \E \log|\det\widetilde A_n|}{\sqrt{\tfrac12\log n}}
    = W_{n}^{\mathrm e}(\widetilde A_n), \nonumber 
  \end{aligned}
\end{equation}
we apply Lemma~\ref{lem:perturb} (iii), gives
\begin{equation}
  \begin{aligned}\label{eq:exact-transfer-distribution-step}
    \dk(W_n^{\mathrm e}(A_n),\Normal)
    \le& \dk(W_n^{\mathrm e}(\widetilde A_n),\Normal)
    +\Pp(A_n\ne\widetilde A_n)\\
    &+C\frac{|\E \log|\det A_n| - \E \log|\det\widetilde A_n||}{\sqrt{\log n}}.
  \end{aligned}
\end{equation}
Substitution of \eqref{eq:general-mean-transfer} into \eqref{eq:exact-transfer-distribution-step} proves \eqref{eq:general-distribution-transfer}. Thus, we complete the proof of Lemma~\ref{prop:exact-transfer}.
\end{proof}

\vspace{0.5cm}

\begin{proof}[Proof of Lemma~\ref{lem:terminal-diagonal}]
First, recall the notation in \eqref{eq:common-parameters}.
\begin{equation}
 \ell=\log n,\qquad a=20,\qquad
 s_1=\lfloor\ell^{3a}\rfloor,\qquad
 s_2=\lfloor n\ell^{-20a}\rfloor,\qquad
 m=n-s_1,\qquad \lambda=n^{-1/6}. \nonumber
\end{equation}
Fix $i$ in the stated range $n-s_2\le i\le n-1$.  Let $A=A(i)$, let $b_r$ be its $r$th column, and let $A_r$ be obtained by deleting $b_r$.  Put
\begin{equation}
  \begin{aligned}\label{def:T-Tr}
    G_r &=(n^{-1}A_rA_r^T+\lambda I_i)^{-1},\qquad T_r=n^{-1}\tr G_r,    \\
    G &=(n^{-1}AA^T+\lambda I_i)^{-1},\qquad T=n^{-1}\tr G.
  \end{aligned}
\end{equation}

Both $G_r$ and $G$ are positive definite; in particular,
$(G_r)_{uu}>0$.  Our goal is to show that $p_{rr}(i)$ is small
uniformly in $r$.  The following lemma reduces this problem to a
lower bound for the quadratic form $n^{-1}b_r^TG_rb_r$ and records
the rank-one trace comparison needed below.

\vspace{0.5cm}

\begin{lemma}\label{lem:projection-identities}
Let $A$ be an $i\times n$ matrix ($i \leq n-1$), let $b_r$ be its $r$th column, and let $A_r$ be obtained by deleting that column.  Assume that both $A$ and $A_r$ have full row rank.  Let
$P=I_{n}-A^T(AA^T)^{-1}A=(p_{st})_{1\le s,t\le n}$. Then we have
\begin{equation}\label{ex:eq:exact-diagonal-projection}
 p_{rr}=\{1+b_r^T(A_rA_r^T)^{-1}b_r\}^{-1}.
\end{equation}
For $\lambda>0$ and
$G_r=(n^{-1}A_rA_r^T+\lambda I_{i})^{-1}$,
\begin{equation}\label{ex:eq:regularized-diagonal-projection}
 p_{rr}\le\{1+n^{-1}b_r^TG_rb_r\}^{-1}.
\end{equation}
Moreover, if $K$ is any positive-semidefinite matrix, $v$ is
a vector of the appropriate dimension, and $\lambda>0$, then
\begin{equation}\label{eq:rank-one-regularized-trace}
 0\le
 \tr(K+\lambda I)^{-1}
 -
 \tr(K+vv^T+\lambda I)^{-1}
 \le \lambda^{-1}.
\end{equation}
Consequently, adding a rank-one positive-semidefinite matrix,
or deleting one while preserving positive semidefiniteness,
changes the trace of the regularized inverse by at most
$\lambda^{-1}$.
\end{lemma}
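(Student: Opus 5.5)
We prove the three assertions of Lemma~\ref{lem:projection-identities} separately, all by block-matrix and Schur-complement identities.

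\textbf{The identity \eqref{ex:eq:exact-diagonal-projection}.} Since $P$ is the orthogonal projection onto $\ker A$, we have
\[
 p_{rr}=e_r^TPe_r=\min\{\|x\|^2:\ x\in\ker A,\ x_r=1\}.
\]
To see this, write any $x\in\ker A$ with $x_r=1$ as $x=Pe_r/p_{rr}$ plus a vector orthogonal to $Pe_r$ that lies in $\ker A$ and has zero $r$th coordinate. Equivalently, $1/p_{rr}=\max\{x_r^2/\|x\|^2:\ x\in\ker A\}$, so the minimum above equals $p_{rr}$.

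Now write $x=(e_r\text{-part }1,\ y)$ with $y\in\R^{n-1}$. The constraint $Ax=0$ becomes $A_ry=-b_r$. Because $A_r$ has full row rank, the minimum-norm solution is $y=-A_r^T(A_rA_r^T)^{-1}b_r$, with
\[
 \|y\|^2=b_r^T(A_rA_r^T)^{-1}b_r .
\]
Hence the minimum of $\|x\|^2$ is $1+b_r^T(A_rA_r^T)^{-1}b_r$. This identifies $1/p_{rr}$ with this quantity, which is \eqref{ex:eq:exact-diagonal-projection}. One can double-check the identity through the Schur complement of the Gram matrix $\begin{pmatrix}1&0\\0&I\end{pmatrix}$ restricted to $\ker A$; I would present the variational argument as the main one.

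\textbf{The inequality \eqref{ex:eq:regularized-diagonal-projection}.} The key observation is the matrix inequality
\[
 A_rA_r^T\preceq A_rA_r^T+n\lambda I_i ,
\]
which gives $(A_rA_r^T)^{-1}\succeq(A_rA_r^T+n\lambda I)^{-1}=n^{-1}G_r$. Therefore
\[
 b_r^T(A_rA_r^T)^{-1}b_r\ge n^{-1}b_r^TG_rb_r .
\]
Inserting this into \eqref{ex:eq:exact-diagonal-projection} and using that $t\mapsto(1+t)^{-1}$ is decreasing yields \eqref{ex:eq:regularized-diagonal-projection}.

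\textbf{The trace bound \eqref{eq:rank-one-regularized-trace}.} Set $R=(K+\lambda I)^{-1}\succ0$. By Sherman--Morrison,
\[
 (K+vv^T+\lambda I)^{-1}=R-\frac{Rvv^TR}{1+v^TRv}.
\]
Hence the trace difference equals
\[
 \frac{v^TR^2v}{1+v^TRv}\ge0 .
\]
For the upper bound, note $R^2\preceq\lambda^{-1}R$, because $R\preceq\lambda^{-1}I$ and $R$ commutes with itself. This gives
\[
 v^TR^2v\le\lambda^{-1}v^TRv ,
\]
so the ratio is at most $\lambda^{-1}\,\frac{v^TRv}{1+v^TRv}\le\lambda^{-1}$. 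The "consequently" statement follows by applying the bound in either direction, taking $K$ to be the smaller positive-semidefinite matrix.

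The only step needing care is the variational characterization of $p_{rr}$ in the first part, and in particular making sure that the full-rank assumption on $A_r$ is what guarantees solvability of $A_ry=-b_r$. This is where the hypothesis $i\le n-1$ is used.
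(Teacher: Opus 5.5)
Your parts (ii) and (iii) are exactly the paper's argument: the Loewner comparison $(A_rA_r^T)^{-1}\succeq n^{-1}G_r$, and Sherman--Morrison for $(K+vv^T+\lambda I)^{-1}$ combined with $R^2\preceq\lambda^{-1}R$. Part (i) uses a different route, and it contains a slip you must fix.

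\textbf{The slip in (i).} As displayed, your variational identity is inverted. For the orthogonal projection $P$ onto $\ker A$, the correct statements are
\[
\max\{x_r^2/\|x\|^2:\ x\in\ker A\setminus\{0\}\}=\|Pe_r\|^2=p_{rr},
\qquad
\min\{\|x\|^2:\ x\in\ker A,\ x_r=1\}=1/p_{rr}.
\]
Your version, with $1/p_{rr}$ in the first and $p_{rr}$ in the second, cannot hold, since the ratio $x_r^2/\|x\|^2$ never exceeds $1$.

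Your own decomposition proves the correct version. If $x\in\ker A$, $x_r=1$, and $w=x-Pe_r/p_{rr}$, then $w\in\ker A$ and $w_r=0$. Also $\langle w,Pe_r\rangle=\langle Pw,e_r\rangle=w_r=0$, so $\|x\|^2=1/p_{rr}+\|w\|^2$.

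Your concluding sentence, which identifies $1/p_{rr}$ with $1+b_r^T(A_rA_r^T)^{-1}b_r$, already uses the correct form. So the argument goes through once the first two claims are corrected.

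You also need $p_{rr}>0$ in order to divide by it. This follows from the nonemptiness of the feasible set, which you obtain from solvability of $A_ry=-b_r$: any feasible $x$ satisfies $1=x_r=\langle x,Pe_r\rangle$, so $Pe_r\neq0$.

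\textbf{Comparison with the paper.} The paper writes $AA^T=A_rA_r^T+b_rb_r^T$ and applies Sherman--Morrison to $(AA^T)^{-1}$. It then reads off $p_{rr}=1-b_r^T(AA^T)^{-1}b_r=(1+t)^{-1}$ with $t=b_r^T(A_rA_r^T)^{-1}b_r$. This is a two-line algebraic computation that reuses the tool from part (iii).

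Your route is more geometric. It identifies $1/p_{rr}$ as the squared norm of the minimum-norm vector in $\ker A$ with unit $r$th coordinate, and it exhibits that vector explicitly via $y=-A_r^T(A_rA_r^T)^{-1}b_r$. It also makes clear why $p_{rr}>0$, and that only full row rank of $A_r$ matters. That condition already forces $i\le n-1$ and full row rank of $A$, so it is not really "where $i\le n-1$ is used."

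Either proof suffices for the paper's purposes.
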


\vspace{0.5cm}

\begin{proof}[Proof of Lemma~\ref{lem:projection-identities}]
Put $H=A_rA_r^T$ and $u=b_r$.  Full row rank of $A_r$ makes $H$ positive definite.  Since $AA^T=H+uu^T$, the Sherman--Morrison formula gives
\begin{equation}\label{eq:projection-SM-inverse}
 (H+uu^T)^{-1}=H^{-1}-\frac{H^{-1}uu^TH^{-1}}{1+u^TH^{-1}u}.
\end{equation}
Since $P=I_n-A^T(AA^T)^{-1}A$, its $r$th diagonal entry is
\[
 p_{rr}=1-u^T(H+uu^T)^{-1}u.
\]
Applying \eqref{eq:projection-SM-inverse} to the quadratic form on the
right-hand side, we obtain
\[
\begin{aligned}
 p_{rr}
 &=1-\left\{
 u^TH^{-1}u
 -\frac{(u^TH^{-1}u)^2}{1+u^TH^{-1}u}
 \right\} \\
 &=1-\frac{u^TH^{-1}u}{1+u^TH^{-1}u}
 =\frac{1}{1+u^TH^{-1}u},
\end{aligned}
\]
which proves \eqref{ex:eq:exact-diagonal-projection}.

Regularization is useful because $H^{-1}$ can be unstable when
the smallest singular value of $A_r$ is small. For symmetric matrices $S$ and $T$, the notation $S\succeq T$ means that $S-T$ is positive semidefinite, and $S\preceq T$ means $T\succeq S$.  In this order,
\[
 H^{-1}\succeq (H+n\lambda I_i)^{-1}
 =n^{-1}(n^{-1}H+\lambda I_i)^{-1}=n^{-1}G_r
\]
shows that $u^TH^{-1}u\ge n^{-1}u^TG_ru$.  Inserting this inequality into \eqref{ex:eq:exact-diagonal-projection} proves \eqref{ex:eq:regularized-diagonal-projection}.

Finally, let $R=(K+\lambda I)^{-1}$ for an arbitrary positive-semidefinite matrix $K$.  The Sherman--Morrison formula
gives
\[
 (K+vv^T+\lambda I)^{-1}
 =
 R-\frac{Rvv^TR}{1+v^TRv}.
\]
Taking traces yields
\[
 \tr(K+\lambda I)^{-1}
 -
 \tr(K+vv^T+\lambda I)^{-1}
 =
 \frac{v^TR^2v}{1+v^TRv}.
\]
Since $K\succeq0$, we have
$0\preceq R\preceq\lambda^{-1}I$, and hence
$R^2\preceq\lambda^{-1}R$.  Therefore,
\[
 0
 \le
 \frac{v^TR^2v}{1+v^TRv}
 \le
 \lambda^{-1}\frac{v^TRv}{1+v^TRv}
 \le
 \lambda^{-1},
\]
which proves \eqref{eq:rank-one-regularized-trace}.  The corresponding
bound for deleting a rank-one positive-semidefinite matrix follows by
reversing the addition.
\end{proof}

\vspace{0.5cm}

We now return to the proof of Lemma~\ref{lem:terminal-diagonal}.
By \eqref{ex:eq:regularized-diagonal-projection},
\begin{equation}\label{ex:eq:terminal-projection-bound}
 p_{rr}(i)
 \le
 \{1+n^{-1}b_r^TG_rb_r\}^{-1}.
\end{equation}
Applying \eqref{eq:rank-one-regularized-trace} with
$K=n^{-1}A_rA_r^T$ and $v=n^{-1/2}b_r$, and then dividing by $n$,
gives
\begin{equation}\label{ex:eq:terminal-trace-comparison}
 0\le T_r-T\le(n\lambda)^{-1},
\end{equation}
where $T, T_{r}$ are defined in \eqref{def:T-Tr}.

Thus, it remains to obtain a lower bound for
$n^{-1}b_r^TG_rb_r$, uniformly in $r$.  Conditionally on $A_r$, the
matrix $G_r$ is fixed and the coordinates of $b_r$ are independent,
centered, and have variance one.  Hence
\[
 \E\left(n^{-1}b_r^TG_rb_r\mid A_r\right)
 =
 n^{-1}\tr G_r
 =
 T_r.
\]
Thus, $T_r$ is exactly the conditional mean of
$n^{-1}b_r^TG_rb_r$ given $A_r$.  We first establish a
high-probability lower bound for $T=n^{-1}\tr G$.  Since
\eqref{ex:eq:terminal-trace-comparison} gives $T_r\ge T$ for every
$r$, this yields a simultaneous lower bound for all $T_r$.  We then
use this simultaneous bound to prove a uniform lower bound for
$n^{-1}b_r^TG_rb_r$, $1\le r\le n$.

Set
\[
 y=\frac{i}{n},
 \qquad
 s_y(\lambda)
 =
 \frac{2}{
 1+\lambda-y+
 \sqrt{(1+\lambda-y)^2+4y\lambda}
 }.
\]
Since $k_i=n-i\le s_2$, we have
\[
 1-y=\frac{k_i}{n}\le\ell^{-20a}.
\]
Moreover, $y\ge1/2$ for all sufficiently large $n$.  From the
explicit formula for $s_y(\lambda)$,
\[
 ys_y(\lambda)
 \ge
 c\min\{(1-y)^{-1},\lambda^{-1/2}\}.
\]
Indeed, if $1-y\ge\sqrt\lambda$, the denominator in $s_y(\lambda)$
is $O(1-y)$, whereas if $1-y<\sqrt\lambda$, it is
$O(\sqrt\lambda)$.  Finally,
\[
 (1-y)^{-1}\ge\ell^{20a},
 \qquad
 \lambda^{-1/2}=n^{1/12}\ge\ell^{20a}
\]
for all sufficiently large $n$.  Hence
\[
 ys_y(\lambda)\ge c\ell^{20a}
\]
uniformly over $n-s_2\le i\le n-1$.

We now turn this deterministic estimate into a high-probability lower
bound for $T=n^{-1}\tr G$.  For this purpose, we compare $\E T$ with
$ys_y(\lambda)$ and control the variance of $T$.  The following lemma
provides exactly these two estimates.

\vspace{0.5cm}

\begin{lemma}\label{lem:regularized-inverse}
Let $X$ be a $p\times n$ independent-entry matrix whose entries are centered, have variance one, and fourth moments at most $M$, where $M\ge3$ and $n/2\le p\le n$.  Let $I_p$ denote the $p\times p$ identity matrix, and put
\[
 G=(n^{-1}XX^T+\lambda I_p)^{-1},\quad y=p/n,
\]
and
\[
 s_y(\lambda)=\frac{2}{1+\lambda-y+\sqrt{(1+\lambda-y)^2+4y\lambda}},
\]
where $\lambda = n^{-1/6}$ is defined in \eqref{eq:common-parameters}.
Then
\begin{align}
 |\E (n^{-1}\tr G) - y s_y(\lambda)|&\le C\{n^{-2/3}+Mn^{-1/2}\},\label{ex:eq:regularized-mean}\\
 \Var( n^{-1}\tr G )&\le Cn^{-2/3}.
 \label{ex:eq:regularized-variance}
\end{align}
\end{lemma}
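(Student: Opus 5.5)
The plan has two parts: an Efron--Stein bound for the variance, and a Schur-complement self-consistent equation for the mean.

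\textbf{Variance.} Write $T=n^{-1}\tr G$ and let $\E_{(k)}$ denote conditional expectation given the first $k$ columns $x_1,\dots,x_k$ of $X$. Since
\[
 n^{-1}XX^T=n^{-1}\sum_{j=1}^n x_jx_j^T,
\]
the martingale (Efron--Stein) decomposition gives
\[
 \Var T=\sum_{k=1}^n\E\bigl|(\E_{(k)}-\E_{(k-1)})T\bigr|^2 .
\]
Replace $T$ by $T^{(k)}=n^{-1}\tr(n^{-1}XX^T-n^{-1}x_kx_k^T+\lambda I_p)^{-1}$. This quantity does not depend on $x_k$, so the martingale difference is unchanged. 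Apply the rank-one estimate \eqref{eq:rank-one-regularized-trace} of Lemma~\ref{lem:projection-identities} with $K=n^{-1}(XX^T-x_kx_k^T)$ and $v=n^{-1/2}x_k$. It gives $|T-T^{(k)}|\le(n\lambda)^{-1}$ deterministically, so each difference is at most $2(n\lambda)^{-1}$. Hence
\[
 \Var T\le 4n\,(n\lambda)^{-2}=4n^{-1}\lambda^{-2}=4n^{-2/3},
\]
which is \eqref{ex:eq:regularized-variance}. No moment assumption is used in this step.

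\textbf{Mean: self-consistent equation.} I will use the Schur complement on rows. Let $r_j^T$ be the $j$th row of $X$ and $X^{(j)}$ the matrix with that row removed. Then
\[
 G_{jj}=\Bigl(\lambda+n^{-1}|r_j|^2-n^{-2}r_j^TX^{(j)T}\bigl(n^{-1}X^{(j)}X^{(j)T}+\lambda\bigr)^{-1}X^{(j)}r_j\Bigr)^{-1}.
\]
By the push-through identity the quadratic part equals $\lambda+1-n^{-1}r_j^T\,\lambda\,\widetilde G^{(j)}r_j$, where $\widetilde G^{(j)}=(n^{-1}X^{(j)T}X^{(j)}+\lambda I_n)^{-1}$ is the companion $n\times n$ resolvent. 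Conditionally on $X^{(j)}$ the vector $r_j$ has independent, centered, unit-variance entries, so
\[
 \E_j\bigl(n^{-1}r_j^T\widetilde G^{(j)}r_j\bigr)=n^{-1}\tr\widetilde G^{(j)} .
\]
The fluctuation of this quadratic form is controlled by the standard bound
\[
 \E\bigl|x^TAx-\tr A\bigr|^2\le C\,M\tr(AA^*),
\]
with $\|\widetilde G^{(j)}\|\le\lambda^{-1}$. After normalising by $n^{-1}$ this contributes an error of order $\sqrt{M}\,n^{-1/2}\lambda^{-1}$ times the relevant factors.

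Next, $n^{-1}\tr\widetilde G^{(j)}$ is replaced by $n^{-1}\tr\widetilde G$ at cost $(n\lambda)^{-1}$ via \eqref{eq:rank-one-regularized-trace}. The companion trace is related to $T$ by
\[
 \tr\widetilde G=\tr G+(n-p)\lambda^{-1}.
\]
This yields
\[
 G_{jj}^{-1}=1+\lambda-y+y\lambda\,T+\varepsilon_j ,
\]
which is the fixed-point equation $\lambda y s^2-(1+\lambda-y)s\cdot(\ldots)+\dots$ satisfied by the Marchenko--Pastur Stieltjes transform, with $ys_y(\lambda)$ as its solution. Averaging $T=n^{-1}\sum_jG_{jj}$ and replacing $T$ by $\E T$ inside the equation costs $\Var T$ (Step 1).

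\textbf{Stability: the expected main obstacle.} Expanding $1/(D+\varepsilon_j)$ around $D=1+\lambda-y+y\lambda\E T$ requires a lower bound on $D$. Since $G\preceq\lambda^{-1}$ and $G_{jj}>0$, we have $D\ge\lambda$ roughly, so the errors carry factors $\lambda^{-k}$. With $\lambda=n^{-1/6}$ these factors are only polynomial in $n^{1/6}$, and this appears to be exactly why $\lambda$ was chosen so that $n^{-1/2}\lambda^{-1}$-type errors remain $\le Mn^{-1/2}+n^{-2/3}$ after bookkeeping.

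Once $\E T$ satisfies the fixed-point equation up to an error $\delta$, I solve the quadratic and use that its derivative at the true root is bounded below by the discriminant $\sqrt{(1+\lambda-y)^2+4y\lambda}\gtrsim\sqrt\lambda$. Converting $\delta$ into $|\E T-ys_y(\lambda)|$ therefore loses at most a power of $\lambda$. The crux is tracking these $\lambda$-powers carefully, using $\E|\varepsilon_j|^2$ rather than absolute bounds and exploiting $\E\varepsilon_j\approx0$ to second order, so that the final bound matches $C\{n^{-2/3}+Mn^{-1/2}\}$. If the $\lambda$-losses are too large, I would instead expand $\E T-ys_y$ to second order so that only $\E\varepsilon_j^2\lesssim Mn^{-1}\lambda^{-2}=Mn^{-2/3}$ enters.
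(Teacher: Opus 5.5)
Your variance bound is correct and is the paper's argument: a column-by-column Efron--Stein/martingale decomposition with the rank-one trace estimate \eqref{eq:rank-one-regularized-trace}. For the mean you follow the same row Schur-complement route as the paper. The gap is in closing the mean estimate, which is exactly the step you flag as the crux and leave open. Two of your quantitative claims there would not give \eqref{ex:eq:regularized-mean}.

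The first is the size of $\varepsilon_j$. The push-through identity gives $G_{jj}^{-1}=\lambda\bigl(1+n^{-1}r_j^T\widetilde G^{(j)}r_j\bigr)$, not $\lambda+1-\cdots$. So the prefactor $\lambda$ cancels $\|\widetilde G^{(j)}\|\le\lambda^{-1}$, and the conditional variance is at most $CM\lambda^2n^{-2}\tr\bigl[(\widetilde G^{(j)})^2\bigr]\le CM/n$. Two further inputs are $\lambda^2\Var(n^{-1}\tr G^{(j)})\le C/n$ and $|\E\varepsilon_j|\le C/n$, the latter from $0\le\tr G-\tr G^{(j)}\le\lambda^{-1}$. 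Together these give $\E\varepsilon_j^2\le CM/n$, which is what the paper proves. Your bound $\E\varepsilon_j^2\lesssim Mn^{-1}\lambda^{-2}$ is too weak by a factor $\lambda^2$. Moreover, $\varepsilon_j^2$ never enters on its own: it always comes divided by $A^2D_j$, and the only a priori lower bounds are $A,D_j\ge\lambda$. With your size of $\varepsilon_j$, that remainder would be of order $Mn^{-1}\lambda^{-5}=Mn^{-1/6}$.

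The second, and more important, is the stability step. With the correct sizes, the expansion $1/D_j=A^{-1}-\varepsilon_jA^{-2}+\varepsilon_j^2A^{-2}D_j^{-1}$ gives $t:=p^{-1}\E\tr G=A^{-1}+\Delta$, where $A=1+\lambda-y+y\lambda t$ and $|\Delta|\le C\{n^{-1}\lambda^{-2}+Mn^{-1}\lambda^{-3}\}$. For $\lambda=n^{-1/6}$ this is already exactly $C\{n^{-2/3}+Mn^{-1/2}\}$. There is therefore no room to ``lose a power of $\lambda$''.

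Your plan passes to the quadratic and divides by its derivative $\sqrt{(1+\lambda-y)^2+4y\lambda}\asymp\sqrt\lambda$ at the root. The lemma must hold uniformly up to $y=1$, which is the case used in Lemma~\ref{lem:terminal-diagonal}, so this loss is real. It yields only $n^{-7/12}+Mn^{-5/12}$.

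The missing observation concerns $f(t)=t-(1+\lambda-y+y\lambda t)^{-1}$. It satisfies $f'(t)=1+y\lambda(1+\lambda-y+y\lambda t)^{-2}\ge1$ for $t\ge0$, and $f(s_y(\lambda))=0$. Hence $|t-s_y(\lambda)|\le|f(t)|=|\Delta|$ with no loss, and multiplying by $y\le1$ gives the claim.

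Alternatively, your quadratic route does work if you measure the error before dividing by $A$. Set $g(t)=y\lambda t^2+(1+\lambda-y)t-1=At-1$. Then $g(t)=-p^{-1}\sum_j\E[\varepsilon_j/D_j]=O(n^{-1}\lambda^{-1}+Mn^{-1}\lambda^{-2})$. The factorization $g(t)=(t-s_y)\{y\lambda(t+s_y)+1+\lambda-y\}$ has bracket at least $\tfrac12\sqrt{(1+\lambda-y)^2+4y\lambda}$, and this route even gives $n^{-3/4}+Mn^{-7/12}$.

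As written, the proposal commits to neither argument and does not reach the stated rate.
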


\vspace{0.5cm}

For all sufficiently large $n$, we have $i\ge n-s_2\ge n/2$, so
Lemma~\ref{lem:regularized-inverse} applies with $X=A$ and $p=i$.
Using the preceding bound for $ys_y(\lambda)$ and the assumption
$M\le C_0\ell$, we obtain
\[
 \E T
 \ge
 ys_y(\lambda)-C\{n^{-2/3}+Mn^{-1/2}\}
 \ge
 c\ell^{20a},
 \qquad
 \Var(T)\le Cn^{-2/3}.
\]
Since $\ell^{10a}=o(\ell^{20a})$, after decreasing $c$ if necessary,
\[
 \E T-\ell^{10a}\ge c\ell^{20a}
\]
for all sufficiently large $n$.  Chebyshev's inequality therefore
gives
\begin{equation}\label{ex:eq:terminal-trace-high}
 \begin{aligned}
 \Pp(T<\ell^{10a})
 &\le
 \Pp\left(
 |T-\E T|\ge \E T-\ell^{10a}
 \right)
 \le
 \frac{\Var(T)}{(\E T-\ell^{10a})^2}
 \le
 Cn^{-2/3}\ell^{-40a}
 =
 o(\ell^{-8a}).
 \end{aligned}
\end{equation}

Define
\begin{equation}\label{def:B-t-event}
 \mathcal B_T
 =
 \bigcup_{r=1}^n\{T_r<\ell^{10a}\}. 
\end{equation}
The lower bound $T_r\ge T$ in
\eqref{ex:eq:terminal-trace-comparison} implies $\mathcal B_T \subseteq \{T<\ell^{10a}\}$.
Hence,
\begin{equation}\label{ex:eq:terminal-leave-one-out-high}
 \Pp(\mathcal B_T)
 \le
 \Pp(T<\ell^{10a})
 =
 o(\ell^{-8a}).
\end{equation}   
Equivalently, outside $\mathcal B_T$,
\begin{equation}\label{eq:low-bound-esti-Tr}
  T_r\ge\ell^{10a} 
\end{equation}
simultaneously for all $r$.

We now return to the quadratic forms.  For each $r$, condition on
$A_r$.  Then $G_r$ is fixed, while the entries of $b_r$ are
independent.  We decompose
\begin{equation}\label{eq:Gr-decompose}
 n^{-1}b_r^TG_rb_r
 =
 D_r+O_r,
\end{equation}
where
\[
 D_r
 =
 n^{-1}\sum_u(G_r)_{uu}b_{ur}^2,
 \qquad
 O_r
 =
 n^{-1}\sum_{u\ne v}(G_r)_{uv}b_{ur}b_{vr}.
\]
It remains to show that, outside an additional event of probability
$o(\ell^{-8a})$,
\begin{equation}\label{ex:eq:terminal-decomposition-good}
 D_r\ge\ell^{-a}T_r,
 \qquad
 |O_r|\le1
\end{equation}
simultaneously for all $r$.

Together with the lower bound
\eqref{eq:low-bound-esti-Tr}, the two estimates in
\eqref{ex:eq:terminal-decomposition-good} will yield a uniform lower
bound for $n^{-1}b_r^TG_rb_r$.  We now prove these two estimates
separately, beginning with the diagonal term $D_r$.
We first prove the diagonal estimate
\[
 D_r
 =
 n^{-1}\sum_u(G_r)_{uu}b_{ur}^2
 \ge
 \ell^{-a}T_r.
\]
Since $T_r = n^{-1}\sum_u(G_r)_{uu}$,
we need a lower bound for the weighted sum of the squares
$b_{ur}^2$.  We truncate and normalize these entries as follows:
\[
 \widehat b_{ur}
 =
 b_{ur}\1_{\{|b_{ur}|\le\ell^a\}},
 \qquad
 \widetilde b_{ur}
 =
 \frac{\widehat b_{ur}-\E\widehat b_{ur}}
 {\sqrt{\Var(\widehat b_{ur})}}.
\]
The fourth-moment bound gives, uniformly in $u,r$,
\begin{equation}\label{ex:eq:internal-trunc}
 |\E\widehat b_{ur}|\le M\ell^{-3a},\quad
 \Var(\widehat b_{ur})\ge1-M\ell^{-2a}-M^2\ell^{-6a}\ge\frac12,
\end{equation}
\begin{equation}\label{ex:eq:internal-8}
 \E|\widetilde b_{ur}|^4\le CM,
 \qquad \E|\widetilde b_{ur}|^8\le CM\ell^{4a}.
\end{equation}
From \eqref{ex:eq:internal-trunc}, uniformly in $u,r$,
\[
 \widetilde b_{ur}^{\,2}
 \le4\widehat b_{ur}^{\,2}+C M^2\ell^{-6a}.
\]

Because $(G_r)_{uu}>0$, $\widehat b_{ur}^{\,2}\le b_{ur}^2$, and
$n^{-1}\sum_u(G_r)_{uu}=T_r$, on the event
$\{D_r<\ell^{-a}T_r\}$ we have
\[
 \begin{aligned}
 n^{-1}\sum_u(G_r)_{uu}\widetilde b_{ur}^{\,2}
 &\le
 4n^{-1}\sum_u(G_r)_{uu}\widehat b_{ur}^{\,2}
 +
 CM^2\ell^{-6a}T_r\\
 &\le
 4D_r+CM^2\ell^{-6a}T_r\\
 &\le
 \{4\ell^{-a}+CM^2\ell^{-6a}\}T_r\\
 &\le
 \frac12T_r
 \end{aligned}
\]
for all sufficiently large $n$, where the last inequality uses
$M\le C_0\ell$.
Define
\[
 S_r
 :=
 \sum_u(G_r)_{uu}
 \bigl(\widetilde b_{ur}^{\,2}-1\bigr).
\]
Since $\sum_u(G_r)_{uu}=nT_r$, on the event
\[
 \{D_r<\ell^{-a}T_r\}
 \cap
 \{T_r\ge\ell^{10a}\}
\]
we have
\[
 \begin{aligned}
 S_r
 &=
 \sum_u(G_r)_{uu}\widetilde b_{ur}^{\,2}
 -
 \sum_u(G_r)_{uu}\\
 &\le
 \frac n2T_r-nT_r\\
 &=
 -\frac n2T_r\\
 &\le
 -\frac n2\ell^{10a}.
 \end{aligned}
\]
Consequently,
\[
 \{D_r<\ell^{-a}T_r\}
 \cap
 \{T_r\ge\ell^{10a}\}
 \subseteq
 \{|S_r|\ge cn\ell^{10a}\}.
\]

To bound the probability of the event on the right, we estimate the
conditional fourth moment of $S_r$.  Conditionally on $A_r$, the
matrix $G_r$ is deterministic, while the variables
$\widetilde b_{ur}$ are independent, centered, and have variance one.
The following Lemma~\ref{lem:quadratic-forms} gives the required fourth-moment
estimate.  Its off-diagonal estimate will also be used later to
control $O_r$.

\vspace{0.5cm}

\begin{lemma}[Quadratic-form estimates]\label{lem:quadratic-forms}
Let $x=(x_1,\ldots,x_N)^T$ have independent centered coordinates with $\E x_j^2=1$ and $\max_j\E|x_j|^4\le M$.
Let $A$ be a deterministic symmetric matrix, then
\begin{equation}
  \begin{aligned}\label{eq:var-part}
    \Var(x^TAx)\le 4M\tr A^2.    
  \end{aligned}
\end{equation}
and
\begin{equation}
  \begin{aligned}\label{eq:off-diag-part}
    \E\left|\sum_{r\ne s}a_{rs}x_rx_s\right|^4
    \le CM^2(\tr A^2)^2.
  \end{aligned}
\end{equation} 
\item If in addition $\max_j\E|x_j|^8\le M_8$ and $A\succeq0$, then
\begin{equation}
  \begin{aligned}\label{eq:diag-part}
    \E\left|\sum_j a_{jj}(x_j^2-1)\right|^4
    \le C\{M_8\tr A^4+M^2(\tr A^2)^2\}.    
  \end{aligned}
\end{equation}
\end{lemma}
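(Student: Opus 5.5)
The proof splits $x^TAx$ into its diagonal and off-diagonal parts and treats each by direct moment expansion. Write
\[
x^TAx-\tr A=\sum_j a_{jj}(x_j^2-1)+\sum_{r\ne s}a_{rs}x_rx_s=:D+O .
\]
For the variance bound \eqref{eq:var-part}, $D$ and $O$ are uncorrelated, since every cross term $\E[(x_j^2-1)x_rx_s]$ with $r\neq s$ contains a lone centered factor. By independence,
\[
\Var D=\sum_j a_{jj}^2\Var(x_j^2)\le M\sum_j a_{jj}^2 .
\]
Expanding $\E O^2$, only the pairings $\{r,s\}=\{r',s'\}$ survive, which gives
\[
\E O^2=2\sum_{r\ne s}a_{rs}^2 .
\]
Adding the two and using $\sum_j a_{jj}^2+\sum_{r\ne s}a_{rs}^2=\tr A^2$ yields $\Var(x^TAx)\le \max(M,2)\tr A^2\le 4M\tr A^2$.

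For the fourth moment of the off-diagonal part \eqref{eq:off-diag-part}, I would decouple or use martingale structure. Order the indices and write $O=2\sum_s x_s Y_s$ with $Y_s=\sum_{r<s}a_{rs}x_r$. This is a martingale difference sum in $s$. Burkholder's inequality gives
\[
\E O^4\le C\,\E\Bigl(\sum_s x_s^2Y_s^2\Bigr)^2 .
\]
Expanding the square, the diagonal terms contribute $\sum_s \E x_s^4\,\E Y_s^4\le M\sum_s \E Y_s^4$. Each $Y_s$ is a linear form, so Rosenthal's inequality gives
\[
\E Y_s^4\le CM\Bigl(\sum_r a_{rs}^2\Bigr)^2 .
\]
For the cross terms with $s<t$, use $\E x_s^2=1$ for the largest index, $t$. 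This reduces them to terms like $\E[x_s^2Y_s^2Y_t^2]$, which Cauchy--Schwarz bounds by $CM\,\bigl(\sum_r a_{rs}^2\bigr)^{1/2\cdot 2}\bigl(\sum_r a_{rt}^2\bigr)$ and similar. Summing over $s,t$ then yields $CM^2(\sum_{r,s}a_{rs}^2)^2\le CM^2(\tr A^2)^2$.

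Alternatively, one can expand $\E O^4$ directly as a sum over 8-tuples of indices. Only index configurations in which every index appears at least twice survive. Each surviving configuration is a closed multigraph on at most four vertices with four edges, and its weight is bounded by products of $\E x^2=1$ or $\E x^4\le M$. The graph sums are controlled by $(\tr A^2)^2$ via Cauchy--Schwarz, for instance
\[
\sum a_{rs}a_{st}a_{tu}a_{ur}=\tr A^4\le(\tr A^2)^2 .
\]
I would keep this direct expansion as a check.

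For the diagonal part \eqref{eq:diag-part}, set $y_j=x_j^2-1$. These are independent and centered, with $\E y_j^2\le M$ and $\E y_j^4\le CM_8$. Rosenthal's inequality for independent sums gives
\[
\E\Bigl|\sum_j a_{jj}y_j\Bigr|^4\le C\Bigl\{\sum_j a_{jj}^4M_8+\Bigl(\sum_j a_{jj}^2M\Bigr)^2\Bigr\}.
\]
Since $A\succeq0$, we have $\sum_j a_{jj}^4\le \tr A^4$, because $a_{jj}^2\le (A^2)_{jj}$ and hence $a_{jj}^4\le ((A^2)_{jj})^2$, with $\sum_j((A^2)_{jj})^2\le\tr A^4$. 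Also $\sum_j a_{jj}^2\le\tr A^2$. Together these give the claim.

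I expect the main obstacle to be the combinatorial control of the cross terms in the off-diagonal fourth moment, namely verifying that every surviving index pattern is bounded by $M^2(\tr A^2)^2$ with no worse power of $M$. The Burkholder-plus-Rosenthal route is what I would try first, because it packages this bookkeeping.
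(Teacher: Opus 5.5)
Your proposal is correct. For \eqref{eq:var-part} and \eqref{eq:diag-part} it is essentially the paper's argument. The paper uses the same uncorrelated diagonal/off-diagonal split for the variance. For the diagonal part it expands $\E|\sum_j Y_j|^4=\sum_j\E Y_j^4+6\sum_{r<s}\E Y_r^2\,\E Y_s^2$ directly, which is just the $p=4$ case of the Rosenthal bound you cite. Your justification of $\sum_j a_{jj}^4\le\tr A^4$ through $a_{jj}^2\le(A^2)_{jj}$ holds for every symmetric $A$, so positivity is not needed there.

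The real difference is in \eqref{eq:off-diag-part}, where the paper decouples instead of using martingales. With independent Bernoulli$(1/2)$ selectors $\delta_r$ it writes $\sum_{r\ne s}a_{rs}x_rx_s=4\E_\delta S_\delta$, where $S_\delta=\sum_{r,s}\delta_r(1-\delta_s)a_{rs}x_rx_s$, and applies Jensen. Then, conditionally on $x_I$ with $I=\{r:\delta_r=1\}$, it views $S_\delta$ as a linear form in $x_J$ with coefficient vector $B_\delta^Tx_I$. This yields $\E|S_\delta|^4\le CM\,\E(x_I^TB_\delta B_\delta^Tx_I)^2$. The last quantity is bounded by $CM(\tr A^2)^2$ using the already proved \eqref{eq:var-part} and $\tr C^2\le(\tr C)^2$ for $C\succeq0$.

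So the paper's proof is self-contained, reuses the first part of the lemma, and needs no cross-term bookkeeping. Your route orders the indices, applies Burkholder's square-function inequality to $\sum_s x_sY_s$, and then pulls out $\E x_t^2=1$ for the largest index. It imports Burkholder's inequality but is equally valid, and it is the more standard martingale argument.

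One small correction to your cross-term estimate. Plain Cauchy--Schwarz gives $\E[x_s^2Y_s^2Y_t^2]\le(\E x_s^4\,\E Y_s^4)^{1/2}(\E Y_t^4)^{1/2}\le CM^{3/2}\bigl(\sum_r a_{rs}^2\bigr)\bigl(\sum_r a_{rt}^2\bigr)$, not $CM(\cdots)$. To get a single factor of $M$ you would split off the term $a_{st}x_s$ from $Y_t$ and use independence of $x_s$ from the remainder. This is harmless: $M\ge(\E x_j^2)^2=1$ gives $M^{3/2}\le M^2$, so the sum over $s<t$ is still at most $CM^2(\tr A^2)^2$.
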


\vspace{0.5cm}

\begin{proof}[Proof of Lemma~\ref{lem:quadratic-forms}]
For \eqref{eq:var-part}, write
\[
 x^TAx-\tr A
 =\sum_j a_{jj}(x_j^2-1)+2\sum_{r<s}a_{rs}x_rx_s.
\]
The two sums are uncorrelated.  Independence gives
\[
 \Var\left(\sum_j a_{jj}(x_j^2-1)\right)
 \le M\sum_j a_{jj}^2,
 \qquad
 \Var\left(2\sum_{r<s}a_{rs}x_rx_s\right)
 =4\sum_{r<s}a_{rs}^2,
\]
which proves \eqref{eq:var-part}.

For \eqref{eq:off-diag-part}, replace $A$ by its off-diagonal part.  Let $\delta_1,\ldots,\delta_N$ be independent Bernoulli variables with parameter $1/2$, independent of $x$, and put
\[
 S_\delta=\sum_{r,s}\delta_r(1-\delta_s)a_{rs}x_rx_s.
\]
Because $a_{rr}=0$ and $\E_\delta[\delta_r(1-\delta_s)]=1/4$ for $r\ne s$,
\[
 \sum_{r\ne s}a_{rs}x_rx_s=4\E_\delta S_\delta,
\]
where $\E_{\delta}$ means taking expectation on $\delta$. Jensen's inequality therefore gives
\begin{equation}\label{eq:qf-random-partition}
 \E\left|\sum_{r\ne s}a_{rs}x_rx_s\right|^4
 \le4^4\E_\delta\E_{x}|S_\delta|^4.
\end{equation}
Fix the selectors and write $I=\{r:\delta_r=1\}$ and $J=I^c$.  Conditional on $(x_r)_{r\in I}$, the variable $S_\delta$ is the independent linear form
$\sum_{s\in J}c_sx_s$, where $c_s=\sum_{r\in I}a_{rs}x_r$.  Direct expansion of its fourth moment gives
\begin{equation}\label{eq:qf-linear-fourth}
 \E\left(|S_\delta|^4\mid(x_r)_{r\in I}\right)
 \le CM\left(\sum_{s\in J}c_s^2\right)^2.
\end{equation}
Let $B_\delta=(a_{rs})_{r\in I,s\in J}$.  Then
$\sum_{s\in J}c_s^2=x_I^TB_\delta B_\delta^Tx_I$.  Recall that a real symmetric matrix $C$ is \emph{positive semidefinite} when $z^TCz\ge0$ for every real vector $z$; we write this as $C\succeq0$.  
Since $B_\delta B_\delta^T\succeq0$, \eqref{eq:var-part}, together with
\[
 \E(x_I^TCx_I)=\tr C
 \qquad\text{and}\qquad
 \tr(C^2)\le(\tr C)^2
\]
for $C\succeq0$, implies
\begin{equation}\label{eq:qf-positive-second}
 \E\left(x_I^TB_\delta B_\delta^Tx_I\right)^2
 \le CM\{\tr(B_\delta B_\delta^T)\}^2
 \le CM(\tr A^2)^2.
\end{equation}
Substitution of \eqref{eq:qf-linear-fourth}--\eqref{eq:qf-positive-second} into
\eqref{eq:qf-random-partition} proves \eqref{eq:off-diag-part}.

For \eqref{eq:diag-part}, set $Y_j=a_{jj}(x_j^2-1)$.  The variables $Y_j$ are independent and centered.  Expanding the fourth power and using independence gives
\[
 \E\left|\sum_jY_j\right|^4
 =\sum_j\E Y_j^4+6\sum_{r<s}\E Y_r^2\E Y_s^2
 \le\sum_j\E Y_j^4+3\left(\sum_j\E Y_j^2\right)^2.
\]
Now
\[
 \sum_j\E|Y_j|^4\le CM_8\sum_j a_{jj}^4
 \le CM_8\tr A^4,
\]
while
\[
 \left(\sum_j\E Y_j^2\right)^2
 \le CM^2\left(\sum_j a_{jj}^2\right)^2
 \le CM^2(\tr A^2)^2.
\]
This proves \eqref{eq:diag-part}.
\end{proof}

\vspace{0.5cm}

We now return to the proof of Lemma~\ref{lem:terminal-diagonal}.
Conditionally on $A_r$, apply \eqref{eq:diag-part} of
Lemma~\ref{lem:quadratic-forms} with
\[
 x=(\widetilde b_{1r},\ldots,\widetilde b_{ir})^T
 \qquad\text{and}\qquad
 A=G_r.
\]
By \eqref{ex:eq:internal-8},
\[
 \E\left(|S_r|^4\mid A_r\right)
 \le
 C\left\{
 M\ell^{4a}\tr G_r^4
 +
 M^2(\tr G_r^2)^2
 \right\}.
\]
Moreover, since
\[
 0\prec G_r\preceq\lambda^{-1}I_i,
\]
we have
\[
 \tr G_r^4\le n\lambda^{-4},
 \qquad
 \tr G_r^2\le n\lambda^{-2}.
\]
Therefore, Markov's inequality and the preceding event inclusion give
\begin{equation}\label{eq:one-prob-need-sum}
 \begin{aligned}
 &\Pp\left(
 D_r<\ell^{-a}T_r,\,
 T_r\ge\ell^{10a}
 \mid A_r
 \right)\\
 &\quad\le
 \Pp\left(
 |S_r|\ge cn\ell^{10a}
 \mid A_r
 \right)\\
 &\quad\le
 Cn^{-4}\ell^{-40a}
 \left\{
 M\ell^{4a}\tr G_r^4
 +
 M^2(\tr G_r^2)^2
 \right\}\\
 &\quad\le
 C\left\{
 Mn^{-3}\lambda^{-4}\ell^{-36a}
 +
 M^2n^{-2}\lambda^{-4}\ell^{-40a}
 \right\}.
 \end{aligned}
\end{equation}
Let
\[
 \mathcal B_D
 =
 \bigcup_{r=1}^n\{D_r<\ell^{-a}T_r\}.
\]

By the definition \eqref{def:B-t-event} and \eqref{eq:low-bound-esti-Tr}, on
$\mathcal B_T^c$ we have
\[
 T_r\ge\ell^{10a}
\]
for every $r$. Taking expectations in
\eqref{eq:one-prob-need-sum} and summing over $r$ therefore gives
\[
 \begin{aligned}
 \Pp(\mathcal B_D\cap\mathcal B_T^c)
 &\le
 \sum_{r=1}^n
 \Pp\left(
 D_r<\ell^{-a}T_r,\,
 T_r\ge\ell^{10a}
 \right)\\
 &\le
 C\{Mn^{-4/3}\ell^{-36a}
 +M^2n^{-1/3}\ell^{-40a}\}\\
 &=
 o(\ell^{-8a}).
 \end{aligned}
\]
Together with \eqref{ex:eq:terminal-leave-one-out-high}, this yields
\begin{equation}\label{ex:eq:terminal-diagonal-failure}
 \Pp(\mathcal B_D)
 \le
 \Pp(\mathcal B_D\cap\mathcal B_T^c)
 +
 \Pp(\mathcal B_T)
 =
 o(\ell^{-8a}).
\end{equation}

It remains to prove the second estimate in
\eqref{ex:eq:terminal-decomposition-good}, namely
\[
 |O_r|\le1
\]
simultaneously for all $r$, outside an event of probability
$o(\ell^{-8a})$.  Recall that
\[
 O_r
 =
 n^{-1}\sum_{u\ne v}(G_r)_{uv}b_{ur}b_{vr}.
\]
Conditionally on $A_r$, the matrix $G_r$ is deterministic, while the
entries of $b_r$ are independent, centered, and have variance one.
Therefore, Lemma~\ref{lem:quadratic-forms}
\eqref{eq:off-diag-part} gives
\[
 \E(|O_r|^4\mid A_r)
 \le
 CM^2n^{-4}(\tr G_r^2)^2.
\]
Since $0\prec G_r\preceq\lambda^{-1}I_i$,
we have
\[
 \tr G_r^2
 \le
 n\lambda^{-2}.
\]
Hence
\[
 \E(|O_r|^4\mid A_r)
 \le
 CM^2n^{-2}\lambda^{-4}.
\]
By Markov's inequality and the union bound over $1\le r\le n$,
\begin{equation}\label{ex:eq:terminal-offdiagonal-failure}
 \begin{aligned}
 \Pp\left(\max_{1\le r\le n}|O_r|>1\right)
 &\le
 \sum_{r=1}^n\E|O_r|^4
 \le
 CM^2n^{-1}\lambda^{-4}
 =
 CM^2n^{-1/3}
 =
 o(\ell^{-8a}).
 \end{aligned}
\end{equation}
Define
\[
 \mathcal E
 :=
 \mathcal B_T
 \cup
 \mathcal B_D
 \cup
 \left\{
 \max_{1\le r\le n}|O_r|>1
 \right\}.
\]
By \eqref{ex:eq:terminal-leave-one-out-high},
\eqref{ex:eq:terminal-diagonal-failure}, and
\eqref{ex:eq:terminal-offdiagonal-failure},
\[
 \Pp(\mathcal E)=o(\ell^{-8a}).
\]
On $\mathcal E^c$, we have, simultaneously for all $r$,
\[
 T_r\ge\ell^{10a},
 \qquad
 D_r\ge\ell^{-a}T_r,
 \qquad
 |O_r|\le1.
\]
Consequently, \eqref{eq:Gr-decompose} gives
\[
 \begin{aligned}
 n^{-1}b_r^TG_rb_r
 &=
 D_r+O_r\\
 &\ge
 D_r-|O_r|\\
 &\ge
 \ell^{-a}T_r-1\\
 &\ge
 \ell^{9a}-1\\
 &\ge
 c\ell^{9a}
 \end{aligned}
\]
for every $r$ and all sufficiently large $n$.  Hence, by
\eqref{ex:eq:terminal-projection-bound},
\[
 \max_{1\le r\le n}p_{rr}(i)
 \le
 C\ell^{-9a}
\]
on $\mathcal E^c$.
Since $P_i$ is an orthogonal projection,
\[
 0\le p_{rr}(i)\le1
\]
for every $r$.  Therefore,
\[
 \begin{aligned}
 \E\max_{1\le r\le n}p_{rr}(i)
 &=
 \E\left[
 \max_{1\le r\le n}p_{rr}(i)
 \mathbf 1_{\mathcal E^c}
 \right]
 +
 \E\left[
 \max_{1\le r\le n}p_{rr}(i)
 \mathbf 1_{\mathcal E}
 \right]\\
 &\le
 C\ell^{-9a}
 +
 \Pp(\mathcal E)\\
 &=
 C\ell^{-9a}
 +
 o(\ell^{-8a})\\
 &\le
 C\ell^{-8a}.
 \end{aligned}
\]
All the estimates above are uniform over
$n-s_2\le i\le n-1$, and hence this proves
\eqref{ex:eq:terminal-diagonal-max}.  The argument uses only
independence, centering, unit variances, and the common fourth-moment
bound.  Since standard Gaussian entries satisfy the same assumptions
with fourth moment $3\le M$, the estimate remains valid when any
subset of the rows is Gaussian.  This completes the proof of
Lemma~\ref{lem:terminal-diagonal}.
\end{proof}

\vspace{0.5cm}

\begin{proof}[Proof of Lemma~\ref{lem:gaussian-terminal-block}]
Condition on the first $n-r$ rows.  By the rank assumption, their
orthogonal complement $W$ has dimension $r$.  The orthogonal
projections of the last $r$ Gaussian rows onto $W$ are independent
standard Gaussian vectors in $W$.

Consider these projected vectors successively.  Before the first one
is added, the available orthogonal space has dimension $r$, so the
first squared Gram--Schmidt distance has law $\chi_r^2$.  After
$q-1$ projected vectors have been added, their span has dimension
$q-1$ almost surely.  Conditional on the preceding projected vectors,
the orthogonal complement of their span in $W$ has dimension
$r-q+1$.  By rotational invariance, the squared norm of the projection
of the $q$th Gaussian vector onto this complement has law
$\chi_{r-q+1}^2$.  This conditional law does not depend on the
preceding vectors.  Induction therefore shows that the successive
squared distances are independent and have laws
\[
 \chi_r^2,\chi_{r-1}^2,\ldots,\chi_1^2.
\]
Since the corresponding normalizing dimensions are
$r,r-1,\ldots,1$, the normalized squared heights have the asserted
laws.  Their conditional joint law is a fixed product law, independent
of the first $n-r$ rows; hence the heights are also independent of the
preceding block.

For $k\ge1$ and $t>-k/2$,
\begin{equation}\label{eq:chi-square-power-moment}
 \E\exp\left\{
 t\log(\chi_k^2/k)
 \right\}
 =
 \left(\frac2k\right)^t
 \frac{\Gamma(k/2+t)}{\Gamma(k/2)}.
\end{equation}
Differentiating its logarithm at $t=0$ gives
\[
 \E\log(\chi_k^2/k)
 =
 \log(2/k)
 +
 \left.
 \frac{\mathrm d}{\mathrm dx}\log\Gamma(x)
 \right|_{x=k/2}.
\]
The differentiated Stirling expansion
\[
 \frac{\mathrm d}{\mathrm dx}\log\Gamma(x)
 =
 \log x-\frac1{2x}+O(x^{-2})
 \qquad (x\to\infty)
\]
therefore we have
\begin{equation}\label{eq:chi-square-log-mean}
 \E\log(\chi_k^2/k)
 =
 -\frac1k+O(k^{-2}).
\end{equation}
Summing
\eqref{eq:chi-square-log-mean} proves
\eqref{eq:gaussian-terminal-mean}.

Set
\[
 Y_k
 =
 \log(\chi_k^2/k)-\E\log(\chi_k^2/k).
\]
For every integer $j\ge2$,
\begin{equation}\label{eq:log-gamma-higher-derivatives}
 \frac{\mathrm d^j}{\mathrm dx^j}\log\Gamma(x)
 =
 (-1)^j(j-1)!
 \sum_{m=0}^{\infty}(x+m)^{-j}.
\end{equation}
Moreover, centering changes only the linear term in the logarithmic
moment-generating function.  Thus, for $j\ge2$, the $j$th cumulant of
$Y_k$ equals the left-hand side of
\eqref{eq:log-gamma-higher-derivatives} evaluated at $x=k/2$.

For $j=2$, integral comparison gives
\[
 \sum_{m=0}^{\infty}(x+m)^{-2}
 =
 \frac1x+O(x^{-2}),
\]
and hence
\begin{equation}\label{eq:chi-square-log-variance}
 \E Y_k^2
 =
 \frac2k+O(k^{-2}).
\end{equation}
Since the variables $Y_1,\ldots,Y_r$ are independent,
summing \eqref{eq:chi-square-log-variance} proves
\eqref{eq:gaussian-terminal-variance}.

For $j=4$, \eqref{eq:log-gamma-higher-derivatives} gives
\[
 |\kappa_4(Y_k)|
 =
 \left|
 \left.
 \frac{\mathrm d^4}{\mathrm dx^4}\log\Gamma(x)
 \right|_{x=k/2}
 \right|
 \le Ck^{-3}.
\]
Using
\[
 \E Y_k^4
 =
 \kappa_4(Y_k)+3\{\E Y_k^2\}^2
\]
together with \eqref{eq:chi-square-log-variance}, we obtain
\[
 \E Y_k^4\le Ck^{-2}.
\]
Therefore, by Cauchy--Schwarz,
\[
 \E|Y_k|^3
 \le
 (\E Y_k^2)^{1/2}(\E Y_k^4)^{1/2}
 \le Ck^{-3/2}.
\]
Since $\sum_{k=1}^{\infty}k^{-3/2}<\infty$, summing this estimate
proves \eqref{eq:gaussian-terminal-third}. Thus we complete the proof of Lemma~\ref{lem:gaussian-terminal-block}.
\end{proof}

\vspace{0.5cm}

\begin{proof}[Proof of Lemma~\ref{lem:common-one-step}]
For both centering schemes, retain the notation in \eqref{eq:common-parameters}--\eqref{eq:common-heights} and \eqref{eq:truncate-notation-definition}. Thus \eqref{eq:truncate-notation-definition} gives
\begin{align}
 \theta&=\ell^{-a/2},
 &H&=|\log\theta|=\frac a2\log\ell,
 \label{eq:common-lower-truncation}\\
 L_{i+1}&=\log(Z_{i+1}\vee\theta),
 &\zeta_{i+1}&=L_{i+1}+\frac1{k_i},
 \qquad 0\le i<m.
 \label{eq:common-truncated-increment}
\end{align}
Also we can write
\begin{equation}\label{eq:common-Z-decompose}
 Z_{i+1}-1=U_{i+1}+V_{i+1},
\end{equation}
where
\begin{equation}
  \begin{aligned}\label{eq:common-UV-setup}
    U_{i+1}
    =
    \sum_rq_{rr}(i)(a_{i+1,r}^2-1),
    \qquad
    V_{i+1}
    =
    \sum_{r\ne s}q_{rs}(i)a_{i+1,r}a_{i+1,s}.    
  \end{aligned}
\end{equation}

Conditionally on $\cF_i$, the current row is independent of $Q_i$.
Lemma~\ref{lem:quadratic-forms} gives
\begin{align}
 \E_iU_{i+1}^2
 &\le CMd_i,
 \label{det:eq:UV-U2}\\
 \E_iV_{i+1}^2
 &=
 2\sum_{r\ne s}q_{rs}(i)^2
 =
 \frac2{k_i}-2d_i,
 \label{det:eq:UV-V2}\\
 \E_iV_{i+1}^4
 &\le CM^2k_i^{-2},
 \label{det:eq:UV-V4}\\
 \E_i|U_{i+1}V_{i+1}|
 &\le
 C\sqrt M\sqrt{\frac{d_i}{k_i}},
 \label{det:eq:UV-UV}\\
 \E_i|V_{i+1}|^3
 &\le
 CM^{3/2}k_i^{-3/2}.
 \label{det:eq:UV-V3}
\end{align}

It is enough to consider all sufficiently large $n$. We may therefore assume throughout the proof that
\[
 \theta<\frac12,\qquad H\ge T_0,\qquad 3\leq M\le\ell,
 \qquad k_i\ge s_1+1\asymp\ell^{3a}\gg M.
\]
Fix $0\le i<m$ and introduce the good event
\[
 \mathcal A_i
 =
 \{|U_{i+1}|\le1/4,\ |V_{i+1}|\le1/4\}.
\]
On $\mathcal A_i$,
$
 Z_{i+1}
 =
 1+U_{i+1}+V_{i+1}
 \ge\frac12>\theta$,
and hence
\[
 L_{i+1}
 =
 \log(1+U_{i+1}+V_{i+1}).
\]
For $|x|\le1/2$, Taylor's formula gives
\[
 \left|\log(1+x)-x+\frac{x^2}{2}\right|
 \le C|x|^3,
 \qquad
 |\log(1+x)|\le C|x|.
\]
Applying these estimates with
$x=U_{i+1}+V_{i+1}$ and using
$|U_{i+1}|,|V_{i+1}|\le1/4$, we obtain on $\mathcal A_i$
\begin{align}
 \left|
 L_{i+1}-U_{i+1}-V_{i+1}
 +\frac12V_{i+1}^2
 \right|
 &\le
 C\left\{
 U_{i+1}^2
 +|U_{i+1}V_{i+1}|
 +|V_{i+1}|^3
 \right\},
 \label{det:eq:good-log-first}\\
 \left|L_{i+1}^2-V_{i+1}^2\right|
 &\le
 C\left\{
 U_{i+1}^2
 +|U_{i+1}V_{i+1}|
 +|V_{i+1}|^3
 \right\},
 \label{det:eq:good-log-second}\\
 |L_{i+1}|^3
 &\le
 C\left\{U_{i+1}^2+|V_{i+1}|^3\right\},
 \label{det:eq:good-log-third}\\
 |L_{i+1}|^4
 &\le
 C\left\{U_{i+1}^2+V_{i+1}^4\right\}.
 \label{det:eq:good-log-fourth}
\end{align}
For example, the first estimate follows from
\[
 -\frac12(U_{i+1}+V_{i+1})^2+\frac12V_{i+1}^2
 =
 -\frac12U_{i+1}^2-U_{i+1}V_{i+1},
\]
while
\[
 |U_{i+1}+V_{i+1}|^3
 \le
 C\bigl(|U_{i+1}|^3+|V_{i+1}|^3\bigr)
 \le
 C\bigl(U_{i+1}^2+|V_{i+1}|^3\bigr)
\]
on $\mathcal A_i$.  The second estimate follows by writing
\[
 L_{i+1}^2-V_{i+1}^2
 =
 \bigl\{L_{i+1}^2-(U_{i+1}+V_{i+1})^2\bigr\}
 +U_{i+1}^2+2U_{i+1}V_{i+1}.
\]

Using
\eqref{det:eq:UV-U2}--\eqref{det:eq:UV-V3}, the conditional
expectation of the right-hand side of each of
\eqref{det:eq:good-log-first}--\eqref{det:eq:good-log-fourth}
is bounded by $CH_i^*$.

We next control the complement of $\mathcal A_i$.  Markov's
inequality, \eqref{det:eq:UV-U2}, and
\eqref{det:eq:UV-V4} give
\begin{equation}\label{det:eq:bad-event-growing}
 \Pp_i(\mathcal A_i^c)
 \le
 C\left\{Md_i+M^2k_i^{-2}\right\}.
\end{equation}
Let $L_{i+1}^{+}=\max\{L_{i+1},0\}$ and
$L_{i+1}^{-}=\max\{-L_{i+1},0\}$.  Since $\theta<1$ and
$Z_{i+1}\le1+|U_{i+1}|+|V_{i+1}|$,
\[
 L_{i+1}^{+}
 \le
 \log\bigl(1+|U_{i+1}|+|V_{i+1}|\bigr).
\]
On $\mathcal A_i^c$,
$|U_{i+1}|+|V_{i+1}|\ge1/4$, and therefore, for every
$1\le r\le4$,
\[
 \left\{
 \log\bigl(1+|U_{i+1}|+|V_{i+1}|\bigr)
 \right\}^{r}
 \le
 C_r\bigl(|U_{i+1}|+|V_{i+1}|\bigr)^2.
\]
Moreover, on $\mathcal A_i^c$,
\[
 V_{i+1}^2
 \le
 U_{i+1}^2+16V_{i+1}^4.
\]
Indeed, if $|V_{i+1}|\ge1/4$, then
$V_{i+1}^2\le16V_{i+1}^4$; otherwise
$|U_{i+1}|>1/4$ and $V_{i+1}^2\le U_{i+1}^2$.
Consequently,
\begin{equation}\label{det:eq:bad-positive-growing}
 \E_i\left[
 (L_{i+1}^{+})^r\1_{\mathcal A_i^c}
 \right]
 \le
 C\left\{Md_i+M^2k_i^{-2}\right\},
 \qquad 1\le r\le4.
\end{equation}

For the negative part, note that on $\mathcal A_i$,
$L_{i+1}\ge-\log2$.  Since $T_0>1>\log2$, the event
$\{L_{i+1}^{-}>T_0\}$ is contained in $\mathcal A_i^c$.
The layer-cake identity therefore gives, for $1\le r\le4$,
\begin{align*}
 \E_i\left[
 (L_{i+1}^{-})^r\1_{\mathcal A_i^c}
 \right]
 &\le
 C\Pp_i(\mathcal A_i^c)
 +r\int_{T_0}^{H}
 u^{r-1}\Pp_i(L_{i+1}^{-}>u)\dd u.
\end{align*}
For $T_0\le u<H$, one has
$\theta=e^{-H}<e^{-u}$, and hence
\[
 \{L_{i+1}^{-}>u\}
 =
 \{L_{i+1}<-u\}
 =
 \{Z_{i+1}<e^{-u}\}.
\]
Lemma~\ref{lem:common-lower-tail} and
\eqref{det:eq:bad-event-growing} thus imply
\begin{equation}\label{det:eq:bad-negative-growing}
 \E_i\left[
 (L_{i+1}^{-})^r\1_{\mathcal A_i^c}
 \right]
 \le
 C\left\{
 Md_i+M^2k_i^{-2}+\lambda_i^{(r)}
 \right\}.
\end{equation}
Combining
\eqref{det:eq:bad-positive-growing} and
\eqref{det:eq:bad-negative-growing}, we obtain
\begin{equation}\label{det:eq:bad-log-growing}
 \E_i\left[
 |L_{i+1}|^r\1_{\mathcal A_i^c}
 \right]
 \le
 C\left\{
 Md_i+M^2k_i^{-2}+\lambda_i^{(r)}
 \right\}
 \le CH_i^*,
 \qquad 1\le r\le4.
\end{equation}

We also need the contributions of $U_{i+1}$,
$V_{i+1}$, and $V_{i+1}^2$ on $\mathcal A_i^c$.
By Cauchy--Schwarz and \eqref{det:eq:bad-event-growing},
\begin{align}
 \E_i\left[
 |U_{i+1}|\1_{\mathcal A_i^c}
 \right]
 &\le
 \{\E_iU_{i+1}^2\}^{1/2}
 \{\Pp_i(\mathcal A_i^c)\}^{1/2}
 \le
 C\left\{Md_i+M^2k_i^{-2}\right\},
 \label{det:eq:bad-U-growing}\\
 \E_i\left[
 |V_{i+1}|\1_{\mathcal A_i^c}
 \right]
 &\le
 \{\E_iV_{i+1}^2\}^{1/2}
 \{\Pp_i(\mathcal A_i^c)\}^{1/2}
 \le
 C\left\{
 \sqrt M\sqrt{d_i/k_i}
 +M^{3/2}k_i^{-3/2}
 \right\}.
 \label{det:eq:bad-V-growing}
\end{align}
For the last inequality we used
$\E_iV_{i+1}^2\le2/k_i$ and $M\ge1$.
Similarly, Hölder's inequality gives
\begin{align}
 \E_i\left[
 V_{i+1}^2\1_{\mathcal A_i^c}
 \right]
 &\le
 \{\E_iV_{i+1}^4\}^{1/2}
 \{\Pp_i(\mathcal A_i^c)\}^{1/2}
 \notag\\
 &\le
 \frac{CM}{k_i}
 \left\{Md_i+M^2k_i^{-2}\right\}^{1/2}
 \notag\\
 &\le
 C\left\{Md_i+M^2k_i^{-2}\right\},
 \label{det:eq:bad-V2-growing}
\end{align}
where the last step follows from $2xy\le x^2+y^2$.

Since
$\E_iU_{i+1}=\E_iV_{i+1}=0$, we may combine
\eqref{det:eq:good-log-first}--\eqref{det:eq:good-log-fourth}
with
\eqref{det:eq:bad-log-growing}--\eqref{det:eq:bad-V2-growing}.
This yields
\begin{align}
 \left|
 \E_iL_{i+1}
 +\frac12\E_iV_{i+1}^2
 \right|
 &\le CH_i^*,
 \label{det:eq:L-mean-growing}\\
 \left|
 \E_iL_{i+1}^2-\E_iV_{i+1}^2
 \right|
 &\le CH_i^*,
 \label{det:eq:L-second-growing}\\
 \left|\E_i(L_{i+1}^3)\right|
 &\le CH_i^*,
 \label{det:eq:L-third-growing}\\
 \E_i|L_{i+1}|^4
 &\le CH_i^*.
 \label{det:eq:L-fourth-growing}
\end{align}

We now pass from $L_{i+1}$ to
$\zeta_{i+1}=L_{i+1}+k_i^{-1}$.
By \eqref{det:eq:UV-V2},
\[
 \frac12\E_iV_{i+1}^2
 =
 \frac1{k_i}-d_i.
\]
Therefore
\[
 \E_i\zeta_{i+1}
 =
 \left(
 \E_iL_{i+1}
 +\frac12\E_iV_{i+1}^2
 \right)
 +d_i.
\]
Since $M\ge1$, the term $d_i$ is bounded by the first term
$Md_i$ in $H_i^*$, and hence
\begin{equation}\label{det:eq:conditional-mean-growing}
 |\E_i\zeta_{i+1}|
 \le CH_i^*.
\end{equation}

For the second moment, direct expansion gives
\begin{align*}
 \E_i\zeta_{i+1}^2-\frac2{k_i}
 &=
 \left(
 \E_iL_{i+1}^2-\E_iV_{i+1}^2
 \right)
 +
 \frac2{k_i}
 \left(
 \E_iL_{i+1}
 +\frac12\E_iV_{i+1}^2
 \right)
 -2\left(1-\frac1{k_i}\right)d_i
 -\frac1{k_i^2}.
\end{align*}
Each term on the right-hand side is bounded by $CH_i^*$:
the first two by
\eqref{det:eq:L-mean-growing}--\eqref{det:eq:L-second-growing},
the third by $Md_i\le H_i^*$, and the last by
$k_i^{-2}\le M^2k_i^{-2}\le H_i^*$.  Thus
\begin{equation}\label{det:eq:conditional-second-growing}
 \left|
 \E_i\zeta_{i+1}^2-\frac2{k_i}
 \right|
 \le CH_i^*.
\end{equation}

To treat the third moment, first note from
\eqref{det:eq:L-mean-growing} and
\eqref{det:eq:L-second-growing} that
\[
 |\E_iL_{i+1}|
 \le
 \frac1{k_i}+CH_i^*,
 \qquad
 \E_iL_{i+1}^2
 \le
 \frac2{k_i}+CH_i^*.
\]
Using
\[
 \zeta_{i+1}^3
 =
 L_{i+1}^3
 +\frac3{k_i}L_{i+1}^2
 +\frac3{k_i^2}L_{i+1}
 +\frac1{k_i^3},
\]
we obtain
\begin{equation}
  \begin{aligned}\label{det:eq:conditional-third-growing}
  \left|\E_i(\zeta_{i+1}^3)\right|
  &\le
  \left|\E_i(L_{i+1}^3)\right|
  +\frac3{k_i}\E_iL_{i+1}^2
  +\frac3{k_i^2}|\E_iL_{i+1}|
  +\frac1{k_i^3}
  \le
  C\left\{H_i^*+k_i^{-2}\right\}
  \le CH_i^*,    
  \end{aligned}
\end{equation}
because $H_i^*\ge M^2k_i^{-2}\ge k_i^{-2}$.
Likewise,
$
 |\zeta_{i+1}|^4
 \le
 C|L_{i+1}|^4+Ck_i^{-4}$,
and therefore
\begin{equation}\label{det:eq:conditional-fourth-growing}
 \E_i|\zeta_{i+1}|^4
 \le CH_i^*.
\end{equation}
Adding the preceding four estimates \eqref{det:eq:conditional-mean-growing},
\eqref{det:eq:conditional-second-growing}, \eqref{det:eq:conditional-third-growing} and \eqref{det:eq:conditional-fourth-growing} proves
\eqref{eq:common-one-step-conditional}.

Taking expectations in in these four estimates gives proves
\eqref{eq:common-one-step-mean},
\eqref{eq:common-one-step-third},
\eqref{eq:common-one-step-fourth}, and
\eqref{eq:common-one-step-second-mean}.  Moreover,
\begin{align*}
 \E\left|
 \E_i(\zeta_{i+1}^2)-\E\zeta_{i+1}^2
 \right|
 &\le
 \E\left|
 \E_i(\zeta_{i+1}^2)-\frac2{k_i}
 \right|
 +
 \left|
 \E\zeta_{i+1}^2-\frac2{k_i}
 \right|
 \le Ch_i,
\end{align*}
which proves \eqref{eq:common-one-step-second}.

It remains to estimate the sums of $h_i$.  By the definition of
$D_i$ and $R_i$,
\[
 h_i
 =
 MD_i+\sqrt M\,R_i
 +M^{3/2}k_i^{-3/2}
 +M^2k_i^{-2}
 +\sum_{r=1}^4\lambda_i^{(r)}.
\]

Thus, to estimate the sums of $h_i$, it is enough to estimate the sums of $D_i$ and $R_i$.  This is done in Lemma~\ref{lem:diagonal-projection-bounds} below.  The remaining terms are easily summed by using Lemma~\ref{lem:common-lower-tail}.

\vspace{0.5cm}

\begin{lemma}\label{lem:diagonal-projection-bounds}
Let $A_n$ be an $n\times n$ random matrix has independent centered variance-one entries with fourth moments at most $M$, where $3\le M\le\ell$. Assume $A_{n}$ satisfies that all square submatrices of $A_n$ are invertible almost surely.
Retain the notations of \eqref{eq:common-parameters}--\eqref{eq:common-heights}.
Define
\[
 D_i=\E d_i,
 \qquad R_i=\E\sqrt{d_i/k_i}.
\]
Then, with the parameters in \eqref{eq:common-parameters},
\begin{align}
 k_i\ge n/2:
 &\quad D_i\le k_i^{-1},\quad R_i\le k_i^{-1},\label{ex:eq:projection-bound-early}\\
 s_2\le k_i<n/2:
 &\quad D_i\le \frac Cn+\frac{CM}{k_i n^{5/6}},
 \quad R_i\le C(nk_i)^{-1/2}+\frac{C\sqrt M}{k_i n^{5/12}},\label{ex:eq:projection-bound-middle}\\
 s_1\le k_i<s_2:
 &\quad D_i\le C\ell^{-8a}k_i^{-1},
 \quad R_i\le C\ell^{-4a}k_i^{-1}.
 \label{ex:eq:projection-bound-terminal}
\end{align}
In particular,
\begin{equation}\label{ex:eq:projection-bound-sums}
 \sum_{i=0}^{m-1}D_i+\sum_{i=0}^{m-1}R_i\le C.
\end{equation}
\end{lemma}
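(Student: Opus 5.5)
The whole lemma rests on two facts about the orthogonal projection $P_i$ of rank $k_i$: its diagonal entries satisfy $0\le p_{rr}(i)\le1$ and $\sum_rp_{rr}(i)=k_i$. Since $d_i=k_i^{-2}\sum_rp_{rr}(i)^2$, these give
\[
 d_i\le k_i^{-2}\max_rp_{rr}(i)\sum_rp_{rr}(i)=\frac{\max_rp_{rr}(i)}{k_i}\le\frac1{k_i}.
\]
The plan treats the three ranges of $k_i$ separately and then sums. I expect the middle range $s_2\le k_i<n/2$ to be the main obstacle.

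\emph{Early range $k_i\ge n/2$.} The display above gives $D_i\le k_i^{-1}$ at once. It also gives $\sqrt{d_i/k_i}\le k_i^{-1}$ pointwise, so $R_i\le k_i^{-1}$. This proves \eqref{ex:eq:projection-bound-early}, and no probability is needed.

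\emph{Terminal range $s_1\le k_i<s_2$.} Here $n-s_2<i$, so Lemma~\ref{lem:terminal-diagonal} applies, using $M\le\ell$. Taking expectations in $d_i\le\max_rp_{rr}(i)/k_i$ gives $D_i\le C\ell^{-8a}k_i^{-1}$. For $R_i$, note that $\sqrt{d_i/k_i}\le k_i^{-1}(\max_rp_{rr}(i))^{1/2}$. Jensen's inequality then gives $R_i\le k_i^{-1}(\E\max_rp_{rr}(i))^{1/2}\le C\ell^{-4a}k_i^{-1}$, which is \eqref{ex:eq:projection-bound-terminal}.

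\emph{Middle range $s_2\le k_i<n/2$.} Here the trivial bound is too weak, and I would use the variance decomposition
\[
 \sum_rp_{rr}(i)^2=\frac{k_i^2}{n}+\sum_r\Bigl(p_{rr}(i)-\frac{k_i}{n}\Bigr)^2 .
\]
This gives $D_i=n^{-1}+k_i^{-2}\sum_r\E(p_{rr}-k_i/n)^2$. The target is therefore
\[
 \E\bigl(p_{rr}(i)-k_i/n\bigr)^2\le\frac{CMk_i}{n^{11/6}}\quad\text{uniformly in }r.
\]
To get this I would start from \eqref{ex:eq:exact-diagonal-projection}, $p_{rr}=\{1+b_r^T(A_rA_r^T)^{-1}b_r\}^{-1}$. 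The map $x\mapsto(1+x)^{-1}$ is $1$-Lipschitz on $[0,\infty)$. So it suffices to show that $b_r^T(A_rA_r^T)^{-1}b_r$ is close in $L^2$ to a deterministic quantity $\tau$ independent of $r$, with $(1+\tau)^{-1}=k_i/n+O(n^{-1/2}\cdots)$.

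\textbf{Step 1 (regularization).} Replace $(A_rA_r^T)^{-1}$ by $n^{-1}G_r$ with $\lambda=n^{-1/6}$. Because $1-y=k_i/n\ge\ell^{-20a}$, the spectrum of $n^{-1}A_rA_r^T$ should stay above $c(1-\sqrt y)^2\gg\lambda$ with overwhelming probability. On that event the regularization changes the quadratic form by $O(\lambda)$ relative to its size. Off that event I would simply use $p_{rr}\in[0,1]$.

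\textbf{Step 2 (concentration of the quadratic form).} Conditionally on $A_r$, apply \eqref{eq:var-part} to get $\Var(n^{-1}b_r^TG_rb_r\mid A_r)\le 4Mn^{-2}\tr G_r^2$.

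\textbf{Step 3 (concentration of the trace).} Use \eqref{eq:rank-one-regularized-trace} to replace $T_r$ by $T$, and Lemma~\ref{lem:regularized-inverse} to replace $T$ by $ys_y(\lambda)$ up to $O(Mn^{-1/2}+n^{-1/3})$. This deterministic target is the same for every $r$. Since $\sum_r\E p_{rr}=k_i$, it must then correspond to $k_i/n$ up to the same error.

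\textbf{Step 4 (collect).} Collecting errors should give the $CMk_i n^{-11/6}$ bound and hence the stated $D_i$. Then Cauchy--Schwarz, $R_i\le\sqrt{D_i/k_i}$, together with $\sqrt{x+y}\le\sqrt x+\sqrt y$, yields the stated bound on $R_i$.

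The hard step is Step 1, the smallest-singular-value control that justifies the regularization with $\lambda=n^{-1/6}$. If that proves awkward, I would instead use $p_{rr}\le\{1+n^{-1}b_r^TG_rb_r\}^{-1}$ for an upper fluctuation bound. I would combine it with the exact identity $\sum_rp_{rr}=k_i$ to control the lower deviations on average. This suffices, because only $\sum_r\E(p_{rr}-k_i/n)^2$ is needed, not a bound for each $r$.

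\emph{Summation \eqref{ex:eq:projection-bound-sums}.}
\begin{itemize}
\item Early range: $\sum_{k\ge n/2}k^{-1}\le C$.
\item Middle range: the number of terms is at most $n$, so $\sum n^{-1}\le1$. The remaining terms give $CMn^{-5/6}\sum_kk^{-1}\le CM\ell n^{-5/6}\le C$, and $\sum_k(nk)^{-1/2}\le C$, and $C\sqrt Mn^{-5/12}\ell\le C$.
\item Terminal range: $C\ell^{-4a}\sum_{k\ge s_1}^{s_2}k^{-1}\le C\ell^{-4a}\ell\le C$.
\end{itemize}
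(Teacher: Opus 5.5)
Your early range, terminal range, the step $R_i\le\sqrt{D_i/k_i}$, and the final summation agree with the paper. The gap is in the middle range $s_2\le k_i<n/2$.

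\textbf{The gap.} By aiming at the centered bound $\E(p_{rr}(i)-k_i/n)^2\le CMk_in^{-11/6}$, you commit yourself to two-sided control of $p_{rr}(i)$. That requires an upper bound on the unregularized form $b_r^T(A_rA_r^T)^{-1}b_r$. This is what forces your Step~1: a lower bound of order $(1-\sqrt y)^2$ on the smallest eigenvalue of $n^{-1}A_rA_r^T$, holding with overwhelming probability.

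Nothing in the hypotheses supplies such an estimate. The entries are independent and non-identically distributed, with only fourth moments $M\le\ell$, and $1-y$ can be as small as $\ell^{-20a}$. None of the paper's lemmas supplies it either. So the main route is incomplete exactly where you suspect.

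Even granting Step~1, Step~4 would not reach your target with the tools you list. Lemma~\ref{lem:regularized-inverse} only gives $\Var(T)\le Cn^{-2/3}$. Multiply this by the squared local Lipschitz factor $(k_i/n)^4$ of $x\mapsto(1+x)^{-1}$ near $n/k_i$; the global constant $1$ you quote is far worse. The result still exceeds $CMk_in^{-11/6}$ when $k_i\asymp n$ and $M\ll n^{1/6}$.

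\textbf{The missing observation: no centering is needed.} Since $D_i=k_i^{-2}\sum_r\E p_{rr}(i)^2$, the stated bound follows from
\[
\E p_{rr}(i)^2\le C(k_i/n)^2+CMk_in^{-11/6}\quad\text{for each } r.
\]
The slack $C(k_i/n)^2$ makes lower deviations irrelevant, because trivially $0\le(k_i/n-p_{rr}(i))_+\le k_i/n$. So you need neither the identity $\sum_rp_{rr}(i)=k_i$ nor any smallest-singular-value estimate. You only need $p_{rr}(i)\le Ck_i/n$ off a small event, and that is the paper's argument:
\begin{enumerate}
\item For $k_i\ge s_2$ one has $\sqrt\lambda=o(k_i/n)$, so $ys_y(\lambda)\ge cn/k_i$. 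Lemma~\ref{lem:regularized-inverse} and Chebyshev then give $\Pp(T<c_0n/k_i)\le Ck_i^2n^{-8/3}$. Because $T_r\ge T$, the same bound controls the event $\{T_r<c_0n/k_i\}$.
\item Conditionally on $A_r$, combine \eqref{eq:var-part} with $\tr G_r^2\le\lambda^{-1}\tr G_r=n\lambda^{-1}T_r$. Conditional Chebyshev gives $\Pp(n^{-1}b_r^TG_rb_r<T_r/2,\ T_r\ge c_0n/k_i)\le CMk_in^{-11/6}$.
\item Off these two events, \eqref{ex:eq:regularized-diagonal-projection} yields $p_{rr}(i)\le Ck_i/n$. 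Using $p_{rr}(i)\le1$ on them gives $\E p_{rr}(i)^2\le C\{k_i^2n^{-8/3}+Mk_in^{-11/6}+(k_i/n)^2\}$, which sums to the stated bound on $D_i$.
\end{enumerate}
Your fallback points in this direction. To close the gap, drop the centered $L^2$ target and state and prove this one-sided bound explicitly.
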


\vspace{0.5cm}

\begin{proof}[Proof of Lemma~\ref{lem:diagonal-projection-bounds}]
Since $P_i$ is an orthogonal projection of rank $k_i$, we have
$0\le p_{rr}(i)\le1$, $\sum_{r=1}^n p_{rr}(i)=\tr P_i=k_i$.
Moreover, $q_{rr}(i)=k_i^{-1}p_{rr}(i)$, and hence $d_i=\frac1{k_i^2}\sum_{r=1}^n p_{rr}(i)^2$.

In the early range $k_i\ge n/2$.
Since $p_{rr}(i)^2\le p_{rr}(i)$,
$d_i \le \frac1{k_i^2}\sum_{r=1}^n p_{rr}(i) = \frac1{k_i}$.
It follows that $D_i\le k_i^{-1}$ and, pointwise, $\sqrt{\frac{d_i}{k_i}}\le\frac1{k_i}$. Thus $R_i\le k_i^{-1}$, proving
\eqref{ex:eq:projection-bound-early}.

In the middle range $s_2\le k_i<n/2$.
Let $A$ be the matrix formed by the first $i$ rows, let $b_r$ be its
$r$th column, and let $A_r$ be obtained by deleting $b_r$.  Put
\[
 G_r=(n^{-1}A_rA_r^T+\lambda I_i)^{-1},
 \quad T_r=n^{-1}\tr G_r,
 \qquad
 G=(n^{-1}AA^T+\lambda I_i)^{-1},
 \quad T=n^{-1}\tr G.
\]
Since $AA^T=A_rA_r^T+b_rb_r^T$, Loewner monotonicity gives
$G\preceq G_r$ and therefore $T\le T_r$.

We first obtain a lower-tail estimate for $T_r$.  Write
$y=i/n=1-k_i/n$.  Since $k_i<n/2$, we have $y>1/2$.  Also, from
$s_2=\lfloor n\ell^{-20a}\rfloor$, $\frac{k_i}{n}\ge\frac{s_2}{n}\ge\frac12\ell^{-20a}$
for all sufficiently large $n$.  Since $\sqrt\lambda=n^{-1/12}$, this
implies
\[
 \sqrt\lambda=o\!\left(\frac{k_i}{n}\right).
\]
Using $1+\lambda-y=k_i/n+\lambda$ in the explicit formula for
$s_y(\lambda)$, we therefore have
\[
 \begin{aligned}
  1+\lambda-y+
  \sqrt{(1+\lambda-y)^2+4y\lambda}
  &=
  \frac{k_i}{n}+\lambda+
  \sqrt{\left(\frac{k_i}{n}+\lambda\right)^2+4y\lambda} 
  \le C\frac{k_i}{n}.
 \end{aligned}
\]
Since $y>1/2$, it follows that
\[
 y s_y(\lambda)\ge c\frac{n}{k_i}.
\]
The matrix $A$ has $i>n/2$ rows, so
Lemma~\ref{lem:regularized-inverse} applies directly to $A$.  It gives
\[
 \left|\E T-y s_y(\lambda)\right|
 \le C\{n^{-2/3}+Mn^{-1/2}\}=o(1),
 \qquad
 \Var(T)\le Cn^{-2/3},
\]
where we used $M\le\ell$.  Since $n/k_i>2$, the preceding lower bound
for $y s_y(\lambda)$ implies that, for some constant $c_1>0$,
\[
 \E T\ge c_1\frac{n}{k_i}.
\]
Choose $c_0=c_1/2$.  By Chebyshev's inequality and $T_r\ge T$,
\[
 \begin{aligned}
 \Pp\left(T_r<c_0\frac{n}{k_i}\right)
 &\le
 \Pp\left(T<c_0\frac{n}{k_i}\right)  
 \le
 \frac{\Var(T)}
 {\left(\E T-c_0n/k_i\right)^2}
 \le C\frac{k_i^2}{n^{8/3}}.
 \end{aligned}
\]

We next control $n^{-1}b_r^TG_rb_r$.  Conditionally on $A_r$, the
matrix $G_r$ is deterministic and $b_r$ has independent centered
coordinates of variance one.  Hence
\[
 \E\left(n^{-1}b_r^TG_rb_r\mid A_r\right)=T_r.
\]
By Lemma~\ref{lem:quadratic-forms} \eqref{eq:var-part},
\[
 \Var\left(n^{-1}b_r^TG_rb_r\mid A_r\right)
 \le CMn^{-2}\tr G_r^2.
\]
Since every eigenvalue of $G_r$ is at most $\lambda^{-1}$, $\tr G_r^2\le\lambda^{-1}\tr G_r =n\lambda^{-1}T_r$,
and therefore
\[
 \Var\left(n^{-1}b_r^TG_rb_r\mid A_r\right)
 \le CM\frac{\lambda^{-1}T_r}{n}.
\]
On the event $T_r\ge c_0n/k_i$, conditional Chebyshev gives
\[
 \begin{aligned}
 &\Pp\left(
 n^{-1}b_r^TG_rb_r<\frac12T_r
 \,\middle|\,A_r
 \right)  
 \le
 \frac{4\Var(n^{-1}b_r^TG_rb_r\mid A_r)}{T_r^2}
 \le
 \frac{CM\lambda^{-1}}{nT_r}
 \le
 CM\frac{k_i}{n^{11/6}},
 \end{aligned}
\]
because $\lambda^{-1}=n^{1/6}$.  Averaging over $A_r$ yields
\[
 \Pp\left(
 n^{-1}b_r^TG_rb_r<\frac12T_r,\,
 T_r\ge c_0\frac{n}{k_i}
 \right)
 \le CM\frac{k_i}{n^{11/6}}.
\]
Outside the two exceptional events just estimated,
\[
 n^{-1}b_r^TG_rb_r
 \ge \frac12T_r
 \ge \frac{c_0}{2}\frac{n}{k_i}.
\]
Thus, by \eqref{ex:eq:regularized-diagonal-projection},
$p_{rr}(i) \le \left(1+n^{-1}b_r^TG_rb_r\right)^{-1} \le C\frac{k_i}{n}$.
Using the trivial bound $p_{rr}(i)^2\le1$ on the exceptional events,
we obtain
\[
 \E p_{rr}(i)^2
 \le
 C\left\{
 \frac{k_i^2}{n^{8/3}}
 +\frac{Mk_i}{n^{11/6}}
 +\left(\frac{k_i}{n}\right)^2
 \right\}.
\]
Consequently,
\[
 \begin{aligned}
 D_i
 &=\frac1{k_i^2}\sum_{r=1}^n\E p_{rr}(i)^2
 \le
 C\left\{
 n^{-5/3}
 +\frac{M}{k_i n^{5/6}}
 +\frac1n
 \right\}
 \le
 \frac Cn+\frac{CM}{k_i n^{5/6}}.
 \end{aligned}
\]
Finally, Jensen's inequality gives
\[
 \begin{aligned}
 R_i
 &=\E\sqrt{\frac{d_i}{k_i}}
 \le\sqrt{\frac{\E d_i}{k_i}}
 =\sqrt{\frac{D_i}{k_i}}
 \le
 C(nk_i)^{-1/2}
 +\frac{C\sqrt M}{k_i n^{5/12}}.
 \end{aligned}
\]
This proves \eqref{ex:eq:projection-bound-middle}.

In the terminal range $s_1\le k_i<s_2$.
Using $\sum_rp_{rr}(i)=k_i$, we have
\[
 d_i
 \le
 \frac{\max_rp_{rr}(i)}{k_i^2}
 \sum_{r=1}^n p_{rr}(i)
 =
 \frac1{k_i}\max_rp_{rr}(i).
\]
Lemma~\ref{lem:terminal-diagonal} therefore gives
\[
 D_i
 \le
 \frac1{k_i}\E\max_rp_{rr}(i)
 \le
 C\ell^{-8a}k_i^{-1}.
\]
Moreover,
\[
 \sqrt{\frac{d_i}{k_i}}
 \le
 \frac1{k_i}\sqrt{\max_rp_{rr}(i)},
\]
so Jensen's inequality gives
\[
 R_i
 \le
 \frac1{k_i}\sqrt{\E\max_rp_{rr}(i)}
 \le
 C\ell^{-4a}k_i^{-1}.
\]
This proves \eqref{ex:eq:projection-bound-terminal}.

It remains to sum the estimates.  As $i$ ranges from $0$ to $m-1$,
the numbers $k_i$ range through the integers from $n$ down to
$s_1+1$.  The early-range contribution is bounded by
\[
 2\sum_{n/2\le k\le n}\frac1k=O(1).
\]
For the middle-range $D_i$ terms,
\[
 \sum_{s_2\le k<n/2}
 \left(\frac Cn+\frac{CM}{kn^{5/6}}\right)
 \le
 C+CMn^{-5/6}\log\frac{n}{s_2}.
\]
Since $M\le\ell$ and $n/s_2\le C\ell^{20a}$,
\[
 Mn^{-5/6}\log\frac{n}{s_2}
 \le Cn^{-5/6}\ell\log\ell=o(1).
\]
Similarly, the middle-range $R_i$ terms satisfy
\[
 \begin{aligned}
 &\sum_{s_2\le k<n/2}
 \left\{C(nk)^{-1/2}
 +\frac{C\sqrt M}{kn^{5/12}}\right\}
 \le
 Cn^{-1/2}\sum_{k\le n/2}k^{-1/2}
 +C\sqrt M\,n^{-5/12}\log\frac{n}{s_2}
 =O(1).
 \end{aligned}
\]
Indeed, the first term is bounded, while the second is at most
$Cn^{-5/12}\ell^{1/2}\log\ell=o(1)$.

Finally, the terminal-range contributions are bounded by
\[
 C\ell^{-8a}\sum_{s_1\le k<s_2}\frac1k
 \le C\ell^{1-8a},
 \qquad
 C\ell^{-4a}\sum_{s_1\le k<s_2}\frac1k
 \le C\ell^{1-4a}.
\]
Since $a=20$, both are uniformly bounded.  Combining the three ranges
proves
\[
 \sum_{i=0}^{m-1}D_i+\sum_{i=0}^{m-1}R_i\le C,
\]
and completes the proof of Lemma~\ref{lem:diagonal-projection-bounds}.
\end{proof}

\vspace{0.5cm}

We now return to the proof of Lemma~\ref{lem:common-one-step}.
Lemma~\ref{lem:diagonal-projection-bounds} gives
\[
 \sum_{i=0}^{m-1}D_i
 +
 \sum_{i=0}^{m-1}R_i
 \le C.
\]
Since the integers $k_i$ range from $n$ down to $s_1+1$,
\[
 \sum_{i=0}^{m-1}k_i^{-3/2}
 \le Cs_1^{-1/2},
 \qquad
 \sum_{i=0}^{m-1}k_i^{-2}
 \le Cs_1^{-1}.
\]
Together with \eqref{det:eq:lambda-growing-total}, this gives
\begin{align*}
 \sum_{i=0}^{m-1}h_i
 &\le
 C\left\{
 M+\sqrt M
 +M^{3/2}s_1^{-1/2}
 +M^2s_1^{-1}
 \right\}\\
 &\quad+
 C(\log\ell)^4
 \left\{
 M^2\ell^{-a} 
 +M\ell^{-2a}
 +M^2\ell^{-3a}
 \right\}.
\end{align*}
Since $M\le \ell$, $s_1\asymp\ell^{3a}$, and
$a=20$, all terms on the right-hand side are bounded by $CM$.
Thus
\[
 \sum_{i=0}^{m-1}h_i\le CM.
\]

We next prove the pointwise estimate.  Since $P_i$ is an
orthogonal projection of rank $k_i$,
\[
 0\le p_{rr}(i)\le1,
 \qquad
 \sum_{r=1}^np_{rr}(i)=k_i.
\]
Consequently,
\[
 d_i
 =
 \frac1{k_i^2}\sum_{r=1}^np_{rr}(i)^2
 \le\frac1{k_i},
 \qquad
 \sqrt{\frac{d_i}{k_i}}\le\frac1{k_i}.
\]
It follows that
\begin{equation}
  \begin{aligned}\label{eq:point-bound-term1}
    D_i\le\frac1{k_i},
    \qquad
    R_i\le\frac1{k_i}.
  \end{aligned}
\end{equation}
Moreover, $k_i\ge s_1\gg M$, so
\begin{equation}
  \begin{aligned}\label{eq:point-bound-term2}
    M^{3/2}k_i^{-3/2}
    +M^2k_i^{-2}
    \le \frac{CM}{k_i}.    
  \end{aligned}
\end{equation}
Finally, since $H=(a/2)\log\ell$ and $e^{4H}=\ell^{2a}$,
the definition of $\lambda_i^{(r)}$ gives
\begin{align*}
 \sum_{r=1}^4\lambda_i^{(r)}
 &\le
 C(\log\ell)^4
 \left\{
 M^2\ell^{2a}k_i^{-2}
 +M\ell^{4a}k_i^{-3}
 +M^2k_i^{-2}
 \right\}
 \le
 \frac{CM}{k_i},
\end{align*}
where the last inequality follows from
$k_i\ge s_1\asymp\ell^{3a}$ and
$M\le \ell$.  Together with \eqref{eq:point-bound-term1} and \eqref{eq:point-bound-term2}, we have
\[
 h_i\le\frac{CM}{k_i},
 \qquad 0\le i<m.
\]

For the terminal range $s_1\le k_i<s_2$,
Lemma~\ref{lem:diagonal-projection-bounds} gives
\[
 D_i\le C\ell^{-8a}k_i^{-1},
 \qquad
 R_i\le C\ell^{-4a}k_i^{-1}.
\]
Since $\sum_{s_1\le k_i<s_2}\frac1{k_i}\le C\ell$
and $s_1\asymp\ell^{3a}$, we obtain
\begin{align*}
 \sum_{s_1\le k_i<s_2}h_i
 &\le
 CM\ell^{1-8a}
 +C\sqrt M\,\ell^{1-4a}
 +CM^{3/2}\ell^{-3a/2}
 +CM^2\ell^{-3a}\\
 &\quad+
 C(\log\ell)^4
 \left\{
 M^2\ell^{-a}
 +M\ell^{-2a}
 +M^2\ell^{-3a}
 \right\}.
\end{align*}
Because $3\le M\le\ell$ and $a=20$, every term on the
right-hand side is bounded by $CM\ell^{-12}$.  Indeed,
\begin{align*}
 M\ell^{1-8a}
 &\le M\ell^{-12},
 \quad
 \sqrt M\,\ell^{1-4a}
 \le M\ell^{-12},
 \quad
 M^{3/2}\ell^{-3a/2}
 \le M\ell^{1/2-3a/2}
 \le M\ell^{-12},\\
 M^2\ell^{-3a}
 &\le M\ell^{1-3a}
 \le M\ell^{-12},
 \quad
  M^2\ell^{-a}(\log\ell)^4
 \le
 M\ell^{1-a}(\log\ell)^4
 \le
 CM\ell^{-12}
\end{align*}
for all sufficiently large $n$; the remaining two logarithmic terms
are smaller.  Therefore
\[
 \sum_{s_1\le k_i<s_2}h_i
 \le CM\ell^{-12}.
\]
This proves \eqref{eq:common-h-terminal}. Thus we complete the proof of Lemma~\ref{lem:common-one-step}.

\end{proof}

\vspace{0.5cm}

\begin{proof}[Proof of Lemma~\ref{lem:common-lower-tail}]
Fix $0\le i<m$.  Since  $Qi_i=k_i^{-1}P_i$ and $P_i$ is an orthogonal projection, $Q_i\succeq0$. Its diagonal entries are
nonnegative. From
\[
 Z_{i+1}
 =
 \sum_rq_{rr}(i)b_{i+1,r}^2+V_{i+1},
\]
we have
\[
 \{Z_{i+1}<e^{-u}\}
 \subseteq
 \left\{
 \sum_rq_{rr}(i)b_{i+1,r}^2<2e^{-u}
 \right\}
 \cup
 \{|V_{i+1}|>e^{-u}\}.
\]
By Markov's inequality and
\eqref{det:eq:UV-V4},
\begin{equation}\label{det:eq:lower-tail-offdiag}
 \Pp_i(|V_{i+1}|>e^{-u})
 \le
 e^{4u}\E_i|V_{i+1}|^4
 \le
 CM^2e^{4u}k_i^{-2}.
\end{equation}
For the diagonal part, let
\[
 \widehat b_r
 =
 b_{i+1,r}\1_{\{|b_{i+1,r}|\le\ell^a\}},
 \qquad
 \widetilde b_r
 =
 \frac{\widehat b_r-\E\widehat b_r}
 {\sqrt{\Var(\widehat b_r)}}.
\]
As in \eqref{ex:eq:internal-trunc}--\eqref{ex:eq:internal-8},
\begin{equation}\label{det:eq:internal-current-row}
 |\E\widehat b_r|\le M\ell^{-3a},
 \qquad
 \Var(\widehat b_r)\ge\frac12,
 \qquad
 \E|\widetilde b_r|^4\le CM,
 \qquad
 \E|\widetilde b_r|^8\le CM\ell^{4a}.
\end{equation}
The first two estimates in
\eqref{det:eq:internal-current-row} imply
\[
 \widetilde b_r^{\,2}
 \le
 4\widehat b_r^{\,2}+CM^2\ell^{-6a}.
\]
Since $\widehat b_r^{\,2}\le b_{i+1,r}^2$ and
$\sum_rq_{rr}(i)=\tr Q_i=1$, the event
\[
 \sum_rq_{rr}(i)b_{i+1,r}^2<2e^{-u}
\]
implies
\[
 \sum_rq_{rr}(i)\widetilde b_r^{\,2}
 \le
 8e^{-u}+CM^2\ell^{-6a}.
\]
Since $u\le H$, we have $e^{-u}\ge e^{-H}=\ell^{-a/2}$.
Moreover, $M\le\ell$ for all sufficiently large $n$, and hence
$M^2\ell^{-6a}=o(\ell^{-a/2})$.  It follows that
\[
 \sum_rq_{rr}(i)\widetilde b_r^{\,2}
 \le C_0e^{-u}
\]
for an absolute constant $C_0$.  Choose $T_0>1$ so that
$C_0e^{-T_0}\le1/2$.  Then, for $T_0\le u\le H$,
\[
 \sum_rq_{rr}(i)b_{i+1,r}^2<2e^{-u}
 \quad\Longrightarrow\quad
 \sum_rq_{rr}(i)\widetilde b_r^{\,2}\le\frac12.
\]
Since $\sum_rq_{rr}(i)=1$, this further implies
\[
 \left|
 \sum_rq_{rr}(i)(\widetilde b_r^{\,2}-1)
 \right|
 \ge\frac12.
\]

Conditionally on $\cF_i$, the variables $\widetilde b_r$ are
independent, centered, and have variance one, while $Q_i$ is a
deterministic positive-semidefinite matrix.  Since
$Q_i=k_i^{-1}P_i$ and $P_i$ is a rank-$k_i$ orthogonal projection,
\[
 \tr Q_i^2=k_i^{-1},
 \qquad
 \tr Q_i^4=k_i^{-3}.
\]
Lemma~\ref{lem:quadratic-forms} \eqref{eq:diag-part},
\eqref{det:eq:internal-current-row}, and Markov's inequality therefore
give
\begin{align}
 \Pp_i\left(
 \sum_rq_{rr}(i)b_{i+1,r}^2<2e^{-u}
 \right)
 &\le
 \Pp_i\left(
 \left|
 \sum_rq_{rr}(i)(\widetilde b_r^{\,2}-1)
 \right|\ge\frac12
 \right) \notag\\
 &\le
 C\left\{
 M\ell^{4a}\tr Q_i^4
 +
 M^2(\tr Q_i^2)^2
 \right\} \notag\\
 &\le
 C\left\{
 M\ell^{4a}k_i^{-3}
 +
 M^2k_i^{-2}
 \right\}.
 \label{det:eq:lower-tail-diagonal}
\end{align}
Combining
\eqref{det:eq:lower-tail-offdiag} and
\eqref{det:eq:lower-tail-diagonal} proves
\eqref{det:eq:lower-tail-Z}.

As $i$ ranges from $0$ to $m-1$, the integers $k_i$ range from $n$
down to $s_1+1$.  Since $s_1=\lfloor\ell^{3a}\rfloor$,
\[
 \sum_{i=0}^{m-1}k_i^{-2}\le C\ell^{-3a},
 \qquad
 \sum_{i=0}^{m-1}k_i^{-3}\le C\ell^{-6a}.
\]
Furthermore, $H=(a/2)\log\ell$, $e^{4H}=\ell^{2a}$, and, for
$1\le r\le4$,
\[
 \int_{T_0}^Hu^{r-1}e^{4u}\dd u
 \le CH^4e^{4H},
 \qquad
 \int_{T_0}^Hu^{r-1}\dd u
 \le CH^4.
\]
Consequently,
\[
 \begin{aligned}
 \sum_{i=0}^{m-1}\sum_{r=1}^4\lambda_i^{(r)}
 &\le
 CH^4\left\{
 M^2e^{4H}\ell^{-3a}
 +
 M\ell^{4a}\ell^{-6a}
 +
 M^2\ell^{-3a}
 \right\}\\
 &\le
 C(\log\ell)^4
 \left\{
 M^2\ell^{-a}
 +
 M\ell^{-2a}
 +
 M^2\ell^{-3a}
 \right\},
 \end{aligned}
\]
which proves \eqref{det:eq:lambda-growing-total}.

Finally,
\[
 L_{i+1}\ne\log Z_{i+1}
 \quad\Longleftrightarrow\quad
 Z_{i+1}<\theta=e^{-H}.
\]
For all sufficiently large $n$, $H\ge T_0$.  Hence, by the union bound,
the tower property, and \eqref{det:eq:lower-tail-Z} with $u=H$,
\[
 \begin{aligned}
 \Pp\left\{
 L_{i+1}\ne\log Z_{i+1}
 \text{ for some }0\le i<m
 \right\}
 &\le
 \sum_{i=0}^{m-1}
 \E\Pp_i(Z_{i+1}<e^{-H})\\
 &\le
 C\left\{
 M^2e^{4H}\ell^{-3a}
 +
 M\ell^{4a}\ell^{-6a}
 +
 M^2\ell^{-3a}
 \right\}\\
 &\le
 C\left\{
 M^2\ell^{-a}
 +
 M\ell^{-2a}
 +
 M^2\ell^{-3a}
 \right\}.
 \end{aligned}
\]
This proves
\eqref{det:eq:lower-truncation-growing-error}. Thus we complete the proof of Lemma~\ref{lem:common-lower-tail}.
\end{proof}

\vspace{0.5cm}

\begin{proof}[Proof of Lemma~\ref{lem:exact-lower-truncation}]
Fix $0\le i<m$ and condition on $\cF_i$.  Then $P_i$ is a
deterministic orthogonal projection of rank $k_i=n-i$, while the
coordinates of the current row $a_{i+1}$ remain independent and have
densities bounded by the common constant $M_1$.  Since
$Q_i=k_i^{-1}P_i$,
\[
 Z_{i+1}
 =
 a_{i+1}^TQ_i a_{i+1}
 =
 \frac{a_{i+1}^TP_i a_{i+1}}{k_i}.
\]
Let $u_{M_1}$ be the constant in
Lemma~\ref{lem:projection-density}.  Since
\[
 H=\frac a2\log\ell\longrightarrow\infty,
\]
we have $H\ge u_{M_1}$ for all sufficiently large $n$.  Therefore,
for every $u\ge H$,
\begin{equation}\label{ex:eq:exact-lower-tail-conditional}
 \Pp_i(Z_{i+1}\le e^{-u})
 \le e^{-k_i u/4}.
\end{equation}
In particular, letting $u\to\infty$ in
\eqref{ex:eq:exact-lower-tail-conditional} shows that
$\Pp_i(Z_{i+1}=0)=0$.

Recall that $\theta=e^{-H}$ and
$L_{i+1}=\log(Z_{i+1}\vee\theta)$.  Hence
\[
 \left|\log Z_{i+1}-L_{i+1}\right|
 =
 \left(-\log Z_{i+1}-H\right)_+.
\]
The layer-cake identity and Tonelli's theorem therefore give
\begin{align*}
 \E_i\left|\log Z_{i+1}-L_{i+1}\right|
 &=
 \int_0^\infty
 \Pp_i\left(-\log Z_{i+1}-H>t\right)\dd t
 \notag\\
 &=
 \int_H^\infty
 \Pp_i(Z_{i+1}<e^{-u})\dd u
 \notag\\
 &\le
 \int_H^\infty e^{-k_i u/4}\dd u
 =
 \frac4{k_i}e^{-k_iH/4}.
\end{align*}

Taking expectations, summing over $0\le i<m$, and using
$k_i=n-i$, we obtain
\begin{align*}
 \E\sum_{i=0}^{m-1}
 \left|\log Z_{i+1}-L_{i+1}\right|
 &\le
 4\sum_{i=0}^{m-1}
 \frac1{k_i}e^{-k_iH/4}
 =
 4\sum_{k=s_1+1}^{n}
 \frac1k e^{-kH/4}.
\end{align*}
For all sufficiently large $n$, $H\ge4\log2$,
and hence
\begin{align*}
 4\sum_{k=s_1+1}^{n}\frac1k e^{-kH/4}
 &\le
 \frac4{s_1+1}\sum_{k=r_0}^{\infty}e^{-kH/4}
 =
 \frac4{s_1+1}
 \frac{e^{-(s_1+1)H/4}}{1-e^{-H/4}}
 \le
 \frac8{s_1+1}e^{-(s_1+1)H/4}.
\end{align*}
For all sufficiently large $n$, $s_1+1\ge8$, so the last expression is
at most $e^{-(s_1+1)H/4}$.  Since $s_1+1>\ell^{3a}$, $H=\frac a2\log\ell$,
we have
\[
 \frac{(s_1+1)H}{4}
 >
 \frac a8\ell^{3a}\log\ell.
\]
Consequently,
\[
 \E\sum_{i=0}^{m-1}
 \left|\log Z_{i+1}-L_{i+1}\right|
 \le
 \exp\left\{
 -\frac a8\ell^{3a}\log\ell
 \right\},
\]
which proves \eqref{ex:eq:lower-truncation-error}. Thus we complete the proof of Lemma~\ref{lem:exact-lower-truncation}.
\end{proof}

\vspace{0.5cm}

\begin{proof}[Proof of Lemma~\ref{lem:exact-mart-bounds}]

Recall that
\[
 \eta_{i+1}
 =
 \zeta_{i+1}-g_i,
 \qquad
 g_i=\E_i\zeta_{i+1}.
\]
Lemma~\ref{lem:common-one-step} already gives the required moment
bounds for $\zeta_{i+1}$.  To transfer these bounds to
$\eta_{i+1}$, we also need a pointwise estimate for the conditional
mean $g_i$.

Conditionally on $\mathcal F_i$, the projection $P_i$ is deterministic
of rank $k_i$, while the current row is independent of
$\mathcal F_i$.  Thus $g_i$ is the logarithmic drift associated with
a fixed rank-$k_i$ projection.  The following lemma gives the bound
\[
 |g_i|
 \le
 \frac{CM}{k_i},
\]
which will be used below to control the terms containing powers of
$g_i$.

\vspace{0.5cm}

\begin{lemma}\label{lem:exact-fixed-drift}
Let $X$ have independent centered variance-one coordinates, fourth moments at most $M$, and densities bounded by $M_1$.  Let $P$ be a deterministic rank-$k$ projection, where $k\ge s_1$, and put $Z=X^TPX/k$.  Then
\begin{equation}\label{ex:eq:fixed-drift}
 \left|\E\left[\log(Z\vee\theta)+\frac1k\right]\right|
 \le \frac{CM}{k}.
\end{equation}
\end{lemma}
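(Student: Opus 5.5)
The plan is to repeat the one-step analysis of Lemma~\ref{lem:common-one-step}, now for a single deterministic projection instead of a random one, and then sum the error terms directly rather than through $h_i$.

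\textbf{Decomposition.} Write $P=(p_{rs})$ and $Q=k^{-1}P$. As in \eqref{eq:common-Z-decompose}, set $Z-1=U+V$, where
\[
U=\sum_r q_{rr}(X_r^2-1),\qquad V=\sum_{r\ne s}q_{rs}X_rX_s,\qquad d=\sum_r q_{rr}^2 .
\]
Since $p_{rr}\le1$ and $\sum_r p_{rr}=k$, we get $d\le 1/k$ pointwise, with no random projection involved. Lemma~\ref{lem:quadratic-forms} then gives the deterministic analogues of \eqref{det:eq:UV-U2}--\eqref{det:eq:UV-V3}:
\[
\E U^2\le CMd\le CM/k,\qquad \E V^2=\tfrac2k-2d,\qquad \E V^4\le CM^2k^{-2},
\]
\[
\E|UV|\le C\sqrt M\sqrt{d/k}\le C\sqrt M/k,\qquad \E|V|^3\le CM^{3/2}k^{-3/2}.
\]

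\textbf{Good event.} On $\mathcal A=\{|U|\le1/4,\ |V|\le1/4\}$, the Taylor bound \eqref{det:eq:good-log-first} gives
\[
\Bigl|\log(Z\vee\theta)-U-V+\tfrac12V^2\Bigr|\le C\bigl(U^2+|UV|+|V|^3\bigr).
\]
Because $\E U=\E V=0$ and $\tfrac12\E V^2=\tfrac1k-d$, we obtain
\[
\E\Bigl[\log(Z\vee\theta)+\tfrac1k\Bigr]=d+(\text{error}).
\]
Here $d\le 1/k$. The error consists of the good-event remainder, which is at most $C\{M/k+\sqrt M/k+M^{3/2}k^{-3/2}\}$, plus the bad-event terms. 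Since $k\ge s_1\gg M$, we have $M^{3/2}k^{-3/2}\le M/k$.

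\textbf{Bad event.} For $\mathcal A^c$ I follow \eqref{det:eq:bad-event-growing}--\eqref{det:eq:bad-V2-growing}. First, $\Pp(\mathcal A^c)\le C(M/k+M^2/k^2)\le CM/k$. The contributions of $U\1_{\mathcal A^c}$, $V\1_{\mathcal A^c}$ and $V^2\1_{\mathcal A^c}$ are handled by Cauchy--Schwarz and Hölder exactly as there, each giving $O(M/k)$. The positive part of the logarithm is bounded by $C(|U|+|V|)^2$ on $\mathcal A^c$, which is again $O(M/k)$.

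\textbf{Main obstacle: the negative part of the logarithm.} Its contribution is
\[
\E\bigl[(\log(Z\vee\theta))^-\1_{\mathcal A^c}\bigr]\le C\,\Pp(\mathcal A^c)+\int_{T_0}^{H}\Pp(Z<e^{-u})\dd u .
\]
Lemma~\ref{lem:common-lower-tail} would only give a bound involving $e^{4u}$, which produces $\ell^{2a}M^2/k^2$ and is too weak unless $k\gg\ell^{2a}$. Since $k$ can be as small as $s_1=\ell^{3a}$, this route costs a power of $\ell$.

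I would instead use the bounded density, via Lemma~\ref{lem:projection-density}. It gives $\Pp(Z\le e^{-u})\le e^{-ku/4}$ for $u\ge u_{M_1}$, whose integral over $u\ge u_{M_1}$ is at most $4k^{-1}e^{-ku_{M_1}/4}$, which is negligible.

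For the range $T_0\le u\le u_{M_1}$, where $u_{M_1}$ is a fixed constant, I use $\Pp(Z<e^{-u})\le\Pp(\mathcal A^c)\le CM/k$. This holds because $Z\ge 1/2$ on $\mathcal A$ and $T_0>\log 2$. Integrating over this bounded interval contributes $O(M/k)$.

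\textbf{Conclusion.} Collecting all terms gives the bound $CM/k$ in \eqref{ex:eq:fixed-drift}, with $C$ depending on $M_1$ through $u_{M_1}$.
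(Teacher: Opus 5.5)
Your proof is correct and follows essentially the paper's route: a Taylor expansion on a good event, Chebyshev/Markov for the bad event and for the lower tail on the bounded range $u\le u_{M_1}$, and the small-ball bound \eqref{ex:eq:projection-smallball} from Lemma~\ref{lem:projection-density} beyond $u_{M_1}$. The paper's version is cruder: it works with $Y=Z-1$ using only $|\log(1+Y)-Y|\le CY^2$ and $\E Y^2\le CM/k$, then absorbs $1/k\le M/k$, so your second-order $U$/$V$ expansion is more than needed; also, your aside dismissing Lemma~\ref{lem:common-lower-tail} is wrong, since $k\ge s_1\asymp\ell^{3a}$ and $M\le\ell$ give $M^2\ell^{2a}k^{-2}\le M\ell^{1-a}k^{-1}$, so that route also yields $CM/k$ (this is the pointwise bound $\sum_r\lambda_i^{(r)}\le CM/k_i$ proved inside Lemma~\ref{lem:common-one-step}).
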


\vspace{0.5cm}

\begin{proof}[Proof of Lemma~\ref{lem:exact-fixed-drift}]
Set $Y=Z-1$.  Lemma~\ref{lem:quadratic-forms} \eqref{eq:var-part} gives
\[
 \E Y=0,\qquad \E Y^2\le CM/k.
\]
Let $A=\{|Y|\le1/2\}$.  Since $\theta<1/2$, the lower truncation is inactive on $A$, and Taylor's formula yields
\[
 |\E[\log Z;A]|
 \le |\E[Y;A]|+C\E[Y^2;A]
 \le |\E[Y;A^c]|+C\E Y^2
 \le CM/k.
\]
Here $\E Y=0$ and $|Y|\1_{A^c}\le2Y^2$ were used.  On the upper tail $Z>3/2$, $\log Z\le C(Z-1)^2$, so its contribution is $O(M/k)$.  For the lower tail, the layer-cake identity \eqref{eq:layer-cake-identity} gives
\[
 \E[-\log(Z\vee\theta);Z<1/2]
 \le(\log2)\Pp(Z<1/2)
 +\int_{\log2}^{H}\Pp(Z\le e^{-u})\dd u.
\]
Chebyshev controls the fixed interval up to the threshold in Lemma~\ref{lem:projection-density}, and the remaining integral is bounded by $C/k$ using \eqref{ex:eq:projection-smallball}.  Thus $|\E\log(Z\vee\theta)|\le CM/k$.  Adding $1/k$ proves \eqref{ex:eq:fixed-drift}. Thus we complete the proof of Lemma~\ref{lem:exact-fixed-drift}.
\end{proof}

\vspace{0.5cm}

We now return to the proof of Lemma~\ref{lem:exact-mart-bounds}.
Lemma~\ref{lem:exact-fixed-drift}, applied conditionally on the first $i$ rows, gives
\begin{equation}\label{ex:eq:g-point}
 |g_i|\le CM/k_i\quad\text{a.s.}
\end{equation}
Together with Lemma~\ref{lem:common-one-step},
\[
 \E g_i^2\le \|g_i\|_\infty\E|g_i|
 \le C(M/k_i)h_i\le Ch_i.
\]
Since $\eta_{i+1}=\zeta_{i+1}-g_i$,
\[
 \E_i\eta_{i+1}^2 = \E_i\zeta_{i+1}^2-g_i^2,
 \qquad
 \E\eta_{i+1}^2 = \E\zeta_{i+1}^2-\E g_i^2.
\]
This proves \eqref{ex:eq:mart-var-bound} and \eqref{ex:eq:mart-q}.  Also
\[
 \E_i\eta_{i+1}^3=\E_i\zeta_{i+1}^3-3g_i\E_i\zeta_{i+1}^2+2g_i^3.
\]
Taking absolute values and then expectations,
\begin{align*}
 \E|\E_i\eta_{i+1}^3|
 &\le \E|\E_i\zeta_{i+1}^3|
   +3\E\{|g_i|\E_i\zeta_{i+1}^2\}+2\E|g_i|^3\\
 &\le Ch_i+\frac{CM}{k_i}\left(\frac C{k_i}+Ch_i\right)
   +C\left(\frac{M}{k_i}\right)^2\E|g_i|\\
 &\le Ch_i.
\end{align*}
Here $M/k_i\le1$ for all sufficiently large $n$, and the term $M/k_i^2$ is absorbed by the $M^2k_i^{-2}$ component of $h_i$.  This proves the third-moment part of \eqref{ex:eq:mart-third-fourth-bound}.  The fourth-moment part follows from
$|\eta_{i+1}|^4\le8|\zeta_{i+1}|^4+8|g_i|^4$ and
$\E|g_i|^4\le (CM/k_i)^3\E|g_i|\le Ch_i$.
Summing \eqref{ex:eq:mart-q} and using \eqref{eq:common-h-sum-point} proves \eqref{ex:eq:mart-s}.  Finally, $s_{\eta}^2\asymp\ell$, $\E\eta_{i+1}^2 \le CM/k_i$, $k_i\ge s_1$, and $M=o(\ell)$ eventually imply
\[
 \max_i\frac{\ell^2\E\eta_{i+1}^2}{s_{\eta}^2}
 \le C\frac{M\ell}{s_1}=o(1).
\]
This proves \eqref{ex:eq:mart-max} and completes the proof of Lemma~\ref{lem:exact-mart-bounds}.
\end{proof}

\vspace{0.5cm}

\begin{proof}[Proof of Lemma~\ref{lem:exact-drift}]

Recall that $\mathcal D_m = \sum_{i=0}^{m-1}(g_i-\E g_i)$.
We split
\[
 \mathcal D_m
 =
 \mathcal D_{\mathrm{early}}
 +
 \mathcal D_{\mathrm{terminal}},
\]
where
\[
 \mathcal D_{\mathrm{early}}
 =
 \sum_{k_i\ge s_2}(g_i-\E g_i),
 \qquad
 \mathcal D_{\mathrm{terminal}}
 =
 \sum_{s_1\le k_i<s_2}(g_i-\E g_i).
\]
We estimate the two parts separately.  We first control
$\mathcal D_{\mathrm{terminal}}$.
Lemma~\ref{lem:common-one-step} gives
\begin{equation}\label{ex:eq:terminal-drift-L1}
 \begin{aligned}
 \E|\mathcal D_{\mathrm{terminal}}|
 &\le
 2\sum_{s_1\le k_i<s_2}\E|g_i|
 \le
 C\sum_{s_1\le k_i<s_2}h_i
 \le
 CM\ell^{-12}.
 \end{aligned}
\end{equation}

We now turn to $\mathcal D_{\mathrm{early}}$.
To write each conditional mean $g_i$ explicitly as a function of
the projection $P_i$, we introduce the following deterministic
functional.  For the law of row $a_{i+1}$ and a deterministic
rank-$k_i$ projection $P$, define
\begin{equation}\label{ex:eq:Gamma-i}
 \Gamma_i(P)
 :=
 \E_{a_{i+1}}
 \left[
 \log\left(
 \frac{a_{i+1}^TPa_{i+1}}{k_i}\vee\theta
 \right)
 +
 \frac1{k_i}
 \right],
\end{equation}
where $\E_{a_{i+1}}$ denotes expectation with respect to the law of
$a_{i+1}$ only.  By
\eqref{eq:common-normalized-projections},
\eqref{eq:common-heights},
\eqref{eq:common-truncated-increment}, and
\eqref{ex:eq:mart-pred-def}, $g_i=\Gamma_i(P_i)$.
Hence, with $F = \sum_{k_i\ge s_2}\Gamma_i(P_i)$,
we have
\[
 \mathcal D_{\mathrm{early}}
 =
 F-\E F.
\]

Thus, to control $\mathcal D_{\mathrm{early}}$, it is enough to
control the fluctuation of $F$. 
We control the fluctuation of $F$ by comparing it with the same
functional after replacing one row.
Fix $r$, let $a_r'$ be an independent copy of $a_r$, and replace $(a_1,\ldots,a_r,\ldots,a_n)$
by $(a_1,\ldots,a_r',\ldots,a_n)$.
Let $P_i^{(r)}$ be the projection obtained from the first $i$ rows
after this replacement, and define
\[
 F^{(r)}
 =
 \sum_{k_i\ge s_2}\Gamma_i(P_i^{(r)}).
\]
We will first bound $|F-F^{(r)}|$
uniformly in $r$.  The Efron--Stein inequality will then give
\[
 \Var(F)
 \le
 \frac12\sum_r\E(F-F^{(r)})^2.
\]

If $i<r$, the first $i$ rows are unchanged, so $P_i^{(r)}=P_i$.
Therefore,
\[
 F-F^{(r)}
 =
 \sum_{\substack{k_i\ge s_2\\ i\ge r}}
 \left\{
 \Gamma_i(P_i)-\Gamma_i(P_i^{(r)})
 \right\}.
\]

For $i\ge r$, the two collections of the first $i$ rows differ in only
one row.  To estimate each difference
\[
 \Gamma_i(P_i)-\Gamma_i(P_i^{(r)}),
\]
we first compare the ranges of the two projections.  The following
lemma shows that these two rank-$k_i$ ranges have an intersection of
dimension at least $k_i-1$.

\vspace{0.5cm}

\begin{lemma}\label{lem:row-resample-geometry}
Let $1\le i\le n$, and suppose that
$x_1,\ldots,x_i$ and $x_1',\ldots,x_i'$ are two linearly
independent families in $\mathbb R^n$. Suppose that, for some
$t\in\{1,\ldots,i\}$,
\[
 x_j=x_j' \qquad\text{for every }j\ne t.
\]
Let $P$ and $P'$ be the orthogonal projections onto
\[
 \operatorname{Span}\{x_1,\ldots,x_i\}^{\perp}
 \quad\text{and}\quad
 \operatorname{Span}\{x_1',\ldots,x_i'\}^{\perp},
\]
respectively. Then
\[
 \operatorname{rank}P=\operatorname{rank}P'=k:=n-i,
\]
and
\[
 \dim(\operatorname{Range}P\cap\operatorname{Range}P')
 \ge k-1.
\]
\end{lemma}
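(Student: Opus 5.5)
The plan is to reduce both claims to dimension counting for subspaces of $\R^n$. Put
\[
 V=\Span\{x_1,\ldots,x_i\},\qquad V'=\Span\{x_1',\ldots,x_i'\},\qquad W=\Span\{x_j: j\ne t\}.
\]
The two families are linearly independent, so $\dim V=\dim V'=i$. Since $P$ and $P'$ are the orthogonal projections onto $V^\perp$ and $V'^\perp$, we get $\rank P=\dim V^\perp=n-i=k$, and likewise $\rank P'=k$. This settles the first assertion.

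For the intersection bound, we use that the two families share every vector except the $t$-th. Hence $W\subseteq V\cap V'$, and $\dim W=i-1$ because any subfamily of a linearly independent family is independent. Moreover $V=W+\Span\{x_t\}$ and $V'=W+\Span\{x_t'\}$, so the sum $V+V'=W+\Span\{x_t,x_t'\}$ has dimension at most $i+1$.

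Next we pass to orthogonal complements: $\operatorname{Range}P\cap\operatorname{Range}P'=V^\perp\cap V'^\perp=(V+V')^\perp$. Therefore
\[
 \dim(\operatorname{Range}P\cap\operatorname{Range}P')=n-\dim(V+V')\ge n-(i+1)=k-1 .
\]
One could instead reach the same bound through the Grassmann formula $\dim(V^\perp\cap V'^\perp)=2k-\dim(V^\perp+V'^\perp)\ge 2k-n$, but that is weaker. The sum-of-subspaces route gives the sharp bound $k-1$ directly.

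No step here is a real obstacle; everything is elementary linear algebra. The only point to be careful about is the identity $(V+V')^\perp=V^\perp\cap V'^\perp$, which follows because a vector is orthogonal to $V+V'$ exactly when it is orthogonal to both $V$ and $V'$. Also, when $k=0$ the bound $k-1=-1$ holds trivially.
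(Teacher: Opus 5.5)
Your proof is correct and follows essentially the same route as the paper: both get the ranks from $\dim V=\dim V'=i$, bound $\dim(V+V')\le i+1$ using the $i-1$ shared vectors, and conclude via $\operatorname{Range}P\cap\operatorname{Range}P'=(V+V')^\perp$. The only cosmetic difference is that you bound $\dim(V+V')$ directly from $V+V'=W+\Span\{x_t,x_t'\}$, whereas the paper uses $\dim(S\cap S')\ge i-1$ and the dimension formula $\dim(S+S')=\dim S+\dim S'-\dim(S\cap S')$.
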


\vspace{0.5cm}

\begin{proof}[Proof of Lemma~\ref{lem:row-resample-geometry}]
Set
\[
 S=\operatorname{Span}\{x_1,\ldots,x_i\},
 \qquad
 S'=\operatorname{Span}\{x_1',\ldots,x_i'\}.
\]
Since both families are linearly independent,
$\dim S=\dim S'=i$, and hence
\[
 \operatorname{rank}P=\dim S^\perp=n-i=k,
 \qquad
 \operatorname{rank}P'=\dim(S')^\perp=n-i=k.
\]
The unchanged vectors $\{x_j:j\ne t\}$ are linearly
independent and span an $(i-1)$-dimensional subspace contained
in $S\cap S'$. Thus
\[
 \dim(S\cap S')\ge i-1,
\]
and consequently
\[
 \dim(S+S')
 =\dim S+\dim S'-\dim(S\cap S')
 \le i+1.
\]
Finally,
\[
 \operatorname{Range}P\cap\operatorname{Range}P'
 =S^\perp\cap(S')^\perp
 =(S+S')^\perp.
\]
Therefore
\[
 \dim(\operatorname{Range}P\cap\operatorname{Range}P')
 =n-\dim(S+S')
 \ge n-(i+1)=k-1.
\]
\end{proof}

\vspace{0.5cm}

We now return to the proof of Lemma~\ref{lem:exact-drift}.
For every $i\ge r$, Lemma~\ref{lem:row-resample-geometry} gives
\[
 \operatorname{rank}P_i
 =
 \operatorname{rank}P_i^{(r)}
 =
 k_i,
 \qquad
 \dim\left(
 \operatorname{Range}P_i
 \cap
 \operatorname{Range}P_i^{(r)}
 \right)
 \ge
 k_i-1.
\]
Thus the two projections can differ in at most one direction in
each of their ranges, and in particular
\[
 \operatorname{rank}
 \left(P_i-P_i^{(r)}\right)
 \le2.
\]

We now need to convert this geometric fact into a bound for
\[
 \left|
 \Gamma_i(P_i)-\Gamma_i(P_i^{(r)})
 \right|.
\]
Since the term $1/k_i$ in the definition of $\Gamma_i$ does not
depend on the projection, it cancels in this difference.  Hence it is
enough to control the change of
\[
 \E_{a_{i+1}}
 \log\left(
 \frac{a_{i+1}^TPa_{i+1}}{k_i}\vee\theta
 \right)
\]
when $P$ is replaced by another rank-$k_i$ projection whose range
has an intersection of dimension at least $k_i-1$ with the range of
$P$.  The following lemma gives exactly this stability estimate.

\vspace{0.5cm}

\begin{lemma}\label{lem:exact-rank-stability}
Let $X=(X_1,\ldots,X_n)^T$ have independent centered variance-one coordinates satisfying $|X_j|\le B$ and $\E|X_j|^4\le M$.  
And suppose its coordinate densities are bounded by $M_1$.  Let $P,P'$ be rank-$k$ orthogonal projections whose ranges have an intersection of dimension at least $k-1$, where $k\ge s_1$.  Define
\[
 \Gamma(P)=\E\log\left(\frac{X^TPX}{k}\vee\theta\right).
\]
Then
\begin{equation}\label{ex:eq:rank-stability}
 |\Gamma(P)-\Gamma(P')|
 \le C\left\{\frac{M^2}{k^2}+\frac{B^2M}{k^3}
 +\left(H+\log\left(e+\frac{nB^2}{k}\right)+\frac{nB^2}{k}\right)e^{-ck}\right\}.
\end{equation}
The constants depend only on $M_1$.
\end{lemma}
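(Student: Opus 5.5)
The plan is to reduce to a rank-one perturbation of a common quadratic form and then Taylor expand. By hypothesis the ranges share a subspace $E$ of dimension $k-1$. Since $P$ and $P'$ have rank $k$, we can write
\[
 P=P_E+uu^T,\qquad P'=P_E+u'u'^T,
\]
where $u\perp E$ and $u'\perp E$ are unit vectors. Put
\[
 w=\frac{X^TP_EX}{k},\qquad \xi=u^TX,\qquad \xi'=u'^TX,\qquad f(z)=\log(z\vee\theta).
\]
Then
\[
 \Gamma(P)-\Gamma(P')=\E\bigl[f(w+\xi^2/k)-f(w+\xi'^2/k)\bigr].
\]
The key observation is that $\E\xi^2=\E\xi'^2=1$, so the first-order effect of the rank-one change vanishes in mean. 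What remains is a covariance between $w$ and $\xi^2-\xi'^2$, plus a second-order remainder, and both are of lower order.

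\emph{Good event.} Let $\mathcal G=\{|w-1|\le 1/4\}\cap\{\xi^2+\xi'^2\le k/4\}$. On $\mathcal G$ the truncation at $\theta$ is inactive, and Taylor expansion of $\log$ around $w$ in the variable $s=\xi^2/k$ gives
\[
 f(w+s)-f(w)=\frac{s}{w}+O(s^2).
\]
Expanding $1/w=1-(w-1)+O((w-1)^2)$, the contributions to $\Gamma(P)-\Gamma(P')$ are as follows.
\begin{itemize}
\item The leading term $k^{-1}\E(\xi^2-\xi'^2)=0$.
\item The covariance term is $k^{-1}\E[(w-1)(\xi^2-\xi'^2)]$. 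Expanding the quadratic forms coordinatewise, only the fourth-cumulant diagonal terms survive, namely $k^{-1}\sum_j(P_E)_{jj}(u_j^2-u_j'^2)(\E X_j^4-1)$. This is at most $CM/k$, so the covariance term is $O(M/k^2)$.
\item The remainders $k^{-2}\E\xi^4\le CM/k^2$ and $k^{-1}\E[(w-1)^2|\xi^2-1|]$ are estimated by Hölder, using $\E(w-1)^2\le CM/k$ (Lemma~\ref{lem:quadratic-forms}) and $\xi^2\le nB^2$ together with a fourth-moment bound on $\xi^2$. This is the source of the $M^2/k^2$ and $B^2M/k^3$ terms.
\end{itemize}
The indicator corrections coming from $\mathcal G^c$ inside these expansions are absorbed by Cauchy--Schwarz.

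\emph{Bad event.} On $\mathcal G^c$ we bound $|f|$ directly. Deterministically $Z\le \|X\|^2/k\le nB^2/k$, so $f^+\le\log(e+nB^2/k)$ and $|Z-1|\le nB^2/k$. Below, $f\ge -H$, and the lower tail $\{Z\le e^{-u}\}$ for $u\ge u_{M_1}$ has probability at most $e^{-ku/4}$ by Lemma~\ref{lem:projection-density} applied to the rank-$k$ (or rank-$(k-1)$) projection. It therefore remains to show $\Pp(\mathcal G^c)\le e^{-ck}$, which gives the last group of terms in \eqref{ex:eq:rank-stability}:
\[
 \Bigl(H+\log\bigl(e+\tfrac{nB^2}{k}\bigr)+\tfrac{nB^2}{k}\Bigr)e^{-ck}.
\]

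\emph{Main obstacle.} The hard step is the exponential bound $\Pp(|w-1|>1/4)\le e^{-ck}$, because Chebyshev only gives $M/k$. The first attempt will be a Hanson--Wright type inequality for the bounded coordinates combined with the small-ball bound of Lemma~\ref{lem:projection-density}. If bounded entries with $B\sim\sqrt n$ make this too weak, the fallback is to keep $\Pp(\mathcal G^c)$ polynomial and instead exploit that the integrand difference on $\mathcal G^c$ is itself small, since $\xi$ and $\xi'$ change only one direction. That route would control the difference by $\E[|\xi^2-\xi'^2|/k\,;\mathcal G^c]$ rather than by $|f|$.
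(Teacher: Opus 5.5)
\textbf{There is a genuine gap.} Your decomposition $P=P_E+uu^T$, $P'=P_E+u'u'^T$ and the cancellation $\E\xi^2=\E\xi'^2$ are the right start. But the step you single out as the main obstacle is not merely hard: the bound $\Pp(\mathcal G^c)\le e^{-ck}$ is false under the hypotheses. These give only $\E X_j^4\le M$ and $|X_j|\le B$, with $B^2$ allowed to exceed $k$ (in the application $B^2=K_0n\ell^{3/4}$).
\begin{itemize}
\item If $u=e_1$, then $\xi=X_1$, and a coordinate with bounded density may carry mass $\asymp M/k^2$ near $\pm\sqrt k$. So $\Pp(\xi^2>k/4)\asymp M/k^2$.
\item If $e_2\in E$, then $w\ge X_2^2/k$, and $\Pp(|w-1|>1/4)$ is of the same polynomial order. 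Hanson--Wright only yields an exponent of order $k/B^4$.
\end{itemize}
With a polynomially small bad event, neither of your ways of handling it fits \eqref{ex:eq:rank-stability}. Bounding $|f|$ there gives terms like $(H+\log(e+nB^2/k))M/k^2$. Your fallback bounds $\E[|\xi^2-\xi'^2|/k;\mathcal G^c]$ and absorbs the indicator corrections by Cauchy--Schwarz; this gives only $k^{-1}\sqrt M\,\Pp(\mathcal G^c)^{1/2}$, i.e.\ $M/k^{3/2}$ with Chebyshev.

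Your remainder step has the same defect. Cauchy--Schwarz with $\E(w-1)^4\lesssim B^4M/k^3+M^2/k^2$ gives $k^{-1}\E[(w-1)^2\xi^2]\lesssim B^2M/k^{5/2}+M^{3/2}/k^2$. This is a factor $\sqrt k$ short of $B^2M/k^3$, and using $\xi^2\le nB^2$ is far worse.

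\textbf{Two ideas are missing.}

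First, only the \emph{lower} tail of $w$ needs to be excised. The small-ball bound of Lemma~\ref{lem:projection-density}, applied to the rank-$(k-1)$ projection $P_E$, gives $\Pp(w<c_0)\le e^{-ck}$; this is where $M_1$ enters. On $\{w\ge c_0\}$ all your expansions hold globally, since $0\le x-\log(1+x)\le x^2/2$ for $x\ge0$ and $1/w=1-(w-1)+(w-1)^2/w$. So no upper-tail control of $w$ or $\xi$ is needed, and the transition error is at most $C(nB^2/k)e^{-ck}$ because $\xi^2\le nB^2$. The paper does exactly this, using a $C^2$ cutoff $f$ equal to $1/W$ on $\{W\ge c_0k\}$ with $\|f''\|_\infty\le C/k^3$, Taylor-expanded at $\mu=k-1$. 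If you prefer to keep your two-sided $\mathcal G$, replace Cauchy--Schwarz by $\1_{\{|w-1|>1/4\}}\le 16(w-1)^2$ on $\{w\ge c_0\}$, where the logarithm is $c_0^{-1}$-Lipschitz.

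Second, everything then reduces to $k^{-3}\E[(\xi^2+\xi'^2)(X^TP_EX-(k-1))^2]$. This requires the index-matching bound $\E[(u^TX)^2(X^TRX-r)^2]\le C(M^2r+B^2M)$ of Lemma~\ref{lem:exact-mixed-six}. There $B^2$ enters only through the single all-equal-index term $\sum_ju_j^2r_{jj}^2\E X_j^2(X_j^2-1)^2$, which H\"older cannot isolate.

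With these two ingredients your covariance computation closes the argument. Its correct weight is $\E X_j^4-3$ rather than $\E X_j^4-1$, since $u^TP_Eu=0$, but this is harmless.
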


\vspace{0.5cm}

We apply Lemma~\ref{lem:exact-rank-stability} with
\[
 \begin{aligned}
 X&=a_{i+1},\qquad P=P_i,\qquad P'=P_i^{(r)},\\
 k&=k_i,\qquad B=(K_0n\ell^{3/4})^{1/2}.
 \end{aligned}
\]
The geometric assumption of the lemma was verified above.
The moment and density assumptions follow from
Proposition~\ref{prop:exact-growing-fourth}.
Moreover, \eqref{eq:exact-growing-support} gives
\[
 |a_{i+1,j}|\le (K_0n\ell^{3/4})^{1/2}
\]
for the bulk row $a_{i+1}$.

Therefore, uniformly in the realized rows,
\[
 \begin{aligned}
 \left|\Gamma_i(P_i)-\Gamma_i(P_i^{(r)})\right|
 &\le C\Biggl\{
 \frac{M^2}{k_i^2}
 +\frac{K_0n\ell^{3/4}M}{k_i^3}
 +\left(H+\log\left(e+\frac{K_0n^2\ell^{3/4}}{k_i}\right)
 +\frac{K_0n^2\ell^{3/4}}{k_i}\right)e^{-ck_i}
 \Biggr\}.
 \end{aligned}
\]
Summing this estimate over the indices appearing in
$F-F^{(r)}$ gives
\begin{equation}\label{ex:eq:row-influence}
 \begin{aligned}
 |F-F^{(r)}|
 &\le C\sum_{k\ge s_2}\Biggl\{
 \frac{M^2}{k^2}+\frac{K_0n\ell^{3/4}M}{k^3}
 +\left(H+\log\left(e+\frac{K_0n^2\ell^{3/4}}{k}\right)
 +\frac{K_0n^2\ell^{3/4}}{k}\right)e^{-ck}
 \Biggr\}\\
 &\le C\left\{
 \frac{M^2}{s_2}+\frac{K_0n\ell^{3/4}M}{s_2^2}
 +(H+K_0n^2\ell^{3/4})e^{-cs_2}
 \right\}
 =:\Delta_n.
 \end{aligned}
\end{equation}
Here we used $\sum_{k\ge s_2}k^{-2}\le C/s_2$ and
$\sum_{k\ge s_2}k^{-3}\le C/s_2^2$.
Moreover, the factor in parentheses in the exponential term is at most
$C(H+K_0n^2\ell^{3/4})$, while
$\sum_{k\ge s_2}e^{-ck}\le Ce^{-cs_2}$.
Since $s_2=n\ell^{-20a}+O(1)$ and $M\le C\ell$ eventually in both
regimes considered in Proposition~\ref{prop:exact-growing-fourth},
Hence
\[
 \frac{M^2}{s_2}\le\frac{C\ell^{20a+2}}{n},
 \qquad
 \frac{K_0n\ell^{3/4}M}{s_2^2}
 \le\frac{C\ell^{40a+7/4}}{n}.
\]
Here the constants may depend on $K_0$.
The exponential term is smaller than every negative power of $n$.  Thus, with the explicit choice $C_a=40a+2$,
\begin{equation}\label{ex:eq:Delta-poly}
 \Delta_n\le C n^{-1}\ell^{C_a}.
\end{equation}
The functional $F$ depends only on rows that can enter some $P_i$ with $k_i\ge s_2$, hence on at most $n$ independent rows.  The Efron--Stein inequality \eqref{eq:efron-stein-definition} now yields
\begin{equation}\label{ex:eq:drift-var}
 \Var(F)\le\frac12\sum_r\E(F-F^{(r)})^2
 \le \frac n2\Delta_n^2
 \le C n^{-1}\ell^{2C_a}.
\end{equation}

Chebyshev's inequality and \eqref{ex:eq:drift-var} give
\begin{equation}\label{ex:eq:early-drift-tail}
 \Pp\left(|\mathcal D_{\mathrm{early}}|>\tfrac12\varepsilon_n\sqrt{2\ell}\right)
 \le \frac{C\ell^{2C_a}}{n\varepsilon_n^2\ell}.
\end{equation}
Because $\varepsilon_n\ge\ell^{-1}$ and $n$ dominates every fixed power of $\ell$, the right-hand side of \eqref{ex:eq:early-drift-tail} is at most $C\varepsilon_n$ for all sufficiently large $n$.  Markov's inequality and \eqref{ex:eq:terminal-drift-L1} give
\[
 \Pp\left(|\mathcal D_{\mathrm{terminal}}|>\tfrac12\varepsilon_n\sqrt{2\ell}\right)
 \le C\ell^{-23/2}\le C\varepsilon_n.
\]
This proves \eqref{ex:eq:drift-prob}. Thus we complete the proof of Lemma~\ref{lem:exact-drift}.
\end{proof}

\vspace{0.5cm}

\begin{proof}[Proof of Lemma~\ref{lem:regularized-inverse}]
Write $x_k$ for the $k$th row of $X$, let $X^{(k)}$ be obtained by deleting that row, and put
\[
 G^{(k)}=(n^{-1}X^{(k)}X^{(k)T}+\lambda I_{p-1})^{-1}.
\]
The block-inverse formula gives
\begin{equation}\label{ex:eq:regularized-block}
 G_{kk}=D_k^{-1},\qquad
 D_k=\lambda+n^{-1}x_kx_k^T
 -n^{-2}x_kX^{(k)T}G^{(k)}X^{(k)}x_k^T.
\end{equation}
Set $H_k=X^{(k)T}G^{(k)}X^{(k)}$.  From $n^{-1}X^{(k)}X^{(k)T}G^{(k)}=I_{p-1}-\lambda G^{(k)} $
we obtain
\[
 n^{-2}\tr H_k=\frac{p-1}{n}-\frac\lambda n\tr G^{(k)}.
\]
Since $\E(x_{k}x_{k}^{T} \mid X^{(k)}) = n$, $\E(x_{k}H_{k}x_{k}^{T} \mid X^{(k)}) = \tr H_{k}$, we have
\begin{equation}
  \begin{aligned}\label{eq:conditional-E-Dk}
     \E(D_k\mid X^{(k)})
      =1+\lambda - n^{-2}\tr H_{k}
      =1+\lambda-\frac{p}{n}+\frac1n+\frac\lambda n\tr G^{(k)}.    
  \end{aligned}
\end{equation}
Let $A=1+\lambda-\frac{p}{n}+\frac{\lambda}{n}\E\tr G$, and let $\varepsilon_k := D_k-A$, thus
\begin{equation}
  \begin{aligned}
    \E(\varepsilon_k \mid X^{(k)}) = \frac{1}{n} + \frac{\lambda}{n}(\tr G^{(k)} - \E\tr G). \nonumber
  \end{aligned}
\end{equation}
We first prove
\begin{equation}\label{ex:eq:eps-bounds}
 |\E\varepsilon_k|\le Cn^{-1},\qquad
 \E\varepsilon_k^2\le CMn^{-1}.
\end{equation}

To compare the traces after deleting one row, introduce the dual
resolvents
\begin{equation}
  \begin{aligned}\label{eq:dual-resolvent-def}
    \widehat G=(n^{-1}X^TX+\lambda I_n)^{-1},
    \qquad
    \widehat G^{(k)}
    =
    (n^{-1}X^{(k)T}X^{(k)}+\lambda I_n)^{-1}.    
  \end{aligned}
\end{equation}
Since
\[
 n^{-1}X^TX
 =
 n^{-1}X^{(k)T}X^{(k)}
 +
 n^{-1}x_k^Tx_k,
\]
Lemma~\ref{lem:projection-identities}, applied with
\[
 K=n^{-1}X^{(k)T}X^{(k)},
 \qquad
 v=n^{-1/2}x_k^T,
\]
gives
\[
 0\le
 \tr\widehat G^{(k)}-\tr\widehat G
 \le\lambda^{-1}.
\]
Since $BB^T$ and $B^TB$ have the same nonzero eigenvalues,
\begin{equation}
  \begin{aligned}\label{eq:dual-trace-diff}
    \tr G
    =
    \tr\widehat G-\frac{n-p}{\lambda},
    \qquad
    \tr G^{(k)}
    =
    \tr\widehat G^{(k)}-\frac{n-p+1}{\lambda}.    
  \end{aligned}
\end{equation}
Consequently,
\[
 0\le
 \tr G-\tr G^{(k)}
 =
 \lambda^{-1}
 -
 \{\tr\widehat G^{(k)}-\tr\widehat G\}
 \le\lambda^{-1}.
\]
Hence $|\E D_k-A|\le C/n$.  This is the first assertion in \eqref{ex:eq:eps-bounds}.

For the second assertion, since $\E \varepsilon_{k}^{2} = \Var(D_{k}) + (\E D_{k} - A)^{2}$. Using the conditional variance decomposition and  the first assertion in \eqref{ex:eq:eps-bounds}, we have
\begin{equation}
  \begin{aligned}\label{eq:second-assertion-decompose}
    \E \varepsilon_{k}^{2} = \E (\Var(D_{k}\mid X^{(k)})) + \Var(\E(D_{k}\mid X^{(k)})) + \frac{C}{n^{2}}. 
  \end{aligned}
\end{equation}
Considering, $\E (\Var(D_{k}\mid X^{(k)}))$, by \eqref{ex:eq:regularized-block} and \eqref{eq:conditional-E-Dk},
\begin{equation}
  \begin{aligned}
    D_{k} - \E (D_{k} \mid X^{(k)}) = \frac{1}{n}\sum_{j = 1}^{n}(x_{kj}^{2}-1) - \frac{1}{n^{2}}(x_{k}H_{k}x_{k}^{T} - \tr H_{k}). \nonumber
  \end{aligned}
\end{equation}
Using $(a-b)^{2} \leq 2a^{2} + 2b^{2}$, we have $\Var(D_{k}\mid X^{(k)})\leq 2 \Var( \frac{1}{n}\sum_{j = 1}^{n}(x_{kj}^{2}-1) \mid X^{(k)}) + 2\Var(\frac{1}{n^{2}}(x_{k}H_{k}x_{k}^{T} - \tr H_{k}) \mid X^{(k)})$. Using the assumption of fourth moment bound $M$ and Lemma~\ref{lem:quadratic-forms} \eqref{eq:var-part}, we have
\begin{equation}
  \begin{aligned}
    \Var \left( \frac{1}{n}\sum_{j = 1}^{n}(x_{kj}^{2}-1) \mid X^{(k)} \right) =& \frac{1}{n^{2}}\sum_{j = 1}^{n}(\E X_{kj}^{4} - 1) \leq \frac{M}{n}.\\
    \Var \left(\frac{1}{n^{2}}(x_{k}H_{k}x_{k}^{T} - \tr H_{k}) \mid X^{(k)} \right) \leq & CMn^{-4}\tr H_k^2. \nonumber
  \end{aligned}
\end{equation}
If $\eta_1,\ldots,\eta_{p-1}$ are the eigenvalues of
$n^{-1}X^{(k)}X^{(k)T}$, then the nonzero eigenvalues of $H_k$ are
$n\eta_j/(\eta_j+\lambda)$.  Thus each is at most $n$ and
$\tr H_k^2\le(p-1)n^2\le n^3$.  Consequently
\begin{equation}
  \begin{aligned}\label{eq:second-assertion-first-part}
     \E\{\Var(D_k\mid X^{(k)})\}\le CM/n.    
  \end{aligned}
\end{equation}
Then we consider $\Var(\E(D_{k}\mid X^{(k)}))$. By \eqref{eq:conditional-E-Dk},
$\E(D_{k}\mid X^{(k)}) = 1+\lambda-p/n+n^{-1}+\lambda n^{-1}\tr G^{(k)}$. Thus 
\begin{equation}
  \begin{aligned}
    \Var(\E(D_{k}\mid X^{(k)})) = \lambda^{2}\Var(n^{-1}\tr G^{(k)}). \nonumber
  \end{aligned}
\end{equation}
Reveal the rows of $X^{(k)}$ one by one.  Similar to \eqref{eq:dual-resolvent-def} and \eqref{eq:dual-trace-diff}, replacing one row changes
$n^{-1}X^{(k)}X^{(k)T}$ in rank at most two; the trace of its regularized inverse changes by at most $2\lambda^{-1}$.  We now use the Efron--Stein inequality.  If $F=F(X_1,\ldots,X_N)$ is a square-integrable function of independent inputs and $F^{(r)}$ is obtained by replacing $X_r$ by an independent copy, then
\begin{equation}\label{eq:efron-stein-definition}
 \Var(F)\le\frac12\sum_{r=1}^N\E(F-F^{(r)})^2.
\end{equation}
Applying \eqref{eq:efron-stein-definition} to the row inputs gives
\[
 \Var(n^{-1}\tr G^{(k)})
 \le C(p-1)(n\lambda)^{-2}\le \frac{C}{n\lambda^2}.
\]
After multiplication by $\lambda^2$, $\Var(\E(D_{k}\mid X^{(k)}))$ contributes $C/n$. Combining with \eqref{eq:second-assertion-decompose} and \eqref{eq:second-assertion-first-part} proves the second assertion in \eqref{ex:eq:eps-bounds}.

Since $A$ is deterministic and does not depend on $k$, the exact identity
$\frac1{A+z}=\frac1A-\frac z{A^2}+\frac{z^2}{A^2(A+z)}$ with $z=\varepsilon_k$ gives
\begin{equation}
  \begin{aligned}
    \frac{1}{D_{k}} = \frac{1}{A} - \frac{\varepsilon_{k}}{A^{2}} + \frac{\varepsilon_{k}^{2}}{A^{2}D_{k}}. \nonumber
  \end{aligned}
\end{equation}
Averaging and taking expectation, and by \eqref{ex:eq:regularized-block}, $G_{kk} = D_{k}^{-1}$, we have
\begin{equation}\label{ex:eq:regularized-fixed-point}
 \frac{1}{p}\E\tr G = \frac{1}{p}\sum_{k = 1}^{p}\R G_{kk} = \frac{1}{p}\sum_{k = 1}^{p}\E \frac{1}{D_{k}} =\frac1{A}+\Delta,
 \quad
 |\Delta|\le C\{n^{-1}\lambda^{-2}+Mn^{-1}\lambda^{-3}\},
\end{equation}
where $\Delta = -\frac{1}{pA^{2}}\sum_{k = 1}^{p}\E \varepsilon_{k} + \frac{1}{pA^{2}}\sum_{k = 1}^{p}\E\frac{\varepsilon_{k}^{2}}{D_{k}}$.
Indeed, before \eqref{eq:second-assertion-first-part}, we know that all eigenvalues of $H_{k}$ are less or equal to $n$. Thus $H_{k}\preceq nI_{n}$, and therefore $n^{-1}I_{n} - n^{-2}H_{k} \succeq 0$. By the definition of $D_{k}$ in \eqref{ex:eq:regularized-block} and the definiton of $H_{k}$, we have $D_{k} = \lambda + x_{k}( n^{-1}I_{n} - n^{-2}H_{k} )x_{k}^{T} \geq \lambda$. Moreover, $A = 1+\lambda -pn^{-1}+ \lambda n^{-1}\E \tr G \geq \lambda$.
Using $A,D_k\ge\lambda$ and \eqref{ex:eq:eps-bounds}, the linear term is bounded by
$A^{-2}|\E\varepsilon_k|\le Cn^{-1}\lambda^{-2}$ and the quadratic remainder by
$A^{-2}\E\varepsilon_k^2/D_k\le CMn^{-1}\lambda^{-3}$.

The positive solution of $s=(1+\lambda-y+y\lambda s)^{-1}$ is $s_y(\lambda)$.  Since
$f(t)=t-(1+\lambda-y+y\lambda t)^{-1}$ satisfies $f'(t)\ge1$, $f(\frac{1}{p}\E\tr G) = \Delta$ and $f(s_{y}(\lambda)) = 0$.
The mean-value theorem and \eqref{ex:eq:regularized-fixed-point} imply
$|p^{-1}\E\tr G - s_y(\lambda)|\le|\Delta|$.  Multiplication by $y$ and the choice $\lambda=n^{-1/6}$ prove \eqref{ex:eq:regularized-mean}.

For the variance, reveal the $n$ columns.  Replacing one column changes $n^{-1}XX^T$ by the difference of two rank-one positive semidefinite matrices. Using Lemma~\ref{lem:projection-identities}, it follows that addition or deletion of one such matrix changes the normalized trace $\frac{1}{n}\tr G$ by at most $1/(n\lambda)$, and a replacement by at most $2/(n\lambda)$.  The Efron--Stein inequality \eqref{eq:efron-stein-definition} therefore gives
\[
 \Var\left(\frac{1}{n}\tr G \right)\le Cn(n\lambda)^{-2}=\frac{C}{n\lambda^2}=Cn^{-2/3}.
\]
Thus we have proved \eqref{ex:eq:regularized-variance} and we complete the proof of Lemma~\ref{lem:regularized-inverse}.
\end{proof}

\vspace{0.5cm}

\begin{proof}[Proof of Lemma~\ref{lem:exact-rank-stability}]
If $P=P'$, there is nothing to prove.  Otherwise their common range has dimension $k-1$.  Let $R$ be the projection onto that common range.  There are unit vectors $u,v\in\ker R$ such that
\[
 P=R+uu^T,\qquad P'=R+vv^T.
\]
Put
\[
 W=X^TRX,\qquad U=u^TX,\qquad V=v^TX,
 \qquad \mu=\E W=k-1.
\]
Let $u_0$ be the threshold in Lemma~\ref{lem:projection-density} and set
$c_0=e^{-u_0}/4$.  Since $R$ has rank $k-1$, for all sufficiently large $k$,
\[
 \{W<c_0k\}\subseteq
 \left\{\frac{W}{k-1}\le e^{-u_0}\right\}.
\]
Consequently, after decreasing $c>0$ if necessary,
\begin{equation}\label{ex:eq:W-small}
 \Pp(W<c_0k)\le e^{-ck}.
\end{equation}
For large $n$, $c_0k>k\theta$.  On $\mathcal G=\{W\ge c_0k\}$ the lower truncation is inactive, and
\begin{equation}\label{ex:eq:log-rank-exp}
 \log(W+U^2)-\log(W+V^2)
 =\frac{U^2-V^2}{W}+\rho,
 \qquad |\rho|\le\frac{U^4+V^4}{2W^2}.
\end{equation}
The fourth moment of a unit linear form is at most $CM$, hence
\begin{equation}\label{ex:eq:rank-rem}
 \E(|\rho|;\mathcal G)\le CM/k^2.
\end{equation}
Since $X^TPX\le\|X\|^2\le nB^2$, on $\mathcal G^c$ both logarithms after lower truncation have absolute value at most
$H+\log(e+nB^2/k)$.  Hence \eqref{ex:eq:W-small} controls the bad-event contribution by
$C\{H+\log(e+nB^2/k)\}e^{-ck}$.

Choose a twice continuously differentiable function $f:[0,\infty)\to\R$ such that
$f(w)=w^{-1}$ for $w\ge c_0k$, $f(w)=0$ for $w\le c_0k/2$, and
\[
 \|f\|_\infty\le C/k,
 \qquad \|f'\|_\infty\le C/k^2,
 \qquad \|f''\|_\infty\le C/k^3.
\]
The two expressions differ only on $\{W<c_0k\}$.  Since
$|U|,|V|\le B\sqrt n$ and $\|f\|_\infty\le C/k$,
\begin{align}
 &\left|\E[(U^2-V^2)W^{-1};\mathcal G]
       -\E[(U^2-V^2)f(W)]\right|\notag\\
 &\qquad\le \frac Ck\E[(U^2+V^2)\1_{\{W<c_0k\}}]
 \le C\frac{nB^2}{k}e^{-ck}.
 \label{ex:eq:f-transition}
\end{align}
Taylor expansion of $f$ at $\mu$ gives
\begin{align}
 \E[(U^2-V^2)f(W)]
 &=f'(\mu)\E[(U^2-V^2)(W-\mu)]+\mathcal R,\label{ex:eq:f-expand}\\
 |\mathcal R|
 &\le\frac{C}{k^3}\E[(U^2+V^2)(W-\mu)^2].\label{ex:eq:f-rem}
\end{align}
The constant term vanishes because $\E U^2=\E V^2=1$.  For deterministic symmetric matrices $C,D$, expanding the four indices and retaining only pairings and all-equal indices gives the exact identity
\begin{equation}\label{ex:eq:qf-covariance}
 \Cov(X^TCX,X^TDX)
 =2\tr(CD)+\sum_j(\E X_j^4-3)c_{jj}d_{jj}.
\end{equation}
Apply this with $C=uu^T$ and $D=R$.  Since $Ru=0$,
\[
 \E[(U^2-1)(W-\mu)]
 =2u^TRu+\sum_j(\E X_j^4-3)u_j^2r_{jj}
 =\sum_j(\E X_j^4-3)u_j^2r_{jj}.
\]
Its absolute value is at most $CM$; the same holds with $v$.
Thus the first term in \eqref{ex:eq:f-expand} is $O(M/k^2)$.

It remains to control the Taylor remainder in
\eqref{ex:eq:f-rem}.  Since
\[
 U=u^TX,\qquad
 V=v^TX,\qquad
 W-\mu=X^TRX-(k-1),
\]
we need to estimate
\[
 \begin{aligned}
 \E[(U^2+V^2)(W-\mu)^2]
 &=
 \E\left[
 (u^TX)^2\{X^TRX-(k-1)\}^2
 \right]\\
 &\quad+
 \E\left[
 (v^TX)^2\{X^TRX-(k-1)\}^2
 \right].
 \end{aligned}
\]
Thus we need a bound for the mixed moment
\[
 \E\left[
 (w^TX)^2\{X^TRX-r\}^2
 \right]
\]
when $w$ is a unit vector and $R$ is a rank-$r$ projection.
The following Lemma~\ref{lem:exact-mixed-six} gives exactly this estimate.

\vspace{0.5cm}

\begin{lemma}\label{lem:exact-mixed-six}
Let $X=(X_1,\ldots,X_n)^T$ have independent centered variance-one coordinates satisfying $|X_j|\le B$ and $\E|X_j|^4\le M$.  Let $u$ be a unit vector and let $R$ be an orthogonal projection of rank $r$.  Then
\begin{equation}\label{ex:eq:mixed-six}
 \E\bigl[(u^TX)^2\{X^TRX-r\}^2\bigr]
 \le C\{M^2r+B^2M\}.
\end{equation}
\end{lemma}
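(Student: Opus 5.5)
The plan is to expand everything in coordinates and bound the resulting sixth-order moment sum by counting index coincidences, using that the coordinates are independent, centered, and of unit variance. Write $Y=u^TX=\sum_a u_aX_a$ and
\[
 Q=X^TRX-r=\sum_j r_{jj}(X_j^2-1)+\sum_{j\ne l}r_{jl}X_jX_l=:Q_d+Q_o .
\]
Since $(Q_d+Q_o)^2\le 2Q_d^2+2Q_o^2$, it is enough to bound $\E[Y^2Q_d^2]$ and $\E[Y^2Q_o^2]$ separately. In each case I expand $Y^2=\sum_{a,b}u_au_bX_aX_b$ and keep only those index patterns in which no variable appears to the first power; all other terms vanish by independence and centering.

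For the diagonal term,
\[
 \E[Y^2Q_d^2]=\sum_{a,b,j,l}u_au_br_{jj}r_{ll}\,\E[X_aX_b(X_j^2-1)(X_l^2-1)].
\]
A nonzero contribution requires $a=b$, and then either $j=l$ or the factors $(X_j^2-1)$ and $(X_l^2-1)$ are killed unless they coincide with $a$. The terms with $a\notin\{j,l\}$ and $j=l$ give at most $\sum_a u_a^2\sum_j r_{jj}^2\,\E(X_j^2-1)^2\le M\sum_j r_{jj}^2\le Mr$, using $r_{jj}\in[0,1]$ and $\sum_j r_{jj}=r$. The terms in which $a$ coincides with $j$ or $l$ produce moments such as $\E[X_a^2(X_a^2-1)]\le M$ times $\E(X_l^2-1)^2\le M$ when $a=j\ne l$; these require $l=j$, or otherwise they vanish because $\E(X_l^2-1)=0$. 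They therefore vanish or reduce to the all-equal case $a=j=l$. That case involves $\E[X_a^2(X_a^2-1)^2]\le B^2M$, since the sixth moment is bounded by $B^2\E X_a^4$, and it contributes at most $B^2M\sum_a u_a^2r_{aa}^2\le B^2M$.

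For the off-diagonal term,
\[
 \E[Y^2Q_o^2]=\sum u_au_br_{jl}r_{j'l'}\,\E[X_aX_bX_jX_lX_{j'}X_{l'}]
\]
with $j\ne l$ and $j'\ne l'$. The generic pairing $a=b$, $\{j,l\}=\{j',l'\}$ gives at most $2\sum_a u_a^2\sum_{j\ne l}r_{jl}^2\,\E[X_a^2X_j^2X_l^2]$. When $a\notin\{j,l\}$ the expectation equals $1$; when $a\in\{j,l\}$ it is at most $M$, and the total is bounded by $CM\,\tr R^2=CMr$. The cross pairings $a=j$, $b=j'$, $l=l'$ (with $a\ne l$) give
\[
 \sum_l\Bigl(\sum_a u_ar_{al}\Bigr)^2\lesssim\|Ru\|^2\le1,
\]
up to diagonal corrections $\sum_a u_a^2r_{aa}^2\le1$ that are handled the same way; the extra factors of $M$ come only from coincidences of the form $a=j=j'$. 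Patterns with a triple coincidence $a=b=j=j'$ produce $\E X_a^4\,\E X_l^2\le M$ times $\sum_a u_a^2\sum_l r_{al}^2\le1$. No pattern produces an isolated sixth moment of a single variable without being multiplied by $u_a^2$ and bounded weights, because $j\ne l$ forces at least two distinct indices.

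Collecting terms gives $C\{Mr+M+B^2M\}\le C\{M^2r+B^2M\}$, since $M\ge1$ and $r\ge1$. This is in fact slightly better than the stated bound, which suffices. The main obstacle is the bookkeeping in the off-diagonal expansion: enumerating all coincidence patterns among the six indices and checking that each is controlled by $\tr R^2=r$, $\|Ru\|\le1$, or $\sum_a u_a^2=1$, rather than by $n$. To make this clean, I would instead use the decoupling already employed in the proof of \eqref{eq:off-diag-part}. Conditionally on the coordinates indexed by the Bernoulli selectors, $Q_o$ becomes a linear form. Then $\E[Y^2Q_o^2]$ can be bounded by Cauchy--Schwarz only on the generic part, while the coincidence part, where $a$ equals some index of $Q_o$, is handled by conditioning on $X_a$ and using $|X_a|\le B$.
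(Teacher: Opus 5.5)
There is a gap in the enumeration. You split with $(Q_d+Q_o)^2\le 2Q_d^2+2Q_o^2$ and then count indices exactly; that plan works, and it differs from the paper's. The problem is that you drop every index pattern that produces a third moment. These terms do not vanish, because the $X_j$ are only centered, not symmetric (in the application they come from the moment-preserving truncation and are genuinely skewed).

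\textbf{Diagonal sum.} The claim that a nonzero contribution requires $a=b$ is false. For $a\ne b$, the pattern $a=j$, $b=l$ (or $a=l$, $b=j$) gives $\E[X_a(X_a^2-1)]\,\E[X_b(X_b^2-1)]=\E X_a^3\,\E X_b^3$. Hence $\E[Y^2Q_d^2]$ contains
\[
 2\sum_{a\ne b}u_au_b\,r_{aa}r_{bb}\,\E X_a^3\,\E X_b^3 ,
\]
which is exactly the paper's term $\E O_AD_R^2$. It is not lower order. Take $R$ a coordinate projection of rank $r$, $u$ the normalized indicator of its range, and i.i.d.\ skewed entries with $(\E X_1^3)^2\asymp\E X_1^4\asymp M$ (e.g.\ normalized centered Bernoulli variables). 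Then this term is positive and of order $Mr$, the same order as the term $M\sum_j r_{jj}^2$ you keep.

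\textbf{Off-diagonal sum.} You work out the coincidence $a=j=j'$ only in the $\{4,2\}$ form $a=b=j=j'$. The two-blocks-of-three pattern $a=j=j'$, $b=l=l'$ with $a\ne b$ (and its relabellings) is not covered. It contributes a constant multiple of $\sum_{a\ne b}u_au_b\,r_{ab}^2\,\E X_a^3\,\E X_b^3$.

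\textbf{Repair.} Both terms are easy to bound once kept. Since $|\E X_a^3|\le(\E X_a^2\,\E X_a^4)^{1/2}\le M^{1/2}$, the first is at most
\[
 2M\Bigl(\sum_a|u_a|r_{aa}\Bigr)^2\le 2M\sum_a u_a^2\sum_a r_{aa}^2\le 2Mr .
\]
For the second, use $|u_a||u_b|\le\tfrac12(u_a^2+u_b^2)$ and $R^2=R$:
\[
 M\sum_{a,b}|u_a||u_b|r_{ab}^2\le M\sum_a u_a^2\sum_b r_{ab}^2=M\sum_a u_a^2r_{aa}\le M .
\]
With these added, your count gives $C\{Mr+B^2M\}$. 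So the lemma, and even your sharper form, survives, and the closing decoupling remark becomes unnecessary.

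\textbf{Comparison with the paper.} The paper expands $(1+D_A+O_A)(D_R+O_R)^2$ into nine terms and bounds most of them by Cauchy--Schwarz, which costs the factors $M^{3/2}r$ and $M^2r$. Your splitting removes the mixed terms $\E[Y^2Q_dQ_o]$ entirely, and exact pairing gives the better factor $Mr$. The price is that the coincidence patterns must be listed completely, including those where an index occurs three times, and that is exactly where your write-up slips.
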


\vspace{0.5cm}

\begin{proof}[Proof of Lemma~\ref{lem:exact-mixed-six}]
Put $A=uu^T$.  For a symmetric matrix $C$, write
\[
 D_C=\sum_jc_{jj}(X_j^2-1),\qquad
 O_C=\sum_{j\ne k}c_{jk}X_jX_k.
\]
Then $X^TAX=1+D_A+O_A$ and $X^TRX-r=D_R+O_R$.  The elementary moment bounds
\begin{align*}
 \E D_A^2&\le CM,& \E D_R^2&\le CMr,\\
 \E O_A^2&\le C,& \E O_A^4&\le CM^2,
 &\E O_R^4&\le CM^2r^2
\end{align*}
follow from independence and Lemma~\ref{lem:quadratic-forms} \eqref{eq:off-diag-part}.  Also
$\E(D_R+O_R)^2\le CMr$.

Expand
\begin{align*}
 (1+D_A+O_A)(D_R+O_R)^2
 &=D_R^2+2D_RO_R+O_R^2+D_AD_R^2+2D_AD_RO_R+D_AO_R^2\\
 &\quad+O_AD_R^2+2O_AD_RO_R+O_AO_R^2.
\end{align*}
The expectation of $2D_RO_R$ is zero, and the expectation of the first line without $D_A$ is
$\E(D_R+O_R)^2=O(Mr)$.  We now bound the six terms containing $D_A$ or $O_A$.
For $\E D_AD_R^2$, independence and the centering of $X_j^2-1$ force all three diagonal indices to coincide; hence
\[
 |\E D_AD_R^2|
 \le C\max_j\E(|X_j|^2+1)^3\sum_ja_{jj}r_{jj}^2
 \le CB^2M.
\]
For $\E D_AD_RO_R$, a nonzero term must match the two off-diagonal indices with the two diagonal indices.  Since $|\E X_j^3|\le M^{1/2}$,
\[
 |\E D_AD_RO_R|
 \le CM\sum_{p\ne q}a_{pp}r_{qq}|r_{pq}|
 \le CM\sqrt r.
\]
Here $R^2=R$ implies $\sum_qr_{pq}^2=r_{pp}$, while
$\sum_qr_{qq}^2\le\sum_qr_{qq}=r$.  Thus
\[
 \sum_qr_{qq}|r_{pq}|\le\sqrt{r r_{pp}},
 \qquad
 \sum_pa_{pp}\sqrt{r_{pp}}
 =\sum_pu_p^2\sqrt{r_{pp}}\le1.
\]
Cauchy--Schwarz then gives
\[
 |\E D_AO_R^2|\le(\E D_A^2)^{1/2}(\E O_R^4)^{1/2}
 \le CM^{3/2}r.
\]
Likewise, direct index matching gives
\begin{align*}
 \E O_AD_R^2
 &=2\sum_{p\ne q}u_pu_qr_{pp}r_{qq}
   \E X_p^3\E X_q^3,\\
 |\E O_AD_R^2|
 &\le CM\left(\sum_p|u_p|r_{pp}\right)^2
 \le CMr,
\end{align*}
where
$\sum_p|u_p|r_{pp}\le(\sum_pu_p^2r_{pp})^{1/2}(\sum_pr_{pp})^{1/2}\le\sqrt r$.
Furthermore,
\[
 |\E O_AD_RO_R|
 \le(\E D_R^2)^{1/2}(\E O_A^2O_R^2)^{1/2}
 \le C M^{3/2}r,
\]
because $\E O_A^2O_R^2\le(\E O_A^4\E O_R^4)^{1/2}\le CM^2r$.  Finally,
$|\E O_AO_R^2|\le(\E O_A^2)^{1/2}(\E O_R^4)^{1/2}\le CMr$.
The displayed estimates cover, respectively,
$D_AD_R^2$, $D_AD_RO_R$, $D_AO_R^2$, $O_AD_R^2$, $O_AD_RO_R$, and $O_AO_R^2$.
Since $M\ge1$ and $r\ge1$, all of them except the all-equal sixth-order diagonal contribution are bounded by $CM^2r$; that exceptional contribution is bounded by $CB^2M$.  This proves \eqref{ex:eq:mixed-six}.
\end{proof}

\vspace{0.5cm}

We now return to the proof of Lemma~\ref{lem:exact-rank-stability}.
Using Lemma~\ref{lem:exact-mixed-six}, applied to $u$ and $v$, bounds \eqref{ex:eq:f-rem} by
\begin{equation}\label{ex:eq:rank-taylor-remainder}
 C\left(\frac{M^2}{k^2}+\frac{B^2M}{k^3}\right).
\end{equation}
The good-event logarithmic remainder in \eqref{ex:eq:rank-rem} is $CM/k^2$, the linear term is $CM/k^2$, and the Taylor remainder is bounded by \eqref{ex:eq:rank-taylor-remainder}.  Because $M\ge 1$, both $M/k^2$ terms are absorbed by $CM^2/k^2$.  The transition error \eqref{ex:eq:f-transition} and the two bad-event logarithms contribute at most
\[
 C\left\{H+\log\left(e+\frac{nB^2}{k}\right)+\frac{nB^2}{k}\right\}e^{-ck}.
\]
Together with \eqref{ex:eq:log-rank-exp}, these bounds give exactly \eqref{ex:eq:rank-stability}. Thus we complete the proof of Lemma~\ref{lem:exact-rank-stability}.
\end{proof}

\vspace{0.5cm}

\section{Proof of examples}\label{sec:proof-examples}
We first verify the properties of the rare-spike random variables used
in Examples~\ref{ex:det-threshold} and~\ref{ex:exact-threshold}.  The
same argument applies to both examples; only the function $q$ changes.

\vspace{0.5cm}

\begin{lemma}\label{lem:spike-law}
Let
\[
 q(x)=\sqrt{\log(e+x)}
 \qquad\text{or}\qquad
 q(x)=\log(e+x),
\]
and let $Y$ be a fixed symmetric random variable with variance one and
a $C^\infty$ density supported on a bounded interval.  For $m\ge1$,
set
\begin{equation}\label{eq:spike-dimensions}
 B_m=e^m,
 \qquad
 N_m=
 \left\lfloor\frac{B_m^4}{q(B_m)}\right\rfloor,
 \qquad
 p_m=N_m^{-1},
 \qquad
 \tau_m=p_mB_m^2.
\end{equation}
Let $\eta_m$ be Bernoulli with parameter $p_m$, let $\varepsilon_m$
take the values $1$ and $-1$ with equal probability, and assume that
$Y,\eta_m,\varepsilon_m$ are independent.  Define
\begin{equation}\label{eq:spike-variable}
 \xi_m
 =
 \frac{Y+\eta_m\varepsilon_mB_m}{\sqrt{1+\tau_m}}.
\end{equation}
Then $(N_m)_{m\ge1}$ is strictly increasing,
\[
 N_m\longrightarrow\infty,
 \qquad
 \tau_m\longrightarrow0,
\]
and the variables $\xi_m$ are centered, have variance one, and have
$C^\infty$ densities bounded by a common constant.

Put
\[
 \kappa_m:=\E\xi_m^4-3,
 \qquad
 m_{8,m}:=\E|\xi_m|^8.
\]
Then
\begin{align}
 \kappa_m
 &=(1+o(1))q(B_m),
 \label{eq:spike-fourth-correction}\\
 m_{8,m}
 &\le CN_mq(B_m)^2,
 \label{eq:spike-eighth}\\
 \sup_{m\ge1}
 \E\frac{|\xi_m|^4}{q(|\xi_m|)}
 &<\infty.
 \label{eq:spike-critical-L1}
\end{align}
However, the family
\begin{equation}\label{eq:spike-critical-not-ui}
 \left\{
 \frac{|\xi_m|^4}{q(|\xi_m|)}:
 m\ge1
 \right\}
\end{equation}
is not uniformly integrable.

If $\ell_m=\log N_m$, then
\begin{align}
 q(x)=\sqrt{\log(e+x)}
 \quad&\Longrightarrow\quad
 \frac{p_mB_m^4}{\sqrt{\ell_m}}\longrightarrow\frac12,
 \quad
 \frac{\kappa_m}{\sqrt{\ell_m}}\longrightarrow\frac12,
 \quad
 m_{8,m}\le CN_m\ell_m,
 \label{eq:spike-det-scales}\\
 q(x)=\log(e+x)
 \quad&\Longrightarrow\quad
 \frac{p_mB_m^4}{\ell_m}\longrightarrow\frac14,
 \quad
 \frac{\kappa_m}{\ell_m}\longrightarrow\frac14,
 \quad
 m_{8,m}\le CN_m\ell_m^2.
 \label{eq:spike-exact-scales}
\end{align}
\end{lemma}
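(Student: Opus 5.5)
The lemma is elementary, and I will verify its claims in order by direct computation with the two-point spike.

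\textbf{Basic properties.} Since $B_m=e^m$ and $q(B_m)$ grows only logarithmically, $B_m^4/q(B_m)$ increases by a factor of roughly $e^4$ per step. Hence $N_m$ is strictly increasing and tends to infinity. Moreover $\tau_m=B_m^2/N_m\asymp q(B_m)/B_m^2\to0$.

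Centering follows from the symmetry of $Y$ and of $\varepsilon_m$. The variance is $(1+p_mB_m^2)/(1+\tau_m)=1$.

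The density of $\xi_m$ is a mixture of $(1-p_m)$ times a rescaled density of $Y$ and two shifted copies with weight $p_m/2$. Since $\sqrt{1+\tau_m}\le 2$, the scaling factor is bounded, so all these densities are $C^\infty$ with a common bound, $2\|f_Y\|_\infty$ say.

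\textbf{Moments.} I expand the fourth moment. Using that odd moments vanish,
\[
\E\xi_m^4=(1+\tau_m)^{-2}\bigl(\E Y^4+6p_mB_m^2+p_mB_m^4\bigr).
\]
Here $p_mB_m^4=B_m^4/N_m=(1+o(1))q(B_m)$, the term $6\tau_m$ tends to $0$, and $\E Y^4$ is bounded. This gives $\kappa_m=(1+o(1))q(B_m)$.

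For the eighth moment, the same expansion gives $m_{8,m}\le C(1+p_mB_m^8)$. Since
\[
p_mB_m^8=B_m^8/N_m\asymp N_m q(B_m)^2,
\]
we get $m_{8,m}\le CN_mq(B_m)^2$.

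For the $L^1$ bound, I split on $\eta_m$. When $\eta_m=0$, $|\xi_m|$ is bounded. When $\eta_m=1$, $|\xi_m|\asymp B_m$, so $q(|\xi_m|)\asymp q(B_m)$ because $q$ is slowly varying. The ratio then contributes $p_m B_m^4/q(B_m)\cdot O(1)=O(1)$.

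\textbf{Failure of uniform integrability.} On the event $\{\eta_m=1\}$, the variable $|\xi_m|^4/q(|\xi_m|)$ exceeds any fixed level $t$ once $m$ is large. The expectation over that event is $\ge c>0$, with $c$ coming from $p_mB_m^4/q(B_m)\to1$. So $\sup_m\E[\,\cdot\,;\,\cdot>t]\ge c$ for every $t$, and the family is not uniformly integrable.

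\textbf{Scales.} I compute $\ell_m=\log N_m=4m-\log q(B_m)+o(1)=4m(1+o(1))$, and $\log(e+B_m)=m+o(1)$.

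For $q=\sqrt{\log(e+\cdot)}$: $q(B_m)\sim\sqrt m\sim\tfrac12\sqrt{\ell_m}$. This gives both limits $\tfrac12$, and $m_{8,m}\le CN_m m\le CN_m\ell_m$.

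For $q=\log(e+\cdot)$: $q(B_m)\sim m\sim\ell_m/4$. This gives the limits $\tfrac14$, and $m_{8,m}\le CN_m\ell_m^2$.

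The only mildly delicate point is that $q(|\xi_m|)\asymp q(B_m)$ on the spike event, uniformly in $Y$. This holds because $Y$ is bounded, so $|\xi_m|\in[B_m/2,2B_m]$ for large $m$.
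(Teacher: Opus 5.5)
Your proposal is correct and follows essentially the same route as the paper's proof. Both arguments use the same moment expansion $\E(Y+\eta_m\varepsilon_mB_m)^4=\E Y^4+6\tau_m+p_mB_m^4$, the same bound $m_{8,m}\le C(1+p_mB_m^8)$, the same split on $\{\eta_m=1\}$ with $|\xi_m|\asymp B_m$ (used for both the $L^1$ bound and the failure of uniform integrability), and the same asymptotics $\ell_m=4\log B_m-\log q(B_m)+o(1)$. The only informal step is the monotonicity of $N_m$. The paper makes it precise via $q(B_{m+1})\le 2q(B_m)$, which gives $x_{m+1}\ge\tfrac{e^4}{2}x_m$ for $x_m=B_m^4/q(B_m)$ with $x_1>1$, so consecutive $x_m$ differ by more than one and the floors strictly increase.
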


\vspace{0.5cm}

\begin{proof}[Proof of Lemma~\ref{lem:spike-law}]
Set
\[
 x_m:=\frac{B_m^4}{q(B_m)}.
\]
For either choice of $q$, one has $x_m\to\infty$ and
\[
 N_m=(1+o(1))x_m
 =(1+o(1))\frac{B_m^4}{q(B_m)}.
\]
Moreover, $\log(e+B_{m+1})\le2\log(e+B_m)$ for $m\ge1$, and hence
$q(B_{m+1})\le2q(B_m)$.  It follows that
\[
 x_{m+1}
 \ge
 \frac{e^4}{2}x_m.
\]
Since $x_1>1$, the sequence $(N_m)_{m\ge1}$ is strictly increasing.
Also,
\begin{equation}\label{eq:spike-tau-asymptotic}
 \tau_m
 =
 (1+o(1))\frac{q(B_m)}{B_m^2}
 \longrightarrow0,
 \qquad
 p_mB_m^4
 =
 (1+o(1))q(B_m).
\end{equation}

Let $h$ be the density of $Y$.  The density of $\xi_m$ is a mixture
of translated and rescaled copies of $h$.  Its supremum norm is at
most
\[
 \sqrt{1+\tau_m}\,\|h\|_\infty.
\]
Since $\sup_m\tau_m<\infty$, the densities of the variables $\xi_m$
are $C^\infty$ and bounded by a common constant.

Symmetry gives $\E Y=0$, and therefore $\E\xi_m=0$.  Moreover,
\[
 \E(Y+\eta_m\varepsilon_mB_m)^2
 =
 \E Y^2+p_mB_m^2
 =
 1+\tau_m,
\]
so $\E\xi_m^2=1$.

All mixed terms containing an odd power of $\varepsilon_m$ have
expectation zero.  Thus
\[
 \E(Y+\eta_m\varepsilon_mB_m)^4
 =
 \E Y^4+6\tau_m+p_mB_m^4.
\]
Consequently,
\[
 \kappa_m
 =
 \frac{
 p_mB_m^4+(\E Y^4-3)-3\tau_m^2
 }
 {(1+\tau_m)^2}.
\]
Since $q(B_m)\to\infty$, relation
\eqref{eq:spike-tau-asymptotic} proves
\eqref{eq:spike-fourth-correction}.

The bounded support of $Y$ and the inequality
$|u+v|^8\le2^7(|u|^8+|v|^8)$ give
\[
 m_{8,m}
 \le
 C\{1+p_mB_m^8\}.
\]
Furthermore,
\[
 p_mB_m^8
 \asymp
 B_m^4q(B_m)
 \asymp
 N_mq(B_m)^2,
\]
which proves \eqref{eq:spike-eighth}.

We next consider the weighted fourth moments.  Put
\[
 U_m:=\frac{|\xi_m|^4}{q(|\xi_m|)}.
\]
On $\{\eta_m=0\}$, the variables $\xi_m$ remain in a fixed bounded
interval, and hence
\[
 \sup_m\E[U_m;\eta_m=0]<\infty.
\]
On $\{\eta_m=1\}$, the bounded support of $Y$ and
$\tau_m\to0$ imply that, for all sufficiently large $m$,
\begin{equation}\label{eq:spike-size-comparison}
 cB_m\le|\xi_m|\le CB_m.
\end{equation}
For either choice of $q$, this gives
\begin{equation}\label{eq:spike-q-comparison}
 cq(B_m)
 \le
 q(|\xi_m|)
 \le
 Cq(B_m)
 \qquad\text{on }\{\eta_m=1\}.
\end{equation}
Together with \eqref{eq:spike-tau-asymptotic}, these estimates yield
\[
 \E[U_m;\eta_m=1]
 \le
 C\frac{p_mB_m^4}{q(B_m)}
 \le C.
\]
This proves \eqref{eq:spike-critical-L1}.

The reverse estimates in
\eqref{eq:spike-size-comparison}--\eqref{eq:spike-q-comparison} give
\[
 U_m
 \ge
 c\frac{B_m^4}{q(B_m)}
 \ge
 cN_m
 \qquad\text{on }\{\eta_m=1\}
\]
for all sufficiently large $m$.  Given $R>0$, choose $m$ so large
that $cN_m>R$.  Since $\Pp(\eta_m=1)=p_m=N_m^{-1}$,
\[
 \E\bigl[U_m\1_{\{U_m>R\}}\bigr]
 \ge
 \E[U_m;\eta_m=1]
 \ge
 cN_mp_m
 =
 c.
\]
Therefore the family in \eqref{eq:spike-critical-not-ui} is not
uniformly integrable.

Finally,
\begin{equation}\label{eq:spike-log-dimension}
 \ell_m
 =
 \log N_m
 =
 4\log B_m-\log q(B_m)+o(1).
\end{equation}
If $q(x)=\sqrt{\log(e+x)}$, then
$q(B_m)\sim\sqrt{\log B_m}$ and $\ell_m\sim4\log B_m$.
Together with
\eqref{eq:spike-tau-asymptotic},
\eqref{eq:spike-fourth-correction}, and
\eqref{eq:spike-eighth}, this proves
\eqref{eq:spike-det-scales}.  If $q(x)=\log(e+x)$, then
$q(B_m)\sim\log B_m$ and again $\ell_m\sim4\log B_m$.
The same three estimates prove \eqref{eq:spike-exact-scales}.
\end{proof}

For $q(x)=\sqrt{\log(e+x)}$, consider the array defined in
Example~\ref{ex:det-threshold}; for $q(x)=\log(e+x)$, consider the
array defined in Example~\ref{ex:exact-threshold}.  In either case,
at a dimension $n=N_m$, the entries in the first
$r_n:=\lfloor n/2\rfloor$ rows are independent copies of $\xi_m$,
and the remaining entries are independent standard Gaussian random
variables.  At every other dimension, all entries are independent
standard Gaussian random variables.

Lemma~\ref{lem:spike-law} shows that every entry is centered and has
variance one, and that all entries have $C^\infty$ densities bounded
by a common constant.  In particular, the first $i$ rows are linearly
independent almost surely for every $0\le i<n$.

Since $q\ge1$, a standard Gaussian random variable $g$ satisfies
\[
 \E\frac{|g|^4}{q(|g|)}<\infty.
\]
Together with \eqref{eq:spike-critical-L1}, this gives
\begin{equation}\label{eq:spike-array-critical-L1}
 \sup_{n\ge2}\max_{1\le i,j\le n}
 \E\frac{|a_{ij}|^4}{q(|a_{ij}|)}
 <\infty.
\end{equation}

At every dimension $n=N_m$, the entry $a_{11}^{(N_m)}$ has the same
distribution as $\xi_m$.  Hence
\[
 \left\{
 \frac{|a_{11}^{(N_m)}|^4}{q(|a_{11}^{(N_m)}|)}:
 m\ge1
 \right\}
\]
is not uniformly integrable by
\eqref{eq:spike-critical-not-ui}.  Therefore the full family
\begin{equation}\label{eq:spike-array-not-ui}
 \left\{
 \frac{|a_{ij}|^4}{q(|a_{ij}|)}:
 n\ge2,\ 1\le i,j\le n
 \right\}
\end{equation}
is not uniformly integrable.

We next record the conditional moment identities used in both
examples.  Fix one of the two examples, fix $m\ge1$, and put
$n=N_m$.  Let $a_1^T,\ldots,a_n^T$ denote the rows of $A_n$.  We
retain the notation of
\eqref{eq:common-parameters}--\eqref{eq:common-heights} and set
\begin{equation}
\begin{aligned}\label{eq:spike-height-definitions}
 Z_{i+1}
 &:=
 a_{i+1}^TQ_i a_{i+1},
 \quad 0\le i<n,\\
 X_{i+1}
 &:=
 Z_{i+1}-1,
 \quad
 d_i
 :=
 \sum_{j=1}^nq_{jj}(i)^2,
 \quad 0\le i<r_n,\\
 D_n
 &:=
 \sum_{i=0}^{r_n-1}d_i.
\end{aligned}
\end{equation}
For the rank-$k_{i}$ projection $P_i=(p_{rs}(i))_{1\le r,s\le n}$, Cauchy--Schwarz and $0\le p_{jj}(i)\le1$ give
\begin{equation}\label{eq:spike-energy-bounds}
 \frac1n\le d_i\le\frac1{k_i},
 \qquad
 \frac{r_n}{n}
 \le
 D_n
 \le
 \sum_{k=n-r_n+1}^{n}\frac1k
 =
 \log2+o(1).
\end{equation}

For $0\le i<r_n$, the entries of $a_{i+1}$ are independent copies
of $\xi_m$ and are independent of $\mathcal F_i$.  By
\eqref{eq:common-projection-traces}, $\E_iZ_{i+1}=\tr Q_i=1$.
Applying the covariance identity
\eqref{ex:eq:qf-covariance} with $C=D=Q_i$ and using $\E\xi_m^4=3+\kappa_m$
gives
\begin{align}
 \E_iX_{i+1}
 &=0,
 \label{eq:spike-conditional-mean}\\
 \E_iX_{i+1}^2
 &=
 2\tr Q_i^2
 +
 \kappa_m\sum_{j=1}^nq_{jj}(i)^2
 =
 \frac2{k_i}+\kappa_md_i.
 \label{eq:spike-conditional-variance}
\end{align}
Splitting $X_{i+1}$ into its diagonal and off-diagonal parts and
applying Lemma~\ref{lem:quadratic-forms} gives
\begin{equation}\label{eq:spike-conditional-fourth-general}
 \E_i|X_{i+1}|^4
 \le
 C\left\{
 \frac{m_{8,m}}{k_i^3}
 +
 \frac{(\E\xi_m^4)^2}{k_i^2}
 \right\}.
\end{equation}
Since $k_i\ge n/2$ for $0\le i<r_n$,
\eqref{eq:spike-det-scales}--\eqref{eq:spike-exact-scales} imply
\begin{align}
 q(x)=\sqrt{\log(e+x)}
 \quad&\Longrightarrow\quad
 \E_i|X_{i+1}|^4
 \le
 \frac{C\ell}{n^2},
 \label{eq:spike-conditional-fourth-det}\\
 q(x)=\log(e+x)
 \quad&\Longrightarrow\quad
 \E_i|X_{i+1}|^4
 \le
 \frac{C\ell^2}{n^2}.
 \label{eq:spike-conditional-fourth-exact}
\end{align}

\subsection{Proof of Example~\ref{ex:det-threshold}}
\label{subsec:proof-det-example}
At a dimension $n=N_m$, the first half of the rows have rare large
entries, while the second half are Gaussian.
The rare spikes make the excess fourth moment $\kappa_m=\E\xi_m^4-3$
of order $\sqrt{\log n}$.  In the second-order expansion of the
logarithmic Gram--Schmidt heights, this excess fourth moment produces
a negative term of order $\sqrt{\log n}$.  It therefore survives the
normalization used in $W_n^{\mathrm d}$.  The Gaussian second block
supplies the usual asymptotically standard normal fluctuation, so the
resulting statistic remains separated from the standard normal law.

\vspace{0.5cm}
\begin{proof}[Proof of Example~\ref{ex:det-threshold}]

Apply Lemma~\ref{lem:spike-law} with
\[
 q(x)=\sqrt{\log(e+x)}.
\]
The limit involving $p_mB_m^4$ in
\eqref{eq:spike-det-scales} proves
\eqref{eq:det-example-spike-scale}.  Moreover,
\eqref{eq:spike-array-critical-L1} proves
\eqref{eq:det-example-critical-bound}, while
\eqref{eq:spike-array-not-ui} proves
\eqref{eq:det-example-not-ui}.  It remains to prove
\eqref{eq:det-example-failure}.

For the rest of the proof, fix $m$ and put $n=N_m$.  Write
\[
 r_n=\lfloor n/2\rfloor,
 \qquad
 s_n=n-r_n,
 \qquad
 \ell=\log n.
\]
We use the notation in \eqref{eq:spike-height-definitions}. 

\medskip
\noindent
\emph{Step 1: expansion of the heavy block.}
Define
\begin{equation}\label{eq:det-example-heavy-log}
 S_{H,n}
 :=
 \sum_{i=0}^{r_n-1}\log Z_{i+1}
 =
 \sum_{i=0}^{r_n-1}\log(1+X_{i+1}).
\end{equation}
By \eqref{eq:spike-conditional-mean}, each $X_{i+1}$ is
$\mathcal F_{i+1}$-measurable and satisfies $\E(X_{i+1}\mid\mathcal F_i)=0$.
Thus $(X_{i+1})_{0\le i<r_n}$ is a martingale-difference array
with respect to the filtration $(\mathcal F_i)_{i\ge0}$.
By \eqref{eq:spike-conditional-mean}, the variables
$X_{i+1}$ are martingale differences with respect to
$(\mathcal F_{i+1})_{i\ge0}$.  Their orthogonality,
\eqref{eq:spike-conditional-variance}, and
\eqref{eq:spike-energy-bounds} give
\begin{align}
 \E\left(\sum_{i<r_n}X_{i+1}\right)^2
 &=
 \E\sum_{i<r_n}\E_iX_{i+1}^2
=
 2\sum_{k=s_n+1}^{n}\frac1k
 +
 \kappa_m\E D_n
 =
 O(\sqrt\ell).
 \label{eq:det-example-linear-bound}
\end{align}
Indeed, $D_n\le\log2+o(1)$ and $\kappa_m=O(\sqrt\ell)$
by \eqref{eq:spike-fourth-correction} and
\eqref{eq:spike-det-scales}.  Dividing
\eqref{eq:det-example-linear-bound} by $\ell$ and applying
Chebyshev's inequality yields
\begin{equation}\label{eq:det-example-linear-negligible}
 \frac1{\sqrt\ell}\sum_{i<r_n}X_{i+1}
 \xrightarrow{\Pp}0.
\end{equation}

Next, the variables $X_{i+1}^2-\E_iX_{i+1}^2$, $0\le i<r_n$,
are also martingale differences.  Hence
\begin{align}
 \E\left|
 \sum_{i<r_n}
 \bigl(X_{i+1}^2-\E_iX_{i+1}^2\bigr)
 \right|^2
 &=
 \sum_{i<r_n}
 \E\left|
 X_{i+1}^2-\E_iX_{i+1}^2
 \right|^2
 \le
 \sum_{i<r_n}\E|X_{i+1}|^4
 \le
 \frac{C\ell}{n}
 \longrightarrow0,
 \label{eq:det-example-quadratic-concentration}
\end{align}
where the last inequality follows from
\eqref{eq:spike-conditional-fourth-det}.  Combining
\eqref{eq:det-example-quadratic-concentration} with
\eqref{eq:spike-conditional-variance}, we obtain
\begin{align}
 \sum_{i<r_n}X_{i+1}^2
 &=
 \sum_{i<r_n}\E_iX_{i+1}^2
 +
 o_{\Pp}(1)
 =
 2\sum_{k=s_n+1}^{n}\frac1k
 +
 \kappa_mD_n
 +
 o_{\Pp}(1)
 =
 O_{\Pp}(\sqrt\ell).
 \label{eq:det-example-square-sum}
\end{align}

The same fourth-moment estimate shows that the largest increment is
small.  For every fixed $\eta>0$,
\begin{align*}
 \Pp\left(
 \max_{i<r_n}|X_{i+1}|>\eta
 \right)
 &\le
 \sum_{i<r_n}
 \Pp(|X_{i+1}|>\eta)
 \le
 \eta^{-4}
 \sum_{i<r_n}\E|X_{i+1}|^4
 \le
 \frac{C\ell}{n\eta^4}
 \longrightarrow0.
\end{align*}
Therefore
\begin{equation}\label{eq:det-example-max-small}
 \max_{i<r_n}|X_{i+1}|=o_{\Pp}(1).
\end{equation}

Define the Taylor remainder
\begin{equation}\label{eq:det-example-heavy-remainder}
 R_{H,n}
 :=
 S_{H,n}
 -
 \sum_{i<r_n}X_{i+1}
 +
 \frac12\sum_{i<r_n}X_{i+1}^2.
\end{equation}
On the event $\{\max_{i<r_n}|X_{i+1}|\le\frac12\}$,
Taylor's formula gives
\[
 \left|
 \log(1+x)-x+\frac{x^2}{2}
 \right|
 \le
 C|x|^3,
 \qquad
 |x|\le\frac12.
\]
Consequently,
\begin{align*}
 |R_{H,n}|
 &\le
 C\sum_{i<r_n}|X_{i+1}|^3
 \le
 C\max_{i<r_n}|X_{i+1}|
 \sum_{i<r_n}X_{i+1}^2.
\end{align*}
Equations \eqref{eq:det-example-square-sum} and
\eqref{eq:det-example-max-small}, together with the fact that the
preceding event has probability tending to one, imply
\begin{equation}\label{eq:det-example-heavy-remainder-bound}
 R_{H,n}=o_{\Pp}(\sqrt\ell).
\end{equation}

Substituting
\eqref{eq:det-example-linear-negligible},
\eqref{eq:det-example-square-sum}, and
\eqref{eq:det-example-heavy-remainder-bound} into
\eqref{eq:det-example-heavy-remainder}, we conclude that
\begin{equation}\label{eq:det-example-heavy-expansion}
 S_{H,n}
 =
 -\sum_{k=s_n+1}^{n}\frac1k
 -
 \frac{\kappa_m}{2}D_n
 +
 o_{\Pp}(\sqrt\ell).
\end{equation}

\medskip
\noindent
\emph{Step 2: the Gaussian block.}
Let
\begin{equation}\label{eq:det-example-gaussian-block}
 S_{G,n}
 :=
 \sum_{i=r_n}^{n-1}\log Z_{i+1}
\end{equation}
be the logarithmic sum of the actual normalized squared
Gram--Schmidt heights in the Gaussian block.

Conditional on $\mathcal F_{r_n}$, the remaining rows are independent
standard Gaussian vectors.  Rotational invariance and the
Gram--Schmidt construction show that the corresponding normalized
squared heights are independent and have respective distributions
\[
 \frac{\chi_{s_n}^2}{s_n},
 \frac{\chi_{s_n-1}^2}{s_n-1},
 \ldots,
 \chi_1^2.
\]
Moreover, this conditional joint distribution does not depend on
$\mathcal F_{r_n}$.  Thus $S_{G,n}$ is independent of the first block
and
\begin{equation}\label{eq:det-example-gaussian-law}
 S_{G,n}
 \stackrel{d}{=}
 \sum_{k=1}^{s_n}\log(\chi_k^2/k),
\end{equation}
where the chi-square variables on the right-hand side are
independent.

Define
\[
 G_n
 :=
 \frac{S_{G,n}-\E S_{G,n}}{\sqrt{2\ell}}.
\]
Lemma~\ref{lem:gaussian-terminal-block} gives
\begin{align}
 \E S_{G,n}
 &=
 -\sum_{k=1}^{s_n}\frac1k+O(1),
 \label{eq:det-example-gaussian-mean}\\
 \Var(S_{G,n})
 &=
 2\sum_{k=1}^{s_n}\frac1k+O(1)
 =
 2\ell+O(1),
 \label{eq:det-example-gaussian-variance}\\
 \sum_{k=1}^{s_n}
 \E\left|
 \log(\chi_k^2/k)-\E\log(\chi_k^2/k)
 \right|^3
 &\le C.
 \label{eq:det-example-gaussian-third}
\end{align}

Let $\sigma_{G,n}^2:=\Var(S_{G,n})$.
The classical Berry--Esseen inequality and
\eqref{eq:det-example-gaussian-third} imply
\[
 \dk\left(
 \frac{S_{G,n}-\E S_{G,n}}{\sigma_{G,n}},
 \Normal
 \right)
 \le
 \frac{C}{\sigma_{G,n}^3}
 \longrightarrow0.
\]
Furthermore, $\frac{\sigma_{G,n}}{\sqrt{2\ell}} \rightarrow1$
by \eqref{eq:det-example-gaussian-variance}.  Slutsky's theorem
therefore gives
\begin{equation}\label{eq:det-example-gaussian-asymptotics}
 G_n\Rightarrow\Normal.
\end{equation}

\medskip
\noindent
\emph{Step 3: recombining the two blocks.}
Because the entries of $A_n$ have densities, the matrix $A_n$ and
all intermediate collections of rows have full rank almost surely.
For every row $i+1$, the squared Gram--Schmidt height equals
$a_{i+1}^TP_i a_{i+1} = k_iZ_{i+1}$.
The product formula for the determinant therefore gives
\begin{equation}\label{eq:det-example-gram-schmidt}
 \log\det(A_nA_n^T)
 =
 \sum_{i=0}^{n-1}\log(k_iZ_{i+1})
 =
 \log n!+S_{H,n}+S_{G,n}.
\end{equation}
Since $2\log|\det A_n| = \log\det(A_nA_n^T)$,
the definition of $W_n^{\mathrm d}$ yields
\begin{equation}\label{eq:det-example-W-representation}
 W_n^{\mathrm d}(A_n)
 =
 \frac{
 \log n+S_{H,n}+S_{G,n}
 }{\sqrt{2\ell}}.
\end{equation}
Let $H_n:=\sum_{k=1}^{n}\frac1k$. By \eqref{eq:det-example-heavy-expansion} and
\eqref{eq:det-example-gaussian-mean},
\[
 -\sum_{k=s_n+1}^{n}\frac1k
 +
 \E S_{G,n}
 =
 -H_n+O(1).
\]
Since $\log n-H_n=O(1)$,
and $S_{G,n} = \E S_{G,n}+\sqrt{2\ell}\,G_n$,
substitution into \eqref{eq:det-example-W-representation} gives
\begin{equation}\label{eq:det-example-final-shift}
 W_n^{\mathrm d}(A_n)
 =
 G_n
 -
 \frac{\kappa_mD_n}{2\sqrt{2\ell}}
 +
 o_{\Pp}(1).
\end{equation}

\medskip
\noindent
\emph{Step 4: the shift does not vanish.}
By \eqref{eq:spike-fourth-correction} and
\eqref{eq:spike-det-scales}, $\frac{\kappa_m}{\sqrt\ell} \rightarrow \frac12$.
Moreover, \eqref{eq:spike-energy-bounds} gives the almost-sure lower
bound $D_n\ge\frac{r_n}{n}$.
Consequently, almost surely,
\[
 \frac{\kappa_mD_n}{2\sqrt{2\ell}}
 \ge
 \frac{\kappa_m}{2\sqrt{2\ell}}
 \frac{r_n}{n},
\]
and the deterministic expression on the right converges to
$\frac1{2\sqrt2} \cdot\frac12 \cdot\frac12 = \frac1{8\sqrt2}$.
It follows that, with $a_0:=\frac1{16\sqrt2}$,
we have
\begin{equation}\label{eq:det-example-shift-lower}
 \frac{\kappa_mD_n}{2\sqrt{2\ell}}
 \ge a_0
 \qquad\text{almost surely}
\end{equation}
for every sufficiently large $m$.

Write \eqref{eq:det-example-final-shift} as
\[
 W_n^{\mathrm d}(A_n)
 =
 G_n
 -
 \frac{\kappa_mD_n}{2\sqrt{2\ell}}
 +
 R_n,
 \qquad
 R_n=o_{\Pp}(1).
\]
Let $\Phi$ denote the standard normal distribution function.  Fix
$0<\epsilon<a_0$.  On the event
\[
 \{G_n\le a_0-\epsilon\}
 \cap
 \{|R_n|\le\epsilon\},
\]
equation \eqref{eq:det-example-shift-lower} gives
\[
 W_n^{\mathrm d}(A_n)
 \le
 (a_0-\epsilon)-a_0+\epsilon
 =
 0.
\]
Therefore, for every sufficiently large $m$,
\begin{align*}
 \Pp\bigl(W_n^{\mathrm d}(A_n)\le0\bigr)
 &\ge
 \Pp\bigl(
 G_n\le a_0-\epsilon,\ |R_n|\le\epsilon
 \bigr)\\
 &\ge
 \Pp(G_n\le a_0-\epsilon)
 -
 \Pp(|R_n|>\epsilon).
\end{align*}
Using \eqref{eq:det-example-gaussian-asymptotics} and
$R_n=o_{\Pp}(1)$, we obtain
\[
 \liminf_{m\to\infty}
 \Pp\bigl(
 W_{N_m}^{\mathrm d}(A_{N_m})\le0
 \bigr)
 \ge
 \Phi(a_0-\epsilon).
\]
Letting $\epsilon\downarrow0$ gives
\[
 \liminf_{m\to\infty}
 \Pp\bigl(
 W_{N_m}^{\mathrm d}(A_{N_m})\le0
 \bigr)
 \ge
 \Phi(a_0)
 >
 \frac12.
\]
Since the standard normal distribution function equals $1/2$ at
zero,
\begin{align*}
 \liminf_{m\to\infty}
 \dk\bigl(
 W_{N_m}^{\mathrm d}(A_{N_m}),
 \Normal
 \bigr)
 &\ge
 \liminf_{m\to\infty}
 \left|
 \Pp\bigl(
 W_{N_m}^{\mathrm d}(A_{N_m})\le0
 \bigr)
 -
 \frac12
 \right|\\
 &\ge
 \Phi(a_0)-\frac12
 >
 0.
\end{align*}
This proves \eqref{eq:det-example-failure}.
\end{proof}

\subsection{Proof of Example~\ref{ex:exact-threshold}}
\label{subsec:proof-exact-example}

Under exact centering, the deterministic fourth-moment shift used in
Example~\ref{ex:det-threshold} is removed.  We therefore use the larger
critical scale $q(x)=\log(e+x)$, for which the excess fourth moment
$\kappa_m$ is of order $\log n$.  This makes the heavy block contribute
an additional Gaussian fluctuation of nonvanishing variance.  The
projection energy $D_n$ first concentrates along a subsequence; a
martingale central limit theorem then identifies the heavy-block limit.
The remaining Gaussian rows contribute an independent standard normal
term.

\begin{proof}[Proof of Example~\ref{ex:exact-threshold}]
Apply Lemma~\ref{lem:spike-law} with
\[
 q(x)=\log(e+x).
\]
The limit involving $p_mB_m^4$ in
\eqref{eq:spike-exact-scales} proves
\eqref{eq:exact-example-spike-scale}.  The common density bound in
Lemma~\ref{lem:spike-law} proves
\eqref{eq:exact-example-density-bound};
\eqref{eq:spike-array-critical-L1} proves
\eqref{eq:exact-example-critical-bound}; and
\eqref{eq:spike-array-not-ui} proves
\eqref{eq:exact-example-not-ui}.  It remains to establish
\eqref{eq:exact-example-limit}.
For each $m$, put $n=N_m$ and write
\[
 r_n=\lfloor n/2\rfloor,
 \qquad
 s_n=n-r_n,
 \qquad
 \ell=\log n.
\]
We retain the notation of \eqref{eq:common-parameters}-\eqref{eq:common-heights} and use the notation in \eqref{eq:spike-height-definitions}.

\medskip
\noindent
\emph{Step 1: concentration of the projection energy.}
For $1\le t<r_n$, replace the $t$ th heavy row by an independent copy,
and let $D_n^{(t)}$ denote the resulting value of $D_n$ defined in \eqref{eq:spike-height-definitions}. Outside a
null event, both the original and resampled collections of rows are
linearly independent at every intermediate stage.  Terms with $i<t$
do not change.  For $t\le i<r_n$, Lemma~\ref{lem:row-resample-geometry}
shows that the ranges of the old and new rank-$k_i$ projections
$P_i$ and $P_i^{(t)}$ have an intersection of dimension at least
$k_i-1$.

If $P_i=P_i^{(t)}$, then $d_i=d_i^{(t)}$.  Otherwise, the intersection
has dimension $k_i-1$, and the two projections may be written as
\[
 P_i=R+uu^T,
 \qquad
 P_i^{(t)}=R+vv^T,
\]
where $R$ is the orthogonal projection onto the common intersection
and $u,v$ are unit vectors orthogonal to its range.  Writing
$P_i=(p_{jk}(i))$ and $P_i^{(t)}=(p_{jk}^{(t)}(i))$, we have
\begin{align*}
 \left|
 \sum_{j=1}^n p_{jj}(i)^2
 -
 \sum_{j=1}^n p_{jj}^{(t)}(i)^2
 \right|
 &\le
 \sum_{j=1}^n
 |p_{jj}(i)-p_{jj}^{(t)}(i)|
 \{p_{jj}(i)+p_{jj}^{(t)}(i)\}\\
 &\le
 2\sum_{j=1}^n|u_j^2-v_j^2|\\
 &\le4.
\end{align*}
Consequently,
\[
 |d_i-d_i^{(t)}|
 \le
 \frac4{k_i^2}.
\]
Since $k_i\ge s_n+1\ge n/2$ throughout the heavy block,
\begin{equation}\label{eq:exact-example-row-influence}
 |D_n-D_n^{(t)}|
 \le
 4\sum_{i=t}^{r_n-1}\frac1{k_i^2}
 \le
 4\sum_{k\ge n/2}\frac1{k^2}
 \le
 \frac Cn.
\end{equation}
The Efron--Stein inequality \eqref{eq:efron-stein-definition}, applied
to the independent heavy rows on which $D_n$ depends, therefore gives
\begin{equation}\label{eq:exact-example-D-variance}
 \Var(D_n)
 \le
 \frac12\sum_{t<r_n}
 \E(D_n-D_n^{(t)})^2
 \le
 \frac Cn.
\end{equation}

Taking $n=N_m$ in \eqref{eq:spike-energy-bounds}, we obtain
\[
 \frac{r_{N_m}}{N_m}
 \le
 \E D_{N_m}
 \le
 \sum_{j=s_{N_m}+1}^{N_m}\frac1j
 =
 \log2+o(1).
\]
Hence the sequence $(\E D_{N_m})_{m\ge1}$ is bounded.  There therefore
exist indices $m_k\uparrow\infty$ and a number
\[
 d_0\in\left[\frac12,\log2\right]
\]
such that
\begin{equation}\label{eq:exact-example-D-limit}
 \E D_{N_{m_k}}\longrightarrow d_0.
\end{equation}
Moreover, \eqref{eq:exact-example-D-variance} gives $\Var(D_{N_{m_k}}) \le \frac{C}{N_{m_k}} \longrightarrow0$.
Consequently,
\begin{equation}\label{eq:exact-example-D-probability}
 D_{N_{m_k}}\xrightarrow{\Pp}d_0.
\end{equation}
Define
\begin{equation}\label{eq:exact-example-sigma-definition}
 \sigma^2
 :=
 1+\frac{d_0}{8}.
\end{equation}
Since $d_0\in[1/2,\log2]$, we have $\sigma^2 \in \left[ \frac{17}{16}, 1+\frac{\log2}{8} \right]$.

\medskip
\noindent
\emph{Step 2: a martingale central limit theorem for the heavy block.}
For each $k$, consider the Gram--Schmidt variables associated with
the matrix $A_{N_{m_k}}$, and define
\begin{equation}\label{eq:exact-example-martingale}
 \begin{aligned}
 M_{H,N_{m_k}}
 &:=
 \sum_{i=0}^{r_{N_{m_k}}-1}X_{i+1},
 \quad
 V_{N_{m_k}}
 :=
 \sum_{i=0}^{r_{N_{m_k}}-1}\E_iX_{i+1}^2.
 \end{aligned}
\end{equation}
Here $X_{i+1}$ and $\E_i$ refer to the variables and conditional
expectations for $A_{N_{m_k}}$.
By \eqref{eq:spike-conditional-variance},
\begin{equation}\label{eq:exact-example-V-formula}
 V_{N_{m_k}}
 =
 2\sum_{j=s_{N_{m_k}}+1}^{N_{m_k}}\frac1j
 +
 \kappa_{m_k}D_{N_{m_k}}.
\end{equation}
For the present choice $q(x)=\log(e+x)$,
\eqref{eq:spike-exact-scales} gives $\frac{\kappa_{m_k}}{\log N_{m_k}} \longrightarrow \frac14$.
Combining this with \eqref{eq:exact-example-D-probability} and the fact that $\sum_{j=s_{N_{m_k}}+1}^{N_{m_k}}\frac1j=O(1)$,
we obtain
\begin{equation}\label{eq:exact-example-V-limit}
 \frac{V_{N_{m_k}}}{\log N_{m_k}}
 \xrightarrow{\Pp}
 \frac{d_0}{4}.
\end{equation}
For $\varepsilon>0$, define
\[
 L_{N_{m_k}}(\varepsilon)
 :=
 \frac{1}{\log N_{m_k}}
 \sum_{i=0}^{r_{N_{m_k}}-1}
 \E_i\left[
 X_{i+1}^2
 \1_{\{
 |X_{i+1}|>\varepsilon\sqrt{\log N_{m_k}}
 \}}
 \right].
\]
Using $x^2\1_{\{ |x|>\varepsilon\sqrt{\log N_{m_k}} \}} \le \frac{x^4}{\varepsilon^2\log N_{m_k}}$
and \eqref{eq:spike-conditional-fourth-exact}, we obtain
\begin{align}
 \E L_{N_{m_k}}(\varepsilon)
 &\le
 \frac{1}{
 \varepsilon^2(\log N_{m_k})^2
 }
 \sum_{i=0}^{r_{N_{m_k}}-1}\E|X_{i+1}|^4
 \le
 \frac{C}{N_{m_k}}
 \longrightarrow0.
 \label{eq:exact-example-lindeberg}
\end{align}
Thus $L_{N_{m_k}}(\varepsilon)\xrightarrow{\Pp}0$,
by Markov's inequality.  The martingale Lindeberg--Feller theorem,
together with \eqref{eq:exact-example-V-limit}, therefore gives
\begin{equation}\label{eq:exact-example-martingale-clt}
 \frac{M_{H,N_{m_k}}}
 {\sqrt{\log N_{m_k}}}
 \Rightarrow
 \mathcal N\left(0,\frac{d_0}{4}\right).
\end{equation}

\medskip
\noindent
\emph{Step 3: replacing the heavy logarithmic sum by its linear part.}
For each $k$, define
\begin{equation}\label{eq:exact-example-heavy-log}
 S_{H,N_{m_k}}
 :=
 \sum_{i=0}^{r_{N_{m_k}}-1}\log Z_{i+1}
 =
 \sum_{i=0}^{r_{N_{m_k}}-1}\log(1+X_{i+1}),
\end{equation}
and
\begin{equation}\label{eq:exact-example-Taylor-remainder}
 T_{H,N_{m_k}}
 :=
 S_{H,N_{m_k}}
 -
 M_{H,N_{m_k}}
 +
 \frac12
 \sum_{i=0}^{r_{N_{m_k}}-1}X_{i+1}^2.
\end{equation}
We first prove
\begin{equation}\label{eq:exact-example-Taylor-L1}
 \E|T_{H,N_{m_k}}|
 \longrightarrow0.
\end{equation}

For $0\le i<r_{N_{m_k}}$, we have $N_{m_k}-i \ge s_{N_{m_k}}+1 \ge \frac{N_{m_k}}2$.
Hence \eqref{eq:spike-conditional-variance}, \eqref{eq:spike-energy-bounds}, and $|\kappa_{m_k}|=O(\log N_{m_k})$
give
\[
 \E_iX_{i+1}^2
 \le
 \frac{2}{N_{m_k}-i}
 +
 \frac{|\kappa_{m_k}|}{N_{m_k}-i}
 \le
 \frac{C\log N_{m_k}}{N_{m_k}}.
\]
Together with \eqref{eq:spike-conditional-fourth-exact} and
Cauchy--Schwarz, this yields
\begin{align}
 \sum_{i=0}^{r_{N_{m_k}}-1}\E|X_{i+1}|^3
 &\le
 \sum_{i=0}^{r_{N_{m_k}}-1}
 (\E X_{i+1}^2)^{1/2}
 (\E|X_{i+1}|^4)^{1/2}
 \notag\\
 &\le
 CN_{m_k}
 \left(
 \frac{\log N_{m_k}}{N_{m_k}}
 \right)^{1/2}
 \left(
 \frac{(\log N_{m_k})^2}{N_{m_k}^2}
 \right)^{1/2}
 \notag\\
 &=
 O\left(
 \frac{(\log N_{m_k})^{3/2}}
 {\sqrt{N_{m_k}}}
 \right)
 \longrightarrow0.
 \label{eq:exact-example-third-sum}
\end{align}
For $x>-1$, put $r(x):=\log(1+x)-x+\frac{x^2}{2}$.
On $|x|\le1/2$, $|r(x)|\le C|x|^3$, while on $x>1/2$, $|r(x)|\le Cx^4$.
Finally, on $-1<x<-1/2$,
\[
 |r(x)|\le C\{1-\log(1+x)\}.
\]
It remains to control the logarithmic singularity in the last
region.  Let $u_0$ be the constant in
Lemma~\ref{lem:projection-density}, corresponding to the common
density bound of the entries.  Since
$Z_{i+1}=1+X_{i+1}$ and $Z_{i+1}<1/2$ implies
$|X_{i+1}|>1/2$, the layer-cake identity
\eqref{eq:layer-cake-identity}, conditional Markov's inequality, and
\eqref{ex:eq:projection-smallball} give
\begin{align}
 &\E_i[-\log Z_{i+1};\,Z_{i+1}<1/2]
 \notag\\
 &\quad=
 (\log2)\Pp_i(Z_{i+1}<1/2)
 +
 \int_{\log2}^{\infty}
 \Pp_i(Z_{i+1}<e^{-u})\,\dd u
 \notag\\
 &\quad\le
 C\E_i|X_{i+1}|^4
 +
 \int_{u_0}^{\infty}
 e^{-(N_{m_k}-i)u/4}\,\dd u
 \notag\\
 &\quad\le
 C\E_i|X_{i+1}|^4
 +
 Ce^{-c(N_{m_k}-i)}.
 \label{eq:exact-example-negative-tail}
\end{align}
For $u\in[\log 2,u_0]$, we have $e^{-u}\le1/2$, and hence
\[
 \{Z_{i+1}<e^{-u}\}
 \subseteq
 \{|X_{i+1}|>1/2\},
\]
because $Z_{i+1}=1+X_{i+1}$.  Therefore,
\[
 \begin{aligned}
 \int_{\log 2}^{u_0}
 \Pp_i(Z_{i+1}<e^{-u})\,\dd u
 &\le
 (u_0-\log 2)\Pp_i(|X_{i+1}|>1/2)\\
 &\le
 C\E_i|X_{i+1}|^4,
 \end{aligned}
\]
where the last inequality follows from Markov's inequality.
Using the three bounds for $r(x)$,
\eqref{eq:exact-example-third-sum},
\eqref{eq:exact-example-negative-tail}, and
\eqref{eq:spike-conditional-fourth-exact}, we obtain
\[
 \begin{aligned}
 \E|T_{H,N_{m_k}}|
 &\le
 C\sum_{i=0}^{r_{N_{m_k}}-1}\E|X_{i+1}|^3
 +
 C\sum_{i=0}^{r_{N_{m_k}}-1}\E|X_{i+1}|^4
 +
 C\sum_{i=0}^{r_{N_{m_k}}-1}
 e^{-c(N_{m_k}-i)}\\
 &\le
 o(1)
 +
 C\frac{(\log N_{m_k})^2}{N_{m_k}}
 +
 CN_{m_k}e^{-cN_{m_k}/2}
 =
 o(1).
 \end{aligned}
\]
This proves \eqref{eq:exact-example-Taylor-L1}.

We next show that the centered quadratic term is negligible on the
scale $\sqrt{\log N_{m_k}}$.  From
\eqref{eq:spike-conditional-variance},
\begin{align}
 &\sum_{i=0}^{r_{N_{m_k}}-1}
 \{X_{i+1}^2-\E X_{i+1}^2\}
 \notag
 =
 \sum_{i=0}^{r_{N_{m_k}}-1}
 \{X_{i+1}^2-\E_iX_{i+1}^2\}
 +
 \kappa_{m_k}
 \{D_{N_{m_k}}-\E D_{N_{m_k}}\}.
 \label{eq:exact-example-quadratic-decomposition}
\end{align}
The first term on the right is a martingale sum and satisfies
\[
 \begin{aligned}
 &\E\left|
 \sum_{i=0}^{r_{N_{m_k}}-1}
 \{X_{i+1}^2-\E_iX_{i+1}^2\}
 \right|^2
 \le
 \sum_{i=0}^{r_{N_{m_k}}-1}\E|X_{i+1}|^4
 \le
 C\frac{(\log N_{m_k})^2}{N_{m_k}}.
 \end{aligned}
\]
For the second term,
\eqref{eq:spike-exact-scales} and
\eqref{eq:exact-example-D-variance} give
\[
 \begin{aligned}
 &\E\left[
 \frac{
 \kappa_{m_k}
 \{D_{N_{m_k}}-\E D_{N_{m_k}}\}
 }
 {\sqrt{\log N_{m_k}}}
 \right]^2
 \le
 C\frac{\log N_{m_k}}{N_{m_k}}
 \longrightarrow0.
 \end{aligned}
\]
Consequently,
\begin{equation}\label{eq:exact-example-quadratic-negligible}
 \frac{1}{\sqrt{\log N_{m_k}}}
 \sum_{i=0}^{r_{N_{m_k}}-1}
 \{X_{i+1}^2-\E X_{i+1}^2\}
 \xrightarrow{\Pp}0.
\end{equation}
From \eqref{eq:exact-example-Taylor-remainder},
\[
 \begin{aligned}
 S_{H,N_{m_k}}-\E S_{H,N_{m_k}}
 &=
 M_{H,N_{m_k}}
 -
 \frac12
 \sum_{i=0}^{r_{N_{m_k}}-1}
 \{X_{i+1}^2-\E X_{i+1}^2\}
 +
 T_{H,N_{m_k}}-\E T_{H,N_{m_k}}.
 \end{aligned}
\]
Equation \eqref{eq:exact-example-Taylor-L1} implies $T_{H,N_{m_k}}-\E T_{H,N_{m_k}} = o_{\Pp}(1)$.
Therefore,
\eqref{eq:exact-example-martingale-clt} and
\eqref{eq:exact-example-quadratic-negligible} yield
\begin{equation}\label{eq:exact-example-heavy-centered}
 \frac{
 S_{H,N_{m_k}}-\E S_{H,N_{m_k}}
 }
 {\sqrt{\log N_{m_k}}}
 \Rightarrow
 \mathcal N\left(0,\frac{d_0}{4}\right).
\end{equation}

\medskip
\noindent
\emph{Step 4: the Gaussian block.}
For each $k$, let
\begin{equation}\label{eq:exact-example-gaussian-block}
 S_{G,N_{m_k}}
 :=
 \sum_{i=r_{N_{m_k}}}^{N_{m_k}-1}\log Z_{i+1}.
\end{equation}
This is the logarithmic sum of the normalized squared
Gram--Schmidt heights in the Gaussian block.  By
Lemma~\ref{lem:gaussian-terminal-block},
$S_{G,N_{m_k}}$ is independent of
$\mathcal F_{r_{N_{m_k}}}$ and
\[
 S_{G,N_{m_k}}
 \stackrel d=
 \sum_{j=1}^{s_{N_{m_k}}}
 \log(\chi_j^2/j),
\]
where the chi-square variables on the right are independent.  The
same lemma gives
\[
 \begin{aligned}
 \Var(S_{G,N_{m_k}})
 &=
 2\sum_{j=1}^{s_{N_{m_k}}}\frac1j+O(1)
 =
 2\log N_{m_k}+O(1),
 \end{aligned}
\]
and the sum of the third absolute centered moments is bounded
uniformly in $k$.  The classical Berry--Esseen inequality, followed
by Slutsky's theorem, therefore yields
\begin{equation}\label{eq:exact-example-Gaussian-clt}
 \frac{
 S_{G,N_{m_k}}-\E S_{G,N_{m_k}}
 }
 {\sqrt{2\log N_{m_k}}}
 \Rightarrow
 \Normal.
\end{equation}

\medskip
\noindent
\emph{Step 5: recombining the two independent blocks.}
For every $k$, the Gram--Schmidt product formula gives
\[
 2\log|\det A_{N_{m_k}}|
 =
 \log(N_{m_k}!)
 +
 S_{H,N_{m_k}}
 +
 S_{G,N_{m_k}}.
\]
After subtracting expectations, the deterministic term
$\log(N_{m_k}!)$ cancels.  Hence
\begin{equation}\label{eq:exact-example-sum-blocks}
 \begin{aligned}
 W_{N_{m_k}}^{\mathrm e}(A_{N_{m_k}})
 &=
 \frac{
 S_{H,N_{m_k}}-\E S_{H,N_{m_k}}
 }
 {\sqrt{2\log N_{m_k}}}
 +
 \frac{
 S_{G,N_{m_k}}-\E S_{G,N_{m_k}}
 }
 {\sqrt{2\log N_{m_k}}}.
 \end{aligned}
\end{equation}
By \eqref{eq:exact-example-heavy-centered}, the first term on the
right-hand side of \eqref{eq:exact-example-sum-blocks} converges to $\mathcal N\left(0,\frac{d_0}{8}\right)$.
By \eqref{eq:exact-example-Gaussian-clt}, the second term converges to
$\Normal$.  Moreover,
$S_{H,N_{m_k}}$ is
$\mathcal F_{r_{N_{m_k}}}$-measurable, whereas
$S_{G,N_{m_k}}$ is independent of
$\mathcal F_{r_{N_{m_k}}}$.  Thus the two terms are independent for
every $k$.  It follows that
\[
 W_{N_{m_k}}^{\mathrm e}(A_{N_{m_k}})
 \Rightarrow
 \mathcal N\left(0,1+\frac{d_0}{8}\right)
 =
 \mathcal N(0,\sigma^2),
\]
where $\sigma^2$ is defined in
\eqref{eq:exact-example-sigma-definition}.  This proves
\eqref{eq:exact-example-limit} and completes the proof.
\end{proof}

\section*{Acknowledgments}
Liu S.H. was partially supported by the Fundamental Research Funds for the Central Universities DUT25RC(3)133.
Shao Q.M. was partially supported by National Nature Science Foundation of China NSFC 12031005 and Shenzhen Outstanding Talents Training Fund, China.

\printcredits

\bibliographystyle{cas-model2-names}

\bibliography{cas-refs}

\begin{thebibliography}{18}
\expandafter\ifx\csname natexlab\endcsname\relax\def\natexlab#1{#1}\fi
\providecommand{\url}[1]{\texttt{#1}}
\providecommand{\href}[2]{#2}
\providecommand{\path}[1]{#1}
\providecommand{\DOIprefix}{doi:}
\providecommand{\ArXivprefix}{arXiv:}
\providecommand{\URLprefix}{URL: }
\providecommand{\Pubmedprefix}{pmid:}
\providecommand{\doi}[1]{\href{http://dx.doi.org/#1}{\path{#1}}}
\providecommand{\Pubmed}[1]{\href{pmid:#1}{\path{#1}}}
\providecommand{\bibinfo}[2]{#2}
\ifx\xfnm\relax \def\xfnm[#1]{\unskip,\space#1}\fi
\bibitem[{Bao et~al.(2015)Bao, Pan and Zhou}]{BaoPanZhou2015}
\bibinfo{author}{Bao, Z.}, \bibinfo{author}{Pan, G.}, \bibinfo{author}{Zhou, W.}, \bibinfo{year}{2015}.
\newblock \bibinfo{title}{The logarithmic law of random determinant}.
\newblock \bibinfo{journal}{Bernoulli} \bibinfo{volume}{21}, \bibinfo{pages}{1600--1628}.
\newblock \DOIprefix\doi{10.3150/14-BEJ615}.
\bibitem[{Dembo(1989)}]{Dembo1989}
\bibinfo{author}{Dembo, A.}, \bibinfo{year}{1989}.
\newblock \bibinfo{title}{On random determinants}.
\newblock \bibinfo{journal}{Quarterly of Applied Mathematics} \bibinfo{volume}{47}, \bibinfo{pages}{185--195}.
\bibitem[{Dumitriu and Edelman(2002)}]{DumitriuEdelman2002}
\bibinfo{author}{Dumitriu, I.}, \bibinfo{author}{Edelman, A.}, \bibinfo{year}{2002}.
\newblock \bibinfo{title}{Matrix models for beta ensembles}.
\newblock \bibinfo{journal}{Journal of Mathematical Physics} \bibinfo{volume}{43}, \bibinfo{pages}{5830--5847}.
\bibitem[{Forsythe and Tukey(1952)}]{ForsytheTukey1952}
\bibinfo{author}{Forsythe, G.E.}, \bibinfo{author}{Tukey, J.W.}, \bibinfo{year}{1952}.
\newblock \bibinfo{title}{The extent of $n$ random unit vectors}.
\newblock \bibinfo{journal}{Bulletin of the American Mathematical Society} \bibinfo{volume}{58}, \bibinfo{pages}{502}.
\bibitem[{Girko(1979)}]{Girko1979}
\bibinfo{author}{Girko, V.L.}, \bibinfo{year}{1979}.
\newblock \bibinfo{title}{A central limit theorem for random determinants}.
\newblock \bibinfo{journal}{Theory of Probability and Its Applications} \bibinfo{volume}{24}, \bibinfo{pages}{729--740}.
\bibitem[{Girko(1990)}]{GirkoBook1990}
\bibinfo{author}{Girko, V.L.}, \bibinfo{year}{1990}.
\newblock \bibinfo{title}{Theory of Random Determinants}. volume~\bibinfo{volume}{45} of \textit{\bibinfo{series}{Mathematics and Its Applications (Soviet Series)}}.
\newblock \bibinfo{publisher}{Kluwer Academic Publishers}, \bibinfo{address}{Dordrecht}.
\newblock \bibinfo{note}{Translated from the Russian}.
\bibitem[{Girko(1997)}]{Girko1997}
\bibinfo{author}{Girko, V.L.}, \bibinfo{year}{1997}.
\newblock \bibinfo{title}{A refinement of the central limit theorem for random determinants}.
\newblock \bibinfo{journal}{Theory of Probability and Its Applications} \bibinfo{volume}{42}, \bibinfo{pages}{121--129}.
\bibitem[{Goodman(1963)}]{Goodman1963}
\bibinfo{author}{Goodman, N.R.}, \bibinfo{year}{1963}.
\newblock \bibinfo{title}{The distribution of the determinant of a complex {Wishart} distributed matrix}.
\newblock \bibinfo{journal}{Annals of Mathematical Statistics} \bibinfo{volume}{34}, \bibinfo{pages}{178--180}.
\bibitem[{Klenke(2014)}]{Klenke2014}
\bibinfo{author}{Klenke, A.}, \bibinfo{year}{2014}.
\newblock \bibinfo{title}{Probability Theory: A Comprehensive Course}.
\newblock Universitext. \bibinfo{edition}{2} ed., \bibinfo{publisher}{Springer}.
\bibitem[{Li et~al.(2026)Li, Liu, Xie and Zhou}]{li2026logarithmic}
\bibinfo{author}{Li, Y.}, \bibinfo{author}{Liu, Z.}, \bibinfo{author}{Xie, J.}, \bibinfo{author}{Zhou, W.}, \bibinfo{year}{2026}.
\newblock \bibinfo{title}{The logarithmic law of sample correlation matrices}.
\newblock \bibinfo{journal}{arXiv preprint arXiv:2603.19800} .
\bibitem[{Livshyts et~al.(2016)Livshyts, Paouris and Pivovarov}]{livshyts2016sharp}
\bibinfo{author}{Livshyts, G.}, \bibinfo{author}{Paouris, G.}, \bibinfo{author}{Pivovarov, P.}, \bibinfo{year}{2016}.
\newblock \bibinfo{title}{On sharp bounds for marginal densities of product measures}.
\newblock \bibinfo{journal}{Israel Journal of Mathematics} \bibinfo{volume}{216}, \bibinfo{pages}{877--889}.
\bibitem[{Nguyen and Vu(2014)}]{NguyenVu2014}
\bibinfo{author}{Nguyen, H.H.}, \bibinfo{author}{Vu, V.H.}, \bibinfo{year}{2014}.
\newblock \bibinfo{title}{Random matrices: Law of the determinant}.
\newblock \bibinfo{journal}{Annals of Probability} \bibinfo{volume}{42}, \bibinfo{pages}{146--167}.
\newblock \DOIprefix\doi{10.1214/12-AOP791}.
\bibitem[{Nyquist et~al.(1954)Nyquist, Rice and Riordan}]{NyquistRiceRiordan1954}
\bibinfo{author}{Nyquist, H.}, \bibinfo{author}{Rice, S.O.}, \bibinfo{author}{Riordan, J.}, \bibinfo{year}{1954}.
\newblock \bibinfo{title}{The distribution of random determinants}.
\newblock \bibinfo{journal}{Quarterly of Applied Mathematics} \bibinfo{volume}{12}, \bibinfo{pages}{97--104}.
\bibitem[{Pr{\'e}kopa(1967)}]{Prekopa1967}
\bibinfo{author}{Pr{\'e}kopa, A.}, \bibinfo{year}{1967}.
\newblock \bibinfo{title}{On random determinants. {I}}.
\newblock \bibinfo{journal}{Studia Scientiarum Mathematicarum Hungarica} \bibinfo{volume}{2}, \bibinfo{pages}{125--132}.
\bibitem[{Rouault(2007)}]{Rouault2007}
\bibinfo{author}{Rouault, A.}, \bibinfo{year}{2007}.
\newblock \bibinfo{title}{Asymptotic behavior of random determinants in the {Laguerre}, {Gram} and {Jacobi} ensembles}.
\newblock \bibinfo{journal}{ALEA. Latin American Journal of Probability and Mathematical Statistics} \bibinfo{volume}{3}, \bibinfo{pages}{181--230}.
\bibitem[{Szekeres and Tur{\'a}n(1937)}]{SzekeresTuran1937}
\bibinfo{author}{Szekeres, G.}, \bibinfo{author}{Tur{\'a}n, P.}, \bibinfo{year}{1937}.
\newblock \bibinfo{title}{On an extremal problem in the theory of determinants}.
\newblock \bibinfo{journal}{Math. Naturwiss. Anz. Ungar. Akad. Wiss.} \bibinfo{volume}{56}, \bibinfo{pages}{796--806}.
\newblock \bibinfo{note}{In Hungarian}.
\bibitem[{Tao and Vu(2012)}]{tao2012central}
\bibinfo{author}{Tao, T.}, \bibinfo{author}{Vu, V.}, \bibinfo{year}{2012}.
\newblock \bibinfo{title}{A central limit theorem for the determinant of a wigner matrix}.
\newblock \bibinfo{journal}{Advances in Mathematics} \bibinfo{volume}{231}, \bibinfo{pages}{74--101}.
\bibitem[{Tao and Vu(2006)}]{TaoVu2006}
\bibinfo{author}{Tao, T.}, \bibinfo{author}{Vu, V.H.}, \bibinfo{year}{2006}.
\newblock \bibinfo{title}{On random {$\pm 1$} matrices: Singularity and determinant}.
\newblock \bibinfo{journal}{Random Structures \& Algorithms} \bibinfo{volume}{28}, \bibinfo{pages}{1--23}.

\end{thebibliography}



\end{document}